\newcommand{\pushright}[1]{\ifmeasuring@#1\else\omit\hfill\(\displaystyle#1\)\fi\ignorespaces}
\newcommand{\pushleft}[1]{\ifmeasuring@#1\else\omit\(\displaystyle#1\)\hfill\fi\ignorespaces}
\newcommand{\betasat}{\beta_{\mathrm{sat}}}
\newcommand{\surcharge}{\mathfrak{s}}
\newcommand{\Wulff}{\mathscr{W}}
\newcommand{\betahat}{\hat{\beta}_{\mathrm{sat}}}
\renewcommand{\norm}[1]{\|#1\|}
\newcommand{\normI}[1]{\left\|#1\right\|_{\scriptscriptstyle 1}}
\newcommand{\normsup}[1]{\left\|#1\right\|_{\scriptscriptstyle\infty}}
\newcommand{\setof}[2]{\{#1\,:\,#2\}}
\newcommand{\bsetof}[2]{\bigl\{#1\,:\,#2\bigr\}}
\newcommand{\Bsetof}[2]{\Bigl\{#1\,:\,#2\Bigr\}}
\newcommand{\comp}{\mathrm{c}}
\newcommand{\IF}[1]{\mathds{1}_{\{#1\}}}
\newcommand{\bbG}{\mathbb{G}}
\newcommand{\bbJ}{\mathbb{J}}
\newcommand{\bbN}{\mathbb{N}}
\newcommand{\bbP}{\mathbb{P}}
\newcommand{\bbR}{\mathbb{R}}
\newcommand{\bbS}{\mathbb{S}}
\newcommand{\bbZ}{\mathbb{Z}}
\newcommand{\calC}{\mathcal{C}}
\newcommand{\calF}{\mathcal{F}}
\newcommand{\calW}{\mathcal{W}}
\newcommand{\rmd}{\mathrm{d}}
\newcommand{\sfe}{\mathsf{e}}
\newcommand{\sfo}{\mathsf{o}}
\newcommand{\sfO}{\mathsf{O}}
\newcommand{\lambdasat}{\lambda_{\mathrm{sat}}}
\newcommand{\alphasat}{\alpha_{\mathrm{sat}}}
\newcommand{\cone}{\mathcal{Y}}
\newcommand{\Tc}{T_{\mathrm{\scriptscriptstyle c}}}
\newcommand{\Piv}{\mathrm{Piv}}
\newcommand{\betaexp}{\beta_{\mathrm{exp}}}
\newcommand{\betac}{\beta_{\mathrm{\scriptscriptstyle c}}}
\newcommand{\icl}{\nu}
\newcommand{\niceConnection}{NC}
\newcommand{\constPrefact}{\tilde{\chi}}
\newcommand{\Z}{\mathbb{Z}}
\newcommand{\R}{\mathbb{R}}
\newcommand{\Rd}{\mathbb{R}^d}
\newcommand{\Zd}{\mathbb{Z}^d}
\newcommand{\given}{\,|\,}
\newcommand{\bgiven}{\bigm\vert}
\renewcommand{\nleftrightarrow}{\mathrel{\ooalign{\(\leftrightarrow\)\cr\hidewidth\(/\)\hidewidth}}}
\theoremstyle{plain}
\newtheorem{theorem}{Theorem}[section]
\newtheorem{lemma}[theorem]{Lemma}
\newtheorem{corollary}[theorem]{Corollary}
\newtheorem{conjecture}[theorem]{Conjecture}
\newtheorem*{conjecture*}{Conjecture}
\newtheorem{remark}{Remark}[section]
\newtheorem{claim}{Claim}
\theoremstyle{definition}
\newtheorem{definition}{Definition}[section]
\newtheorem{obs}{Observation}
\author{Yacine Aoun}
\address{Section de Mathématiques, Université de Genève, CH-1211 Genève, Switzerland}
\email{Yacine.Aoun@unige.ch}
\author{S\'{e}bastien Ott}
\address{Département de Mathématiques, Université de Fribourg, Chemin du Musée 23, 1700 Fribourg, Switzerland}
\email{ott.sebast@gmail.com}
\author{Yvan Velenik}
\address{Section de Mathématiques, Université de Genève, CH-1211 Genève, Switzerland}
\email{Yvan.Velenik@unige.ch}
\date{\today}
\title[On the Potts 2-point function in the saturation regime]{On the two-point function of the Potts model\\in the saturation regime}
\begin{document}

\begin{abstract}
	We consider the Random-Cluster model on \(\Zd\) with interactions of infinite range of the form \(J_x = \psi(x)\sfe^{-\rho(x)}\) with \(\rho\) a norm on \(\Zd\) and \(\psi\) a subexponential correction. We first provide an optimal criterion ensuring the existence of a nontrivial \emph{saturation} regime (that is, the existence of \(\betasat(s)>0\) such that the inverse correlation length in the direction \(s\) is constant on \([0,\betasat(s)\))), thus removing a regularity assumption used in our previous work~\cite{Aoun+Ioffe+Ott+Velenik-CMP-2021}. Then, under suitable assumptions, we derive sharp asymptotics (which are not of Ornstein--Zernike form) for the two-point function in the whole saturation regime \((0,\betasat(s))\). We also obtain a number of additional results for this class of models, including sharpness of the phase transition, mixing above the critical temperature and the strict monotonicity of the inverse correlation length in \(\beta\) in the regime \((\betasat(s), \betac)\).
\end{abstract}

\maketitle


\section{Introduction}

Since the celebrated work of Ornstein and Zernike more than a century ago~\cite{Ornstein+Zernike-1914, Zernike-1916}, the analysis of the asymptotic behavior of correlation functions has played an important role in our understanding of the equilibrium properties
of macroscopic systems. In particular, Ornstein and Zernike predicted that the pair correlation function \(G_\beta(r)\) would decay, as a function of the distance \(r\), according to \(r^{-(d-1)/2}\sfe^{-\icl_\beta r}\), where \(\icl_\beta\) denotes the inverse correlation length. This has since become known as \emph{Ornstein--Zernike (OZ) behavior} and is expected to be the generic behavior away from critical points.

While their original work relied on unproven (both explicit and implicit) assumptions, the validity of OZ behavior was established rigorously in a variety of settings (see also~\cite{Ott+Velenik-2019} for a more detailed overview restricted to the Ising model): the first derivation of OZ behavior was done in the planar Ising model above the critical temperature~\cite{Wu-1966,Wu+McCoy+Tracy+Barouch-1976} by explicitly computing the pair correlation function (these computations also showed that OZ behavior is violated below the critical temperature in the planar Ising model); then, OZ behavior was established in more general systems and in any dimension in perturbative regimes starting with the works~\cite{Abraham+Kunz-1977, Paes-Leme-1978, Bricmont+Frohlich-1985a}; the first non-perturbative approaches were introduced in the 1980s, but were restricted to simple models (in particular, the self-avoiding walk~\cite{Chayes+Chayes-1986, Ioffe-1998} and Bernoulli percolation~\cite{Campanino+Chayes+Chayes-1991}), were very model-dependent and lacked robustness. In the last two decades, a much more powerful and robust approach was developed in the works~\cite{Campanino+Ioffe-2002, Campanino+Ioffe+Velenik-2003, Campanino+Ioffe+Velenik-2008, Ott+Velenik-2018}. Among others, the latter version of the theory allowed the analysis of the odd-odd and even-even correlations in the finite-range Ising model on \(\Zd\) above \(\Tc\)~\cite{Campanino+Ioffe+Velenik-2004, Ott+Velenik-2018(2)}, of the 2-point correlation function of the finite-range Ising model on \(\Zd\) in a field at any temperature~\cite{Ott-2020} and of the 2-point correlation function of the finite-range Potts model on \(\Zd\) above \(\Tc\)~\cite{Campanino+Ioffe+Velenik-2008}, including in the presence of inhomogeneities~\cite{Ott+Velenik-2018}.

At this stage, it was natural to proceed one step further by considering models with interactions of infinite range. It had been understood since at least the 1960s~\cite{Widom-1964} that the pair correlation function cannot decay faster than the pair interaction (at least in ferromagnetic-type systems). In particular, OZ behavior cannot occur when the interactions decay slower than exponentially; the sharp asymptotic behavior of the 2-point function in this regime was established for the Ising model on \(\Zd\) above \(\Tc\) in~\cite{Newman+Spohn-1998} and (in the case of interactions decaying according to a power law) for the Potts model on \(\Zd\) above \(\Tc\) in~\cite{Aoun-2021} (the extension of the latter result to interactions decaying like a stretched exponential is provided in the present work, see below). However, it was expected (see, for instance, \cite{Bricmont+Frohlich-1985b}) that OZ behavior should occur (at least at sufficiently high temperatures) whenever the interaction decays at least exponentially fast with the distance. This turns out to be incorrect, as we explain now.

\bigskip
In~\cite{Aoun+Ioffe+Ott+Velenik-CMP-2021} (see also~\cite{Aoun+Ioffe+Ott+Velenik-PRE-2021}), we considered a general class of lattice spin systems on \(\Zd\) with two-body ferromagnetic interactions decaying asymptotically as \(\psi(x)\sfe^{-\rho(x)}\), where \(\rho\) is a norm on \(\Rd\) and \(\psi\) is a sub-exponential correction. Let us denote by \(G_\beta(x)\) the associated two-point function and by \(\nu_\beta(s)\) the corresponding inverse correlation length in direction \(s\), defined as the rate of exponential decay of \(G_\beta\): \(G_\beta(ns) = \sfe^{-\nu_\beta(s)n + \sfo(n)}\) as \(n\to\infty\) and \(s\in\bbS^{d-1}\). It is easy to check that \(\beta\mapsto\nu_\beta(s)\) is non-increasing and that \(\lim_{\beta\downarrow 0} \nu_\beta(s) = \rho(s)\) for all \(s\in\bbS^{d-1}\). This leads naturally to the introduction of the \emph{saturation point} \(\betasat(s) = \sup\setof{\beta\geq 0}{\nu_\beta(s) = \rho(s)}\) (see Figure~\ref{fig:saturation}). We call \((0, \betasat(s))\) the \emph{saturation} regime (in direction \(s\)). Our goal in~\cite{Aoun+Ioffe+Ott+Velenik-CMP-2021} was to understand under which conditions saturation does occur, that is, when is \(\betasat(s)\) strictly positive.
 This turns out to depend on the direction \(s\), the norm \(\rho\) and the prefactor \(\psi\). The main result in~\cite{Aoun+Ioffe+Ott+Velenik-CMP-2021} was the derivation of a criterion characterizing exactly the prefactors \(\psi\) leading to \(\betasat(s)>0\) (actually, in~\cite{Aoun+Ioffe+Ott+Velenik-CMP-2021}, we imposed a mild regularity condition on the behavior of the norm \(\rho\) in the neighborhood of the direction \(s\); removing this condition is one of the results of the present paper, as we explain below). Note that, when \(\betasat(s)>0\), the correlation length is not an analytic function of the temperature in the high-temperature regime; that this could occur (even in dimension \(1\)!) was also unexpected.

\begin{figure}[t]
	\centering
	\includegraphics{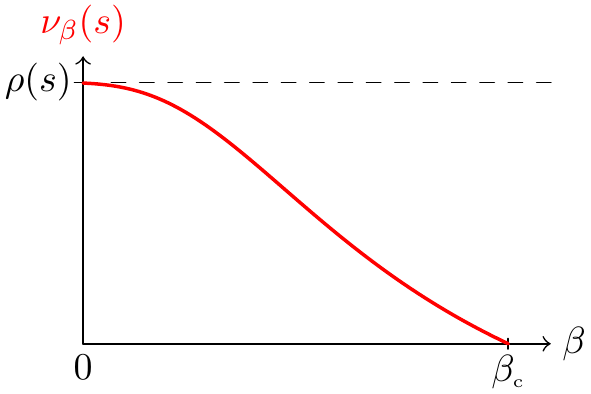}
	\hspace{2cm}
	\includegraphics{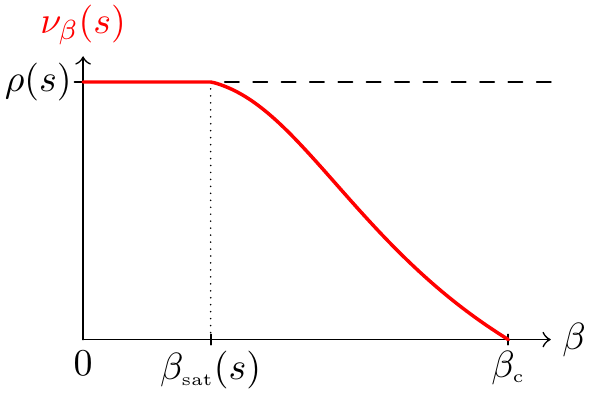}
	\caption{The two possible behaviors of the inverse correlation length in a direction \(s\in\bbS^{d-1}\) in the Ising model on \(\Zd\) with coupling constants \(J_x = \psi(x)\sfe^{-\rho(x)}\). Left: saturation never occurs (\(\betasat(s)=0\)). Right: saturation occurs (\(\betasat(s)>0\)). In more general models, the behavior is completely similar, except that the inverse correlation length does not necessarily converge to \(0\) at \(\betac\).}
	\label{fig:saturation}
\end{figure}

In~\cite{Aoun+Ioffe+Ott+Velenik-CMP-2021}, we also described (with partial results) how the asymptotic behavior of the two-point function is expected to change depending on whether saturation occurs. In the regime \((\betasat(s),\betac)\), \(G_\beta\) is expected to always display standard Ornstein--Zernike asymptotics: \(G_\beta(x) \sim \abs{x}^{-(d-1)/2} \sfe^{-\nu_\beta(x)}\). This is however expected not to be true in the regime  \((0,\betasat(s))\) in which saturation occurs; here, \(G_\beta(x)\) (with \(x\),\(s\) colinear) is dominated by the direct interaction between the spins in the neighborhood of \(0\) and those in a neighborhood of \(x\), which should generally lead to \(G_\beta(x) \sim \psi(x)\sfe^{-\rho(x)}\). This was proved for sufficiently small values of \(\beta\) in a variety of models in~\cite{Aoun+Ioffe+Ott+Velenik-CMP-2021}. This difference in behavior should be related to a drastic change in the morphology of typical paths contributing to the high-temperature expansion of \(G_\beta(x)\), analogous to what happens in condensation phenomena for sums of independent random variables~\cite{Godreche-2019}; see Fig.~\ref{fig:CondensationPaths}.

\begin{figure}[ht]
	\centering
	\begin{tikzpicture}
		\node[anchor=south west,inner sep=0] at (0,0) {\includegraphics[width=4.5cm]{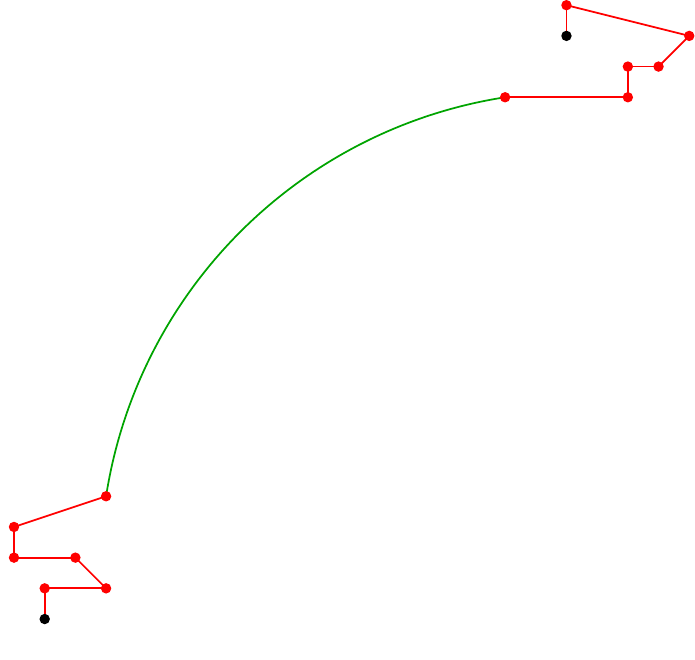}\hspace{1cm}
			\raisebox{2.3mm}{\includegraphics[height=4cm]{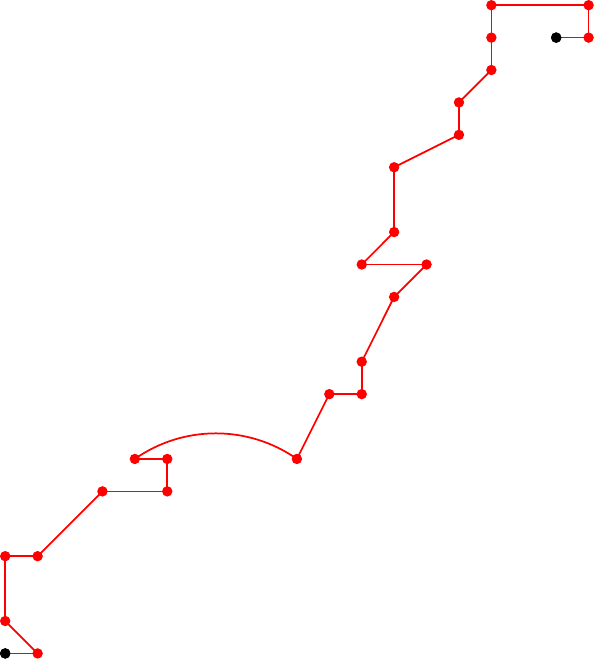}}};
		\node[left] at (0.55,0) {$0$};
		\node at (3.35,4.0) {$n s$};
		\node[left] at (5.93,0) {$0$};
		\node at (9,3.8) {$n s$};
	\end{tikzpicture}
	\caption{Qualitative depiction of typical paths contributing to the high-temperature expansion of the 2-point function \(G_\beta(x)\) of the Ising model (with \(\norm{x}\gg 1\)). Left: In the saturation regime, there is a single giant edge connecting a vertex close to \(0\) to a vertex close to \(x\). Right: In the regime \((\betasat(s),\betac)\), all edges are microscopic (and the path can in fact be coupled, using the Ornstein--Zernike theory) to a directed random walk.}
	\label{fig:CondensationPaths}
\end{figure}

\bigskip
The discussion above applies to a general class of lattice spin systems. In order to obtain more precise results, it is useful to turn to important specific examples. In~\cite{Aoun+Ott+Velenik-2021}, we considered the ferromagnetic Ising model on \(\Zd\) with coupling constants of the form \(J_x = \psi(x)\sfe^{-\rho(x)}\) as above. Extending earlier results restricted to finite-range interactions~\cite{Campanino+Ioffe+Velenik-2003} (see also \cite{Ott+Velenik-2019} for an overview), we proved that the two-point function indeed exhibits Ornstein--Zernike behavior for all \(\beta\in (\betasat(s),\betac)\) (under some regularity condition). This provides a precise (although not yet completely exhaustive) description of the regime in which saturation does not occur.

\medskip
In the present work, we consider the saturation regime.
Our main goal is to provide a detailed analysis of the two-point function  in this regime. We do that by analyzing the \(q\geq 1\) Random-Cluster model on \(\Zd\) (which includes Bernoulli percolation and the Ising and Potts models on \(\Zd\)).
We introduce the required terminology and notation in Section~\ref{sec:Notations} and then state our main results in Section~\ref{sec:Results}.

\section{Model and notations}\label{sec:Notations}

Most of our results naturally extend to a wider set-up but we restrict attention to \(\Zd\). We will always see \(\Zd\) as canonically embedded inside \(\bbR^d\) and will denote \(\norm{\cdot}\) the Euclidean norm on \(\Rd\). \(\rho\) will denote a norm on \(\bbR^d\) (and will be one of the parameters in our analysis).

We consider the graph \((\Zd,E_d)\) with edge set \(E_d = \bigl\{\{i,j\}\subset\Zd\bigr\}\), which we will often write simply \(\Zd\). Let \(\Lambda_N=\{-N,\dots,N\}^d\) and \(\Lambda_N(x) = x+\Lambda_N\). For \(x,y\in\Zd\), we denote by \([x,y]\) the closed line segment in \(\Rd\) with endpoints \(x\) and \(y\).

\subsection{Graphs}

For a graph \((V,E)\), \(A\subset V\) and \(F\subset E\), we write \(A^\comp=V\setminus A\),
\begin{gather*}
	E_A = \bsetof{\{i,j\}\in E}{\{i,j\}\subset A},\quad
	\partial A = \bsetof{ \{i,j\}\in E}{i\in A, j\in A^\comp},\\
	V_F = \bigcup_{\{i,j\}\in F} \{i,j\},\quad
	\partial F = \bsetof{ \{i,j\}\in E}{i\in V_F}\setminus F,\quad
	\bar{E}_A = E_A \cup \partial E_A.
\end{gather*}

We will systematically identify sets and their characteristic function (e.g., \(\omega\subset E\) will be identified with \(\omega\in\{0,1\}^E\), where \(\omega_e=1\) if and only if \(e\in\omega\)).

In all this work, we will consider subgraphs of \((\Z^d,E_d)\). For \(\omega,\eta\subset E_d\), we will denote by \(\omega|_F\) the restriction of \(\omega\) to \(F\) and by \(\omega|_F\eta|_{F^\comp}\) the union of the edges in \(\omega|_F\) and in \(\eta|_{F^\comp}\). We endow the subsets of \(E_d\) with the usual partial order (that is, sets are ordered by inclusion).

\subsection{Random-Cluster model}

\subsubsection{Interaction}\label{ssec:Interactions}

We consider a weight function (the \textit{interaction}, or the set of \textit{coupling constants}) \(J:E_d\to\R_+\) satisfying
\begin{itemize}
	\item \textsf{Translation/reflection invariance:} \(J_{ij} = J_{i-j}= J_{j-i}\),
	\item \textsf{No self-interaction:} \(J_0=0\),
	\item \textsf{Normalization:} \(\sum_{x\in\Zd} J_x = 1\).
\end{itemize}
We will use the following terminology.
\begin{definition}
	\label{def:short_range}
	\(J\) is \emph{exponentially-bounded} if \(J_x\leq \sfe^{-c\norm{x}}\) for some \(c>0\) and all \(x\in\Zd\) with \(\norm{x}\) sufficiently large.
	\(J\) is \emph{exponentially-decaying} if \(J_x= \psi(x) \sfe^{-\rho(x)}\) with \(\psi>0\) satisfying
	\[
		\lim_{\norm{x}\to\infty} \dfrac{\log\psi(x)}{\norm{x}} = 0,
	\]
	and \(\rho\) a norm on \(\Rd\).
\end{definition}

\subsubsection{The Random-Cluster model}

We refer to~\cite{Grimmett-2006, Duminil-Copin-2017} for additional details on the Random-Cluster model.
Let \(q\geq 1\) and \(\beta\geq 0\). Consider a finite set \(F\subset E_d\). Let \(\eta \subset E_d\) be such that the graph \((\Zd,\eta)\) contains at most one infinite cluster.
The Random-Cluster measure on \(F\) with boundary condition \(\eta\) is the probability measure on \(\{0,1\}^F\) given by
\begin{equation}
	\Phi_{F;q,\beta}^{\eta}(\omega) = \frac{1}{Z_{F;q,\beta}^{\eta}} \prod_{e\in F}(\sfe^{\beta J_e}-1)^{\omega_e} q^{\kappa_{\eta}(\omega)},
\end{equation}
where \(\kappa_{\eta}(\omega)\) is the number of connected components in \((\Zd,\omega|_F\eta|_{F^\comp})\) having an endpoint in \(V_F\) and \(Z_{F;q,\beta}^{\eta}\) is the partition function.
For \(V\subset\Zd\), we set \(\Phi_{V;q,\beta}^{\eta}\equiv \Phi_{\bar{E}_V;q,\beta}^{\eta}\).
The following stochastic domination (with respect to the partial order on subsets of \(E_d\)) applies:
\[
	\forall\eta\leq\eta',\forall\beta\leq\beta',\qquad
	\Phi_{F;q,\beta}^{\eta} \preccurlyeq \Phi_{F;q,\beta'}^{\eta'}.
\]
In particular, the boundary conditions \(\eta\equiv 0\) and \(\eta \equiv 1\) are extremal; they will respectively be denoted by \(0\) and \(1\).
Moreover, the following finite energy property applies: for any $e\in F$ and $\eta'\in\lbrace 0,1\rbrace^{F\setminus\lbrace e\rbrace}$, one has
\begin{equation*}
\dfrac{\sfe^{\beta J_{e}}-1}{\sfe^{\beta J_{e}} -1+q}
\leq \Phi_{F;q,\beta}^{\eta}(\omega_{e}=1 \mid \omega_{F\setminus\lbrace e\rbrace}=\eta')\leq 1-\sfe^{-\beta J_{e}}
\end{equation*}
The limits
\[
	\Phi_{q,\beta}^{*} =\lim_{F\to E_d}\Phi_{F;q,\beta}^{*},
\]
exist for \(*\in\{0,1\}\) and are translation invariant.
Since \(q\geq 1\) will be kept fixed in our analysis, it will be removed from the notation.
For \(F\subset E_d\), denote by \(\calF_{F}\) the sigma-algebra generated by the edge variables \(\omega_e\), \(e\in F\). We say that an event \(A\) is \emph{supported on \(F\)} if \(A\in\calF_F\). As usual, we denote \(\{x\leftrightarrow y\}\) for the event ``\(x\) connected to \(y\)'', and \(\{x\xleftrightarrow[]{F} y\}\) for the event ``\(x\) connected to \(y\) using only edges in \(F\)''.

The \emph{two-point function} of the Random-Cluster model is
\[
	G_{\beta}(x,y) = \Phi_\beta^0(x\leftrightarrow y).
\]

When \(q\in\bbN\) with \(q\geq 2\), one has the following correspondance with the well-known \emph{Potts model}
\begin{equation}
	\bbP^{\mathrm{Potts}}_{\beta,q}(\IF{\sigma_x=\sigma_y})- \frac{1}{q} = \frac{q-1}{q} \, \Phi^{0}_{\beta,q}(x\leftrightarrow y).
\end{equation}
We refer to~\cite{Duminil-Copin-2017} for more details.

\subsection{Convex geometry}

It will be convenient to introduce a few quantities associated to the norm $\rho$. First, two convex sets are important: the unit ball \(\mathscr{U}\subset \bbR^d\) and the corresponding \emph{Wulff shape}
\[
	\Wulff = \setof{t\in\bbR^d}{\forall x\in\bbR^d,\, t\cdot x \leq \rho(x)}.
\]
Given a direction \(s\in \bbS^{d-1}\), we say that the vector \(t\in\bbR^d\) is dual (or \(\rho\)-dual) to \(s\) if
\(t\in\partial\Wulff\) and \(t\cdot s = \rho(s)\). A direction \(s\) possesses a unique dual vector \(t\) if and only if \(\partial\Wulff\) does not possess an affine part with normal \(s\). Equivalently, there is a unique dual vector when the unit ball $\mathscr{U}$ has a unique supporting hyperplane at \(s/\rho(s)\). (See Fig.~\ref{fig:duality} for an illustration.)

\begin{figure}[ht]
	\includegraphics{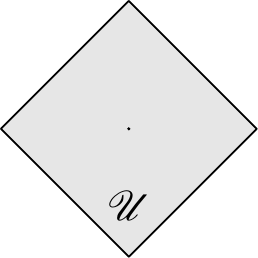}
	\hspace*{1cm}
	\includegraphics{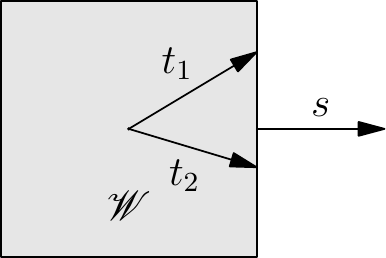}
	\hspace*{1cm}
	\includegraphics{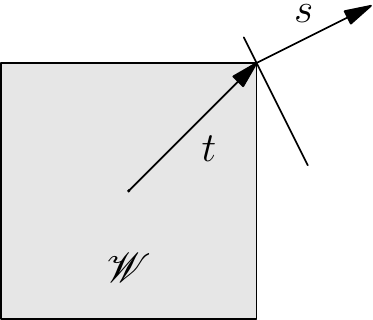}
	\caption{Left: The unit ball for the norm \(\rho(\cdot)=\normI{\cdot}\). Middle: the corresponding Wulff shape $\Wulff$ with two vectors \(t_1\) and \(t_2\) dual to \(s=(1,0)\). Right: the set \(\Wulff\) with the unique vector \(t\) dual to \(s=\frac{1}{\sqrt{5}}(2,1)\).}
	\label{fig:duality}
\end{figure}

The \emph{surcharge function}\footnote{To avoid confusion, we warn the reader that the surcharge function used in~\cite{Aoun+Ott+Velenik-2021} was associated to the inverse correlation, not the interaction.} associated to a dual vector \(t\in\partial\Wulff\) is then defined by
\begin{equation*}
	\surcharge_t(x) = \rho(x)- x\cdot t.
\end{equation*}
It immediately follows from the definition that \(\surcharge_t(x)\geq 0\) for all \(x\in\bbR^d\) and \(\surcharge_t(s)=0\) if \(t\) is dual to \(s\).

\pagebreak

\subsection{Transition points}

\subsubsection{The phase transition}

There are \textit{a priori} two natural transition points in the Random-Cluster model. The first one corresponds to the onset of percolation and reduces to the usual order/disorder phase transition in the associated Potts model when \(q\geq 2\) is an integer:
\[
	\betac(d,q) = \inf\setof{\beta\geq 0}{\Phi^0_{\beta}(0\leftrightarrow\infty) > 0}.
\]
The second one corresponds to the boundary of the regime in which the
connection probabilities decay exponentially fast with the distance, and this uniformly over boundary conditions:
\[
\betaexp(d,q) = \sup\setof{\beta\geq 0}{\exists c>0, \exists N_0\geq 0,\forall N\geq N_0, \Phi^1_{\Lambda_N;\beta}(0\leftrightarrow \Lambda_N^\comp) \leq \sfe^{-cN}}.
\]
We will see below that the transition is \emph{sharp}, that is, these two points actually coincide: \(\betac = \betaexp\).

\subsubsection{Saturation transition}
\label{sec:saturation_transition}

Suppose that \(J\) is exponentially-bounded. The \emph{inverse correlation length} is defined as follows: for \(s\in\bbS^{d-1}\),
\begin{equation}
	\nu_\beta(s) = -\lim_{n\to\infty} \frac1{n}\log G_\beta(0,ns),
\end{equation}
where integer parts are implicitly taken on \(ns\).
The limit can be proven to exist (see~\cite{Aoun+Ioffe+Ott+Velenik-CMP-2021} for references). One always has
\begin{equation*}
G_\beta(0,ns)\leq e^{-\nu_{\beta}(ns)}.
\end{equation*}

 Moreover, when \(\beta<\betaexp\), \(\nu_\beta\) can be extended to a non-degenerate norm on \(\Rd\) by positive homogeneity of order one.
In addition, the function \(\beta\mapsto \nu_\beta(s)\) is non-increasing over \(\R_+\), \(\nu_\beta(s)\leq\rho(s)\).

\medskip
When \(J\) is exponentially-decaying, we define the \emph{saturation point} as
\[
	\betasat(s) = \inf\setof{\beta\geq 0}{\nu_{\beta}(s) < \rho(s)}.
\]
Recall that \(\lim_{\beta\to 0} \nu_\beta(s) = \rho(s)\) (see~\cite{Aoun+Ioffe+Ott+Velenik-CMP-2021}). By definition, one clearly has $\betasat(s)\leq\betaexp$, but a priori, we don't know whether $\betasat(s)<\betaexp$ or not. There is another (family of) point(s) with special properties related to the saturation transition: \(\betahat(s) \equiv \betahat(s,q) \). To define it, introduce the generating functions:
\begin{equation*}
	\bbG_{\beta}(h)=\sum_{x\in\mathbb{Z}^{d}}e^{h\cdot x}G_\beta(0,x)
	\qquad\text{and}\qquad
	\bbJ(h)=\sum_{x\in\mathbb{Z}^{d}}e^{h\cdot x}J_{0,x},
\end{equation*}
for $h \in \R^d$. The closure of the convergence domain of $\bbJ$ is $\Wulff$. 
The point \(\betahat(s)\) is then defined by
\[
	\betahat(s) = \sup_{\substack{t\in\partial\Wulff \\ t\text{ dual to }s}} \sup\setof{\beta\geq 0}{\bbG_{\beta}(t) < \infty}.
\]
We believe that \(\betahat(s) = \betasat(s)\) (see Conjecture~\ref{conj:betasat_equal_betasathat}). Partial results in that direction are proven in the present work. Note that one always has \(\betahat(s)\leq \betasat(s)\) (as the convergence of \(\bbG\) implies the convergence of \(\bbJ\)).

\medskip
Finally, when\footnote{We write \(\psi(x)\propto \rho(x)^{-\alpha}\), since a multiplicative constant is needed in order to ensure that \(\sum_{x\in\Zd} J_x = 1\).} $\psi(x)\propto \rho(x)^{-\alpha}$ with $\alpha\in\R_{>0}$, we define, for any \(s\in\mathbb{S}^{d-1}\),
\[
	\alphasat(s)=\sup\setof{\alpha\in\mathbb{R}_{>0}}{\betasat(s,\alpha)=0}.
\]
Using Theorem~\ref{thm:main_saturation_citerium}, and \(\surcharge_t\geq 0\), it is easy to see that one always has $d\geq \alphasat(s)\geq 1$.

\section{Results}\label{sec:Results}

\subsection{Sharpness and mixing in the Random-Cluster model}
Our first result establishes coincidence of the two transition points \(\betac\) and \(\betaexp\) in the case of exponentially bounded interactions. It is an extension of~\cite{Duminil-Copin+Raoufi+Tassion-2017} to the infinite-range setup. This proves Conjecture~1.11 in~\cite{Aoun+Ioffe+Ott+Velenik-CMP-2021} for the Random-Cluster model (and thus for Bernoulli percolation and the Ising and Potts models). Weaker versions of sharpness were known in this setup for \(q=1\) and \(q=2\)~\cite{Aizenman+Barsky-1987, Aizenman+Barsky+Fernandez-1987} (finite susceptibility rather than exponential decay of connectivities uniformly in boundary conditions).
\begin{theorem}
	\label{thm:main_sharpness}
	Assume that \(J\) is exponentially-bounded, \(d\geq 1\) and \(q\geq 1\). Then \(\betac(d,q) = \betaexp(d,q)\).
\end{theorem}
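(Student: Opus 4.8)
The plan is to adapt the Duminil-Copin--Raoufi--Tassion (DCRT) randomized-algorithm / OSSS approach, which proves sharpness of the phase transition for the finite-range Random-Cluster model, to the infinite-range exponentially-bounded setting. The inequality $\betaexp \leq \betac$ is immediate (exponential decay of connectivities uniformly in boundary conditions precludes percolation), so the content is $\betac \leq \betaexp$, i.e.\ that throughout the subcritical regime $\beta < \betac$ the quantities $\Phi^1_{\Lambda_N;\beta}(0\leftrightarrow\Lambda_N^\comp)$ decay exponentially in $N$. Following DCRT, I would introduce $\theta_N(\beta) = \Phi^1_{\Lambda_N;\beta}(0\leftrightarrow\Lambda_N^\comp)$ and $\theta(\beta)=\lim_N\theta_N(\beta)=\Phi^1_\beta(0\leftrightarrow\infty)$, and prove a differential inequality of the form
\begin{equation*}
	\theta_N'(\beta) \geq \frac{c_\beta\, N}{\Sigma_N(\beta)}\,\bigl(1-\theta_N(\beta)\bigr)\,\theta_N(\beta),
	\qquad \Sigma_N(\beta) = \sum_{k=0}^{N-1}\sum_{x\in\Lambda_k}\Phi^1_{\Lambda_N;\beta}(0\leftrightarrow x),
\end{equation*}
obtained by running, for each $N$, a randomized algorithm that determines the indicator of $\{0\leftrightarrow\Lambda_N^\comp\}$ by exploring the cluster of $\partial\Lambda_n$ for a uniformly chosen $n\in\{0,\dots,N-1\}$, and bounding each edge-revealment by the OSSS inequality. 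Then a standard ODE argument (as in DCRT) shows that if $\chi(\beta) := \sum_x \Phi^1_\beta(0\leftrightarrow x) = \infty$ on an interval, one gets $\theta(\beta_1)>0$ for $\beta_1$ slightly above, while if $\chi(\beta)<\infty$ then the $\Sigma_N$ are uniformly bounded and the differential inequality upgrades polynomial smallness of $\theta_N$ to exponential decay; combined with a dichotomy this yields $\betac=\betaexp$.

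The infinite-range aspect enters at two points, and these are where care (beyond citing DCRT) is needed. First, the randomized algorithm must reveal edges $\{x,y\}$ with $x,y$ on opposite sides of the current exploration frontier even when $\|x-y\|$ is large; because $J$ is only exponentially bounded, $\Phi(\omega_{\{x,y\}}=1)\leq 1-\sfe^{-\beta J_{\{x,y\}}}\leq \beta J_{\{x,y\}}\leq \beta\sfe^{-c\|x-y\|}$ for $\|x-y\|$ large, so the revealment of a far edge is exponentially small in the jump length. The key estimate is therefore that the expected number of revealed edges crossing a given ``level'' $\partial\Lambda_k$ is still $O(1)$ (uniformly in $N$ and $k$): a crossing edge must have one endpoint within the explored cluster of $\partial\Lambda_k$, and summing $\beta\sfe^{-c\|x-y\|}$ over the possible partners gives a convergent geometric-type bound. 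This is the analogue of the finite-range ``each edge near the frontier is revealed with probability $\leq 1/N$'' step, and it is the main obstacle: one must organize the OSSS bookkeeping so that the long edges contribute a summable correction rather than destroying the $1/N$ gain. I expect this to go through by splitting edges according to their length into dyadic scales and absorbing the scale-$2^j$ contribution against the factor $\sfe^{-c2^j}$.

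Second, one needs the elementary monotonicity and limit facts about $\theta_N,\theta,\chi$ in the infinite-range model — finite energy, monotonicity in $\beta$ and in boundary conditions (all stated in the excerpt), and the fact that $\chi$ is finite on an open set near $0$ (which holds since $\sum_x J_x=1<\infty$ and a standard comparison at small $\beta$). With these in hand, the endgame is exactly DCRT's: define $\betac' = \inf\{\beta : \chi(\beta)=\infty\}$; for $\beta<\betac'$ the differential inequality plus $\Sigma_N(\beta)\leq N\chi(\beta)$ is too weak, so instead use that $\chi(\beta)<\infty$ forces $\sum_N\theta_N(\beta)<\infty$ hence (again via the differential inequality, now with bounded $\Sigma_N$ after a further integration) exponential decay, giving $\beta\leq\betaexp$; and for $\beta>\betac'$ the inequality integrates to $\theta(\beta)>0$, giving $\beta\geq\betac$. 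Since $\betaexp\leq\betac$ always, all three points coincide. I would present the argument by isolating the infinite-range revealment estimate as a lemma and then stating that the remainder is identical to~\cite{Duminil-Copin+Raoufi+Tassion-2017}, spelling out only the places where the sums over $x$ replace finite sums.
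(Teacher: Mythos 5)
Your high-level framework is the same as the paper's: both run the DCRT/OSSS randomized-algorithm argument, with the new difficulty being the revealment of long edges in the infinite-range setting, and both handle it by comparing connections using only short edges (length $\leq R$) against general connections and absorbing the long-edge contribution using the exponential bound on $J$. You correctly identify the revealment estimate as the crux. However, your proposal under-specifies, or gets wrong, two genuine technical difficulties that the paper has to overcome.

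First, your dichotomy threshold and the monotonicity it relies on are not quite right. The paper does not use $\chi$, but rather $F_N(\beta) = \sum_{k<N}\Phi_{2k;\beta}(0\leftrightarrow\Lambda_k^\comp)$, and — crucially — defines $\tilde{\betac}$ with a $\liminf$ of $\log F_N/\log N$ rather than a $\limsup$. The paper explicitly flags this as a departure from DCRT: the $\liminf$ is needed so that the percolation-above-$\tilde{\betac}$ integration survives the extra $R$-term in the denominator of the differential inequality (Lemma~\ref{lem:diff_ineq_radius}); the price is that, below $\tilde{\betac}$, the definition only gives you control of $F_N$ along a subsequence, which then has to be upgraded to control for all $N$ (Claim~\ref{proof_of_exp_dec:claim2}, via the combinatorial Lemma~\ref{lem:stretch_exp_seq}). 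Your proposal, which treats the dichotomy as in DCRT, skips this entirely and would break at the step "if $\chi(\beta)<\infty$ then the $\Sigma_N$ are uniformly bounded."

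Second, you assert that the endgame "is exactly DCRT's" — in particular that a second integration of the differential inequality upgrades smallness of $\theta_N$ to exponential decay. The paper states explicitly that its differential inequality, carrying the $R$-term, is \emph{not} convenient for DCRT's second-integration argument, and replaces it by a different bootstrap: first obtain stretch-exponential decay of $f_{N_n}(\beta)$ along the subsequence provided by the $\liminf$ (Claim~\ref{proof_of_exp_dec:claim1}), then push it to all $N$ via a quasi-multiplicative inequality coming from a union bound and the finite-energy comparison (Claim~\ref{proof_of_exp_dec:claim2} plus Lemma~\ref{lem:stretch_exp_seq}), and finally upgrade stretch-exponential to genuine exponential decay via a convolution-type recursion $b_N \leq \sfe^{-c_1 N} + N^{\alpha}\sum_k b_k b_{N-k}$ (Claim~\ref{proof_of_exp_dec:claim3} plus Lemma~\ref{lem:stretch_exp_to_exp_seq}). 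These two elementary sequence lemmas, and the structure of the three-step bootstrap, are the actual new content beyond "adapt DCRT," and your proposal does not contain them. As written your argument would stall at the exponential-decay direction.
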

As a corollary of this, we obtain the following mixing property below \(\betac\):
\begin{corollary}
	\label{cor:main_mixing}
	Suppose \(\beta<\betac\) and let \(\Phi_\beta\) be the unique infinite-volume measure at \(\beta\). Then, there exist \(C<\infty\) and \(c>0\) such that, for any \(F,F'\subset E_d\) and any events \(A\in\calF_{F}, B\in\calF_{F'}\) having positive probability,
	\[
		\Bigl\lvert\frac{\Phi_\beta(A\cap B)}{\Phi_\beta(A)\Phi_\beta(B)} - 1\Bigr\rvert \leq \sum_{x\in V_{F}, y\in V_{F'}} C \sfe^{-c\norm{x-y}}
	\]
	whenever the right-hand-side is at most \(1\).
\end{corollary}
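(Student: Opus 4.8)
The final statement (Corollary~\ref{cor:main_mixing}) is the \emph{ratio weak mixing} property of the (unique) infinite-volume Random-Cluster measure below $\betac$, and the plan is to deduce it from Theorem~\ref{thm:main_sharpness} by the classical route, checking that the infinite range of $J$ causes no essential trouble. First, Theorem~\ref{thm:main_sharpness} gives $\beta<\betac=\betaexp$, so the defining estimate of $\betaexp$, namely $\Phi^1_{\Lambda_N;\beta}(0\leftrightarrow\Lambda_N^\comp)\le\sfe^{-c_1N}$ for large $N$, together with monotonicity in the boundary condition, translation invariance, and a routine union bound over the block in which a long connection first exits, upgrades to uniform exponential decay of connectivities: there are $C_0<\infty$ and $c_0>0$ with
\[
	\Phi^{\eta}_{F;\beta}\bigl(x\xleftrightarrow[]{F}y\bigr)\le C_0\,\sfe^{-c_0\norm{x-y}}
\]
for all $x,y\in\Zd$, all finite $F$ and all boundary conditions $\eta$. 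In particular $\Phi^1_\beta(0\leftrightarrow\infty)=0$, hence $\Phi^0_\beta=\Phi^1_\beta=:\Phi_\beta$ (the uniqueness invoked in the statement), and $\Phi_\beta(x\leftrightarrow y)\le C_0\sfe^{-c_0\norm{x-y}}$.

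Next I would localize. The asserted bound is vacuous (after enlarging $C$) unless $V_F$ and $V_{F'}$ are separated, so I fix a finite region $\Lambda$ containing $V_F$ well in its interior and interpolating between $V_F$ and $V_{F'}$, with $\bar E_\Lambda$ disjoint from $F'$. By the domain Markov property, $\Phi_\beta(A)$ and $\Phi_\beta(A\mid B)$ are both averages of $\Phi^{\Pi(\omega)}_{\bar E_\Lambda;\beta}(A)$, where $\Pi(\omega)$ is the boundary condition on $\partial\Lambda$ induced by the configuration on $E_d\setminus\bar E_\Lambda$ (sampled from $\Phi_\beta$, respectively from $\Phi_\beta(\cdot\mid B)$), so it suffices to control $\Phi^{\Pi}_{\bar E_\Lambda;\beta}(A)$ relative to $\Phi_\beta(A)$, uniformly in $\Pi$. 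To that end one reveals the open cluster of $V_F$ inside $\bar E_\Lambda$ together with its closed edge-boundary. Both $A$ and the event $\mathcal G$ that this cluster avoids $\partial\Lambda$ are measurable with respect to the revealed data, and — since $V_F$ lies in the interior — the law of this exploration \emph{restricted to $\mathcal G$} is the same under every $\Phi^{\Pi}_{\bar E_\Lambda;\beta}$ and under $\Phi_\beta$, the boundary condition being felt only once the growing cluster reaches $\partial\Lambda$. Hence $\Phi^{\Pi}_{\bar E_\Lambda;\beta}(A\cap\mathcal G)=\Phi_\beta(A\cap\mathcal G)$, while $\Phi^{\Pi}_{\bar E_\Lambda;\beta}(\mathcal G^\comp)$ and $\Phi_\beta(\mathcal G^\comp)$ are each at most $\sum_{x\in V_F,\,z\in\partial\Lambda}C_0\sfe^{-c_0\norm{x-z}}$ by the first step.

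This yields the \emph{additive} estimate $\bigl|\Phi^{\Pi}_{\bar E_\Lambda;\beta}(A)-\Phi_\beta(A)\bigr|\le 2\sum_{x\in V_F,z\in\partial\Lambda}C_0\sfe^{-c_0\norm{x-z}}$; averaging and passing from $\sum_{z\in\partial\Lambda}$ to $\sum_{y\in V_{F'}}$ via the triangle inequality and a convolution bound on $\Zd$, one gets $\bigl|\Phi_\beta(A\cap B)-\Phi_\beta(A)\Phi_\beta(B)\bigr|\le\sum_{x\in V_F,y\in V_{F'}}C\sfe^{-c\norm{x-y}}$. The main obstacle — the one genuine step — is to replace this by the claimed \emph{ratio} bound, which is strictly stronger when $\Phi_\beta(A)\Phi_\beta(B)$ is small: what is needed is the multiplicative refinement $\bigl|\Phi^{\Pi}_{\bar E_\Lambda;\beta}(A)/\Phi_\beta(A)-1\bigr|\le C\sum_{x\in V_F,z\in\partial\Lambda}\sfe^{-c\norm{x-z}}$, i.e.\ that conditioning on $A$ cannot inflate the probability of $\{V_F\leftrightarrow\partial\Lambda\}$ past that sum. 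This is precisely the content of the ratio weak mixing argument (in the style of Alexander's), and is where the real work lies; granting it, the corollary follows after adjusting constants, all error terms being at most $1$ under the hypothesis. The only infinite-range-specific points — almost sure finiteness of the cluster of $V_F$ for $\beta<\betac$, the fact that the conditional law given a revealed cluster is a free-boundary Random-Cluster measure, and making sense of $\Phi^\eta_{\bar E_\Lambda;\beta}$ when $\bar E_\Lambda$ is infinite — I expect to require only routine care rather than new ideas.
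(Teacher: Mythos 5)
Your reduction to uniform exponential decay of connectivities via Theorem~\ref{thm:main_sharpness} is sound, and the localization/cluster-exploration scheme you sketch (reveal the cluster of $V_F$ inside a buffer region $\Lambda$, use the domain Markov property, compare against the event $\mathcal G$ that the cluster stays interior) is a legitimate way to get an \emph{additive} mixing bound. But you then explicitly acknowledge that you do not prove the step that actually matters for the corollary: the upgrade to the \emph{ratio} bound, i.e.\ the uniform-in-$\Pi$ estimate $\bigl|\Phi^{\Pi}_{\bar E_\Lambda;\beta}(A)/\Phi_\beta(A)-1\bigr|\le C\sum_{x,z}\sfe^{-c\norm{x-z}}$. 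Writing ``this is where the real work lies; granting it, the corollary follows'' leaves the proof incomplete precisely where the statement ceases to be trivial, since the ratio bound is strictly stronger than the additive one when $\Phi_\beta(A)\Phi_\beta(B)$ is small, and it is the whole point of the corollary. The assertion that ``conditioning on $A$ cannot inflate the probability of $\{V_F\leftrightarrow\partial\Lambda\}$'' is not automatic for a non-monotone event $A$ and requires an actual argument.

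The paper avoids Alexander-style exploration altogether and gets the ratio bound directly by a different mechanism. It first reduces to $A=\{\omega_F=\eta\}$, $B=\{\omega_{F'}=\eta'\}$ and compares ratios $\Phi(\omega_F=\eta\mid\omega_{F'}=\cdot)$ as the conditioning ranges over $0$, $1$, $\eta'$ by a telescoping product. The key ingredient (its Lemma~\ref{lem:mixing}) is a monotone coupling $\Xi$ of $\Phi(\cdot\mid\omega_{F'}=1)$ and $\Phi(\cdot\mid\omega_{F'}=0)$ in which the two configurations agree off the $\omega^+$-cluster of $V_{F'}$; this gives the identity $\Phi(\omega_F=\eta\mid\omega_{F'}=1)=\Xi(\omega^+_F=\eta,\,V_F\leftrightarrow V_{F'})+\Xi(\omega^-_F=\eta,\,V_F\nleftrightarrow V_{F'})$, and the first term is controlled \emph{relative to} $\Phi(\omega_F=\eta\mid\omega_{F'}=1)$ using the uniform exponential decay from sharpness. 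That is exactly the multiplicative refinement you deferred. So while your approach may well be completable along Alexander's lines, as written it has a genuine gap at the central step; you should either carry out the exploration/entropy argument in full or replace it by the monotone-coupling argument, which is shorter and directly produces the ratio estimate.
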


These results are proved in Section~\ref{sec:Sharpness}.

\subsection{Optimal criterion for the existence of a saturation regime}

Our second result removes an unnecessary regularity assumption from the characterization derived in~\cite{Aoun+Ioffe+Ott+Velenik-CMP-2021}, thus answering Open Problem~1.12 therein.

\begin{theorem}
	\label{thm:main_saturation_citerium}
	Suppose \(J\) is exponentially-decaying. Let \(s\in\bbS^{d-1}\). Then, \(\betasat(s) > 0\) if and only if there exists \(t\) \(\rho\)-dual to \(s\) such that \(\bbJ(t) = \sum_{x\in\Zd} \sfe^{t\cdot x} J_x < \infty\).
\end{theorem}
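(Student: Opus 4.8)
The plan is to prove the two implications separately, the "only if" direction being the easy one and the "if" direction being where the real work lies.

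\medskip

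\emph{The easy direction.} Suppose that for every $t$ $\rho$-dual to $s$ one has $\bbJ(t) = \infty$. I want to conclude $\betasat(s) = 0$, i.e.\ $\nu_\beta(s) < \rho(s)$ for all $\beta > 0$. The idea is the classical lower bound $G_\beta(0,ns) \geq (\sfe^{\beta J_{0,ns}}-1)/q \cdot (\text{something bounded below})$ coming from the finite-energy property: opening the single edge $\{0, \lfloor ns\rfloor\}$ forces a connection, so $G_\beta(0,ns) \geq c\, \beta J_{0,ns} = c\,\beta\,\psi(ns)\sfe^{-\rho(ns)}$ for small $\beta$, which already gives $\nu_\beta(s) \leq \rho(s)$ but not strict inequality. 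To get strictness one must exploit $\bbJ(t)=\infty$ for all dual $t$: this is exactly the hypothesis that was shown in~\cite{Aoun+Ioffe+Ott+Velenik-CMP-2021} to force a genuine gain. Concretely, $\nu_\beta$ is a norm with Wulff shape contained in $\Wulff$, and $\nu_\beta(s) = \rho(s)$ would force a dual vector $t$ to $s$ (w.r.t.\ $\rho$) to also be dual to $s$ w.r.t.\ $\nu_\beta$, hence to lie in the convergence domain of $\bbG_\beta$; but convergence of $\bbG_\beta$ at $t$ implies convergence of $\bbJ$ at $t$ (the inequality $G_\beta \geq c\beta J$ pointwise), contradicting $\bbJ(t)=\infty$. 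So this direction reduces to the variational/convex-geometry identity relating $\nu_\beta$-duality and $\rho$-duality, plus the monotone a-priori bound $\bbG_\beta \geq c\beta\bbJ$; I expect this to go through essentially as in the earlier paper, now \emph{without} the regularity assumption because we are only using one direction of the duality.

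\medskip

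\emph{The hard direction.} Suppose there exists $t$ $\rho$-dual to $s$ with $\bbJ(t) < \infty$; I must show $\betasat(s) > 0$, i.e.\ produce $\beta_0 > 0$ with $\nu_\beta(s) = \rho(s)$ for $\beta < \beta_0$, equivalently an upper bound $G_\beta(0,ns) \leq \sfe^{-\rho(ns) + \sfo(n)}$. The strategy is a high-temperature / random-walk (tree-graph or BK-type) expansion of the connectivity: write $\{0 \leftrightarrow ns\}$ as a sum over paths/multigraphs and bound each step by the coupling constant, giving a bound of the schematic form $G_\beta(0, x) \leq \sum_{k\geq 1} \beta^k \sum_{x_0 = 0, x_1, \dots, x_k = x} \prod_{i} J_{x_{i-1} x_i}$ (with a combinatorial factor absorbed into a constant and using $\sfe^{\beta J}-1 \leq C\beta J$ for $\beta$ small). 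Now multiply through by $\sfe^{t\cdot x} = \sfe^{t\cdot(x_k - x_0)}$ and telescope: $\sfe^{t\cdot x} \prod_i J_{x_{i-1}x_i} = \prod_i \bigl(\sfe^{t\cdot(x_i - x_{i-1})} J_{x_{i-1}x_i}\bigr)$, so $\sfe^{t\cdot x} G_\beta(0,x) \leq \sum_k (C\beta)^k \bbJ(t)^{k-1}\cdot(\text{convolution tail})$. Since $\bbJ(t) < \infty$, for $\beta$ small enough this geometric series converges and is bounded uniformly in $x$; hence $G_\beta(0,x) \leq C' \sfe^{-t\cdot x}$, and taking $x = ns$ with $t\cdot s = \rho(s)$ gives $\nu_\beta(s) \geq t\cdot s = \rho(s)$, hence equality. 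So $\betasat(s) \geq$ the threshold $\beta_0 \sim 1/(C\bbJ(t))$.

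\medskip

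\emph{Main obstacle.} The delicate point is the expansion itself in the Random-Cluster setting with $q \geq 1$: unlike Bernoulli percolation, the FK measure is not a product measure, so one cannot naively expand $\{0 \leftrightarrow x\}$ over edge-disjoint open paths with independent weights. I would handle this either via the standard comparison of FK connectivities with a multiplicative (tree-graph) bound — e.g.\ using the finite-energy upper bound $\Phi(\omega_e = 1 \mid \cdot) \leq 1 - \sfe^{-\beta J_e} \leq \beta J_e$ together with a union bound over simple open paths, which does give $G_\beta(0,x) \leq \sum_{\gamma: 0 \to x} \prod_{e\in\gamma}\beta J_e$ since the existence of a connection implies the existence of a simple open path and each fixed path is open with probability $\leq \prod \beta J_e$ by finite energy applied edge by edge — or, more carefully if that union bound is too lossy, via the coupling/expansion machinery already developed in the companion works. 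A secondary technical point is controlling the "convolution tail" $\sum_x (\text{stuff})$ so that the bound is genuinely uniform in $x$ and the $\sfo(n)$ is controlled; this is routine once the geometric series converges, since $\sfe^{t\cdot x} J_x$ is summable. I would also need to double-check that the small-$\beta$ replacement $\sfe^{\beta J_e} - 1 \leq (1+\epsilon)\beta J_e$ together with the combinatorial count of simple paths of length $k$ (at most $(2d)^k$, harmlessly absorbed) keeps the effective per-step constant finite, which it does for $\beta$ below an explicit threshold. With these in hand, the theorem follows, and — crucially — no regularity of $\rho$ near $s$ is used anywhere: only the existence of one summable dual direction $t$.
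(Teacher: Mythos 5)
You have the two directions in reverse relative to which one is actually new and which was already known. The implication you label ``hard'' --- $\exists t$ \(\rho\)-dual to \(s\) with \(\bbJ(t)<\infty\) implies \(\betasat(s)>0\) --- is the one that was already established in~\cite{Aoun+Ioffe+Ott+Velenik-CMP-2021}; the paper simply cites it. Your sketch of that direction is essentially the correct argument: union bound over simple open paths, finite energy giving \(\prod(1-\sfe^{-\beta J_e})\leq\prod\beta J_e\), telescoping with \(\sfe^{t\cdot x}\), and the geometric series controlled by \(\beta\bbJ(t)<1\). (One small quibble: since \(J\) is infinite-range, there is no ``\((2d)^k\)'' bound on paths of length \(k\); but you don't need it --- the factor \(\bbJ(t)^k\) already absorbs the sum over all step choices.)

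The direction you call ``easy'' --- if \(\bbJ(t)=\infty\) for every \(\rho\)-dual \(t\), then \(\betasat(s)=0\) --- is the actual new content of this theorem (it is what required a regularity hypothesis in the earlier paper), and your argument for it has a genuine gap at exactly the place that makes it hard. You argue: \(\nu_\beta(s)=\rho(s)\) produces a \(t\) that is simultaneously \(\rho\)-dual and \(\nu_\beta\)-dual to \(s\), ``hence \(t\) lies in the convergence domain of \(\bbG_\beta\), hence \(\bbJ(t)\leq c^{-1}\beta^{-1}\bbG_\beta(t)<\infty\), contradiction.'' But being \(\nu_\beta\)-dual only means \(t\in\partial\Wulff_{\nu_\beta}\), the \emph{boundary} of the closure of the convergence domain of \(\bbG_\beta\); there is no reason \(\bbG_\beta(t)\) should converge there. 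In fact, whether it does is precisely Conjecture~\ref{conj:betasat_equal_betasathat} (\(\betahat(s)=\betasat(s)\)), which the paper proves only for a restricted class of prefactors. So your argument silently assumes the conjecture. The appendix avoids this entirely by a different mechanism: for each \(t\in T_s=\{t'\in\partial\Wulff: t'\ \rho\text{-dual to } s\}\) one shows that \(\bbG_\lambda\bigl((1-\epsilon)t\bigr)=\infty\) for some small \(\epsilon>0\), by restricting to self-avoiding walks whose steps all lie in a narrow cone around \(s\) (contained in a half-space, so any concatenation is automatically a SAW) and on which the truncated step-generating function \(\bbJ_R\) can be made \(\geq\lambda^{-1}\) using \(\bbJ(t)=\infty\). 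Compactness of \(T_s\) then gives that \(T_s\) is at positive distance from \(\partial\Wulff_\lambda\), which forces \(\nu_\lambda(s)<\rho(s)\). This is a strictly stronger statement than what you would get from a comparison of convergence domains, and it is the ingredient your sketch is missing.
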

Although the previous result is stated for the Random-Cluster model, it applies to the much more general class of models considered in~\cite{Aoun+Ioffe+Ott+Velenik-CMP-2021}. The proof of Theorem~\ref{thm:main_saturation_citerium} can be found in Appendix~\ref{sec:Criterion}.

\subsection{Sharp asymptotics in the saturation regime}

Our next result provides the sharp asymptotic behavior of the two-point function \(G_\beta(0,ns)\) in the saturation regime \((0, \betasat(s))\). 
It shows, in particular, that these asymptotics are not of Ornstein--Zernike type in this regime.

Let \((\constPrefact_n(s))\) be the following sequence
\[
\constPrefact_n(s) = \constPrefact_n(\beta,q,s) = \frac{\beta}{q} \sfe^{\rho(ns)} \sum_{u,v\in\Zd} \Phi_\beta(0\leftrightarrow u) \sfe^{-\rho(ns-u-v)} \Phi_\beta(0\leftrightarrow v) .
\]

The first claim is a result valid at any \(\beta<\betahat(s)\) (by opposition to what we proved in~\cite{Aoun+Ioffe+Ott+Velenik-CMP-2021} which was at \(\beta\) sufficiently small).
\begin{theorem}\label{theorem:sharp_asymptotics}
	Let \(s\in\bbS^{d-1}\). Suppose that \(\psi\) is of one of the following forms:
	\begin{itemize}
		\item \(\psi(x) \propto \rho(x)^{-\alpha}\) with $\alpha>\alphasat(s)$,
		\item \(\psi(x) \propto \sfe^{-\tilde{c}\rho(x)^{\eta}}\) with \(\tilde{c}>0\) and \(\eta\in (0,1)\).
	\end{itemize}
Then, for every \(\beta<\betahat(s)\), the limit \(\constPrefact(s) = \lim_{n\to \infty} \constPrefact_n(\beta, q,s) \) exists and
	\[
	\Phi_\beta(0\leftrightarrow ns) = \constPrefact J_{0,ns}(1+\sfo_n(1)).
	\]
\end{theorem}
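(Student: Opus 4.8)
The plan is to establish matching upper and lower bounds showing that, in the saturation regime, the two-point function is dominated by configurations with a single ``giant edge'' joining a finite cluster near $0$ to a finite cluster near $ns$. The starting point is the finite-energy/BK-type decomposition of the event $\{0\leftrightarrow ns\}$ according to the longest edge of an open path. Using the Random-Cluster representation, one writes
\[
\Phi_\beta(0\leftrightarrow ns) = \sum_{\{u,v\}} \Phi_\beta\bigl(0\leftrightarrow ns \text{ ``via'' the edge } \{u,v\}\bigr),
\]
and the main term comes from edges $\{u,v\}$ with $\rho(v-u)$ of order $\rho(ns)$, contributing a factor $\sfe^{\beta J_{uv}}-1 \approx \beta J_{uv} = \beta\psi(v-u)\sfe^{-\rho(v-u)}$ times the (independent, up to the $q$-factor bookkeeping) probabilities $\Phi_\beta(0\leftrightarrow u)$ and $\Phi_\beta(ns\leftrightarrow v)$. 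Summing over $u,v$ reproduces exactly $\constPrefact_n(s)\,J_{0,ns}$, modulo the analysis of the sub-exponential prefactor.

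The key steps, in order, are: (i) \emph{Upper bound.} Condition on the longest open edge $e=\{u,v\}$ on a path realizing $\{0\leftrightarrow ns\}$; by finite energy, removing $e$ costs at most a constant and decouples the two sides, and one uses $\nu_\beta = \rho$ in direction $s$ (valid since $\beta<\betahat(s)\le\betasat(s)$) together with the triangle inequality $\rho(v-u)\ge \rho(ns)-\rho(u)-\rho(ns-v)$ to control the exponential weight; the contribution of paths whose longest edge has $\rho$-length $o(\rho(ns))$ must be shown to be negligible, which is where the hypothesis on $\psi$ (power law with $\alpha>\alphasat(s)$, or stretched exponential) enters — it guarantees that splitting a long displacement into two long edges, or into many shorter ones, is exponentially suppressed relative to one giant edge. (ii) \emph{Lower bound.} Force the edge $\{u,v\}$ to be open and the two endpoints to connect to $0$ and $ns$ respectively; the FKG inequality and finite energy give a matching lower bound, and one extracts the sum $\sum_{u,v}\Phi_\beta(0\leftrightarrow u)\sfe^{-\rho(ns-u-v)}\Phi_\beta(0\leftrightarrow v)$ up to $(1+\sfo_n(1))$. (iii) \emph{Existence of the limit $\constPrefact(s)$.} Show $\constPrefact_n(s)$ converges: after the change of variables $w=ns-u-v$, this is essentially a convolution $\bigl(\Phi_\beta(0\leftrightarrow\cdot)\star \sfe^{-\rho(\cdot)}\star\Phi_\beta(0\leftrightarrow\cdot)\bigr)(ns)\,\sfe^{\rho(ns)}$; because $\beta<\betahat(s)$, the generating function $\bbG_\beta$ converges at the dual vector(s) $t$, so $\sum_u \Phi_\beta(0\leftrightarrow u)\sfe^{t\cdot u}<\infty$, and one rewrites each term using $\rho = \surcharge_t + t\cdot(\,\cdot\,)$ so that the $\sfe^{\rho(ns)}$ prefactor is absorbed and the sum becomes an absolutely convergent series in $u,v$ whose tail is controlled by $\surcharge_t\ge 0$ and the summability of $\bbG_\beta(t)$; a dominated-convergence argument then yields the limit (the direction-$s$ terms, where $\surcharge_t(u)+\surcharge_t(ns-u-v)+\surcharge_t(v)$ is small, survive).

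The main obstacle I expect is step (i), specifically ruling out the ``two (or more) giant edges'' and ``many medium edges'' scenarios uniformly in $n$: one needs a quantitative statement that the prefactor $\psi$ is ``$\rho$-subconvolutive'' in the relevant sense, i.e.\ that $\psi(x)$ decays slowly enough that $\sum_{y}\psi(y)\psi(x-y)\sfe^{-\surcharge_t(y)-\surcharge_t(x-y)}$ (the two-giant-edge cost) is $o(\psi(x))$ as $\rho(x)\to\infty$ — this is false for, e.g., $\psi$ decaying like a pure exponential (hence the exclusion $\alpha>\alphasat(s)$ rather than $\alpha\ge\alphasat(s)$ at the boundary, and the restriction $\eta<1$). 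For the power-law case this is a Fuk–Nagaev / heavy-tail convolution estimate; for the stretched-exponential case it follows from the strict concavity of $r\mapsto r^\eta$, which makes $\rho(y)^\eta+\rho(x-y)^\eta - \rho(x)^\eta$ grow. Controlling this remainder, together with the contribution of paths using only ``microscopic'' edges (which is governed by the Ornstein–Zernike-type bound $\sfe^{-\nu_\beta(ns)} = \sfe^{-\rho(ns)}$ but \emph{without} the enhancing $\psi$ factor, hence subleading precisely because $\psi$ decays sub-exponentially), is the technical heart of the argument; everything else is bookkeeping with finite energy, FKG, and the convexity facts about $\surcharge_t$ recorded in Section~\ref{sec:Notations}.
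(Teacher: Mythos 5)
Your proposal captures the right intuition (single giant edge dominating, finite-energy decoupling, convergence of $\bbG_\beta(t)$, sub-convolutivity of $\psi$), and steps (ii) and (iii) are essentially what the paper does: the lower bound opens a single edge between two boxes $\Lambda_{R_n}(0)$ and $\Lambda_{R_n}(ns)$ and factorizes, and the existence of the limit $\constPrefact(s)$ is proved via monotone convergence after rewriting $\rho(\cdot) = t\cdot(\cdot) + \surcharge_t(\cdot)$, using $\bbG_\beta(t)<\infty$ and $\surcharge_t\geq 0$.

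However, step (i) as written has a genuine gap. You propose to ``condition on the longest open edge of a path realizing $\{0\leftrightarrow ns\}$,'' but this is not a well-defined decomposition (there is no canonical path, no unique longest edge), and more importantly you give no mechanism to bound the number of edges or the size of the contributing cluster. The paper's actual upper bound hinges on two ingredients you omit entirely. First, a \emph{quantitative cluster-size bound}: Lemma~\ref{lem:large_deviation_volume_connect}, proved via the OSSS inequality and a differential inequality in $\beta$, shows $\Phi_\beta(0\leftrightarrow x, \abs{C_0}\geq N)\leq C\sfe^{-\nu_{\beta'}(x)-cN}$ for $\beta<\beta'$; this is what lets one restrict to clusters of size $\sfO(\rho(ns)^\epsilon)$, and without it the ``many medium edges'' scenario cannot be dismissed. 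Second, a \emph{pivotality statement}: Lemma~\ref{lem:long_edges} shows that, given $\abs{C_0}\leq N$, every open edge in $C_0$ of $\rho$-length $\geq 3\log N$ is pivotal for $\{0\leftrightarrow x\}$ with high probability; this replaces your ``longest edge'' conditioning and is what makes the BK-type decoupling (Lemma~\ref{lem:BK_piv}) and the splitting of the cluster into pieces separated by pivotal long edges (Lemma~\ref{lem:nice_co_splitting}) rigorous. The resulting sum over the number $k$ of long pivotal edges is then shown to be dominated by $k=1$ using exactly the sub-convolutivity properties~\eqref{eq:psi_property} you identify.

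A second, smaller omission: in the stretched-exponential case the sharp upper bound (Lemma~\ref{lem:UB_good_imply_ok}) needs an a priori rough bound $\sfe^{\rho(ns)}\Phi_{\beta'}(0\leftrightarrow ns)\leq C\psi(ns)\sfe^{c\rho(ns)^\epsilon}$ as input, which is \emph{not} automatic (unlike the polynomial case). The paper establishes it by a bootstrap (Lemma~\ref{lem:UB_enhancement}), applied repeatedly across a chain $\beta<\beta_1<\dots<\betahat(s)$ to iteratively improve the exponent. Your proposal treats the stretched-exponential case as if it only required the concavity of $r\mapsto r^\eta$, which supplies the sub-convolutivity but not the starting bound.
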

The constant \(\constPrefact(s)\) is reminiscent of the squared susceptibility appearing in the case of sub-exponentially decaying interactions, see Theorem~\ref{thm:asymp_sub_exp_coupling}.

The second claim supplements the first one by giving prefactors for which \(\betasat\) and \(\betahat\) agree (giving a partial answer to Conjecture~\ref{conj:betasat_equal_betasathat} below).
\begin{theorem}\label{theorem:betasat=betahat}
Let \(s\in\bbS^{d-1}\). Suppose that \(\psi\) is of one of the following forms:
	\begin{itemize}
		\item \(\psi(x) \propto \rho(x)^{-\alpha}\) with $\alpha>2d$,
		\item \(\psi(x) \propto \sfe^{-\tilde{c}\rho(x)^{\eta}}\) with \(\tilde{c}>0\) and \(\eta\in (0,1)\).
	\end{itemize}
	Then, $\betasat(s)=\betahat(s)$.
\end{theorem}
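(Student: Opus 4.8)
The plan is to prove $\betasat(s)\le\betahat(s)$, which combined with $\betahat(s)\le\betasat(s)$ (already recorded) gives the claim. Since $\betahat(s)=\sup_{t\text{ dual to }s}\sup\setof{\beta\ge0}{\bbG_\beta(t)<\infty}$ and $\beta\mapsto\bbG_\beta(t)$ is nondecreasing by stochastic monotonicity, it suffices to show that for every $\beta<\betasat(s)$ there is a $\rho$-dual vector $t$ with $\bbG_\beta(t)<\infty$. If $\betasat(s)=0$ there is nothing to prove; otherwise Theorem~\ref{thm:main_saturation_citerium} furnishes a $\rho$-dual $t$ with $\bbJ(t)<\infty$, and we show $\bbG_\beta(t)<\infty$ for this $t$ and every $\beta<\betasat(s)$. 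Fix such a $\beta$: then $\nu_\beta(s)=\rho(s)$ and, by Theorem~\ref{thm:main_sharpness}, $\beta<\betac$, so $G_\beta$ decays exponentially and $\sum_x G_\beta(0,x)<\infty$.

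The main tool is a Simon--Lieb/renewal inequality at a truncation scale $M$. Let $G_\beta^{\le M}$ be the two–point function of the model with interaction $J$ restricted to bonds of length $\le M$, and $\nu_\beta^{\le M}\ge\nu_\beta$ its inverse correlation length (removing bonds only slows connectivities). Decomposing $\{0\leftrightarrow x\}$ according to its first bond of length $>M$ yields
\[
G_\beta(0,x)\le G_\beta^{\le M}(0,x)+\sum_{\substack{u,v\\\norm{v-u}>M}}G_\beta^{\le M}(0,u)\,\beta J_{v-u}\,G_\beta(v,x),
\]
and, after tilting by $\sfe^{t\cdot x}$, summing over $x$, and using translation invariance,
\[
\bbG_\beta(t)\le \bbG_\beta^{\le M}(t)+\underbrace{\Bigl(\sum_u G_\beta^{\le M}(0,u)\sfe^{t\cdot u}\Bigr)\Bigl(\beta\!\!\sum_{\norm{w}>M}\!\!J_w\sfe^{t\cdot w}\Bigr)}_{=:B_M}\ \bbG_\beta(t).
\]
One must then check that the ``defect'' terms $\bbG_\beta^{\le M}(t)$ and $\sum_u G_\beta^{\le M}(0,u)\sfe^{t\cdot u}$ are finite — i.e. that $t$ lies in the interior of the Wulff shape of $\nu_\beta^{\le M}$ — and that $B_M\to0$ as $M\to\infty$, after which choosing $M$ with $B_M<1$ closes the estimate and gives $\bbG_\beta(t)<\infty$.

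For $B_M$, the second factor is a tail of the convergent series $\bbJ(t)=\sum_w\psi(w)\sfe^{-\surcharge_t(w)}$; in the polynomial case $\psi\propto\rho^{-\alpha}$ it is of order $M^{-(\alpha-(d+1)/2)}$ (it is stretched–exponentially small in the other case). The first factor instead blows up: because $\beta<\betasat(s)$ forces $\nu_\beta^{\le M}(s)\downarrow\nu_\beta(s)=\rho(s)$, and a single bond of length $\approx M$ costs only $\rho(s)+O(M^{-1}\log M)$ per unit length, one gets $\nu_\beta^{\le M}(s)-\rho(s)=O(M^{-1}\log M)$, whence $\sum_u G_\beta^{\le M}(0,u)\sfe^{t\cdot u}$ is at most of order $(M/\log M)^{(d+1)/2}$. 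The product is therefore $O\bigl(M^{\,d+1-\alpha}(\log M)^{-(d+1)/2}\bigr)$, which tends to $0$ under the hypothesis $\alpha>2d$ (with ample room to absorb the error terms glossed over here, which is why the non-optimal threshold $2d$ is used rather than $d+1$).

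The step I expect to be the main obstacle is making this work \emph{uniformly for $\beta$ up to $\betasat(s)$}: one must simultaneously (i) establish the quantitative descent $\nu_\beta^{\le M}(s)\downarrow\rho(s)$ at the claimed rate, (ii) guarantee that the tilted truncated susceptibility $\sum_u G_\beta^{\le M}(0,u)\sfe^{t\cdot u}$ is finite at all — delicate in the directions where $\nu_\beta<\rho$, where it requires combining the strict positivity of $\surcharge_t$ off the ``saturation cone'' of $s$ with subcriticality and the quantitative geometry of the Wulff shape — and (iii) balance the rates in (i)--(ii) against the tail of $\bbJ(t)$. Granting all this, $\bbG_\beta(t)<\infty$ for every $\beta<\betasat(s)$, hence $\betahat(s)\ge\betasat(s)$, which concludes the proof.
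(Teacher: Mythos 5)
Your plan is a genuinely different route from the paper's. The paper does not use a truncation/Simon--Lieb scheme at all. Instead, it (i) picks $\beta<\beta'<\betasat(s)$ and \emph{constructs} a specific $\rho$-dual vector $t$ via a supporting-hyperplane argument so that $t\cdot x\leq\nu_{\beta'}(x)$ for all $x$ (so $t$ lies in $\Wulff_{\beta'}$ as well as $\partial\Wulff$); (ii) uses the OSSS-derived large-deviation bound~\eqref{eq:large_deviation_volume} to kill the contribution of clusters of size $\geq\kappa\log\rho(x)$ after tilting by $\sfe^{t\cdot x}$; (iii) for small clusters, uses the pivotal-edge/"nice connection" decomposition (Lemmas~\ref{lem:long_edges}, \ref{lem:BK_piv}, \ref{lem:nice_co_splitting}) and explicit combinatorial estimates that exploit $\psi(x)\leq C\rho(x)^{-2d-\epsilon}$. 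Your approach, if it could be carried out, would be interesting, but as written it has several real gaps, not merely technical ones.

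First, your starting choice of $t$ does not suffice. Taking \emph{any} $t$ that is $\rho$-dual with $\bbJ(t)<\infty$ only gives $t\cdot x\leq\rho(x)$. In directions $s'$ where $\nu_\beta(s')<\rho(s')$ (which is allowed for $\beta<\betasat(s)$ — saturation is directional), $t\cdot s'$ may exceed $\nu_\beta(s')$, and then the tilted (truncated or not) susceptibility $\sum_u G^{\le M}_\beta(0,u)\sfe^{t\cdot u}$ has no reason to be finite. You flag this in your ``(ii)'' but do not resolve it; the paper resolves it by constructing $t$ in $\Wulff_{\beta'}$ (hence $t\cdot x\le\nu_{\beta'}(x)\le\nu_\beta(x)$). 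This is the single most important step and it is missing from your argument.

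Second, the ``first long bond'' Simon--Lieb inequality you invoke is not valid for the Random-Cluster model as stated. For $q>1$ there is no BK inequality, and a long open bond in $C_0$ need not be a cut: one cannot simply close it and factor the measure. The paper gets around this in two steps: Lemma~\ref{lem:long_edges} shows that, conditionally on $\{0\leftrightarrow x,\ |C_0|\leq N\}$, a long open edge of $C_0$ is pivotal with high probability, and Lemma~\ref{lem:BK_piv} provides a BK-type bound \emph{only for pivotal edges}. You would need exactly this machinery to justify your renewal inequality.

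Third, your quantitative claims — $\nu^{\le M}_\beta(s)-\rho(s)=O(M^{-1}\log M)$ and $\sum_u G^{\le M}_\beta(0,u)\sfe^{t\cdot u}\lesssim (M/\log M)^{(d+1)/2}$ — presuppose a polynomial Ornstein--Zernike-type estimate for the truncated model, uniformly as $\beta\uparrow\betasat(s)$. That is essentially the content of~\cite{Aoun+Ott+Velenik-2021} (proved there for $q=2$ under assumptions), not a given; the paper deliberately avoids needing any OZ input. So the balance you perform between the blow-up of the truncated tilted susceptibility and the tail of $\bbJ(t)$ is resting on an unproved foundation. In short: your route is different, and it identifies the right dichotomy (large vs.\ small clusters, long vs.\ short bonds), but the paper's key structural inputs — the supporting-hyperplane construction of $t$ and the pivotal-edge surrogate for BK — are absent, and without them the proof does not go through.
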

This in particular implies that the conclusion of Theorem~\ref{theorem:sharp_asymptotics} holds in the whole saturation regime for a restricted class of prefactors.

\begin{remark}
	\label{rem:main_thm_prefactor_more_general}
	Although, for simplicity of exposition, the results above are only stated for two particular classes of prefactors, they actually hold more generally than that, with the same proof. The main properties of the prefactor are listed in~\eqref{eq:psi_property}. We also need some mild regularity and monotonicity (for instance, \(\psi(x) \asymp f(\rho(x))\) for some nonincreasing positive function \(f\)).
\end{remark}

\begin{conjecture}
	\label{conj:betasat_equal_betasathat}
	$\betasat(s)=\betahat(s)$ for general \(\rho,\psi\).
\end{conjecture}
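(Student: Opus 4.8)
Since $\betahat(s)\le\betasat(s)$ always holds, only the reverse bound $\betahat(s)\ge\betasat(s)$ needs to be proved. If $\betasat(s)=0$ there is nothing to do, so assume $\betasat(s)>0$; Theorem~\ref{thm:main_saturation_citerium} then provides a vector $\rho$-dual to $s$ at which $\bbJ$ is finite. As $\beta\mapsto\bbG_\beta(h)$ is nondecreasing (stochastic monotonicity of $\Phi^0_\beta$ in $\beta$), unravelling the definition of $\betahat(s)$ shows that it suffices to prove: for every $\beta<\betasat(s)$ there is a vector $t$ $\rho$-dual to $s$ with $\bbG_\beta(t)<\infty$. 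I would begin with a convex-geometric observation. Fix $\beta<\betasat(s)$, so $\nu_\beta(s)=\rho(s)$; recall $\nu_\beta$ is then a norm with $\nu_\beta\le\rho$, so its Wulff shape is contained in $\Wulff$, while its support function in direction $s$ equals $\nu_\beta(s)=\rho(s)$. Consequently the Wulff shape of $\nu_\beta$ contains a point $t$ of the face $\Wulff\cap\{y\cdot s=\rho(s)\}$ of vectors $\rho$-dual to $s$, and this $t$ satisfies $\nu_\beta(x)\ge t\cdot x$ for all $x$, i.e. it lies on the boundary of the convergence domain of $\bbG_\beta$. When $s$ has a unique $\rho$-dual vector, this $t$ is forced and, by Theorem~\ref{thm:main_saturation_citerium}, has $\bbJ(t)<\infty$; otherwise a short additional argument is needed to select $t$ lying simultaneously in the Wulff shape of $\nu_\beta$ and in the convex set $\{\bbJ<\infty\}\cap\partial\Wulff$.

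To prove $\bbG_\beta(t)<\infty$ for the chosen $t$, I would truncate the interaction to edges of $\rho$-length at most $R$ and iterate a Simon--Lieb/finite-energy inequality. Write $G^{(R)}_\beta$ and $\bbG^{(R)}_\beta$ for the two-point function of the truncated (finite-range) model and its $t$-tilted sum, and $\bbJ^{>R}(t)=\sum_{\rho(x)>R}\sfe^{t\cdot x}J_{0,x}$. Decomposing a connection according to its first long edge and applying the finite-energy bound yields, after tilting by $t$,
\[
\bbG_\beta(t)\ \le\ \frac{\bbG^{(R)}_\beta(t)}{1-C\beta\,\bbJ^{>R}(t)\,\bbG^{(R)}_\beta(t)}
\]
as soon as the denominator is positive. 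Since $\bbJ(t)<\infty$, the tail $\bbJ^{>R}(t)$ tends to $0$ as $R\to\infty$, so it suffices to pick $R$ large with $\bbG^{(R)}_\beta(t)<\infty$ and $\bbG^{(R)}_\beta(t)\,\bbJ^{>R}(t)\to0$. The naive self-avoiding walk bound gives $\bbG^{(R)}_\beta(t)<\infty$ only for $\beta<1/\bbJ(t)$, which need not reach $\betasat(s)$; instead I would invoke the Ornstein--Zernike analysis of finite-range models (as in \cite{Campanino+Ioffe+Velenik-2003, Aoun+Ott+Velenik-2021}), valid up to the exponential-decay threshold of the truncated model. The relevant inputs are that the inverse correlation length of the truncated model decreases to $\nu_\beta$ as $R\to\infty$, that $\nu_\beta(x)\ge t\cdot x$ for all $x$ by the choice of $t$, and a quantitative gap of the form $\nu^{(R)}_\beta(s)>\rho(s)=t\cdot s$ for $R$ large (in the truncated model, connections in direction $s$ decay strictly faster than the interaction). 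One then sums $\bbG^{(R)}_\beta(t)$ direction by direction; the contribution of the directions in which $\nu^{(R)}_\beta(x)-t\cdot x$ is small is estimated using the OZ prefactor together with the decay of $\psi$, and this is exactly where the hypothesis enters: $\alpha>2d$, respectively the stretched-exponential form, is what makes this series convergent, uniformly in $R$.

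The main obstacle is this last step: converting the finite-range OZ estimates into a bound on $\bbG^{(R)}_\beta(t)$ that is finite for all $\beta<\betasat(s)$ (not merely for small $\beta$) and uniform enough in $R$ to force $\bbG^{(R)}_\beta(t)\,\bbJ^{>R}(t)\to0$. Heuristically this amounts to controlling the ``end-cluster'' pieces of a connection between consecutive long edges, ensuring that they do not by themselves produce a divergence at the tilt $t$; because two such end clusters occur, the natural estimate loses a square root of the prefactor, which is the origin of the condition $\alpha>2d$ (rather than merely $\alpha>d$). This loss of a square root, together with the simultaneous choice of a dual vector when it is not unique, is what keeps the argument from reaching general $\rho$ and $\psi$, leaving Conjecture~\ref{conj:betasat_equal_betasathat} open.
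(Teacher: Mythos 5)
This statement is explicitly labeled a \emph{Conjecture} in the paper; there is no proof of it anywhere in the text. What the paper does prove is the partial result Theorem~\ref{theorem:betasat=betahat}, namely \(\betasat(s)=\betahat(s)\) under the restricted hypotheses \(\psi(x)\propto\rho(x)^{-\alpha}\) with \(\alpha>2d\) or \(\psi\) stretched-exponential, via the unnamed theorem in the ``Good dual vector'' subsection. Your write-up is honest about this: you correctly observe that only \(\betahat(s)\geq\betasat(s)\) needs proof, and you close by saying the argument ``leaves Conjecture~\ref{conj:betasat_equal_betasathat} open.'' So this is not a proof of the stated conjecture, and it does not claim to be one; it is a sketch aimed at the Theorem~\ref{theorem:betasat=betahat}-level statement.

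Comparing your sketch with the paper's actual proof of that partial result: your first, convex-geometric step is essentially the paper's. The paper fixes \(\beta<\beta'<\betasat(s)\), uses that \(\mathscr U\subset\mathscr U_{\beta'}\) touch at \(s/\rho(s)\) and picks a common supporting hyperplane there, getting a vector \(t\) that is \(\rho\)-dual to \(s\) and satisfies \(t\cdot x\le\nu_{\beta'}(x)\) for all \(x\). (Note the deliberate use of \(\beta'>\beta\), which gives the exponent room the later estimates feed on; your version, built from \(\nu_\beta\) itself, produces a \(t\) on the boundary of the convergence domain of \(\bbG_\beta\) and does not give this slack.) Your second step is genuinely different and, I think, not viable as written. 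The paper does not truncate the interaction and does not invoke Ornstein--Zernike theory for finite-range models at all; instead it controls \(\bbG_\beta(t)\) by (i) the OSSS-based volume estimate~\eqref{eq:large_deviation_volume}, which handles \(\Phi_\beta(0\leftrightarrow x,\,|C_0|\ge\kappa\log\rho(x))\), (ii) Lemma~\ref{lem:long_edges}, which forces long edges to be pivotal, and (iii) the nice-connection splitting of Lemma~\ref{lem:nice_co_splitting} together with Lemma~\ref{lem:BK_piv}, after which the trivial bound \(\sfe^{t\cdot x_i}\Phi_\beta(0\leftrightarrow x_i)\le 1\) is used in the surviving sums. The exponent \(2d\) enters through the combinatorics of that splitting and the choice of the nice-connection scale \(L_x=(\log\rho(x))^\delta\) with \(\delta=(d+1)/\epsilon\), not through a ``square-root loss of the prefactor'' as you suggest. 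Your Simon--Lieb-type iteration \(\bbG_\beta(t)\le\bbG^{(R)}_\beta(t)/\bigl(1-C\beta\,\bbJ^{>R}(t)\,\bbG^{(R)}_\beta(t)\bigr)\) would require, as you yourself flag, that \(\bbG^{(R)}_\beta(t)\) stay bounded uniformly in \(R\) all the way up to \(\betasat(s)\); you would also need \(\beta<\betaexp^{(R)}\) for every truncation, and a uniform-in-\(R\) control on \(\nu^{(R)}_\beta-t\cdot\) near the dual face, none of which are supplied. These are real obstructions, not routine technicalities, and they are exactly what the paper's route avoids. In short: the geometric step matches, the analytic core does not, and the proposal (correctly) does not establish the conjecture.
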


Finally, as a rather simple adaptation of our methods, we can also obtain sharp asymptotics for some coupling constants decaying slower than exponentially with the distance. Namely, we prove
\begin{theorem}\label{thm:asymp_sub_exp_coupling}
	Let \(J_{0,x} \propto \rho(x)^{-\alpha}\) with \(\alpha>d\), or \(J_{0,x} \propto \sfe^{-\tilde{c}\rho(x)^\eta}\) with \(\tilde{c}>0,\eta\in (0,1)\). Then, for any \(\beta<\betac\),
	\[
		\Phi_\beta(0\leftrightarrow x) = \frac{\beta\chi(\beta)^2}{q} J_{0,x} (1+\sfo_{\norm{x}}(1)),
	\]
	where \(\chi(\beta) = \sum_{y\in\Zd} \Phi_\beta(0\leftrightarrow y)\).
\end{theorem}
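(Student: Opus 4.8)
The plan is to run a two-sided argument on the two-point function using the finite-energy/monotonicity structure of the Random-Cluster model together with a "single long jump" heuristic, exactly as in the saturation regime but now with no competition from a norm $\rho$ appearing in the exponent of $G_\beta$ (since $J$ decays subexponentially, $\nu_\beta \equiv 0$ and there is genuinely nothing to optimize over). The key input is a decomposition of the event $\{0\leftrightarrow x\}$ according to the "longest edge" used by an exploration of the cluster of $0$: one isolates a single open edge $\{u,v\}$ with $\norm{v-u}$ of order $\norm{x}$, with $u$ in the finite cluster of $0$ (using only short edges) and $v$ in the finite cluster of $x$ (using only short edges). The weight of that edge is $\approx \sfe^{\beta J_{u,v}}-1 \approx \beta J_{u,v}$, and since $J_{u,v}/J_{0,x}\to 1$ whenever $u,v$ stay within $o(\norm{x})$ of $0$ and $x$ respectively (by the regular variation of $\rho(\cdot)^{-\alpha}$, resp. the subexponential form $\sfe^{-\tilde c\rho(\cdot)^\eta}$, together with $\rho(x-u-v)=\rho(x)(1+o(1))$), the sum over $u,v$ factorizes into $\chi(\beta)^2$ up to $1+o(1)$. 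The factor $1/q$ comes from the cluster-counting weight of merging the two clusters via the long edge. This is precisely the structure that produces $\constPrefact(s)$ in Theorem~\ref{theorem:sharp_asymptotics}, with $\sum_u \Phi_\beta(0\leftrightarrow u)\sfe^{-\rho(\cdot)}\sum_v\cdots$ collapsing to $\chi(\beta)^2$ because there is no $\sfe^{\rho(ns)}$ prefactor to compensate and the $\rho$-weights are asymptotically trivial.

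First I would prove the lower bound. Restrict to configurations where exactly one "long" edge (length $\geq \delta\norm{x}$, say) is used, insert it by finite energy, and ask $0$ to connect to one endpoint and $x$ to the other using only edges of length $\leq \delta\norm{x}$; a standard second-moment / BK-type argument, or simply the FKG inequality together with the fact that $\sum_{y}\Phi_\beta(0\leftrightarrow y \text{ via short edges})\to\chi(\beta)$ as $\delta\to 0$ (since $\beta<\betac$ gives $\chi(\beta)<\infty$ by sharpness, Theorem~\ref{thm:main_sharpness}), yields $\Phi_\beta(0\leftrightarrow x)\geq \frac{\beta\chi(\beta)^2}{q}J_{0,x}(1-o(1))$ after letting $\norm{x}\to\infty$ and then $\delta\to 0$. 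For the upper bound I would expand $\{0\leftrightarrow x\}$ over all open paths and, in each path, pick out the edges of length $>\varepsilon\norm{x}$; because $\chi(\beta)<\infty$ the expected number of vertices in the cluster is finite, so with overwhelming probability there are $O(1)$ such long edges, and a union bound over which single edge is "the" crossing one, combined with the exponential decay of connectivities at subcritical $\beta$ (again Theorem~\ref{thm:main_sharpness}) to control the "no long edge" and "two or more long edges" contributions, gives the matching upper bound. The contribution of paths with two or more macroscopic edges is smaller by a factor $O(J$ at scale $\varepsilon\norm{x})=o(J_{0,x})$ once $\varepsilon$ is small, using $J_{0,\varepsilon x}/J_{0,x}\to 0$ along a subsequence of $\varepsilon\downarrow 0$ — here the monotonicity $J_x\asymp f(\rho(x))$ with $f$ nonincreasing is what makes the bookkeeping clean.

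The main obstacle I expect is the uniform control of the short-edge clusters: one must show that conditioning on (or inserting) the long edge $\{u,v\}$ does not distort the law of the short-range clusters of $0$ and $x$ by more than $1+o(1)$, uniformly in the position of $u,v$, and that the truncation at scale $\delta\norm{x}$ (lower bound) and $\varepsilon\norm{x}$ (upper bound) can be removed in the right order. This is where finite energy, the stochastic domination $\Phi^0\preccurlyeq \Phi\preccurlyeq\Phi^1$, and the mixing estimate of Corollary~\ref{cor:main_mixing} enter: mixing gives that two clusters living near $0$ and near $x$ (distance $\asymp\norm{x}$ apart) are asymptotically independent, which is exactly the decoupling needed to produce the product $\chi(\beta)^2$ rather than some correlated quantity. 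A secondary, purely technical point is handling the regime $J_{0,x}\propto\rho(x)^{-\alpha}$ with $d<\alpha\leq d+\text{something}$, where $\chi(\beta)$ is finite but the tails of $J$ are heavy enough that one must be slightly careful that $\sum_{u}\Phi_\beta(0\leftrightarrow u)J_{u,v}$ is genuinely asymptotic to $\chi(\beta)J_{0,v}$ and not inflated by the rare far-away $u$; the hypothesis $\alpha>d$ (and more comfortably the regular variation) is precisely what guarantees this, and I would isolate it as a short lemma on "convolution stability" of $J$ under summation against an $\ell^1$ weight.
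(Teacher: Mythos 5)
Your lower bound sketch is essentially the paper's (insert a single long edge by finite energy, close it, factorize, pass to the limit by monotone convergence, as in Lemma~\ref{lem:prefactor_sharp_LB} with \(t=0\)). The upper bound, however, has genuine gaps.

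First, you repeatedly invoke \emph{exponential decay of connectivities} (Theorem~\ref{thm:main_sharpness}) and the mixing estimate of Corollary~\ref{cor:main_mixing}, but both require \(J\) to be exponentially bounded. Here \(J\) decays subexponentially, \(\nu_\beta\equiv 0\), and connectivities decay only polynomially (or stretched-exponentially), so neither result is available. What the paper actually uses is exponential decay \emph{in the cluster volume}: Lemmas~\ref{lem:OSSS_field_connect_volume} and~\ref{lem:large_deviation_volume_connect}, obtained via the OSSS inequality, give \(\Phi_\beta(0\leftrightarrow x,\,|C_0|\geq N)\leq C\Phi_{\beta'}(0\leftrightarrow x)\,\sfe^{-cN}\). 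Your statement ``because \(\chi(\beta)<\infty\) the expected number of vertices in the cluster is finite, so with overwhelming probability there are \(O(1)\) long edges'' is only a first-moment heuristic; the concentration needed (exponential tails on \(|C_0|\) conditionally on connection) does not follow from \(\chi(\beta)<\infty\) alone, and this volume-tail bound is a genuine, nontrivial input that your sketch omits.

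Second, the cutoff scale for ``long'' edges matters and your choice \(\varepsilon\norm{x}\) is too coarse. With that cutoff there can be many edges of intermediate scale (say \(\norm{x}^{1/2}\)) whose total contribution is not negligible relative to \(J_{0,x}\), so the single-jump picture need not dominate. The paper uses a much finer, essentially polylogarithmic cutoff \(K_x = C F^{-1}\bigl(1/(f(x)\log f(x))\bigr)\) tied to the tail function \(F(r)=\sum_{\normsup{x}\geq r}J_x\), which for polynomial tails gives \(K_x\asymp (f(x)\log f(x))^{1/(\alpha-d)}\) with \(f(x)=\kappa\rho(x)^\epsilon\). Moreover, the event one actually restricts to is that \emph{every} long edge touching \(C_0\) is \emph{pivotal} (the nice-connection event \eqref{eq:nice_connection_def_subExp}, controlled by Lemma~\ref{lem:sub_exp:long_edges}); this pivotality is what licenses the BK-type decomposition of Lemma~\ref{lem:BK_piv}/\ref{lem:nice_co_splitting_subExp} and the factorization into a geometric series in \(\bbJ_a\)-type quantities. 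Your ``union bound over which single edge is the crossing one'' is the right intuition, but without the pivotality structure and the BK inequality you cannot decouple the pieces into a product of independent connection probabilities. The paper avoids any appeal to mixing here by closing the pivotal edge and using monotonicity, which is a weaker and available tool.

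Third, for the stretched-exponential case \(J_x\propto \sfe^{-\tilde c\rho(x)^\eta}\) one iteration of the argument only yields a bound of the form \(\Phi_\beta(0\leftrightarrow x)\leq C J_x\,\sfe^{c\rho(x)^{\epsilon'}}\) with a smaller but still positive exponent \(\epsilon'\); the sharp \((1+\sfo(1))\) requires the bootstrap/enhancement of Lemma~\ref{lem:UB_enhancement_subExp}, which your proposal does not anticipate. On the positive side, your observation about ``convolution stability'' of \(J\) is exactly the role of property~\eqref{eq:psi_property} (the inequality \(\psi(u)\prod\psi(v_i)\leq b^k\psi(u+v)\prod\psi(v_i)^a\) together with \(\bbJ_a(t)<\infty\)); that part of your discussion is on target.
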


The fact that \(\chi(\beta)<\infty\) when \(\beta<\betac\) was proved in~\cite{Hutchcroft-2020}. The case \(q=2\) (Ising model) was treated in~\cite{Newman+Spohn-1998}. In the polynomial case, the generalization of the latter to \(q\geq 1\) was achieved in~\cite{Aoun-2021}. As mentioned in Remark~\ref{rem:main_thm_prefactor_more_general} (in the case of exponentially decaying coupling constants), our methods allow to prove the same claim under more general assumptions.

\begin{conjecture}
	\label{conj:asymptotics_beta>beta_c}
	The result of Theorem~\ref{thm:asymp_sub_exp_coupling} holds for large enough values of \(\beta\) both if we consider $G_{\beta}(0,ns)=\Phi_\beta(0\leftrightarrow ns, \abs{C(0)}<\infty)$ and for the truncated two-point function of the Ising model without an external field. It also holds in presence of an external field for any $\beta>0$.
\end{conjecture}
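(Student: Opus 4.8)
The plan is to treat the three assertions through a single mechanism — the ``giant edge'' — and then to supply, case by case, the two analytic ingredients it requires. Write $G^{\mathrm{fin}}_\beta(0,x)=\Phi^0_\beta(0\leftrightarrow x,\ \abs{C(0)}<\infty)$ for the Random-Cluster model, $\lrangle{\sigma_0;\sigma_x}^+_\beta$ for the field-free truncated Ising correlation, and $\lrangle{\sigma_0;\sigma_x}_{\beta,h}$ for the one with a field; in each case let $\chi^{\mathrm{fin}}$ denote the corresponding truncated susceptibility (this is the object that should replace $\chi$ in the statement of Theorem~\ref{thm:asymp_sub_exp_coupling}, since $\sum_y\Phi_\beta(0\leftrightarrow y)=\infty$ above $\betac$). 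The target is that, for $x=ns$ with $\abs{x}$ large,
\[
G^{\mathrm{fin}}_\beta(0,ns)=\frac{\beta}{q}\,J_{0,ns}\,\bigl(\chi^{\mathrm{fin}}(\beta)\bigr)^{2}\,(1+\sfo_n(1)),
\]
the dominant configuration consisting of a single open edge $\{u,v\}$ with $u$ at $O(1)$ distance from $0$ and $v$ at $O(1)$ distance from $ns$, the two finite clusters containing $0$ and $ns$ being otherwise microscopic; the prefactor $\beta/q$ is the conditional cost of that edge as $\abs{u-v}\to\infty$ when $u,v$ lie in distinct finite clusters (for the Ising cases $q=2$, with ``$C(0)$ finite'' replaced by the relevant ``avoiding the $+$-cluster'' or ``avoiding the ghost'' event).

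The first ingredient is finiteness of $\chi^{\mathrm{fin}}(\beta)$; the second is exponential decay of the \emph{microscopic} part of the truncated two-point function, i.e.\ of the contribution of connections using no edge of length $>R$, for some fixed $R$. In the model with a field both hold for every $\beta>0$: analyticity of the magnetization in $h$ for $h\neq0$ (Lee--Yang) gives $\sum_y\lrangle{\sigma_0;\sigma_y}_{\beta,h}<\infty$, and a finite-range model in a nonzero field has exponentially decaying truncated correlations by standard high-field (equivalently, random-current-with-ghost) estimates. For the Random-Cluster model at $\beta$ large, and the field-free Ising model at $\beta$ large, both ingredients must be obtained by a Pisztora-type coarse-graining: deep in the percolating phase the ``short-range part'' of the model (edges of length $\leq R$) dominates, its truncated connectivity decays exponentially by a Peierls/renormalization argument, and the long edges — whose total weight $\sum_{\abs{x}>R}J_x$ can be made as small as one wishes by enlarging $R$ — form a sparse perturbation that does not spoil the renormalized estimate. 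In particular $\chi^{\mathrm{fin}}(\beta)<\infty$ for $\beta$ large.

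With these inputs one revisits the proof of Theorem~\ref{thm:asymp_sub_exp_coupling} (in the spirit of \cite{Newman+Spohn-1998,Aoun-2021}) and replaces the steps that used $\beta<\betac$. For the lower bound one forces $0\leftrightarrow u$ inside a finite cluster, forces $v\leftrightarrow ns$ inside a finite cluster confined to a far region, and opens $\{u,v\}$; FKG, finite energy, and the fact that a union of two finite clusters is finite give the claimed bound once $u,v$ are restricted to large-but-fixed neighborhoods (legitimate by summability of $G^{\mathrm{fin}}$). For the upper bound one breaks the connection at a canonical long edge and sums over its endpoints by a BK/finite-energy estimate; the new difficulty is that $\{\abs{C(0)}<\infty\}$ is not a local event, so the usual ``explore the cluster of $0$'' step is unavailable — this is circumvented by performing the surgery in a finite box $\Lambda_N$ with free boundary condition (where ``$C(0)$ finite'' is automatic) and then letting $N\to\infty$ along $\Phi^0_{\Lambda_N;\beta}\uparrow\Phi^0_\beta$, using the a priori estimates to dominate the sums uniformly in $N$; the multi-long-edge remainders are $\sfo_n(1)J_{0,ns}$ because $\sum_m J_{0,m}J_{m,ns}=\sfo(J_{0,ns})$ when $\alpha>d$, together with the exponential decay of the microscopic part. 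For the Ising statements one then uses the Edwards--Sokal dictionary: with wired boundary condition and uniqueness of the infinite cluster, $\lrangle{\sigma_0;\sigma_x}^+_\beta=\Phi^1_{\beta,2}(0\leftrightarrow x,\ \abs{C(0)}<\infty)+\Cov_{\Phi^1_{\beta,2}}(\IF{0\leftrightarrow\infty},\IF{x\leftrightarrow\infty})$, where the first term is treated as above (with $\Phi^1$ in place of $\Phi^0$) and the covariance term decays exponentially at $\beta$ large (a truncated correlation in the ordered phase), hence is $\sfo(J_{0,x})$ and contributes negligibly to $\chi^{\mathrm{fin}}$; in a field one argues identically with the FK representation with a ghost vertex $\mathfrak{g}$, so that $\lrangle{\sigma_0;\sigma_x}_{\beta,h}$ equals the FK truncated two-point function ``avoiding $\mathfrak{g}$'' minus $\Cov(\IF{0\leftrightarrow\mathfrak{g}},\IF{x\leftrightarrow\mathfrak{g}})$, the ghost now providing the mass gap for every $\beta$.

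The main obstacle is the pair of a priori inputs in the non-field cases (the second paragraph): carrying out a Pisztora-type renormalization in the presence of arbitrarily long bonds so as to obtain simultaneously $\chi^{\mathrm{fin}}(\beta)<\infty$ and exponential decay of the microscopic truncated connectivity at $\beta$ large — which is presumably also the reason the statement asks for ``large enough $\beta$'' rather than all $\beta>\betac$. Once these are available, the remainder is an adaptation of the proof of Theorem~\ref{thm:asymp_sub_exp_coupling} together with the standard FK/random-current dictionary, the only extra care being the finite-volume surgery forced by the non-locality of $\{\abs{C(0)}<\infty\}$.
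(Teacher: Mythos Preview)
This statement is a \emph{conjecture} in the paper and is not proved there; there is no ``paper's own proof'' to compare against. The remark immediately following it even recalls (after Imbrie--Newman) that the asymptotics fail for the one-dimensional Ising model with \(J_x\propto\norm{x}^{-2}\) in an intermediate range \([\betac,\beta_0)\), which is consistent with the restriction to ``large enough \(\beta\)'' but shows that any argument must be sensitive to this.

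Your proposal is a strategy outline, not a proof, and you acknowledge as much: the entire argument rests on the two a~priori inputs in your second paragraph (finiteness of the truncated susceptibility and exponential decay of the finite-range truncated connectivity at large \(\beta\), to be obtained by a Pisztora-type renormalization adapted to the presence of arbitrarily long edges), and you do not supply them. That is the genuine gap, and it is the heart of the problem. The remaining steps --- the giant-edge lower bound, the upper bound by splitting at long pivotal edges, and the Edwards--Sokal/ghost dictionaries --- are the natural continuation and mirror the subcritical proof of Theorem~\ref{thm:asymp_sub_exp_coupling}, but they are not where the difficulty lies.

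A few places where the sketch is more optimistic than warranted. The claim that \(\Cov_{\Phi^1}(\IF{0\leftrightarrow\infty},\IF{x\leftrightarrow\infty})\) decays exponentially at large \(\beta\) is itself one of the missing a~priori inputs, not a consequence of them. The finite-volume surgery you propose, to cope with the non-locality of \(\{|C(0)|<\infty\}\), requires truncated-connectivity bounds in \(\Phi^0_{\Lambda_N;\beta}\) that are uniform in \(N\), which again presupposes the missing inputs. Finally, in the field case, Lee--Yang yields finiteness of the susceptibility, but exponential decay of the truncated two-point function for this class of infinite-range interactions is a separate (plausible but nontrivial) statement; the reference you would want (\cite{Ott-2020}) treats finite range.
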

\begin{remark}
	Note that it is known~\cite{Imbrie+Newman} that the truncated 2-point function of the one-dimensional Ising model with interactions of the form \(J_x \propto \|x\|^{-2}\) and no external field does not display the asymptotic behavior of Theorem~\ref{thm:asymp_sub_exp_coupling} in an intermediate regime of temperatures \(\beta\in [\betac,\beta_0)\) for some finite \(\beta_0>\betac\).
\end{remark}

\subsection{Size of a typical cluster of $0$ and $ns$ in the saturation regime}

Our proof of Theorem~\ref{theorem:sharp_asymptotics} gives a control on the size of $C(0)$ conditionned on $\lbrace 0\leftrightarrow ns\rbrace$ in the saturation regime.

\begin{theorem}\label{theorem:size_cluster}
	Let \(s\in\bbS^{d-1}\). Suppose that \(\psi\) is of one of the following forms:
	\begin{itemize}
		\item \(\psi(x) \propto \rho(x)^{-\alpha}\) with $\alpha>\alphasat(s)$,
		\item \(\psi(x) \propto \sfe^{-\tilde{c}\rho(x)^{\eta}}\) with \(\tilde{c}>0\) and \(\eta\in (0,1)\).
	\end{itemize}
Then, for every \(\beta<\betahat(s)\), there exist \(c>0\) and \(C\) such that, for all $M>0$ and $n > 0$,
	\[
	\Phi_\beta(\abs{C(0)}>M \vert\hphantom{,} 0\leftrightarrow ns)\leq C\sfe^{-c M}.
	\]
\end{theorem}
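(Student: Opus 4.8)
The plan is to run the same bound that underlies the sharp asymptotics of Theorem~\ref{theorem:sharp_asymptotics}, but keeping track of the size of the cluster $C(0)$ throughout. Write $\Phi_\beta(\abs{C(0)}>M \mid 0\leftrightarrow ns) = \Phi_\beta(\abs{C(0)}>M,\, 0\leftrightarrow ns)/\Phi_\beta(0\leftrightarrow ns)$. By Theorem~\ref{theorem:sharp_asymptotics} the denominator is $\asymp J_{0,ns} \asymp \psi(ns)\sfe^{-\rho(ns)}$, so it suffices to prove an upper bound of the form $\Phi_\beta(\abs{C(0)}>M,\, 0\leftrightarrow ns) \leq C\sfe^{-cM} J_{0,ns}$, uniformly in $n$. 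The natural way to do this is the standard ``single giant edge'' decomposition: on the event $\{0\leftrightarrow ns\}$, condition on the (random) edge $e=\{u,v\}$ of largest $\rho$-length on the open path joining $0$ to $ns$ — or, more robustly, use the switching/finite-energy surgery that produces the factorization already used to prove Theorem~\ref{theorem:sharp_asymptotics}. That decomposition writes $\Phi_\beta(0\leftrightarrow ns)$ as (roughly) $\frac{\beta}{q}\sum_{u,v} \Phi_\beta(0\leftrightarrow u)\, J_{u,v}\, \Phi_\beta(v\leftrightarrow ns) + (\text{error})$, where the error is exponentially smaller (this is exactly why $\constPrefact_n$ has the stated form). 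On the event $\{\abs{C(0)}>M\}$, the vertex $u$ is forced to lie in a cluster of size $>M$.

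Concretely, I would bound
\[
\Phi_\beta(\abs{C(0)}>M,\, 0\leftrightarrow ns) \;\leq\; \frac{\beta}{q}\sum_{u,v\in\Zd} \Phi_\beta(0\leftrightarrow u,\,\abs{C(0)}>M)\, (\sfe^{\beta J_{uv}}-1)\, \Phi_\beta(v\leftrightarrow ns) \;+\; (\text{error}),
\]
plus the symmetric term where the long edge lands near $ns$ (which however does not help control $C(0)$ and must instead be bounded by the case $v\leftrightarrow ns$ with $u$ near $0$, so one only needs $\abs{C(0)}>M$ to cost a factor on the $0$-side sum). Using $\sfe^{\beta J_{uv}}-1 \leq C J_{uv} = C\psi(u-v)\sfe^{-\rho(u-v)}$ and the surcharge inequality $\rho(ns) \leq \rho(u) + \rho(u-v) + \rho(v-ns)$ (so that $\sfe^{-\rho(u-v)} \leq \sfe^{-\rho(ns)}\sfe^{\rho(u)}\sfe^{\rho(ns-v)}$, up to the subexponential prefactor comparison $\psi(u-v)\asymp\psi(ns)$ valid when $u,v$ are not too far from $0,ns$ — the regime that dominates), one reduces to showing
\[
\sum_{u\in\Zd} \sfe^{\rho(u)}\, \Phi_\beta(0\leftrightarrow u,\,\abs{C(0)}>M) \;\leq\; C\sfe^{-cM}
\]
together with the corresponding tilted-susceptibility bound $\sum_v \sfe^{\rho(ns-v)}\Phi_\beta(v\leftrightarrow ns) < \infty$, which is finite and bounded uniformly in $n$ because $\beta < \betahat(s)$ means $\bbG_\beta(t) < \infty$ for a dual vector $t$ (and $\sfe^{\rho(u)}$ can be replaced by $\sfe^{t\cdot u}\sfe^{\surcharge_t(u)}$, absorbing $\surcharge_t$ into a slightly smaller tilt or into the prefactor estimate). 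The surcharge bookkeeping is exactly as in the proof of Theorem~\ref{theorem:sharp_asymptotics}, so I would simply cite the relevant lemmas from that section.

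The one genuinely new estimate is the tilted-and-size-restricted susceptibility $\sum_u \sfe^{t\cdot u}\,\Phi_\beta(0\leftrightarrow u,\,\abs{C(0)}>M) \leq C\sfe^{-cM}$. For this I would argue as follows: for $\beta < \betahat(s)$ we have $\bbG_\beta(t') < \infty$ for some $t'$ with $t' \cdot s > $ the relevant threshold, in fact one can choose the tilt strictly larger than needed (using that $\beta \mapsto \bbG_\beta$ has an open convergence region, or by an explicit comparison in the exponentially-decaying case), giving a uniform exponential moment $\sum_u \sfe^{(1+\epsilon) t\cdot u}\Phi_\beta(0\leftrightarrow u) =: K < \infty$. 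Then if $0\leftrightarrow u$ and $\abs{C(0)}>M$, the cluster $C(0)$ contains at least $M$ vertices, each connected to $0$; summing $\sfe^{t\cdot u}$ over $u$ and using a Chebyshev/Markov-type argument on $\abs{C(0)} = \sum_{w} \IF{0\leftrightarrow w}$ — specifically $\sfe^{\epsilon t\cdot u}\IF{\abs{C(0)}>M} \leq \sfe^{-c M}\sfe^{\epsilon t\cdot u}\sfe^{c\abs{C(0)}}$ is too crude, so instead one should use the exponential-moment bound on $\abs{C(0)}$ itself. That exponential moment, $\Phi_\beta(\sfe^{c\abs{C(0)}}) < \infty$ for small $c$ when $\beta < \betac$, follows from sharpness (Theorem~\ref{thm:main_sharpness}) exactly as in the finite-range case — this is where Theorem~\ref{thm:main_sharpness} and Corollary~\ref{cor:main_mixing} enter. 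Combining a Cauchy–Schwarz split of $\Phi_\beta(0\leftrightarrow u,\abs{C(0)}>M)$ into $\Phi_\beta(0\leftrightarrow u)^{1/2}\,\Phi_\beta(\abs{C(0)}>M \mid 0\leftrightarrow u)^{1/2}$ with the tilted summability of the first factor and the uniform-in-$u$ exponential tail $\Phi_\beta(\abs{C(0)}>M\mid 0\leftrightarrow u) \leq C\sfe^{-cM}$ of the second (monotonicity in boundary conditions plus the unconditional exponential moment) closes the estimate.

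The main obstacle I anticipate is making the Cauchy–Schwarz / conditioning step genuinely uniform in $u$: one needs $\Phi_\beta(\abs{C(0)} > M \mid 0 \leftrightarrow u) \leq C\sfe^{-cM}$ with constants independent of $u$, and conditioning on a far-away connection could in principle inflate the cluster. This is handled by the finite-energy property and FKG: $\{0\leftrightarrow u\}$ is increasing, so conditioning on it only increases $\abs{C(0)}$ stochastically, and the unconditional exponential moment of $\abs{C(0)}$ under $\Phi_\beta^1$ (valid for $\beta < \betac = \betaexp$) dominates — but one must be slightly careful because $\abs{C(0)}$ is \emph{not} a bounded increasing function, so the cleaner route is to condition instead on the specific path realizing $0\leftrightarrow u$ and use exploration plus the exponential moment on the remaining randomness. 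I expect this to go through with the tools already developed in the paper (finite energy, stochastic domination, and the exponential decay from Theorem~\ref{thm:main_sharpness}), so the write-up should be short, mostly a matter of inserting the size constraint into the existing proof of Theorem~\ref{theorem:sharp_asymptotics} and invoking the exponential moment of the cluster size.
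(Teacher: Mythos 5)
Your high-level goal — bound the numerator and use Theorem~\ref{theorem:sharp_asymptotics} for the denominator — is the right one, but the route you take is much more involved than necessary and, as written, has a genuine gap. The paper's proof is three lines: fix $\beta<\beta'<\betahat(s)$, apply the two-temperature volume comparison \eqref{eq:large_deviation_volume_connect} (from Lemma~\ref{lem:large_deviation_volume_connect}, which is the OSSS-based differential inequality integrated between $\beta$ and $\beta'$) directly at $x=ns$ to get
\[
\Phi_\beta(0\leftrightarrow ns,\,|C_0|\geq M) \leq C\,\sfe^{-cM}\,\Phi_{\beta'}(0\leftrightarrow ns),
\]
and then use Theorem~\ref{theorem:sharp_asymptotics} at both $\beta$ and $\beta'$ to replace $\Phi_{\beta'}(0\leftrightarrow ns)$ and the denominator $\Phi_\beta(0\leftrightarrow ns)$ by $\asymp J_{0,ns}$. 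There is no need to reopen the giant-edge decomposition at all; the exponential cost of a large cluster has already been isolated from the connection event by Lemma~\ref{lem:large_deviation_volume_connect}, and you never invoke that lemma.

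The step in your plan that does not go through is the uniform-in-$u$ conditional tail $\Phi_\beta(|C(0)|>M \mid 0\leftrightarrow u)\leq C\sfe^{-cM}$. You correctly sense the danger (``conditioning on a far-away connection could in principle inflate the cluster'') but the fixes you offer do not resolve it. FKG gives the \emph{wrong-direction} inequality: both $\{0\leftrightarrow u\}$ and $\{|C_0|>M\}$ are increasing, so $\Phi_\beta(|C(0)|>M\mid 0\leftrightarrow u)\geq \Phi_\beta(|C(0)|>M)$, and the conditional tail can in fact be much fatter than the unconditional one when $\norm{u}$ is large (in the finite-range case it equals $1$ for $M\lesssim\norm{u}$, and for infinite-range interactions there is no uniform control either). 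The Cauchy--Schwarz split you write, $\Phi_\beta(0\leftrightarrow u)^{1/2}\Phi_\beta(|C(0)|>M\mid 0\leftrightarrow u)^{1/2}$, is also misstated (the factors should carry full powers), and in either form the required uniformity is the real obstruction. The correct way to pay $\sfe^{-cM}$ for the volume constraint while keeping $\{0\leftrightarrow u\}$ intact is precisely to compare at two temperatures via \eqref{eq:large_deviation_volume_connect}; if you insist on going through the tilted-and-size-restricted susceptibility you would still need to invoke that estimate rather than a conditional exponential tail, at which point your long detour collapses back to the paper's direct argument applied at a single point $x=ns$.
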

The previous Theorem is in contrast with what happens for $\beta\in (\betasat(s),\betac)$ (see Fig.~\ref{fig:CondensationPaths}): under suitable assumptions, it was proved for $q=2$ in~\cite{Aoun+Ott+Velenik-2021} that a typical path contributing to the high-temperature expansion of the two-point function has a number of points that is linear in $n$ (see~\cite{Aoun+Ott+Velenik-2021} for a much more precise statement).

\begin{proof}[Proof of Theorem~\ref{theorem:size_cluster}]
Fix $\beta<\beta'<\betasat(s)$. Theorem~\ref{theorem:sharp_asymptotics} implies that for $n$ big enough
\begin{equation}\label{inequality:close_to_prefactor}
\frac{1}{2}\constPrefact(s,\beta')J_{0,ns}\leq \Phi_{\beta'}\bigl(0\leftrightarrow ns \bigr)\leq 2\constPrefact(s,\beta')J_{0,ns}.
\end{equation}
Moreover, it follows from~\eqref{eq:large_deviation_volume_connect} that 
\begin{equation*}
\Phi_\beta\bigl(0\leftrightarrow ns,|C_0|\geq M \bigr) \leq C \Phi_{\beta'}\bigl(0\leftrightarrow ns,|C_0|\geq N \bigr) \sfe^{-cM}
\leq 
2C\constPrefact(s,\beta')J_{0,ns}\sfe^{-cM}.
\end{equation*}
where the last inequality follows from~\eqref{inequality:close_to_prefactor}.
\end{proof}

\subsection{Strict monotonicity of \(\nu\) outside of the saturation regime}

Our last result concerns the regime \((\betasat(s),\betac)\). More precisely, we prove that the function $\beta\mapsto\nu_{\beta}(s)$ is \emph{strictly} decreasing outside of the saturation regime.
\begin{lemma}\label{lemma:bound_on_nu}
	Suppose \(J\) is exponentially-decaying. Let \(s\in\bbS^{d-1}\), and suppose \(\betasat(s)>0\). Then, for any $\beta\in(\betasat(s),\betac)$, there exists $\varepsilon=\varepsilon_{\beta}>0$ and $C=C_{\beta, \varepsilon}>0$ such that, for any $\beta'\in(\beta,\beta+\varepsilon)$, one has
	\begin{equation*}
		\nu_{\beta}(s)-\nu_{\beta'}(s)\geq C(\beta'-\beta).
	\end{equation*}
	In particular, the function $\beta\mapsto\nu_{\beta}(s)$ is strictly decreasing on $(\betasat(s),\betac)$.
\end{lemma}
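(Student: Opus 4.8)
Here is the plan. I would prove the quantitative bound by showing that the logarithmic $\beta$-derivative of the finite-volume two-point function in the direction $s$ grows at least linearly in the distance, uniformly for $\beta$ in a right-neighbourhood of the given value, and then integrate. Write $A_n=\{0\leftrightarrow ns\}$ and $\Phi_{M,b}=\Phi^0_{\Lambda_M;b}$, with $ns\in\Lambda_M$. Differentiating the partition function in $b$ gives the exact identity
\begin{equation}\label{eq:plan_deriv}
	\partial_b\log\Phi_{M,b}(A_n)=\sum_{e\in\bar E_{\Lambda_M}}\frac{J_e\,\sfe^{bJ_e}}{\sfe^{bJ_e}-1}\,\frac{\Cov_{M,b}\bigl(\IF{\omega_e=1},\IF{A_n}\bigr)}{\Phi_{M,b}(A_n)},
\end{equation}
a finite sum of nonnegative terms (by monotonicity in $\beta$), whose prefactors are all $\geq 1/\betac$. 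Once I produce $c>0$ and $\varepsilon\in(0,\betac-\beta)$ such that $\partial_b\log\Phi_{M,b}(A_n)\geq cn$ for every $b\in[\beta,\beta+\varepsilon)$ and all large $n,M$, I integrate \eqref{eq:plan_deriv} over $b\in[\beta,\beta']$, let $M\to\infty$ so that $\Phi_{M,b}(A_n)\uparrow G_b(0,ns)$, divide by $n$, and use $-\tfrac1n\log G_b(0,ns)\to\nu_b(s)$ together with $\nu_{\beta'}(s)\leq-\tfrac1n\log G_{\beta'}(0,ns)$ (subadditivity via FKG) to conclude $\nu_\beta(s)-\nu_{\beta'}(s)\geq c(\beta'-\beta)$.

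To get the derivative estimate I would first pass to a Russo-type lower bound involving only \emph{pivotal} edges, which avoids subtracting divergent quantities. Fix a large $K_0$ and keep in \eqref{eq:plan_deriv} only the edges with $\norm{e}\leq K_0$; for these, $J_e$ is bounded away from $0$ and from $J_{\max}$. Conditioning on $\calF_{E_d\setminus\{e\}}$ and writing $\pi_e=\Phi_{M,b}(\omega_e=1\mid\calF_{E_d\setminus\{e\}})$, the law of total covariance (with $\e_{M,b}$ the $\Phi_{M,b}$-expectation) gives
\[
	\Cov_{M,b}\bigl(\IF{\omega_e=1},\IF{A_n}\bigr)=\e_{M,b}\bigl[\IF{e\text{ pivotal for }A_n}\,\pi_e(1-\pi_e)\bigr]+\Cov_{M,b}\bigl(\pi_e,\Phi_{M,b}(A_n\mid\calF_{E_d\setminus\{e\}})\bigr).
\]
The second term is nonnegative because both arguments are nondecreasing functions of $\omega|_{E_d\setminus\{e\}}$ (for $\pi_e$ since opening edges can only make the endpoints of $e$ more likely to be already connected; for the conditional probability since $A_n$ is increasing) and the marginal of $\Phi_{M,b}$ on $E_d\setminus\{e\}$ is positively associated. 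For the first term, the finite-energy bounds keep $\pi_e\in(0,1)$ bounded away from the endpoints when $\norm{e}\leq K_0$, so it is $\geq c_0\,\Phi_{M,b}(e\text{ pivotal for }A_n)\geq c_0\,\Phi_{M,b}(\{e\text{ pivotal for }A_n\}\cap A_n)$. Summing over $\norm{e}\leq K_0$ and using that on $A_n$ a pivotal edge is automatically open yields
\begin{equation}\label{eq:plan_russo}
	\partial_b\log\Phi_{M,b}(A_n)\;\geq\; c_1\sum_{e:\,\norm{e}\leq K_0}\Phi_{M,b}\bigl(e\text{ pivotal for }A_n\ \big|\ A_n\bigr).
\end{equation}

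It remains to bound the right-hand side of \eqref{eq:plan_russo} below by a multiple of $n$; this is where the absence of saturation enters, since $\beta>\betasat(s)$ forces $\nu_b(s)\leq\nu_\beta(s)<\rho(s)$ for all $b\geq\beta$. A first-moment estimate over open self-avoiding paths from $0$ to $ns$ with edge-increments $y_1,\dots,y_k$---each has probability at most $\prod_i bJ_{y_i}$ by finite energy, $\sum_i\rho(y_i)\geq\rho(ns)$ by the triangle inequality for $\rho$, and, separating a factor $\sfe^{-\varepsilon'\rho(y_i)}$ from each $J_{y_i}$, one uses that $\psi(\cdot)\sfe^{-\varepsilon'\rho(\cdot)}$ is summable---shows that for $\delta>0$ small (depending only on $\beta$ through $\rho(s)-\nu_\beta(s)$), the $\Phi_{M,b}$-probability that $0$ and $ns$ are joined by an open path of at most $\delta n$ edges is $\leq\tfrac14 G_\beta(0,ns)\leq\tfrac12\Phi_{M,b}(A_n)$ for $n,M$ large; hence, given $A_n$, the cluster $C(0)$ has more than $\delta n$ edges with probability bounded below. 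One then upgrades ``$\gtrsim n$ edges'' to ``$\gtrsim n$ short pivotal edges'': the expected number of long edges of $C(0)$ is negligible for $K_0$ large (it is controlled by $\chi(b)<\infty$~\cite{Hutchcroft-2020} and $\sum_{\norm{x}>K_0}J_x$), and, using the exponential decay of connectivities below $\betac$ (Theorem~\ref{thm:main_sharpness}), a connecting cluster cannot remain macroscopically ``thick'' along $[0,ns]$, so it necks down $\Theta(n)$ times. This gives $\sum_{\norm{e}\leq K_0}\Phi_{M,b}(e\text{ pivotal for }A_n\mid A_n)\geq c_2 n$ and hence the estimate of the first paragraph. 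All constants are uniform for $\beta'\in(\beta,\beta+\varepsilon)$: $G_b\geq G_\beta$ and $\nu_b\leq\nu_\beta$ give $\rho(s)-\nu_b(s)\geq\rho(s)-\nu_\beta(s)>0$, $\chi(b)\leq\chi(\beta+\varepsilon)<\infty$, and the finite-energy constants depend only on $\beta+\varepsilon$ and $K_0$.

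The main obstacle is the geometric step just described---turning ``$C(0)$ has $\gtrsim n$ edges conditionally on $A_n$'' into ``$C(0)$ has $\gtrsim n$ short pivotal edges conditionally on $A_n$''. It requires ruling out, via the exponential decay below $\betac$, that the connecting cluster is thick over a macroscopic portion of $[0,ns]$; the remaining ingredients---the derivative identity \eqref{eq:plan_deriv}, the reduction to pivotal edges \eqref{eq:plan_russo}, and the path-counting bound---are routine.
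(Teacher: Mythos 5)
Your plan departs from the paper's at a fundamental point, and the departure creates a genuine gap rather than an alternative proof. The paper never works with pivotal edges. Instead it uses a dichotomy on $|C_0|$ and two separately clean bounds: for small clusters ($|C_0| \leq \varepsilon n$) a path-counting estimate close in spirit to your first-moment bound, using $\bbJ(t) < \infty$ (which follows from $\betasat(s)>0$ by Theorem~\ref{thm:main_saturation_citerium}) together with $c_s := \rho(s) - \nu_\beta(s) > 0$; and for large clusters ($|C_0| \geq \varepsilon n$) the OSSS-derived \emph{integrated} inequality~\eqref{eq:OSSSexpovolume},
\[
\Phi_\beta\bigl(0 \leftrightarrow ns, |C_0| \geq N\bigr) \leq C_{\beta,\beta'}\,\Phi_{\beta'}\bigl(0 \leftrightarrow ns, |C_0| \geq N\bigr)\,\sfe^{-c_{\beta,\beta'} N}.
\]
The crucial feature of this inequality is that it compares $\Phi_\beta$ and $\Phi_{\beta'}$ directly (it is already integrated in $\beta$), it involves only the cluster \emph{size}, and it requires no geometric control on the cluster. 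Comparing the two bounds with $\Phi_\beta(A_n) = \sfe^{-n\nu_\beta(s)(1+\sfo(1))}$ immediately gives $\nu_{\beta'}(s) - \nu_\beta(s) \leq -c_{\beta,\beta'}\varepsilon$ once $\varepsilon < c_s/a$.

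Your plan, by contrast, is a Russo-formula argument: one would like to show that the conditional expected number of short pivotal edges given $A_n$ is $\gtrsim n$ and then integrate. The step you yourself flag as ``the main obstacle'' — upgrading ``$|C_0| \gtrsim n$'' to ``$\gtrsim n$ short pivotal edges given $A_n$'' — is not a routine consequence of the exponential decay of connectivities, and it is exactly where the argument breaks. Exponential decay bounds the probability that a fixed point is connected far away, but it does not rule out clusters that branch or wind so as to produce only $\sfo(n)$ pivotal edges: two disjoint open paths from $0$ to $ns$ contribute $\Theta(n)$ edges to $C_0$ and may have \emph{zero} pivotal edges along a macroscopic segment. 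Establishing that the pivotal count is linear under the conditional measure is, via Russo's formula, essentially the derivative form of the very strict monotonicity you are trying to prove; as stated the reduction is circular in spirit, and making it non-circular would require an additional coarse-graining or renewal input of the Ornstein--Zernike type, which is precisely the machinery the paper's size-only bound~\eqref{eq:OSSSexpovolume} is designed to bypass. (A secondary issue: your remark that the expected number of long edges in $C_0$ is negligible is an \emph{unconditional} statement controlled by $\chi(b)$; you need it conditionally on $A_n$, and dividing by $\Phi_b(A_n) = \sfe^{-\nu_b(s)n(1+\sfo(1))}$ destroys the bound, so this too would require a separate BK-type estimate, which is what the paper's Lemma~\ref{lem:long_edges} provides in a different context.)
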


We believe that \(\beta\mapsto \nu_{\beta}(s)\) is (real-)analytic on the interval \((\betasat(s),\betac)\). We plan to come back to this issue in a future work. As the proof is short and does not fit naturally in the remaining sections, we present it here (although it refers to other parts of the paper).
\begin{proof}[Proof of Lemma~\ref{lemma:bound_on_nu}]
	From Theorem~\ref{thm:main_saturation_citerium}, \(\betasat(s)>0\) implies the existence of \(t\in \partial \Wulff\) dual to \(s\) with \(\bbJ(t)<\infty\). Fix such a \(t\). On the one hand, since \(c_s=\rho(s)- \nu_{\beta}(s)>0\), we have (as \(t\cdot s = \rho(s)\) by choice of \(t\))
	\begin{multline*}
		\Phi_\beta(0\leftrightarrow ns,\abs{C_{0}}\leq \varepsilon n)
		\leq \sum_{\substack{\gamma:0\to x\\|\gamma|\leq \epsilon n}} \Phi_\beta(\gamma \textnormal{ open})\leq \\
		\leq \sfe^{-\rho(ns)}\sum_{k=1}^{\varepsilon n}\sum_{\substack{y_1,\cdots, y_k\in \Zd\\ \sum y_i = x}} \prod_{i=1}^k \sfe^{t\cdot y_i}(1-\sfe^{-\beta J_{y_i}}) \leq \sfe^{-n(\nu_{\beta}(s)+ c_s)} \sum_{k=1}^{\varepsilon n} \Big(\beta\sum_{y\in \Zd}  J_{y}\sfe^{t\cdot y}\Big)^k
	\end{multline*}where the sum after the first inequality is over self avoiding paths \(\gamma=(0, y_1,y_1+y_2,\cdots, x)\), we used finite energy to get the second inequality, and \(1-\sfe^{-x}\leq x\) for \(x\geq 0\) to get the third.
	
	On the other hand, we know by~\eqref{eq:OSSSexpovolume} that 
	\[
	\Phi_\beta(0\leftrightarrow ns,\abs{C_{0}} \geq \varepsilon n)
	\leq 
	C\sfe^{-\nu_{\beta'}(ns)} \sfe^{-c_{\beta,\beta'}\varepsilon n},
	\]
	where \(c_{\beta,\beta'} = \frac{\beta'-\beta}{2\beta'\Phi_{\beta'}(|C_0|)}\). To conclude, introduce \( A = \max(1,\beta\bbJ(t))<\infty\) and \(a=\log A\), and note that combining the two bounds with \(\Phi_\beta(0\leftrightarrow ns)\geq e^{-\nu_{\beta}(ns)(1+\sfo_n(1)) }\) gives (for \(n\) large enough)
	\begin{equation*}
		e^{\sfo(n) } \leq C\sfe^{-(\nu_{\beta'}(s)-\nu_{\beta}(s) + c_{\beta,\beta'}\varepsilon )n} + \varepsilon n \sfe^{-n (c_s- a \varepsilon )}.
	\end{equation*}Taking \(\varepsilon>0\) small enough (as a function of \(s,\beta, \beta'-\beta\)) the last display implies
	\begin{equation*}
		\nu_{\beta'}(s)-\nu_{\beta}(s) + c_{\beta,\beta'}\varepsilon \leq 0,
	\end{equation*}which is the claim.
\end{proof}

\begin{remark}
	The same lower bound can be proved if $(J_{x,y})_{x,y\in\mathbb{Z}^{d}}$ are finite range. Indeed, in this case
	\[
	\Phi_\beta(0\leftrightarrow ns, \abs{C_{0}}\leq \varepsilon n) = 0,
	\]
	for \(\varepsilon\) small enough, and~\eqref{eq:OSSSexpovolume} still holds in this case (the inequality~\eqref{eq:OSSSexpovolume} holds for any translation-invariant coupling constants \((J_{0,x})_{x\in\Zd}\) that satisfy \(\sum_x J_{0,x} < \infty\)).
\end{remark}

\section{Sharpness of the phase transition}\label{sec:Sharpness}

We will use the shorter notation
\[
	\Phi_{N;\beta} \equiv \Phi_{\Lambda_N;\beta}^1.
\]

The goal of this section is to prove the following result, which implies Theorem~\ref{thm:main_sharpness}.
\begin{theorem}
	\label{thm:sharpness_main}
	Suppose \(J\) is exponentially-bounded. Let \(\tilde{\betac}\) be given by~\eqref{eq:betac_tilde} below. Then the following assertions hold.
	\begin{itemize}
		\item For any \(\beta<\tilde{\betac}\), there exist \(C_{\beta}\) and \(c_{\beta}>0\) such that
		\[
			\Phi_{N;\beta}(0\leftrightarrow \Lambda_N^\comp)\leq C_{\beta} \sfe^{-c_{\beta} N},
		\]
		for all \(N\geq 1\).
		\item For any \(\beta>\tilde{\betac}\),
		\[
			\Phi_\beta^1(0\leftrightarrow \infty)
			\geq (\beta-\tilde{\betac}) \frac{\sfe^{-\beta}}{9\beta}.
		\]
	\end{itemize}
	In particular, \(\tilde{\betac}=\betac=\betaexp\).
\end{theorem}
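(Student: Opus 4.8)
The plan is to prove the two dichotomy statements of Theorem~\ref{thm:sharpness_main} using the OSSS inequality / randomized algorithm approach of Duminil-Copin, Raoufi and Tassion, adapted to the infinite-range (but exponentially-bounded) setting. One first defines $\tilde\betac$ via a differential-inequality threshold: roughly, $\tilde\betac = \sup\setof{\beta}{\theta_N(\beta)\to 0}$ where $\theta_N(\beta) = \Phi_{N;\beta}(0\leftrightarrow\Lambda_N^\comp)$, or equivalently through the susceptibility-type quantity that appears in~\eqref{eq:betac_tilde}. The heart of the argument is to run a randomized algorithm determining the indicator of $\{0\leftrightarrow\Lambda_N^\comp\}$ in $\Lambda_N$ (exploring clusters from a randomly chosen inner box or sphere) and to bound the revealment of each edge $e$ by something like $\sum_{x\in V_e}\Phi_{N;\beta}(0\leftrightarrow x)/|\Lambda_N|$ plus the probability that $e$'s (possibly long) endpoints get connected to the exploration; here exponential-boundedness of $J$ is exactly what keeps the contribution of long edges under control, since the ``cost'' $\sfe^{\beta J_e}-1\le \beta J_e\le\beta\sfe^{-c\|x-y\|}$ is summable. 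Feeding the revealment bound into OSSS yields a differential inequality of the form $\theta_N'(\beta)\ge \frac{c}{\Sigma_N(\beta)}\,\theta_N(\beta)(1-\theta_N(\beta))$ (or its $\Sigma_N = \sum_x\Phi_{N;\beta}(0\leftrightarrow x)$ weighted analogue), from which the standard Duminil-Copin--Raoufi--Tassion dichotomy follows.

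The concrete steps I would carry out, in order, are as follows. First, fix notation: let $\theta_N(\beta)=\Phi_{N;\beta}(0\leftrightarrow\Lambda_N^\comp)$ and $\Sigma_N(\beta)=\sum_{x\in\Lambda_N}\Phi_{N;\beta}(0\leftrightarrow x)$, and define $\tilde\betac$ by~\eqref{eq:betac_tilde} so that for $\beta<\tilde\betac$ one has $\sum_N\theta_N(\beta)/\Sigma_N(\beta)$-type summability or $\Sigma_N(\beta)$ bounded, and for $\beta>\tilde\betac$ it is not. Second, establish the Russo-type / Margulis derivative formula for $\Phi_{N;\beta}$: since increasing $\beta$ changes the edge weights $p_e = 1-\sfe^{-\beta J_e}$, one gets $\partial_\beta\Phi_{N;\beta}(A) = \sum_e J_e \sfe^{-\beta J_e}\cdot(\text{covariance-type term})$, which for increasing $A$ reduces to a sum over pivotal-edge probabilities weighted by $J_e$. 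Third, prove the OSSS revealment bound: design the algorithm exactly as in DCRT but with the chosen starting box random among sub-boxes of $\Lambda_N$ at scale comparable to $N$, reveal clusters edge-by-edge, and bound $\delta_e(\text{algo}) \le C\big(\tfrac{1}{N}\sum_{z}\Phi_{N;\beta}(\text{some vertex of }\Lambda_N\leftrightarrow z) + (\text{long-edge correction})\big)$; sum over $e$ using $\sum_e J_e<\infty$. Fourth, combine the derivative formula with OSSS ($\Phi(A)(1-\Phi(A))\le \sum_e \delta_e \mathrm{Inf}_e(A)$) to get the differential inequality for $\theta_N$. Fifth, integrate: on $\{\beta<\tilde\betac\}$ a standard argument (as in DCRT, Lemma on $\sum\theta_N$) gives exponential decay $\theta_N(\beta)\le C_\beta\sfe^{-c_\beta N}$; on $\{\beta>\tilde\betac\}$ integrating from $\tilde\betac$ to $\beta$ yields the explicit linear lower bound $\Phi^1_\beta(0\leftrightarrow\infty)\ge(\beta-\tilde\betac)\sfe^{-\beta}/(9\beta)$ after tracking constants. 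Sixth, conclude $\tilde\betac=\betaexp$ (by the very definition of $\betaexp$ and the exponential-decay half) and $\tilde\betac=\betac$ (by the positive-density half, which shows percolation for $\beta>\tilde\betac$, together with the trivial $\betac\ge\betaexp$).

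The main obstacle I anticipate is controlling the long edges in the revealment estimate. In the finite-range case each edge has endpoints at bounded distance, so revealing an edge reveals at most two nearby vertices, and the revealment of $e$ is essentially the probability that a fixed vertex near $e$ is reached by the exploration, i.e.\ $\lesssim \max_z \Phi_{N;\beta}(\partial\Lambda_k(z)\leftrightarrow\cdot)/(\#\text{boxes})$. With infinite range, an edge $e=\{x,y\}$ with $\|x-y\|$ large is ``queried'' by the algorithm whenever the exploration reaches either $x$ or $y$, and naively summing $\sum_e\delta_e\mathrm{Inf}_e$ could diverge. The fix is that the \emph{influence} $\mathrm{Inf}_e(A)$ for such an edge is multiplied in the Margulis--Russo formula by the tiny weight $J_e\sfe^{-\beta J_e}\le\sfe^{-c\|x-y\|}$, and the revealment is bounded by $\Phi_{N;\beta}(0\text{'s cluster reaches }x)+\Phi_{N;\beta}(0\text{'s cluster reaches }y)$; after summing over $y$ at fixed $x$ the geometric factor $\sfe^{-c\|x-y\|}$ is summable, so the long edges contribute a bounded correction to $\Sigma_N(\beta)$. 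Making this bookkeeping precise — in particular choosing the algorithm so that the revealment of a long edge really is governed by a connection probability and not by something worse, and checking the constants survive to give the stated $\sfe^{-\beta}/(9\beta)$ — is the delicate part; the rest is a fairly mechanical transcription of~\cite{Duminil-Copin+Raoufi+Tassion-2017}.
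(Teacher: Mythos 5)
Your overall strategy (OSSS plus a DCRT-style dichotomy) is the one the paper follows, but your proposed fix for the long edges does not work, and this is the central technical issue. You argue that a long edge $e=\{x,y\}$ is harmless because its Russo--Margulis weight $J_e\sfe^{-\beta J_e}$ is tiny. That misreads the formula: the derivative of $\Phi_{N;\beta}(A)$ is $\sum_e\frac{J_e}{1-\sfe^{-\beta J_e}}\Phi_{N;\beta}(A;\omega_e)$, and the prefactor $\frac{J_e}{1-\sfe^{-\beta J_e}}\to 1/\beta$ as $J_e\to 0$, so there is no vanishing weight attached to long edges. What \emph{is} small is the covariance $\Phi_{N;\beta}(A;\omega_e)$, but that is exactly the quantity OSSS is supposed to lower-bound, and the bound $\Phi(f;g)\le\sum_e\delta_e\Phi(\omega_e;f)$ becomes useless once a single edge has revealment of order one --- which is what happens for an edge spanning a macroscopic fraction of the box, since any exploration that reaches one of its endpoints must query it. The paper circumvents this with a truncation: the decision tree only queries edges of length at most $R$, and therefore computes the indicator of the truncated event $\{0\leftrightarrow_R\Lambda_N^\comp\}$; one then applies the two-function version of OSSS (with $g=\mathds{1}_{0\leftrightarrow_R\Lambda_N^\comp}$ and $f=\mathds{1}_{0\leftrightarrow_{R'}\Lambda_N^\comp}$, $R'\ge R$), obtains Lemma~\ref{lem:diff_ineq_radius}, and finally compares truncated and untruncated crossing probabilities by a union bound over long open edges, \eqref{eq:connections_to_R_connections}, with an additive error $C\beta N^d\sfe^{-cR}$ --- this is the one place where exponential boundedness of $J$ enters, and choosing $R=R_N$ of order $\log N$ kills that error. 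Without the truncation the differential inequality does not close.

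Two further points you have skipped. The definition of $\tilde{\betac}$ in~\eqref{eq:betac_tilde} uses a $\liminf$, not the $\limsup$ of DCRT; this is needed to close the $\beta>\tilde{\betac}$ direction in the presence of the $R$-dependent error, but it makes the $\beta<\tilde{\betac}$ direction harder: the first integration of Lemma~\ref{lem:diff_ineq_radius} only gives stretched-exponential decay along a subsequence $(N_n)$, and a separate combinatorial argument (Lemma~\ref{lem:stretch_exp_seq}) is needed to extend it to all $N$. And the upgrade from stretched-exponential to true exponential decay is \emph{not}, as you suggest, a mechanical second integration as in DCRT: the paper explicitly notes that its differential inequality is not adapted to that step, and instead uses a crossing decomposition at an intermediate scale combined with an elementary sequence lemma (Lemma~\ref{lem:stretch_exp_to_exp_seq}).
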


Our approach is closely related to the one introduced in~\cite{Duminil-Copin+Raoufi+Tassion-2017}, which is based on the one-monotonic version of the OSSS inequality. The main (mostly technical) differences will be highlighted during the proofs.

\subsection{Preparations}
We will need a few more objects. We first define the edge sets
\[
	E_{N,R} = \bsetof{\{x,y\}\in\bar{E}_{\Lambda_{N}}}{\normsup{x-y} \leq R},
	\qquad\text{and}\qquad
	E_{N,>R} = E_{N,\infty} \setminus E_{N,R}.
\]
Also, \(E_{\infty,R} = \lim_{N\to \infty} E_{N,R}\). Define the shorthand \(\{x\leftrightarrow_R y\} = \{x\xlongleftrightarrow{E_{\infty,R}} y\}\).

We will use the shorter notation
\[
	f_N(\beta) = \Phi_{2N;\beta}(0\leftrightarrow \Lambda_N^\comp),
	\quad
	F_{N}(\beta) = \sum_{k=0}^{N-1} f_{k}(\beta).
\]

We then define
\begin{align}\label{eq:betac_tilde}
	\tilde{\betac} &= \sup\setof{\beta\geq 0}{\exists (N_n)_{n\geq 1} \text{ increasing}, \exists C, c>0, F_{N_n}(\beta) \leq C(N_n)^{1-c}}\notag\\
	&= \inf\Bsetof{\beta\geq 0}{\liminf_{N\to\infty} \frac{\log F_N(\beta)}{\log N}\geq 1}.
\end{align}
Notice that, for any \(\beta<\tilde{\betac}\), as \(f_N(\beta)\) is non-increasing in \(N\), one has \(\lim_{N\to\infty} f_N(\beta) = 0\). In particular, \(\tilde{\betac}\leq \betac\).
A first difference compared to~\cite{Duminil-Copin+Raoufi+Tassion-2017} is the use of a \(\liminf\) instead of a \(\limsup\) in the definition of \(\tilde{\betac}\). This will be convenient when establishing that \(\tilde{\betac}\geq \betac\), but will generate some difficulties when proving that \(\beta<\tilde{\betac}\).

\subsection{Differential inequality: radius}
 
We will use the following differential inequality.
\begin{lemma}\label{lem:diff_ineq_radius}
	For any \(\beta>0\), \(\infty\geq R'\geq R>0\), and \(N\geq 1\),
	\begin{equation}
		\frac{\dd}{\dd\beta}\Phi_{2N;\beta}(0\leftrightarrow_{R'} \Lambda_N^\comp) \geq \frac{\sfe^{-\beta}}{\beta} \Phi_{2N;\beta}(0\leftrightarrow_R \Lambda_N^\comp) \frac{N}{R+4\sum_{i=0}^{N-1} \Phi_{N;\beta}(0\leftrightarrow_R \Lambda_{i}^\comp)}.
	\end{equation}
\end{lemma}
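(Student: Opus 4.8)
The plan is to follow the Duminil-Copin--Raoufi--Tassion strategy based on the one-monotonic OSSS inequality, adapting it to the present infinite-range setting with the cutoff $R$. First I would recall that differentiating a Random-Cluster probability in $\beta$ produces a sum over edges of a pivotal-type quantity: by the Russo-type formula for the Random-Cluster model, $\frac{\dd}{\dd\beta}\Phi_{2N;\beta}(A) = \sum_{e\in F} \frac{J_e \sfe^{\beta J_e}}{\sfe^{\beta J_e}-1}\,\Phi_{2N;\beta}\bigl(\{\omega_e=1\}\cap (\text{$e$ pivotal-like for }A)\bigr) - (\text{similar terms})$, and after the usual manipulations this is bounded below by $\tfrac{\sfe^{-\beta}}{\beta}\sum_{e} J_e\,\Phi_{2N;\beta}(e\text{ influences }A)$ where $A = \{0\leftrightarrow_{R'}\Lambda_N^\comp\}$; here the factor $\tfrac{\sfe^{-\beta}}{\beta}$ comes from $\tfrac{J_e \sfe^{\beta J_e}}{\sfe^{\beta J_e}-1}\geq \tfrac{1}{\beta}$ combined with the finite-energy lower bound on the conditional edge-probability, $\Phi(\omega_e=1\mid\cdot)\geq \tfrac{\sfe^{\beta J_e}-1}{\sfe^{\beta J_e}-1+q}\geq c\sfe^{-\beta}\beta J_e$ for $q$ fixed. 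Since $\sum_e J_e$ over all edges incident to a fixed vertex is at most $1$ by the normalization $\sum_x J_x=1$, the "revealment" in the OSSS sense of the randomized algorithm described below will be controlled.

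Next I would introduce the randomized decision tree (algorithm) computing $\mathds{1}_A$ with small revealment. Pick a uniformly random $i\in\{0,1,\dots,N-1\}$ and explore the cluster of $\Lambda_i^\comp$ (equivalently, reveal edges starting from the boundary $\Lambda_i^\comp$) using only edges in $E_{\infty,R}$, in a breadth-first manner; stop when one determines whether $0$ is connected to $\Lambda_N^\comp$ via $E_{\infty,R'}$-edges. The point is that an edge $e=\{x,y\}$ is queried only if one of its endpoints lies within $\ell^\infty$-distance $R$ of the explored cluster and that cluster has already reached "radius $i$"; the revealment of $e$ is then bounded by $\Phi_{2N;\beta}(x\leftrightarrow_R \Lambda_i^\comp)$ averaged over $i$, i.e.\ $\delta_e \leq \tfrac1N\bigl(R + \sum_{i=0}^{N-1}\Phi_{N;\beta}(0\leftrightarrow_R \Lambda_i^\comp)\bigr)$ up to the combinatorial factor $4$ accounting for the (at most) constantly many edges of each "shift class" that can be revealed from a given revealed vertex within range $R$ (in $\ell^\infty$ the relevant count is the perimeter-type term $R$ for the deterministic first layer plus the cluster-size sum). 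The OSSS inequality then gives
\[
	\mathrm{Var}_{\Phi_{2N;\beta}}(\mathds{1}_A) \leq \sum_{e} \delta_e\, \mathrm{Inf}_e(A),
\]
and using $\mathrm{Var}(\mathds{1}_A)\geq \Phi_{2N;\beta}(A)(1-\Phi_{2N;\beta}(A))$ together with the monotonicity replacing the variance by $\Phi_{2N;\beta}(0\leftrightarrow_R\Lambda_N^\comp)$ (the one-monotonic version — one uses that $A$ is increasing and compares to the smaller-range event), one obtains
\[
	\Phi_{2N;\beta}(0\leftrightarrow_R\Lambda_N^\comp) \leq \Bigl(\tfrac1N\bigl(R+4\textstyle\sum_{i=0}^{N-1}\Phi_{N;\beta}(0\leftrightarrow_R\Lambda_i^\comp)\bigr)\Bigr)\sum_e \mathrm{Inf}_e(A).
\]
Combining this with the lower bound on the derivative from the first paragraph, $\frac{\dd}{\dd\beta}\Phi_{2N;\beta}(A)\geq \tfrac{\sfe^{-\beta}}{\beta}\sum_e \mathrm{Inf}_e(A)$ (after absorbing the $J_e$ weights — note the influences here must be the \emph{weighted} influences $J_e\,\mathrm{Inf}_e$, so the same weighting must be carried through the OSSS bound, which is legitimate since the algorithm's revealment is measured per edge and the differential inequality supplies exactly $\sum_e J_e\mathrm{Inf}_e$), rearranging yields the claimed inequality.

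The main obstacle, and the place requiring genuine care rather than routine bookkeeping, is the revealment estimate in the infinite-range setting: unlike the nearest-neighbour case of \cite{Duminil-Copin+Raoufi+Tassion-2017}, revealing the cluster of $\Lambda_i^\comp$ within range $R$ means that from each revealed vertex one may query edges to \emph{all} vertices within $\ell^\infty$-distance $R$, so the naive revealment of a far edge $e=\{x,y\}$ is $\Phi_{2N;\beta}(\{x,y\}\cap \text{(explored set within range $R$)}\neq\varnothing)$, and one must show this is dominated by $\tfrac1N\bigl(R+4\sum_i\Phi_{N;\beta}(0\leftrightarrow_R\Lambda_i^\comp)\bigr)$ uniformly in $e$. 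This is where the precise choice of stopping rule and the bound $\Phi_{2N;\beta}(x\leftrightarrow_R\Lambda_i^\comp)\leq \Phi_{N;\beta}(0\leftrightarrow_R\Lambda_{?}^\comp)$ (via translation, monotonicity in the domain, and using that $x\in\Lambda_N$) must be invoked carefully — in particular, the event $\{0\leftrightarrow_R\Lambda_i^\comp\}$ inside $\Lambda_N$ is used as the universal upper bound for the per-vertex revealment, and the extra deterministic $R$ term absorbs the revealment of the first layer of edges emanating directly from the deterministic set $\Lambda_i^\comp$ which are queried with probability one; keeping the constant $4$ (rather than a dimension-dependent constant) requires noting that in the OSSS sum each revealed \emph{vertex} contributes, through its incident edges grouped by displacement, a total weighted revealment controlled by $\sum_z J_z\cdot(\text{revealment of the vertex})\le 1\cdot(\text{revealment})$, so the range $R$ enters only through the geometric reach of the exploration and not multiplicatively through $\#\{z:\|z\|_\infty\le R\}$. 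Once this revealment bound is in hand, the rest is the standard OSSS-plus-Russo combination.
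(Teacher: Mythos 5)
Your plan correctly identifies the overall strategy (one-monotonic OSSS applied to an exploration tree started from a uniformly random shell, combined with a Russo-type derivative identity), which is indeed the paper's approach. However, two of the mechanisms you propose for producing the specific constants are wrong, and a third step is not filled in.

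First, the source of the factor $\sfe^{-\beta}$. You claim it comes from the finite-energy lower bound on $\Phi(\omega_e=1\mid\cdot)$ combined with the Russo derivative; but this gives a $q$-dependent constant (since the finite-energy lower bound is $\tfrac{\sfe^{\beta J_e}-1}{\sfe^{\beta J_e}-1+q}$), not $\sfe^{-\beta}$, and it would over-count: the Russo step and the OSSS step would then each contribute a finite-energy factor. The paper instead uses only $\tfrac{J_e}{1-\sfe^{-\beta J_e}}\geq\tfrac1\beta$ in the derivative step, with no finite-energy factor at that point. The $\sfe^{-\beta}$ factor enters in a completely separate and necessary step that your proposal omits: since $\{0\leftrightarrow_{R}\Lambda_N^\comp\}\subseteq\{0\leftrightarrow_{R'}\Lambda_N^\comp\}$, the two-function covariance factorizes as $\Phi(f;g)=\Phi(g)\bigl(1-\Phi(f)\bigr)$, and $1-\Phi(f)\geq\Phi_{2N;\beta}(0\nleftrightarrow\Lambda_N^\comp)\geq\sfe^{-\beta}$ by finite energy (probability that all edges incident to $0$ are closed) together with the normalization $\sum_x J_{0x}=1$. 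Without this covariance lower bound the OSSS inequality cannot be converted into a lower bound on the derivative, and ``using $\mathrm{Var}\geq\Phi(A)(1-\Phi(A))$'' does not by itself yield a quantitative constant here.

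Second, the constant $4$. Your explanation in terms of ``constantly many edges of each shift class'' and re-weighting influences by $J_e$ does not correspond to the actual argument (nor does the OSSS step involve weighted influences $J_e\,\mathrm{Inf}_e$ — the $J_e$ weights are fully disposed of in the Russo step via $J_e/(1-\sfe^{-\beta J_e})\geq\beta^{-1}$). The $4$ is the product of $2$ (an edge has two endpoints $x,y$, each contributing a term $\Phi(x\leftrightarrow_R\Lambda_{|\normsup{x}-i|}(x)^\comp)$ to the revealment) and $2$ (when $i$ runs over $1,\dots,N$ the quantity $|\normsup{x}-i|$ visits each value in $\{0,\dots,N-1\}$ at most twice, giving $\sum_{i=1}^N\Phi_{N;\beta}(0\leftrightarrow_R\Lambda_{|\normsup{x}-i|}^\comp)\leq 2\sum_{j=0}^{N-1}\Phi_{N;\beta}(0\leftrightarrow_R\Lambda_j^\comp)$). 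Likewise, the additive term $R$ in the denominator arises because, upon averaging over the random shell index $i$, the deterministic indicator $\mathds{1}_{[x,y]\cap\partial[-i,i]^d\neq\varnothing}$ contributes at most $\normsup{x-y}\leq R$ — it is a counting bound, not the ``deterministic first layer of edges queried with probability one'' that you describe. These points, together with the explicit use of the two-function OSSS inequality with $g=\mathds{1}_{0\leftrightarrow_R\Lambda_N^\comp}$ (measurable with respect to $\calF_{E_{N,R}}$, so that the revealment vanishes outside $E_{N,R}$) and $f=\mathds{1}_{0\leftrightarrow_{R'}\Lambda_N^\comp}$, are exactly the places where the argument must be made precise.
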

\begin{proof}
	First,
	\begin{align}
			\frac{\dd}{\dd\beta}\Phi_{2N;\beta}(0\leftrightarrow_{R'} \Lambda_N^\comp)
			&= \sum_{e\in E_{2N,\infty}} \frac{J_e}{1-\sfe^{-\beta J_{e}}} \Phi_{2N;\beta}(0\leftrightarrow_{R'} \Lambda_N^\comp\ ;\ \omega_e) \notag\\
			&\geq \frac1{\beta} \sum_{e\in E_{2N,R}} \Phi_{2N;\beta}(0\leftrightarrow_{R'} \Lambda_N^\comp\ ;\ \omega_e),
			\label{eq:diff_ineq_prf1}
	\end{align}
	since \(\frac{x}{1-\sfe^{-\beta x}} \geq \beta^{-1}\) and the covariances are nonnegative by FKG.
	
	We will use the two-function version of the monotonic OSSS inequality of~\cite{Duminil-Copin+Raoufi+Tassion-2017}, see~\cite[Theorem 2.2]{Hutchcroft-2020} for the exact statement. We refer to~\cite{Duminil-Copin+Raoufi+Tassion-2017, Hutchcroft-2020} for missing definitions; our notations should be close enough to the ones used in theses papers for the reader to be able to translate. We will use the following inputs in~\cite[Theorem 2.2]{Hutchcroft-2020}:
	\begin{gather*}
		\mu = \Phi_{2N;\beta}, \quad
		f = \mathds{1}_{0\leftrightarrow_{R'} \Lambda_N^\comp}, \quad
		g = \mathds{1}_{0\leftrightarrow_R \Lambda_N^\comp}.
	\end{gather*}
	We obtain that, for any decision tree \(T\) computing \(g\),
	\begin{equation}\label{eq:OSSS_DCRT}
		\Phi_{2N;\beta}(f\ ;\ g) \leq \sum_{e\in E_{2N,\infty}} \delta_e(\Phi_{2N;\beta},T) \Phi_{2N;\beta}(\omega_e\ ;\ f).
	\end{equation}
	We will now define some decision trees. Notice that \(\mathds{1}_{0\leftrightarrow_R \Lambda_N^\comp}\) is measurable with respect to \(\calF_{E_{N,R}}\). Fix some arbitrary total ordering of \(E_{N,R}\). We define a family of decision trees \(T^i, i=1,\dots, N\), as follows. \(T^{i}\) first queries the state of all edges \(\{x,y\}\in E_{N,R}\) with \([x,y]\cap \partial [-i,i]^d\neq \varnothing\) in increasing order. Let us denote the set of open edges revealed in this way by \(X\). Then, \(T^{i}\) explores all the connected components, in the configuration restricted to \(E_{N,R}\), of the endpoints of \(X\) together with their boundary. We refer to~\cite{Duminil-Copin+Raoufi+Tassion-2017,Hutchcroft-2020} for an explicit description of the exploration algorithm.
	Obviously, \(T^i\) computes \(\mathds{1}_{0\leftrightarrow_R \Lambda_N^\comp}\). Moreover, for an edge to be queried, it has to be in \(E_{N,R}\) and either intersect \(\partial [-i,i]^d\) or be connected to an open edge which does so. The revealment of an edge in \(E_{N,R}\) is therefore upper bounded by
	\begin{multline*}
		\delta_{\{x,y\}}(\Phi_{2N;\beta},T^{i}) \leq\\
		\mathds{1}_{[x,y]\cap \partial [-i,i]^d\neq \varnothing} + \Phi_{2N;\beta}(x\leftrightarrow_R \Lambda_{|\normsup{x}-i|}(x)^\comp) + \Phi_{2N;\beta}(y\leftrightarrow_R \Lambda_{|\normsup{y}-i|}(y)^\comp).
	\end{multline*}
	Now,
	\[
		\sum_{i=1}^N \Phi_{2N;\beta}(x\leftrightarrow_R \Lambda_{|\normsup{x}-i|}(x)^\comp)
		\leq \sum_{i=1}^N\Phi_{N;\beta}(0\leftrightarrow_R \Lambda_{|\normsup{x}-i|}^\comp)
		\leq 2 \sum_{i=0}^{N-1} \Phi_{N;\beta}(0\leftrightarrow_R \Lambda_{i}^\comp).
	\]
	Taking the average over \(i=1,\dots,N\), one gets
	\[
		\frac1{N}\sum_{i=1}^N \delta_{\{x,y\}}(\Phi_{2N;\beta}, T^i)
		\leq \frac{\normsup{x-y}}{N} + \frac{4}{N} \sum_{i=0}^{N-1} \Phi_{N;\beta}(0\leftrightarrow_R \Lambda_{i}^\comp).
	\]
	In particular, averaging~\eqref{eq:OSSS_DCRT} over \(T^i\), \(i=1,\dots, N\), implies
	\begin{multline}\label{eq:diff_ineq_prf2}
		\sum_{e\in E_{2N,R}} \Phi_{2N;\beta}(\omega_e\ ;\ 0\leftrightarrow_{R'} \Lambda_N^\comp) \geq \sum_{e\in E_{N,R}} \Phi_{2N;\beta}(\omega_e\ ;\ 0\leftrightarrow_{R'} \Lambda_N^\comp)\geq\\
		\Phi_{2N;\beta}(0\leftrightarrow_{R'} \Lambda_N^\comp\ ;\ 0\leftrightarrow_R \Lambda_N^\comp)\frac{N}{R+4\sum_{i=0}^{N-1} \Phi_{N;\beta}(0\leftrightarrow_R \Lambda_{i}^\comp)}.
	\end{multline}
	Finally,
	\begin{multline}\label{eq:diff_ineq_prf3}
		\Phi_{2N;\beta}(0\leftrightarrow_{R'} \Lambda_N^\comp\ ;\ 0\leftrightarrow_R \Lambda_N^\comp)
		=
		\Phi_{2N;\beta}(0\leftrightarrow_R \Lambda_N^\comp)\Phi_{2N;\beta}(0\nleftrightarrow_{R'} \Lambda_N^\comp) \\
		\geq \Phi_{2N;\beta}(0\leftrightarrow_R \Lambda_N^\comp)\Phi_{2N;\beta}(0\nleftrightarrow \Lambda_N^\comp) \geq \Phi_{2N;\beta}(0\leftrightarrow_R \Lambda_N^\comp) \sfe^{-\beta},
	\end{multline}
	where the last inequality follows from finite energy and the normalization \(\sum_x J_{0x} = 1\). Using~\eqref{eq:diff_ineq_prf1}, \eqref{eq:diff_ineq_prf2} and~\eqref{eq:diff_ineq_prf3} yields the result.
\end{proof}

We will combine the previous differential inequality with a simple bound comparing \(\Phi_{2N;\beta}(0\leftrightarrow_R \Lambda_N^\comp)\) and \(\Phi_{2N;\beta}(0\leftrightarrow \Lambda_N^\comp)\). Partitioning on whether there is an edge of length at least \(R\) which is open or not, and using a union bound, one has
\begin{align*}
	\Phi_{2N;\beta}(0\leftrightarrow \Lambda_N^\comp) &\leq \Phi_{2N;\beta}(0\leftrightarrow_R \Lambda_N^\comp) + \sum_{e\in E_{2N,>R}} \Phi_{2N;\beta}(\omega_e=1)\\
	&\leq \Phi_{2N;\beta}(0\leftrightarrow_R \Lambda_N^\comp) + C\beta N^d \sum_{y: \normsup{y}> R} J_y\\
	&\leq \Phi_{2N;\beta}(0\leftrightarrow_R \Lambda_N^\comp) + C\beta N^d \sfe^{-cR},
\end{align*}
where \(c>0\), we used that \(J\) is exponentially bounded in the last line, and all \(\beta\)-dependencies are explicit.
So, there exist \(C<\infty, c>0\) (independent of \(\beta\)) such that for any \(R>0\)
\begin{equation}
	\label{eq:connections_to_R_connections}
	\Phi_{2N;\beta}(0\leftrightarrow_R \Lambda_N^\comp) \geq \Phi_{2N;\beta}(0\leftrightarrow \Lambda_N^\comp) -C\beta N^d \sfe^{-cR}. 
\end{equation}

\subsection{Proof of Theorem~\ref{thm:sharpness_main}: Percolation above \(\tilde{\betac}\)}

We now study the differential inequality of Lemma~\ref{lem:diff_ineq_radius}. The proof is very close to the corresponding ones in~\cite{Duminil-Copin+Raoufi+Tassion-2017,Hutchcroft-2020}. The main difference is that we have an effect due to the infinite range of the interaction in the differential inequality. This is where using a \(\liminf\) in the definition of \(\tilde{\betac}\) instead of a \(\limsup\) comes crucially into play.

\begin{lemma}
	For any \(\beta>\tilde{\betac}\),
	\[
		\Phi^1_{\beta}(0\leftrightarrow \infty) \geq (\beta-\tilde{\betac})\frac{\sfe^{-\beta}}{9\beta}.
	\]
	In particular, \(\tilde{\betac} \geq \betac\).
\end{lemma}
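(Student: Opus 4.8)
The plan is to integrate the differential inequality from Lemma~\ref{lem:diff_ineq_radius} over a suitable interval of inverse temperatures, using the definition of $\tilde{\betac}$ to control the denominator appearing on the right-hand side. First I would fix $\beta>\tilde{\betac}$ and choose $\beta_0\in(\tilde{\betac},\beta)$; by the second characterization of $\tilde{\betac}$ in~\eqref{eq:betac_tilde}, we have $\liminf_{N\to\infty}\frac{\log F_N(\beta_0)}{\log N}\geq 1$, so for every $c>0$ there are infinitely many $N$ with $F_N(\beta_0)\geq N^{1-c}$, but more to the point, for the values of $\beta$ we integrate over (all $\geq\beta_0$), monotonicity in $\beta$ gives $F_N(\beta)\geq F_N(\beta_0)$, and we want instead an \emph{upper} bound on $\sum_{i=0}^{N-1}\Phi_{N;\beta}(0\leftrightarrow_R\Lambda_i^\comp)$ in the denominator. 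The point of the $\liminf$ is the contrapositive: if $\beta<\tilde{\betac}$ were to fail, i.e.\ if we are \emph{above} $\tilde{\betac}$, then along \emph{every} sequence $N\to\infty$ one eventually has $F_N(\beta)\geq N^{1-c}$; equivalently, for $\beta>\tilde{\betac}$, there is no increasing sequence making $F_{N_n}$ grow slower than $N_n^{1-c}$. I would use this to say that if the radius connection probabilities $f_N(\beta)$ did \emph{not} decay, then... — actually the cleaner route is the standard one: set $g_N(\beta)=\Phi_{2N;\beta}(0\leftrightarrow_R\Lambda_N^\comp)$ (with $R=R(N)$ chosen later, e.g.\ $R=\varepsilon N$ or $R$ a large constant times $\log N$), and rewrite Lemma~\ref{lem:diff_ineq_radius} with $R'=\infty$ as
\[
\frac{\dd}{\dd\beta}\Phi_{2N;\beta}(0\leftrightarrow\Lambda_N^\comp)\geq \frac{\sfe^{-\beta}}{\beta}\, g_N(\beta)\,\frac{N}{R+4\Sigma_N(\beta)},\qquad \Sigma_N(\beta):=\sum_{i=0}^{N-1}\Phi_{N;\beta}(0\leftrightarrow_R\Lambda_i^\comp).
\]

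Next I would run the integration argument of Duminil-Copin--Raoufi--Tassion / Hutchcroft. One integrates the inequality in $\beta$ over $[\betac',\beta]$ for $\betac'$ slightly above $\tilde{\betac}$; since $\Phi_{2N;\beta}(0\leftrightarrow\Lambda_N^\comp)\leq 1$, the left side is at most $1$, while on the right the factor $g_N$ can be replaced by $\Phi_{2N;\beta}(0\leftrightarrow\Lambda_N^\comp)-C\beta N^d\sfe^{-cR}$ via~\eqref{eq:connections_to_R_connections}. Choosing $R=R(N)$ growing like a large multiple of $\log N$ kills the error term $C\beta N^d\sfe^{-cR}$ (it becomes $\ll N^{-d}$, say), and makes $R$ negligible against $N$ in the denominator. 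It remains to bound $\Sigma_N(\beta)$: here is where $\tilde{\betac}$ enters. Since $\Phi_{N;\beta}(0\leftrightarrow_R\Lambda_i^\comp)\leq \Phi_{N;\beta}(0\leftrightarrow\Lambda_i^\comp)\leq f_i(\beta)$ (up to the harmless relabeling $\Phi_{N;\beta}\le\Phi_{2i;\beta}$ for $i\le N/2$ by monotonicity in the volume, and crude bounds for $i>N/2$), we get $\Sigma_N(\beta)\leq C F_N(\beta)+C$. Thus the differential inequality becomes, writing $\theta_N(\beta)=\Phi_{2N;\beta}(0\leftrightarrow\Lambda_N^\comp)$,
\[
\frac{\dd}{\dd\beta}\theta_N(\beta)\geq c_\beta\,\bigl(\theta_N(\beta)-\text{(small)}\bigr)\,\frac{N}{C F_N(\beta)+C R(N)}.
\]
Now one argues by contradiction: suppose $\Phi^1_\beta(0\leftrightarrow\infty)$ is smaller than the claimed bound; then $\theta_N(\beta)\to 0$ (indeed $\theta_N(\beta)\le 2\Phi^1_\beta(0\leftrightarrow\Lambda_N^\comp)$ plus volume corrections, using the finite-energy / monotonicity comparison of $\Phi_{2N;\beta}$ with the infinite-volume wired measure), and integrating from $\betac'$ to $\beta$ one finds that $F_N(\betac')$ (or $F_N$ at some intermediate point) must grow at least like $N^{1-c}$ for \emph{all} large $N$ — but then by definition every $\beta'$ in a left-neighborhood of $\betac'$ would violate the $\liminf<1$ condition, pushing $\tilde{\betac}$ down, contradicting $\betac'>\tilde{\betac}$. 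Tracking the constants through the integration (the factor $\sfe^{-\beta}/\beta$, the $4$ and the $9$) yields the explicit lower bound $(\beta-\tilde{\betac})\sfe^{-\beta}/(9\beta)$.

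More concretely, the quantitative output comes from the following elementary lemma, which I would isolate: if $(a_N)_{N\ge1}$ is a nonincreasing sequence in $[0,1]$ and $h:[\beta_1,\beta_2]\to[0,1]$ satisfies $h'\geq a_N h \cdot N/(A_N+B)$ with $A_N=\sum_{k<N}$ (partial sums of the $a$'s) and appropriate uniformity, then either $h(\beta_2)$ is bounded below by an explicit constant times $(\beta_2-\beta_1)$, or the partial sums $A_N$ grow polynomially fast with exponent arbitrarily close to $1$. This is exactly Lemma in~\cite{Duminil-Copin+Raoufi+Tassion-2017}; the role of the $\liminf$ (rather than $\limsup$) in~\eqref{eq:betac_tilde} is that we need the polynomial lower bound on $F_N$ to hold along \emph{all} $N$, not just a subsequence, in order to conclude that $\beta\le\tilde{\betac}$ forces slow growth — and conversely $\beta>\tilde{\betac}$ gives us the room to run the integration without worrying about bad subsequences of $N$. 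Finally, $\tilde{\betac}\ge\betac$ follows immediately: if $\beta>\tilde{\betac}$ then $\Phi^1_\beta(0\leftrightarrow\infty)>0$, so $\beta\ge\betac$, hence $\tilde{\betac}\ge\betac$ (combined with the earlier observation $\tilde{\betac}\le\betac$, this gives equality, and $\betaexp$ is sandwiched likewise).

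The main obstacle I anticipate is the bookkeeping around the infinite range: one must simultaneously choose $R=R(N)$ large enough that the $\sfe^{-cR}$ error terms from~\eqref{eq:connections_to_R_connections} are negligible (both in the ``main'' quantity and inside $\Sigma_N$), yet small enough — sublinear, indeed $o(F_N)$ when $F_N$ is already polynomially large — that $R(N)$ does not spoil the denominator $N/(CF_N+CR)$ in the regime where we want to extract growth of $F_N$. Taking $R(N)$ of order $\log^2 N$ (or any fixed large multiple of $\log N$) should thread this needle, since $F_N\le N$ trivially and the interesting regime has $F_N\gtrsim N^{1-c}\gg \log^2 N$. The second delicate point, already flagged by the authors, is that with the $\liminf$ definition the contradiction argument must be set up so that the conclusion ``$F_N$ grows polynomially for all large $N$'' is genuinely incompatible with $\betac'>\tilde{\betac}$; this requires that the integration be carried out starting from a $\betac'$ with $\tilde{\betac}<\betac'<\beta$ and that one extracts the growth statement at $\betac'$ itself (or uniformly on $[\betac',\beta]$), which is exactly what monotonicity of $f_N(\cdot)$ in $\beta$ permits.
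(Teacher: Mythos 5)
Your proposal follows the same DCRT/Hutchcroft route as the paper: integrate the differential inequality from Lemma~\ref{lem:diff_ineq_radius} (with $R'=\infty$ and $R=R(N)$ of order $\log N$), remove the $R$-truncation via~\eqref{eq:connections_to_R_connections}, and exploit the $\liminf$ definition of $\tilde{\betac}$ together with monotonicity of $F_N(\cdot)$ to control the denominator. The one thing you gloss over is the actual mechanism that converts the differential inequality into a quantitative lower bound on $\Phi^1_\beta(0\leftrightarrow\infty)$: the paper introduces the Ces\`{a}ro-type averages $W_{N,M}(\beta)=\frac1{\log N}\sum_{k=M}^N k^{-1}f_k(\beta)$ (which converge, in the iterated limit $N\to\infty$ then $M\to\infty$, to $\Phi^1_\beta(0\leftrightarrow\infty)$) and uses the telescoping bound $f_k/F_k\ge\log F_{k+1}-\log F_k$ so that $\frac{\rmd}{\rmd\beta}W_{N,M}\gtrsim\frac1{\log N}(\log F_{N+1}-\log F_M)$; this is the key step you outsource to the reference. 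Also note that the paper's argument is \emph{direct}, not by contradiction; and your line ``suppose $\Phi^1_\beta(0\leftrightarrow\infty)$ is smaller than the claimed bound; then $\theta_N(\beta)\to0$'' is not correct as stated (it would require $\Phi^1_\beta(0\leftrightarrow\infty)=0$), but this is inessential once the argument is rephrased directly as in the paper.
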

\begin{proof}
	The claim will follow by lower bounding the quantity \(W_{N,M}\) defined by
	\[
		W_{N,M}(\beta) = \frac1{\log N} \sum_{k=M}^{N} k^{-1}f_{k}(\beta).
	\]Observe that \(W_{N,M}\) is non-decreasing in \(\beta\). The interest of this quantity is that one has \linebreak \(\lim_{M\to\infty}\lim_{N\to\infty} W_{N,M}(\beta) = \Phi^1_{\beta}(0\leftrightarrow \infty)\). Indeed: the lower bound is obtained by observing that \(f_{k}(\beta)\geq \Phi^1_{\beta}(0\leftrightarrow \infty)\) and \(\lim_{N\to \infty}\frac{\sum_{k=M}^N k^{-1}}{\log(N)} =1\), while the upper bound follows from \(f_{k}(\beta) \leq \Phi_{2k;\beta}(0\leftrightarrow \Lambda_M^c)\) for \(k\geq M\), and \(\Phi_{2k;\beta}(0\leftrightarrow \Lambda_M^c) \leq \Phi_{\beta}^1(0\leftrightarrow \Lambda_M^c) +\sfo_k(1)\) (by convergence of \(\Phi_{2k;\beta}\) to \(\Phi_{\beta}^1\)), so that \(\lim_{N\to \infty} W_{N,M} \leq \Phi_{\beta}^1(0\leftrightarrow \Lambda_M^c)\).
	
	Let \(\beta>\beta'>\tilde{\betac}\). Let \(R_N = (d+\frac{3}{2})\frac{\log N}{c}\) where \(c\) is given by~\eqref{eq:connections_to_R_connections}, and let \(N_0\) be such that \(R_N\leq N^{1/2}\) for all \(N\geq N_0\).
	Using the bound
	\begin{equation}\label{eq:perco_prf0}
		\sum_{i=0}^{N-1} \Phi_{N;\beta}(0\leftrightarrow_R \Lambda_{i}^\comp)
		\leq 2 \sum_{i=0}^{N/2} \Phi_{2i;\beta}(0\leftrightarrow \Lambda_{i}^\comp)
		\leq 2 F_{N}(\beta)
	\end{equation}
	in Lemma~\ref{lem:diff_ineq_radius} (with \(R'=\infty\)), and using~\eqref{eq:connections_to_R_connections}, one obtains
	\begin{equation}\label{eq:perco_prf1}
		\frac{\rmd}{\rmd\beta}f_N(\beta) \geq \frac{\sfe^{-\beta}}{\beta} \Phi_{2N;\beta}(0\leftrightarrow_{R_N} \Lambda_N^c ) \frac{N}{R_N+8F_N(\beta)}.
	\end{equation}
	Recall that, by the definition of \(\tilde{\betac}\) in~\eqref{eq:betac_tilde}, one has
	\begin{equation}\label{eq:perco_prf2}
		\liminf_{N\to\infty}\frac{\log F_N(\beta')}{\log N} \geq 1.
	\end{equation}
	In particular, by monotonicity of \(F_{N}(\beta)\) in \(\beta\), there exists \(N_1 = N_1(\beta') < \infty\) such that, for any \(N\geq N_1\) and \(\beta\geq\beta'\), \(F_N(\beta) \geq N^{1/2}\). Let \(N_2 = \max(N_0, N_1)\).
	
	By~\eqref{eq:perco_prf1}, our choice of \(N_2\), and the inequality \(R_k\leq k^{1/2}\) for \(k\geq N_0\), for any \(M\geq N_2\)
	\begin{align*}
		\frac{\rmd}{\rmd\beta} W_{N,M}(\beta)
		&\geq \frac{\sfe^{-\beta}}{9\beta} \frac1{\log N} \sum_{k=M}^{N}  \frac{\Phi_{2k;\beta}(0\leftrightarrow_{R_k} \Lambda_k^c )}{F_k(\beta)}\\
		&\geq \frac{\sfe^{-\beta}}{9\beta} \frac1{\log N} \Bigl(\log F_{N+1}(\beta) - \log F_{M}(\beta) - \sum_{k=M}^{N}  \frac{C\beta }{k^{2}}\Bigr),
	\end{align*}
	where we used~\eqref{eq:connections_to_R_connections}, \(F_k(\beta)\geq k^{1/2}\), the choice of \(R_k\), and
	\[
		\frac{f_k}{F_k} = \frac{F_{k+1}-F_k}{F_k} \geq \int_{F_k}^{F_{k+1}} \frac1{x} \,\rmd x = \log F_{k+1} - \log F_k.
	\]
	Integrating this inequality between \(\beta'\) and \(\beta\) and using the monotonicity of $F_{k}$ in $\beta$ yields, for \(M\geq N_2\),
	\[
		W_{N,M}(\beta) \geq (\beta-\beta') \frac{\sfe^{-\beta}}{9\beta} \frac1{\log N} \Bigl(\log F_{N+1}(\beta') - \log F_{M}(\beta) - c(\beta,\beta')M^{-1}\Bigr).
	\]
	Taking \(N\to\infty\) followed by \(M\to\infty\) and using~\eqref{eq:perco_prf2}, one obtains
	\[
		\Phi^1_{\beta}(0\leftrightarrow \infty) \geq (\beta-\beta') \frac{\sfe^{-\beta}}{9\beta}.
	\]
	The result now follows by letting \(\beta'\downarrow\tilde{\betac}\).
\end{proof}

\subsection{Proof of Theorem~\ref{thm:sharpness_main}: Exponential decay below \(\tilde{\betac}\)}

We will rely on two elementary lemmas on sequences, the proofs of which are relegated to the end of the section.
\begin{lemma}\label{lem:stretch_exp_seq}
	Let \(a_N\geq 0\) be a sequence satisfying
	\begin{itemize}
		\item \(a_N\) is non-increasing,
		\item \(\exists m\in\bbZ_{\geq 2}\), \(\exists\alpha<\infty\), \(\exists C_1,C_2\geq 0, \exists c_1>0, \forall N\geq 1, a_{m N} \leq C_1 N^{\alpha} a_N^2 + C_2\sfe^{-c_1 N}\),
		\item \(\exists\epsilon>0, \exists (N_{n})_{n\geq 1}\) increasing, \(\forall n\geq 1, a_{N_n} \leq \sfe^{- (\log N_n)^{1+\epsilon}}\).
	\end{itemize}
	Then there exist \(C\geq 0\) and \(c>0\) such that
	\[
		\forall N\geq 1, \quad a_N\leq C\sfe^{-c N^{\nu}},
	\]
	where \(\nu = \frac{\log 2}{\log m}\).
\end{lemma}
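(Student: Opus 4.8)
The plan is to bootstrap the quadratic recursion along the geometric subsequence $M_k=m^kM_0$, the base scale $M_0$ being chosen so large that $a_{M_0}$ already lies below a threshold depending only on $m,\alpha,C_1,C_2,c_1$. The role of the third hypothesis is precisely to supply such a \emph{seed}. We may assume $\alpha\ge0$ (otherwise replace $\alpha$ by $\max(\alpha,0)$, using $N^\alpha\le N^{\max(\alpha,0)}$ for $N\ge1$). Set $A=\log(2C_1)+\alpha\log M_0$ and $B=\alpha\log m$; the threshold we will need is of the polynomial form $a_{M_0}\le\sfe^{-1-A-B}$, with $A+B$ growing only logarithmically in $M_0$. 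Since $(\log N)^{1+\epsilon}$ eventually dominates $\alpha\log N+O(1)$, taking $M_0=N_n$ with $n$ large enough makes $a_{N_n}\le\sfe^{-(\log N_n)^{1+\epsilon}}$ meet this threshold, while simultaneously making $M_0$ large enough to satisfy a finite list of further lower bounds used below (each of which holds for $M_0$ large, since a linearly growing quantity eventually dominates a logarithmically growing one).

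I would then take logarithms and iterate (we may assume $a_N>0$ for all $N$: otherwise $a_N=0$ beyond some scale and the conclusion follows from monotonicity). Writing $u_k=\log a_{M_k}$ and feeding $N=M_k$ into the recursion, one gets at each step either $u_{k+1}\le A+Bk+2u_k$ (when the quadratic term dominates the additive one) or $u_{k+1}\le\log(2C_2)-c_1m^kM_0$ (otherwise). The crucial observation is that the linear recursion $u_{k+1}=2u_k+A+Bk$ is solved explicitly by $u_k=(u_0+A+B)2^k-(A+B)-Bk$; hence the induction hypothesis ``$u_k\le-2^k-(A+B)-Bk$'' is reproduced \emph{verbatim} by the first alternative, and the second alternative is even more favourable, because $m^k\ge2^k$ forces $\log(2C_2)-c_1m^kM_0\le-2^{k+1}-(A+B)-B(k+1)$ once $M_0$ is large (this is where the further lower bounds on $M_0$ enter). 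The seed gives the case $k=0$, so induction yields $a_{M_k}\le\sfe^{-2^k}$ for every $k\ge0$.

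It remains to translate this into the stated bound. Since $2^k=(m^k)^{\log 2/\log m}=(M_k/M_0)^\nu$, we have $a_{M_k}\le\exp(-M_0^{-\nu}M_k^\nu)$. Given an arbitrary $N\ge M_0$, choose $k$ with $M_k\le N<mM_k$; monotonicity of $(a_N)$ then gives $a_N\le a_{M_k}\le\exp\!\big(-M_0^{-\nu}(N/m)^\nu\big)=\exp(-(mM_0)^{-\nu}N^\nu)$, so the claim holds with $c=(mM_0)^{-\nu}$ for all $N\ge M_0$; the finitely many remaining $N$ are absorbed into $C$ by bounding $a_N\le a_1$ and enlarging the constant.

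The step I expect to be the real obstacle is handling the polynomial prefactor $N^\alpha$ in the recursion. Without it the bootstrap is immediate; with it, the naive ansatz $a_{M_k}\le\sfe^{-\lambda2^k}$ fails to close the induction, since one loses a factor $M_k^\alpha$ at each iteration. The remedy is to carry along the affine-in-$k$ correction $-(A+B)-Bk$ dictated by the exact solution of the linearized recursion, together with the point that the seed hypothesis is strong enough to beat this (logarithmically small) loss. A secondary technicality is that the additive error term $C_2\sfe^{-c_1N}$ — which decays genuinely exponentially, hence faster than $\sfe^{-cN^\nu}$ since $\nu\le1$ — must not spoil the induction; this is what fixes (together with arranging $A\ge0$) the remaining size requirements on $M_0$.
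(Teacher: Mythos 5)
Your proof is correct and follows essentially the same route as the paper's: a log-transformed bootstrap along the geometric subsequence \(m^kM_0\), seeded by the third hypothesis, with an affine-in-\(k\) correction to absorb the polynomial prefactor. The only real technical difference is in handling the additive error \(C_2\sfe^{-c_1N}\): the paper first replaces \(a_N\) by \(\tilde a_N=\max\{a_N,\sfe^{-c_1N/2}\}\), which turns the recursion into the purely multiplicative \(\tilde a_{mN}\le(C_3N)^\alpha\tilde a_N^2\) and removes the case analysis, whereas you carry the additive term along and dispatch it by a dichotomy at each step — both work, and the paper's device is just slightly cleaner.
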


\begin{lemma}\label{lem:stretch_exp_to_exp_seq}
	Let \(a_N\geq 0\) be a sequence satisfying
	\begin{itemize}
		\item \(\exists c_1>0,\exists\alpha<\infty,\exists \tilde{N}, \forall N\geq \tilde{N}\),
		\[
			a_{N}\leq \sfe^{-c_1 N} + N^{\alpha} \sum_{k=\lceil N/3\rceil }^{\lceil 2N/3\rceil} a_{k}a_{N-k},
		\]
		\item \(\exists\epsilon>0, \exists\tilde{N}, \forall N\geq\tilde{N}, a_N\leq \sfe^{-N^{\epsilon}}\).
	\end{itemize}
	Then, there exist \(C\geq 0\) and \(c>0\) such that
	\[
		\forall N\geq 1, \quad a_N\leq C\sfe^{-c N}.
	\]
\end{lemma}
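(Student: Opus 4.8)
The plan is to prove a sharper bound by strong induction. We may assume $\alpha\ge 0$ (if $\alpha<0$, replace $\alpha$ by $0$, which only weakens the first hypothesis), $\epsilon<1$ (otherwise $a_N\le\sfe^{-N}$ already gives the claim) and that the same $\tilde N$ appears in both hypotheses. I will show that, for suitable $c>0$ and $C_\ast<\infty$,
\[
	a_N\le C_\ast\,N^{-(\alpha+2)}\,\sfe^{-cN}\qquad\text{for all }N\ge\tilde N,
\]
after which the conclusion follows by enlarging the constant to absorb the finitely many indices $N<\tilde N$. The polynomial prefactor $N^{-(\alpha+2)}$ is chosen precisely so that, when the bound for $k$ and $N-k$ (both of order $N$) is inserted into the recursion, the factors $N^{\alpha}$ (from the prefactor in the hypothesis) and $\le N$ (from the number of summands) are over-compensated, leaving a spare factor $N^{-1}$; note that a naive ansatz $b_N:=-\log a_N\ge cN-g(N)$ with sublinear $g$ cannot close, since the convolution forces $g$ to more than double between $N/2$ and $N$, hence to grow linearly.

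The delicate point is to fix the three constants $c$, $C_\ast$ and the induction threshold $N_0$ without circularity. Using $a_N\le\sfe^{-N^\epsilon}$ one sees that $a_N N^{\alpha+2}\sfe^{cN}\le N^{\alpha+2}\sfe^{-N^\epsilon/2}\le M^\ast:=\sup_{x\ge1}x^{\alpha+2}\sfe^{-x^\epsilon/2}<\infty$ whenever $cN\le\tfrac12 N^\epsilon$; crucially $M^\ast$ depends only on $\alpha,\epsilon$. Set $C_\ast:=\max(1,M^\ast)$, then pick $N_0\ge\max(3\tilde N+3,\ 12,\ 2C_\ast 12^{\alpha+2})$ large enough (depending only on $\alpha,\epsilon,c_1,\tilde N$) that also $x^{\alpha+2}\sfe^{-c_1 x/2}\le\tfrac12$ for $x\ge N_0$, and finally $c:=\min\bigl(c_1/2,\ \tfrac12 N_0^{-(1-\epsilon)}\bigr)>0$. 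Since $x\mapsto x^{\epsilon-1}$ is decreasing (as $\epsilon<1$), one has $cN\le\tfrac12 N^\epsilon$ for every $N\le N_0$.

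Now run the induction. For $\tilde N\le N<N_0$ the choice of $c$ gives directly $a_N N^{\alpha+2}\sfe^{cN}\le M^\ast\le C_\ast$. For $N\ge N_0$, assume the bound for all $m\in[\tilde N,N)$; every index occurring in the sum of the first hypothesis lies in $[\lceil N/3\rceil,\lceil 2N/3\rceil]$, hence in $[\tilde N,N)$ and is $\ge N/4$ (this is where $N\ge N_0$ is used), so the inductive hypothesis applies to both $a_k$ and $a_{N-k}$ and, using $\sfe^{-ck}\sfe^{-c(N-k)}=\sfe^{-cN}$, gives $a_k a_{N-k}\le C_\ast^2\,12^{\alpha+2}\,N^{-2(\alpha+2)}\sfe^{-cN}$. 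As the sum has at most $N$ terms, $a_N\le\sfe^{-c_1 N}+C_\ast^2\,12^{\alpha+2}\,N^{-(\alpha+2)-1}\sfe^{-cN}$; dividing by $N^{-(\alpha+2)}\sfe^{-cN}$ and invoking $c\le c_1/2$ together with the lower bounds $N\ge N_0\ge 2C_\ast 12^{\alpha+2}$ and $N^{\alpha+2}\sfe^{-c_1 N/2}\le\tfrac12$ yields $a_N N^{\alpha+2}\sfe^{cN}\le\tfrac12+\tfrac12 C_\ast\le C_\ast$, closing the induction.

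The displayed chain of inequalities in the inductive step is entirely routine; the only genuine subtlety — and the one place where the stretched-exponential hypothesis is really needed — is guaranteeing that the constant $C_\ast$ controlling $a_N N^{\alpha+2}\sfe^{cN}$ on the initial block $[\tilde N,N_0)$ can be chosen \emph{before}, hence independently of, the threshold $N_0$ (which itself depends on $C_\ast$). This is exactly what the $N_0$-independent bound $M^\ast$ provides, once $c$ has been taken small enough relative to $N_0$.
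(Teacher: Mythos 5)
Your proof is correct and takes essentially the same route as the paper: a strong induction establishing a polynomially-corrected exponential bound (here $a_N\le C_\ast N^{-(\alpha+2)}\sfe^{-cN}$), with the stretched-exponential hypothesis used only to seed the base case on an initial block of indices and the recursion used to propagate it. The paper's bookkeeping differs cosmetically---it replaces the convolutional sum by $N^{\alpha+1}\max_k a_k a_{N-k}$ and proves $2(16N)^{\alpha+1}a_N\le \sfe^{-N/N_0}$ by induction, with the base interval $[N_0,3N_0]$ rather than $[\tilde N,N_0)$---but the mechanism and the role of each hypothesis are identical.
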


The idea is to establish the stretch-exponential decay of \(f_N(\beta)\) along a subsequence (provided by the definition of \(\tilde{\betac}\)) by integrating the \(\log\) version of the differential inequality from Lemma~\ref{lem:diff_ineq_radius}.
We then use Lemma~\ref{lem:stretch_exp_seq} to push this result to all \(N\geq 1\).
Finally, we use Lemma~\ref{lem:stretch_exp_to_exp_seq} to enhance the stretch-exponential decay to exponential decay.
This last step also differs from the argument in~\cite{Duminil-Copin+Raoufi+Tassion-2017}, which uses a second integration of the differential inequality. The differential inequality we obtain is not very convenient to repeat the argument of~\cite{Duminil-Copin+Raoufi+Tassion-2017}.

\begin{lemma}\label{lem:exp_dec}
	For any \(\beta<\tilde{\betac}\), there exist \(C_{\beta}\geq 0\) and \(c_{\beta}>0\) such that, for any \(N\geq 1\),
	\[
		\Phi_{N;\beta}(0\leftrightarrow \Lambda_{N}^\comp) \leq C_{\beta} \sfe^{-c_{\beta} N}.
	\]
\end{lemma}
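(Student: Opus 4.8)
The plan is to carry out the three-step scheme announced just above. Set $a_N=f_N(\beta)$. First, integrating the logarithmic form of the differential inequality of Lemma~\ref{lem:diff_ineq_radius} against the polynomial control on $F_N$ provided by the condition $\beta<\tilde{\betac}$ gives stretch-exponential decay of $a_N$ along a subsequence. Second, a renormalization inequality of quadratic type lets us apply Lemma~\ref{lem:stretch_exp_seq} and extend this to a stretch-exponential bound valid for every $N$. Third, a renormalization inequality of convolution type lets us apply Lemma~\ref{lem:stretch_exp_to_exp_seq} and upgrade it to genuine exponential decay of $f_N(\beta)$. The stated bound on $\Phi_{N;\beta}(0\leftrightarrow\Lambda_N^\comp)$ is then immediate: since $\{0\leftrightarrow\Lambda_N^\comp\}\subseteq\{0\leftrightarrow\Lambda_{\lfloor N/2\rfloor}^\comp\}$ and $\Lambda_{2\lfloor N/2\rfloor}\subseteq\Lambda_N$, monotonicity of wired Random-Cluster measures in the volume yields $\Phi_{N;\beta}(0\leftrightarrow\Lambda_N^\comp)\leq f_{\lfloor N/2\rfloor}(\beta)$.

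For the first step, since $\beta<\tilde{\betac}$, the description of $\tilde{\betac}$ in~\eqref{eq:betac_tilde} provides $\beta'\in(\beta,\tilde{\betac}]$, an increasing sequence $(N_n)$ and constants $C,c>0$ such that $F_{N_n}(\beta')\leq CN_n^{1-c}$; since each $f_k$ is nondecreasing in $\beta$ (stochastic monotonicity), also $F_{N_n}(\tilde\beta)\leq CN_n^{1-c}$ for all $\tilde\beta\leq\beta'$. Applying Lemma~\ref{lem:diff_ineq_radius} with $R'=\infty$ and $R=R_N:=\lceil\sqrt N\rceil$, and inserting~\eqref{eq:perco_prf0} and~\eqref{eq:connections_to_R_connections}, one gets, for $\tilde\beta\in[\beta,\beta']$ and all $N\geq1$,
\[
	\frac{\dd}{\dd\tilde\beta}f_N(\tilde\beta)\ \geq\ \frac{\sfe^{-\beta'}}{\beta'}\Bigl(f_N(\tilde\beta)-C'\tilde\beta\,N^{d}\sfe^{-cR_N}\Bigr)\frac{N}{R_N+8F_N(\tilde\beta)}.
\]
The point is that here $R_N$ must be chosen to grow faster than any power of $\log N$ (while still $o(N)$), so that the infinite-range correction $N^{d}\sfe^{-cR_N}$ is beyond stretch-exponentially small; the choice $R_N\asymp\log N$ used in the percolation part is too weak. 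Now suppose $f_{N_n}(\beta)>\sfe^{-(\log N_n)^{2}}$ for some large $n$. Because $\tilde\beta\mapsto f_{N_n}(\tilde\beta)$ is nondecreasing and $\sfe^{-(\log N_n)^{2}}$ dominates $C'\beta'N_n^{d}\sfe^{-cR_{N_n}}$, the parenthesis above is at least $\tfrac12 f_{N_n}(\tilde\beta)$ for every $\tilde\beta\in[\beta,\beta']$; dividing by $f_{N_n}(\tilde\beta)$, integrating over $[\beta,\beta']$, and using $F_{N_n}(\tilde\beta)\leq CN_n^{1-c}$ and $R_{N_n}\leq\sqrt{N_n}$, one obtains $\log f_{N_n}(\beta')-\log f_{N_n}(\beta)\geq c_1 N_n^{\delta}$ with $\delta=\min(c,\tfrac12)>0$, hence $f_{N_n}(\beta)\leq\sfe^{-c_1 N_n^{\delta}}$ (as $f_{N_n}(\beta')\leq1$). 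For $N_n$ large this contradicts the assumption, so $f_{N_n}(\beta)\leq\sfe^{-(\log N_n)^{2}}$ for all large $n$; after discarding finitely many indices this is precisely the third hypothesis of Lemma~\ref{lem:stretch_exp_seq}, with $\epsilon=1$.

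For the remaining two steps we verify the outstanding hypotheses of Lemmas~\ref{lem:stretch_exp_seq} and~\ref{lem:stretch_exp_to_exp_seq} for $a_N=f_N(\beta)$. Monotonicity of $f_N$ in $N$ follows from $\{0\leftrightarrow\Lambda_{N+1}^\comp\}\subseteq\{0\leftrightarrow\Lambda_N^\comp\}$ and monotonicity of wired measures in the volume. The quadratic recursion $f_{mN}(\beta)\leq C_1 N^{d}f_N(\beta)^2+C_2\sfe^{-c_1 N}$ (for a suitable fixed integer $m$, all $N\geq1$ after enlarging $C_1$ for the finitely many small $N$), and the convolution recursion $f_N(\beta)\leq\sfe^{-c_1 N}+N^{d}\sum_{k=\lceil N/3\rceil}^{\lceil 2N/3\rceil}f_k(\beta)f_{N-k}(\beta)$ (for all large $N$), are obtained by a standard renormalization argument: discard the event that some open edge is longer than $N$ (resp.\ $N/3$), whose probability is at most $C(mN)^{d}\beta\sfe^{-cN}\leq C_2\sfe^{-c_1 N}$ by exponential boundedness of $J$; on the complement, the connection $\{0\leftrightarrow\Lambda_{mN}^\comp\}$ (resp.\ $\{0\leftrightarrow\Lambda_N^\comp\}$) forces two crossings of well-separated concentric annuli of scale $N$ (resp.\ a junction vertex $x$ at each scale $k\in[N/3,2N/3]$, with $0$ connected to $x$ at scale $\leq k$ and $x$ connected, off the cluster of $0$, to distance $\geq N-k$); conditioning on the cluster of $0$ inside the relevant box, using the domain-Markov property and monotonicity in the boundary condition to decouple the two pieces and dominate each by a wired connection probability, and summing over the $O(N^{d-1})$ possible junction vertices, yields the polynomial prefactor. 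Feeding the first step into Lemma~\ref{lem:stretch_exp_seq} gives $f_N(\beta)\leq C\sfe^{-cN^{\nu}}$ for all $N$, with $\nu=\log2/\log m$; this verifies the last hypothesis of Lemma~\ref{lem:stretch_exp_to_exp_seq}, and that lemma then gives $f_N(\beta)\leq C_\beta\sfe^{-c_\beta N}$ for all $N$, whence $\Phi_{N;\beta}(0\leftrightarrow\Lambda_N^\comp)\leq f_{\lfloor N/2\rfloor}(\beta)\leq C'_\beta\sfe^{-c'_\beta N}$.

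I expect the first step to be the delicate one, for the reason flagged by the authors: because $\tilde{\betac}$ is defined via a $\liminf$, the polynomial bound on $F_N(\beta')$ holds only along a subsequence, and a priori $f_N(\beta)$ might be merely polynomially — or even super-polynomially — small; this forces the truncation scale $R_N$ to grow faster than $(\log N)^{2}$ so that the infinite-range correction in the differential inequality stays negligible, and one must exploit the monotonicity of $\tilde\beta\mapsto f_N(\tilde\beta)$ to keep the logarithmic differential inequality valid over the whole interval $[\beta,\beta']$. The renormalization inequalities of the last two steps are routine but require a bit of care to arrange enough spatial separation between the two crossings for the domain-Markov decoupling to apply cleanly, given that (after truncation) open edges still have length up to $\asymp N$.
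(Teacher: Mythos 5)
Your proposal is correct and follows the paper's own three-step scheme: stretch-exponential decay along the subsequence via the differential inequality, globalization via Lemma~\ref{lem:stretch_exp_seq}, and the upgrade to exponential decay via Lemma~\ref{lem:stretch_exp_to_exp_seq}. The one non-cosmetic divergence is in the first step: you keep \(R'=\infty\) in Lemma~\ref{lem:diff_ineq_radius}, which leaves the additive infinite-range correction \(C'\tilde\beta N^d\sfe^{-cR_N}\) inside the differential inequality and therefore forces the contradiction argument (and the monotonicity of \(\tilde\beta\mapsto f_{N_n}(\tilde\beta)\)) to ensure the bracket stays comparable to \(f_{N_n}(\tilde\beta)\) over the whole integration interval. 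The paper instead applies the same lemma with \(R'=R=R_N=N^{1-\tilde c}\), working with the truncated quantity \(\Phi_{2N;\beta}(0\leftrightarrow_{R_N}\Lambda_N^\comp)\) throughout; this eliminates the additive term, so the stretch-exponential bound along \((N_n)\) comes from a direct integration of \(\tfrac{\dd}{\dd\beta}\log\Phi_{2N_n;\beta}(0\leftrightarrow_{R_{N_n}}\Lambda_{N_n}^\comp)\geq c'N_n^{\tilde c}\), and one invokes~\eqref{eq:connections_to_R_connections} only once at the end to compare back to \(f_{N_n}\). Both routes supply condition (iii) of Lemma~\ref{lem:stretch_exp_seq}; the paper's is cleaner, yours is a perfectly serviceable variant. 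The remaining differences — tracking \(f_N\) rather than \(b_N=\Phi_{N;\beta}(0\leftrightarrow\Lambda_N^\comp)\) through Steps 2 and 3 and recovering the statement at the end via \(b_N\leq f_{\lfloor N/2\rfloor}\), and the sketched renormalization derivations of the quadratic and convolution recursions — are cosmetic and match the paper's Claims 2 and 3.
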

\begin{proof}
	Let \(\beta'<\tilde{\betac}\). By definition of \(\tilde{\betac}\), there exist \(\tilde{c}=\tilde{c}(\beta')>0\) and an increasing sequence \((N_n)_{n\geq 1}\) such that \(F_{N_n}(\beta')\leq N_n^{1-\tilde{c}}\) for all \(n\geq 1\). Without loss of generality, we impose \(\tilde{c}<1\). Let \(R_N = N^{1-\tilde{c}}\). The first step consists in bounding \(f_N(\beta)\) from above along the subsequence \((N_n)\).
	\begin{claim}
		\label{proof_of_exp_dec:claim1}
		There exists \(n_0\geq 1\) and \(c''>0\) such that for any \(\beta<\beta'\), there exists \(c'=c'(\beta,\beta')>0\) satisfying
		\begin{equation*}
			f_{N_n}(\beta) \leq \exp(-c' N_n^{c''})
		\end{equation*}for any \(n\geq n_0\).
	\end{claim}
	\begin{proof}
		First, using~\eqref{eq:connections_to_R_connections},
		\begin{equation*}
			f_{N}(\beta) \leq \Phi_{2N;\beta}(0\leftrightarrow_{R_N} \Lambda_N^{\comp}) + C\beta N^d e^{-N^{1-\tilde{c}}}.
		\end{equation*}The second term has the wanted decay, so we focus on the first one. Using Lemma~\ref{lem:diff_ineq_radius} with \(R=R'=R_N\) (and~\eqref{eq:perco_prf0}), one then gets that, for any \(\beta\leq \beta'\)
		\[
		\frac{\rmd}{\rmd\beta} \log \Phi_{2N;\beta}(0\leftrightarrow_{R_N} \Lambda_N^{\comp}) \geq 	\frac{\sfe^{-\beta'}}{\beta'}\frac{N}{R_N+8F_N(\beta')}.
		\]So, for \(n\geq 1\) and \(\beta\leq\beta'\),
		\[
		\frac{\rmd}{\rmd\beta} \log \Phi_{2N_n;\beta}(0\leftrightarrow_{R_{N_n}} \Lambda_{N_n}^{\comp}) \geq \frac{\sfe^{-\beta'}}{\beta'}\frac{N_n}{9 N_n^{1-\tilde{c}}} \equiv c'N_n^{\tilde{c}},
		\]
		where \(c'\equiv c'(\beta')\). Integrating between \(\beta\) and \(\beta'\),
		\[
		\Phi_{2N_n;\beta}(0\leftrightarrow_{R_{N_n}} \Lambda_{N_n}^{\comp}) \leq \Phi_{2N_n;\beta'}(0\leftrightarrow_{R_{N_n}} \Lambda_{N_n}^{\comp}) \exp(-c'(\beta'-\beta) N_n^{\tilde{c}}).
		\qedhere
		\]
	\end{proof}

	The second step is too push Claim~\ref{proof_of_exp_dec:claim1} to all values of \(N\) (not just the subsequence \((N_n)\)). This step is the price to pay for having a \(\liminf\) instead of a \(\limsup\) in~\eqref{eq:betac_tilde}.
	\begin{claim}
		\label{proof_of_exp_dec:claim2}
		For any \(\beta<\beta'\), there exists \(c'>0\) such that for any \(N\) large enough
		\begin{equation*}
			f_{N}(\beta) \leq C'\exp(-c' N^{\log(2)/\log(6)}).
		\end{equation*}
	\end{claim}
	\begin{proof}
		We would now like to use Lemma~\ref{lem:stretch_exp_seq}. The sequence \(a_N=f_{N}(\beta)\) is non-increasing and Claim~\ref{proof_of_exp_dec:claim1} implies that \(a_N\) satisfies the third condition of Lemma~\ref{lem:stretch_exp_seq}. We now establish the second condition with \(m=6\). 
		Partitioning according to whether there is an open edge in \(E_{12N,N}\) or not, we obtain from finite energy that
		\[
		f_{6N}(\beta) \leq \Phi_{12N;\beta}(0\leftrightarrow_{N} \Lambda_{6N}^\comp) + C_{\beta} \sfe^{-cN}
		\]
		for some \(c>0\) and \(C_{\beta}\geq 0\). Now, the event \(\{0\leftrightarrow_{N} \Lambda_{6N}^\comp\}\) implies both the event \(\{0\leftrightarrow_N \Lambda_N^\comp\}\), which is \(E_{2N,N}\)-measurable, and the existence of a point \(x\in\Lambda_{7N}\setminus\Lambda_{6N}\) such that \(x\leftrightarrow_{N} \Lambda_{N}(x)\), which is (\(x+E_{2N,N}\))-measurable. In particular, by a union bound and monotonicity,
		\[
		\Phi_{12N;\beta}(0\leftrightarrow_{N} \Lambda_{6N}^\comp)
		\leq C_d N^{d} \Phi_{2N}(0\leftrightarrow \Lambda_N^\comp)^2.
		\]
		Lemma~\ref{lem:stretch_exp_seq} then implies the existence of \(c=c(\beta)>0\) and \(C=C(\beta)\geq 0\) such that
		\[
		\forall N\geq 1, \quad f_{N}(\beta)\leq C \sfe^{-c N^{\nu}}
		\]
		with \(\nu = \frac{\log 2}{\log 6}\).
	\end{proof}
	The final step is to enhance the stretch exponential decay to exponential. This is the content of the last claim.
	\begin{claim}
		\label{proof_of_exp_dec:claim3}
		Let \(\beta<\beta'\). Then, the sequence
		\[
			b_N = \Phi_{N;\beta}(0\leftrightarrow \Lambda_N^\comp)
		\]
		satisfies the hypotheses of Lemma~\ref{lem:stretch_exp_to_exp_seq}.
	\end{claim}
	\begin{proof}
		The second condition of Lemma~\ref{lem:stretch_exp_to_exp_seq} with \(\epsilon = \frac{\log(2)}{2\log(6)}\) follows from Claim~\ref{proof_of_exp_dec:claim2}. Let us now turn to the first condition.
		 Partitioning on whether an edge with length at least \(N/3\) is open or not, one obtains, for any \(N\) large enough,
		\[
		\Phi_{N;\beta}(0\leftrightarrow \Lambda_N^\comp) \leq \sfe^{-c_1 N} + \Phi_{N;\beta}(0\leftrightarrow_{N/3} \Lambda_N^\comp).
		\]
		Now, the event \(\{0\leftrightarrow_{N/3} \Lambda_N^\comp\}\) entails the existence of \(x,y,z\in\Lambda_N\) (see Fig.~\ref{figure:Claim_3}) with
		\begin{itemize}
			\item \(N/3\leq\normsup{y}\leq\normsup{x}\leq 2N/3\), \(\normsup{z}> \normsup{x}\),
			\item \(0\xlongleftrightarrow{E_{\Lambda_{\normsup{x}}}}_{N/3} x \),
			\item \(\omega_{yz} = 1\),
			\item \(z\xlongleftrightarrow{E_{N,\infty}\setminus E_{\normsup{x},\infty}} \Lambda_N^\comp\).
		\end{itemize}
		
		To see this, let \(\gamma = (\gamma_1,\dots,\gamma_m)\) be a self-avoiding path of open edges in \(E_{N,N/3}\) with \(\gamma_1=0\) and \(\gamma_m\in\Lambda_N^\comp\). Let \(t_1\) be the first time \(\gamma\) exits \(\Lambda_{N/3}\). Set \(x=\gamma_{t_1}\in\Lambda_{2N/3}\setminus \Lambda_{N/3}\). \(x\) connected to \(0\) using only edges in \(E_{\Lambda_{\normsup{x}}}\), since \(\{\gamma_1,\dots,\gamma_{t_1}\}\subset \Lambda_{\normsup{x}}\). Let now \(t_2\) be the last time \(\gamma\) exits \(\Lambda_{\normsup{x}}\). This implies, in particular, that \(\{\gamma_{t_2},\dots,\gamma_{m}\}\subset \Lambda_{\normsup{x}}^\comp\) and \(\gamma_{t_2-1}\in\Lambda_{\normsup{x}}\). We can then set \(y=\gamma_{t_2-1}\) and \(z=\gamma_{t_2}\).
		
		\begin{figure}[t]
	\centering
	\includegraphics{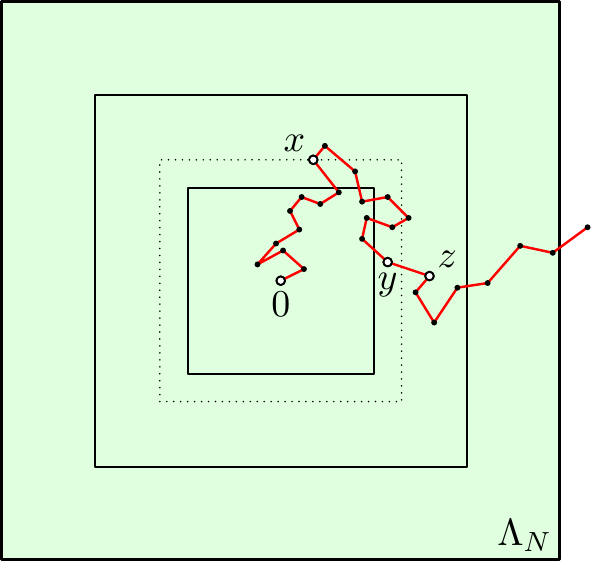}
	\caption{The construction of a triplet $x,y,z$ used in the proof of Claim 3.}
	\label{figure:Claim_3}
\end{figure}
		
		Now, for such a triplet \(x,y,z\),
		\begin{align*}
			\Phi_{N;\beta}&\bigl(0\xlongleftrightarrow{E_{\Lambda_{\normsup{x}}}}_{N/3} x,\omega_{yz} = 1, z\xlongleftrightarrow{E_{N,\infty}\setminus (E_{\normsup{x},\infty} \cup \{\{y,z\}\})} \Lambda_N^\comp \bigr) \\
			&\leq p_{yz} \Phi_{N;\beta}\bigl(0\xlongleftrightarrow{E_{\Lambda_{\normsup{x}}}}_{N/3} x \bgiven z\xleftrightarrow[]{E_{N,\infty}\setminus (E_{\normsup{x},\infty} \cup \{\{y,z\}\}) } \Lambda_N^\comp \bigr) \Phi_{N;\beta}\bigl(z\xlongleftrightarrow{E_{N,\infty}\setminus\{y,z\}} \Lambda_N^\comp \bigr)\\
			&\leq \frac{p_{yz}(p_{yz}+q(1-p_{yz}))}{p_{yz}}\Phi_{\normsup{x};\beta} \bigl(0\xlongleftrightarrow{E_{\Lambda_{\normsup{x}}}} x\bigr) \Phi_{N;\beta}\bigl( y\leftrightarrow \Lambda_N^\comp \bigr)\\
			&\leq c \Phi_{\normsup{x};\beta}\bigl(0\leftrightarrow \Lambda_{\normsup{x}}^\comp\bigr) \Phi_{N-\normsup{y};\beta}\bigl( 0\leftrightarrow \Lambda_{N-\normsup{y}}^\comp \bigr),
		\end{align*}
		where \(p_{xy} = 1-e^{-\beta J_{xy}}\), we opened $\lbrace y,z\rbrace$ in the third line and we forced a step from \(x\) to the outside of \(\Lambda_{\normsup{x}}\) by finite energy (manifested by the presence of the constant \(c<\infty\) depending on \(q\), \(\beta\) and \(J\)).
		Now, by a union bound and monotonicity,
		\begin{align*}
			\Phi_{N;\beta}(0\leftrightarrow_{N/3}& \Lambda_N^\comp) \\
			&\leq\sum_{k=N/3}^{2N/3}\sum_{k'=N/3}^{k}\sum_{\normsup{x}=k}\sum_{\normsup{y}=k'} CN^{d} cq \Phi_{k;\beta}\bigl(0\leftrightarrow \Lambda_{k}^\comp\bigr) \Phi_{N-k';\beta}\bigl( 0\leftrightarrow \Lambda_{N-k'}^\comp \bigr)\\
			&\leq C'N^{3d-1} \sum_{k=N/3}^{2N/3} \Phi_{k;\beta}\bigl(0\leftrightarrow \Lambda_{k}^\comp\bigr) \Phi_{N-k;\beta}\bigl( 0\leftrightarrow \Lambda_{N-k}^\comp \bigr).
		\end{align*}
		Plugging this into our first bound on \(b_N\), we conclude that, for any \(N\) large enough,
		\[
		b_N \leq \sfe^{-c_1 N} + N^{3d} \sum_{k=N/3}^{2N/3} b_k b_{N-k},
		\]
		which is the first condition of Lemma~\ref{lem:stretch_exp_to_exp_seq}.
	\end{proof}
	Application of Lemma~\ref{lem:stretch_exp_to_exp_seq} concludes the proof.
\end{proof}

\begin{proof}[Proof of Lemma~\ref{lem:stretch_exp_seq}]
	Consider the sequence \(\tilde a_N = \max\{a_N, \sfe^{-c_1N/2}\}\).
	The second condition implies the existence of \(C_3 \geq 1\) and $N_{0}\geq 0$ such that, for any $N$ sufficiently large
	\[
		\tilde a_{m N} \leq C_1 N^{\alpha} \tilde a_N^2 + C_2\sfe^{-c_1 N} \leq (C_3N)^{\alpha} \tilde a_N^2.
	\]
	Define \(b_N= -\log \tilde a_N\). It is now sufficient to prove that \(b_N \geq cN^{\nu}\) for some \(c>0\) and all \(N\) large enough.
	The inequality above becomes
	\[
		b_{mN} \geq -\alpha\log(C_3 N) + 2 b_N.
	\]
	In particular, for \(k\geq 1$ and $N$ sufficiently large
	\begin{align*}
		b_{m^k N} &\geq 2^kb_{N} - \frac{\alpha}2 \sum_{i=1}^{k} 2^i\log(C_3Nm^{k-i})\\
		&= 2^k \bigl(b_{N} - \frac{\alpha}2\log(C_3N) \sum_{i=1}^{k} 2^{i-k} - \frac{\alpha}2\log m \sum_{i=1}^{k-1} 2^{i-k}(k-i)\bigr)\\
		&\geq 2^k\bigl(b_{N} - \frac{\alpha}2\log(C_3N) \sum_{i=0}^{\infty} 2^{-i} - \frac{\alpha}{4}\log m \sum_{i=0}^{\infty} 2^{-i}(i+1)\bigr)\\
		&= 2^k\bigl(b_{N} - \alpha\log(C_3N) -\alpha\log m \bigr).
	\end{align*}
where the first inequality follows from an easy induction. By our third assumption, \(b_{N_n}\geq (\log N_n)^{1+\epsilon}\) for any \(n\geq 1\). Let then \(N_0\) be such that \(b_{N_0} \geq \alpha \log(C_3 N_0 m)+ 1\), so that
	\[
		\forall k\geq 1,\quad b_{m^kN_0} \geq  2^k.
	\]
	By our first assumption, \(b_N\) is non-decreasing. Let \(\nu = \frac{\log 2}{\log m}\). Set \(c = (mN_0)^{-\nu}\). For any \(N\geq N_0\), one can find \(k\geq 1\) such that \(m^{k-1} N_0 \leq N < m^k N_0\). Hence,
	\[
		b_{N} \geq b_{m^{k-1}N_0} \geq 2^{k-1} = \frac1{(mN_0)^{\nu}}(m^{k}N_0)^{\nu} \geq c N^{\nu}.
		\qedhere
	\]
\end{proof}

\begin{proof}[Proof of Lemma~\ref{lem:stretch_exp_to_exp_seq}]
	From the first condition, we obtain
	\[
		a_{N}\leq \sfe^{-c_1 N} + N^{\alpha+1} \max_{k\in\{\lceil N/3\rceil, \dots, \lceil 2N/3\rceil\}} a_{k} a_{N-k},
	\]
	for all \(N\) large enough.
	By our second condition, there exists \(N_0 \geq 2\) such that
	\begin{itemize}
		\item the previous inequality holds for \(N\geq N_0\),
		\item \(\max_{N_0\leq k \leq 3N_0} (16k)^{\alpha + 1}a_{k} \leq \frac12\sfe^{-3}\),
		\item \(2(16N)^{\alpha +1} \sfe^{-c_1 N} \leq \frac12 \sfe^{-N/N_0}\) for \(N\geq N_0\).
	\end{itemize}
	We now claim that \(2(16N)^{\alpha + 1}a_{N}\leq \sfe^{-N/N_0}\) for any \(N\geq N_0\). We proceed by induction over \(N\). The cases \(N\in\{N_0,\dots,3N_0\}\) follow by the second bullet point above.
	Suppose now that the claim holds up to \(N\geq 3N_0\). Let us prove that it also holds for \(N+1\).
	\begin{align*}
		2(16(N+1))^{\alpha +1} a_{N+1}
		&\leq \frac12 \sfe^{-(N+1)/N_0} +  2(16(N+1))^{\alpha +1} (N+1)^{\alpha+1}\max_{k} a_k a_{N+1-k}\\
		&\leq \frac12 \sfe^{-(N+1)/N_0} +  \frac12 \max_{k} 2(16k)^{\alpha +1} a_k 2(16(N+1-k))^{\alpha+1} a_{N+1-k}\\
		&\leq \frac12 \sfe^{-(N+1)/N_0} +  \frac12 \max_{k} \sfe^{-k/N_0} \sfe^{-(N+1-k)/N_0} = \sfe^{-(N+1)/N_0},
	\end{align*}
	where we used the induction hypothesis in the last line and the maxima are over \(\lceil (N+1)/3 \rceil\leq k\leq \lceil 2(N+1)/3 \rceil\); in particular, for these choices of \(k\), \(\frac{N+1}{k}\leq 4\) and \(\frac{N+1}{N+1-k}\leq 4\).
\end{proof}

\subsection{Ratio mixing: Proof of Corollary~\ref{cor:main_mixing}}
Let us write \(\Phi=\Phi_\beta\). We first prove the claim for finite \(F,F'\). It is sufficient to show that, for any \(\eta\in \{0,1\}^{F}\) and \(\eta'\in \{0,1\}^{F'}\),
\[
	(1-\epsilon)^{-1} \geq \frac{\Phi(\omega_F=\eta)}{\Phi(\omega_F=\eta \given \omega_{F'} = \eta')} \geq (1+\epsilon)^{-1}
\]
with
\[
	\epsilon \equiv \epsilon(F,F',C,c) = \sum_{x\in V_F,y\in V_{F'}} C\sfe^{-c\norm{x-y}}.
\]
Let us first prove the following result.
\begin{lemma}\label{lem:mixing}
	Let \(\beta<\betac\). There exist \(C<\infty\) and \(c>0\) such that, for any \(F,F'\) finite and \(\eta\in\{0,1\}^{F}\),
	\[
		1-\epsilon(F,F',C,c) \leq \frac{\Phi\bigl(\omega|_F=\eta\bgiven \omega|_{F'} = 1\bigr)}{\Phi\bigl(\omega|_F=\eta\bgiven \omega|_{F'} = 0\bigr)}\leq 1+2\epsilon(F,F',C,c),
	\]
	whenever \(\epsilon\leq 1/2\). The same holds if one replace exactly one of \(\{\omega|_{F'} = 0\}\) or \(\{\omega|_{F'} = 1\}\) by \(\{\omega|_{F'} = \eta'\}\) for any \(\eta'\in \{0,1\}^{F'}\).
\end{lemma}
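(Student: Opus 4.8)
The plan is to reduce, via Bayes' rule, to the statement that conditioning on a cylinder event on $F'$ changes $\Phi(\omega|_F=\eta)$ only by a factor $1+O(\epsilon)$, and to obtain this by revealing the cluster of $V_F$ and invoking the exponential decay of connectivities, which is available since $\beta<\betac=\betaexp$ by Theorem~\ref{thm:main_sharpness}. We may assume $V_F\cap V_{F'}=\varnothing$: having first fixed $C$ large enough, otherwise $\epsilon\geq C>1/2$ and there is nothing to prove. One works throughout with the unique (DLR) infinite-volume measure $\Phi_\beta$, or passes to the limit from finite volumes.

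The first ingredient is the following consequence of Theorem~\ref{thm:main_sharpness} and the spatial Markov property: there are $C,c$, \emph{independent of all data}, such that for every finite $G\subset E_d$, every $\zeta\in\{0,1\}^{G}$, every vertex $z\notin V_G$ and every set $S$,
\[
\Phi\bigl(z\leftrightarrow S\bigm\vert\omega|_G=\zeta\bigr)\ \leq\ C\sfe^{-c\,\mathrm{dist}(z,\,S\cup V_G)},
\]
because inside any box $\Lambda$ disjoint from $V_G$ the conditioned measure is stochastically dominated by the wired measure $\Phi^1_{\Lambda}$, whose radius-$R$ connection probabilities are $\leq C\sfe^{-cR}$ uniformly; the infinite range of $J$ intervenes only through $\sum_{\norm{y}>R}J_y\leq\sfe^{-cR}$. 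Summing over $z$ one deduces $\Phi(V_F\leftrightarrow V_{F'}\mid\omega|_F=\eta)\leq\epsilon(F,F',C,c)$ and $\Phi(V_F\leftrightarrow V_{F'})\leq\epsilon$, and that the cluster of $V_F$ has radius $\geq R$ with probability $\leq\epsilon$ when $R\asymp\mathrm{dist}(V_F,V_{F'})$, uniformly over conditionings supported outside a $2R$-neighbourhood of $V_F$.

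By Bayes' rule it suffices to show $\Phi(\omega|_F=\eta\mid\omega|_{F'}=\zeta')=\Phi(\omega|_F=\eta)(1+O(\epsilon))$ for every $\zeta'\in\{0,1\}^{F'}$; the two inequalities of the lemma and its ``$\eta'$''-variant then follow by taking ratios. Reveal the cluster $\mathcal C$ of $V_F$ in $E_d\setminus F'$. On $\{\mathcal C=A\}$ with $A\cap V_{F'}=\varnothing$ one has $V_F\subseteq A$, $F\subseteq E_A$, $F'\subseteq E_{A^\comp}$, and $\partial A$ closed, so conditionally on $\{\mathcal C=A\}$ the blocks $\omega|_{\bar E_A}$ (carrying $\{\omega|_F=\eta\}$) and $\omega|_{E_{A^\comp}}\sim\Phi^0_{A^\comp}$ (carrying $\{\omega|_{F'}=\zeta'\}$) are independent; hence $\Phi(\omega|_F=\eta\mid\mathcal C=A,\omega|_{F'}=\zeta')=\Phi(\omega|_F=\eta\mid\mathcal C=A)$ and $\Phi(\mathcal C=A,\omega|_{F'}=\zeta')=\Phi(\mathcal C=A)\,\Phi^0_{A^\comp}(\omega|_{F'}=\zeta')$. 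Dropping the $A$'s with $A\cap V_{F'}\neq\varnothing$ or $\mathrm{diam}\,A>\tfrac12\mathrm{dist}(V_F,V_{F'})$ costs only $O(\epsilon)$ on both sides (previous paragraph), and the retained $A$'s satisfy $\mathrm{dist}(A,V_{F'})\geq\tfrac12\mathrm{dist}(V_F,V_{F'})$. Subtracting the two total-probability decompositions gives
\[
\Phi(\omega|_F=\eta\mid\omega|_{F'}=\zeta')-\Phi(\omega|_F=\eta)\ =\ \sum_{A}\Phi(\mathcal C=A,\,\omega|_F=\eta)\Bigl(\tfrac{\Phi^0_{A^\comp}(\omega|_{F'}=\zeta')}{\Phi(\omega|_{F'}=\zeta')}-1\Bigr)\ +\ O(\epsilon)\,\Phi(\omega|_F=\eta),
\]
the sum over the retained $A$'s.

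It remains to prove the key estimate: deleting a finite set $A$ at distance $D$ from $V_{F'}$ perturbs the probability of the cylinder $\{\omega|_{F'}=\zeta'\}$ only multiplicatively, $\bigl\lvert\Phi^0_{A^\comp}(\omega|_{F'}=\zeta')/\Phi(\omega|_{F'}=\zeta')-1\bigr\rvert\leq C\sfe^{-cD}$, uniformly in $A$, $F'$ and $\zeta'$. Granting it, inserting it above and summing (using $\mathrm{dist}(A,V_{F'})\geq\tfrac12\mathrm{dist}(V_F,V_{F'})$) bounds the right-hand side by $c'\epsilon\,\Phi(\omega|_F=\eta)$, which is what we wanted. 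The estimate itself is obtained by coupling $\Phi^0_{A^\comp}$ and ($\Phi$ restricted to $E_{A^\comp}$) monotonically and using disagreement percolation for the Random-Cluster model: any disagreement between the two measures lies in the larger cluster of $\partial A$, so a disagreement inside $F'$ forces $\partial A\leftrightarrow V_{F'}$, an event of probability $\leq C\sfe^{-cD}$ by the exponential-decay input; the passage from this \emph{additive} bound to the \emph{multiplicative} one is carried out by conditioning on the configuration near $V_{F'}$ (far from $A$) and comparing the two conditional laws of $\omega|_{F'}$ through finite energy, again with the help of the exponential-decay input. This last step --- making the multiplicative estimate quantitative with constants that do not deteriorate with $F'$, $\zeta'$ and $A$ --- is the technically delicate point; everything else is bookkeeping of the spatial Markov property and of Theorem~\ref{thm:main_sharpness}.
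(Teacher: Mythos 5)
Your plan takes a genuinely different route from the paper's — cluster-revealment of $V_F$ plus a ratio-mixing estimate for the effect of deleting a finite set $A$ — but it contains a real gap. The essential difficulty in this lemma is converting additive estimates (which exponential decay of connectivities readily supplies) into a \emph{multiplicative} one that is uniform in $\eta$; after all, $\Phi(\omega|_F=\eta)$ can be as small as $\sfe^{-c|F|}$. Your proposal pushes exactly that difficulty onto the unproved ``key estimate'' $\bigl\lvert\Phi^0_{A^\comp}(\omega|_{F'}=\zeta')/\Phi(\omega|_{F'}=\zeta')-1\bigr\rvert\le C\sfe^{-cD}$, uniformly over $A$, $F'$ and $\zeta'$, which is itself a ratio-mixing statement for cylinder events of the same strength as the lemma. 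The sketch you offer for it does not visibly close: comparing the two conditional laws of $\omega|_{F'}$ edge by edge through finite energy compounds over $\lvert F'\rvert$ edges and yields a bound of order $c^{\lvert F'\rvert}$, not $1+O(\sfe^{-cD})$, and because $J$ has infinite range, no finite ``shell'' around $V_{F'}$ screens off the effect of deleting $A$ from the domain (the domain Markov property requires conditioning on \emph{all} edges crossing out of a box, not a finite annulus). Likewise, ``dropping the $A$'s with large diameter costs only $O(\epsilon)$'' must be read as a multiplicative cost relative to $\Phi(\omega|_F=\eta)$, i.e.\ $\Phi(\mathrm{diam}\,\calC>\tfrac12\mathrm{dist}(V_F,V_{F'})\mid\omega|_F=\eta)\le\epsilon$ uniformly in $\eta$, which again is a non-trivial conditional assertion.

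The paper bypasses the additive-to-multiplicative obstacle by working with a monotone coupling $\Xi$ of $\Phi(\cdot\mid\omega|_{F'}=1)$ and $\Phi(\cdot\mid\omega|_{F'}=0)$ whose marginals $\omega^+\ge\omega^-$ \emph{agree off the $\omega^+$-cluster of $V_{F'}$} (the construction is in Appendix~A of~\cite{Ott+Velenik-2018}). Splitting $\Xi(\omega^+|_F=\eta)$ according to whether $V_F\leftrightarrow_{\omega^+}V_{F'}$ or not, the agreement property lets one replace $\omega^+|_F$ by $\omega^-|_F$ on the no-connection event, so the desired ratio emerges directly, and exponential decay is only invoked additively (but \emph{conditionally on} $\{\omega|_F=\eta\}$) on the connection event; the lower bound is handled by a separate FKG factorization over $F_1=\{e:\eta_e=1\}$ and $F_0=\{e:\eta_e=0\}$. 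You should either adopt that coupling, or supply a complete proof of your key estimate; as written, deferring it to a ``technically delicate'' sketch leaves the proof incomplete.
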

\begin{proof}
	Let \(\Xi\) be a monotone coupling of \(\Phi(\cdot \given \omega|_{F'} = 1)\) and \(\Phi(\cdot \given \omega|_{F'} = 0)\) such that, if \((\omega^+,\omega^-)\sim \Xi\), \(\omega^+\geq \omega^-\), one has that \(\omega^+\) and \(\omega^-\) agree on the complement of the cluster of \(V_{F'}\) in \(\omega^+\) (see the Appendix A in~\cite{Ott+Velenik-2018} for the proof of existence of such a coupling).
	Then,
	\begin{align*}
		\Xi\bigl(\omega|_F=\eta\bgiven \omega|_{F'} = 1\bigr) &= \Xi(\omega^+_F=\eta )\\
		&= \Xi(\omega^+_F=\eta, V_F\leftrightarrow_{\omega^+} V_{F'} ) + \Xi(\omega^+_F=\eta, V_F\nleftrightarrow_{\omega^+} V_{F'} )\\
		&= \Xi(\omega^+_F=\eta, V_F\leftrightarrow_{\omega^+} V_{F'} ) + \Xi(\omega^-_F=\eta, V_F\nleftrightarrow_{\omega^+} V_{F'} )\\
		& \leq \Phi\bigl(V_F\leftrightarrow V_{F'}, \omega|_F=\eta \bgiven \omega|_{F'} = 1\bigr) + \Phi\bigl(\omega|_F=\eta\bgiven \omega|_{F'} = 0\bigr).
	\end{align*}
	The uniform exponential decay of connectivities now implies that
	\[
		\Phi\bigl(V_F\leftrightarrow V_{F'},  \omega|_F=\eta \bgiven \omega|_{F'} = 1\bigr) \leq \Phi\bigl(\omega|_F=\eta \bgiven \omega|_{F'} = 1\bigr) \sum_{x\in V_F, y\in V_{F'}} C\sfe^{-c\norm{x-y}},
	\]
	which yields the upper bound. The same procedure applies if one does the replacements mentioned in the statement.
	
	To obtain the lower bound, let us write \(F_{*}=\setof{e\in F}{\eta_e=*}, *\in\{0, 1\}\). Then, the ratio we want to lower bound can be expressed as
	\begin{multline*}
		\frac{\Phi\bigl(\omega|_{F_1}=1 \bgiven \omega|_{F'} = 1\bigr) \Phi\bigl(\omega|_{F_0}=0 \given \omega|_{F_1}=1, \omega|_{F'} = 1\bigr)}{\Phi\bigl(\omega|_{F_1}=1 \bgiven \omega|_{F'} = 0\bigr) \Phi\bigl(\omega|_{F_0}=0 \bgiven \omega|_{F_1}=1, \omega|_{F'} = 0\bigr)}\\
		\geq \frac{ \Phi\bigl(\omega|_{F_0}=0 \bgiven \omega|_{F_1}=1, \omega|_{F'} = 1\bigr)}{\Phi\bigl(\omega|_{F_0}=0 \bgiven \omega|_{F_1}=1, \omega|_{F'} = 0\bigr)}.
	\end{multline*}
	Now,
	\begin{align*}
		\Phi\bigl(\omega|_{F_0}=0 \bgiven \omega|_{F_1}=1, \omega|_{F'} = 1\bigr) &\geq \Phi\bigl(\omega|_{F_0}=0, V_F\nleftrightarrow V_{F'} \bgiven \omega|_{F_1}=1, \omega|_{F'} = 1\bigr)\\
		&\geq \Phi\bigl(\omega|_{F_0}=0 \bgiven \omega|_{F_1}=1, \omega|_{F'} = 0\bigr)\bigl(1-\epsilon(F,F')\bigr),
	\end{align*}
	by monotonicity and the uniform exponential decay of connectivities. Again, the same procedure applies if one does the replacements mentioned in the statement.
\end{proof}

To get Corollary~\ref{cor:main_mixing} from there, let us write
\[
	\frac{\Phi(\omega_F=\eta)}{\Phi(\omega_F=\eta \given \omega_{F'} = \eta')} = \sum_{\tau\in\{0,1\}^{F'}} \Phi(\omega_{F'}=\tau) \frac{\Phi(\eta \given \tau)}{\Phi(\eta \given 0)}\frac{\Phi(\eta \given 0)}{\Phi(\eta \given 1)}\frac{\Phi(\eta \given 1)}{\Phi(\eta \given \eta')},
\]
where we have written \(\Phi(\eta\given *) = \Phi(\omega_{F}=\eta \given \omega_{F'}=*)\),
and use Lemma~\ref{lem:mixing}.
To get the case of finitely supported events \(A\) and \(B\), we can sum over configurations in \(A\) and \(B\) and apply the bound configuration-wise. To treat events in \(A\in\calF_{F}\) and \(B\in \calF_{F'}\) with \(F\) and \(F'\) infinite, we approximate the events \(A\) and \(B\) by events \(A_n\) and  \(B_n\) that are supported on finite sets \(F_n,F'_n\).
\(\epsilon(F,F',C,c)\) provides a uniform bound on \(\epsilon(F_n,F'_n,C,c)\). So,
\[
	 \frac{\Phi(A\cap B)}{\Phi(A)\Phi(B)} = \lim_{n\to\infty} \frac{\Phi(A_n\cap B_n)}{\Phi(A_n)\Phi( B_n)}
	 \begin{cases}
	 	\leq 1+ \epsilon(F,F',C,c),\\
	 	\geq 1- \epsilon(F,F',C,c).
	 \end{cases}
\]

\section{Asymptotics of connexion probabilities}

In all this section we work with \(\beta<\betac\). So, there is a unique infinite volume measure which is denoted \(\Phi_\beta\).

Recall that we defined \(\constPrefact_n(s)=\constPrefact_n(s,\beta,q)\) by
\begin{equation*}
	\constPrefact_n(s,\beta,q) = \frac{\beta}{q}\sfe^{\rho(ns)} \sum_{u,v\in \Zd} \Phi_{\beta}(0\leftrightarrow u) \sfe^{-\rho(ns-u-v)} \Phi_{\beta}(0\leftrightarrow v).
\end{equation*}
The goal of this section is the proof of Theorems~\ref{theorem:sharp_asymptotics} and~\ref{theorem:betasat=betahat}.

\subsection{Technical preparations}

We first state a few definitions/observations.

\subsubsection*{Pivotal edges}
Introduce the set \(\Piv_x(\omega)\) of edges pivotal in \(\omega\) for the event \(\{0\leftrightarrow x\}\).
\label{figure:Claim_3}
\subsubsection*{Nice connections}
Let \(f:\R^d \to \R_+\). Introduce the \emph{nice connection} event:
	\begin{multline}
		\label{eq:nice_connection_def}
			\niceConnection_x(f) = \{0\leftrightarrow x\}\cap \{|C_0|\leq f(x) \}\cap\\
		\cap \big\{ \{u,v\}\subset C_0 \ \mathrm{ AND }\ \rho(u-v)\geq 3\log f(x) \ \mathrm{ AND }\  \omega_{uv} = 1 \implies  \{u,v\}\in \Piv_x \big\}.
	\end{multline}
\begin{figure}[t]
	\centering
	\input{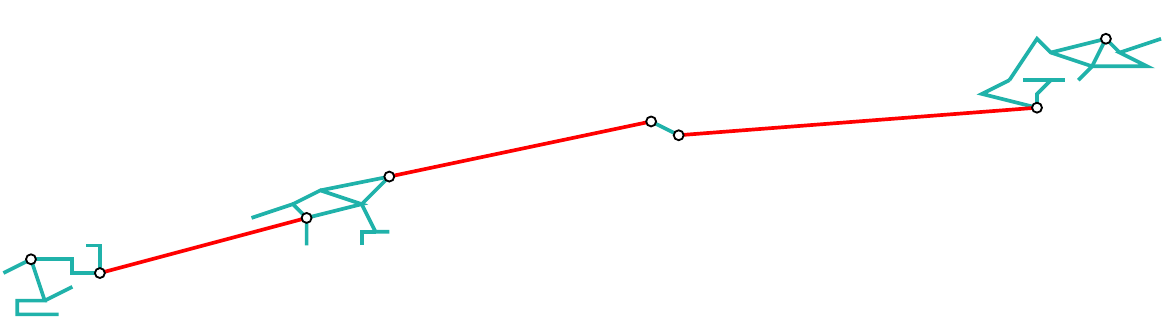_t}
	\caption{Depiction of a realization of the nice connection event $\niceConnection_x$. Notice that all red edges are pivotal for the event $\lbrace 0\leftrightarrow x\rbrace$.}
	\label{figure:nice_connection}
\end{figure}	
	
These restricted connections allow a finer control of the geometry of typical clusters. The proof of the prefactor will be done by first reducing the analysis to this class of events and then proving the result for them.

\subsubsection*{Dual vector} Let \(s\in\bbS^{d-1}\). Let \(t\) be dual to \(s\). Suppose that \(\bbG_{\beta}(t)<\infty\).
We claim that
\begin{equation}
\label{eq:Gt_finite_implies_t_nu_dual}
	t\cdot x\leq \nu_{\beta}(x),
\end{equation}for any \(x\in \R^d\). Indeed, suppose \(t\cdot s'> \nu_{\beta}(s')\) for some \(s'\in \bbS^{d-1}\). Then, for any \(n\) large enough, \(t\cdot [ns'] \geq (1+\epsilon ) \nu_{\beta}([ns'])\) for some \(\epsilon>0\), where \([ns']\) is the lattice point closest to \(ns'\). Thus,
\begin{equation*}
	\bbG_{\beta}(t)\geq \sum_{n\geq 1} e^{-\nu_{\beta}([ns']) + \sfo(n)+ t\cdot [ns']} \geq \sum_{n\geq n_0} e^{\epsilon \nu_{\beta}([ns'])/2} = \infty.
\end{equation*}

We collect some intermediate results which will be at the core of the proof of Theorem~\ref{theorem:sharp_asymptotics}. We will again use the OSSS inequality, but closer to what is done in~\cite{Hutchcroft-2020}. We will use the following inequality.
\begin{lemma}\label{lem:OSSS_field_connect_volume}
	Let \(A\) be an increasing event, \(N\geq 1\), \(\lambda> 0\). Then,
	\[
		\sum_{e\in E_d} \Phi_\beta \bigl(\mathds{1}_{A}\mathds{1}_{|C_0|\geq N}\ ;\ \omega_e\bigr) \geq \frac12\Phi_\beta\bigl(A,|C_0|\geq N \bigr) \Bigl(\frac{1-\sfe^{-\lambda}}{\Phi_\beta\bigl(1-\sfe^{-\lambda|C_0|/N}\bigr)}-1\Bigr).
	\]
\end{lemma}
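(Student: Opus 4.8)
The plan is to combine the two-function monotonic OSSS inequality of \cite[Theorem~2.2]{Hutchcroft-2020} (as already used in~\eqref{eq:OSSS_DCRT}) with a ``ghost field'' in the spirit of~\cite{Hutchcroft-2020}. We may assume \(\bar p := \Phi_\beta\bigl(1-\sfe^{-\lambda\abs{C_0}/N}\bigr) < 1-\sfe^{-\lambda}\), since otherwise the asserted lower bound is \(\leq 0\) while its left-hand side is \(\geq 0\) by FKG. First I would enlarge the probability space by attaching to each \(v\in\Zd\) an independent Bernoulli mark \(\eta_v\) with \(\Phi(\eta_v=1)=1-\sfe^{-\lambda/N}\), write \(\calG=\setof{v}{\eta_v=1}\) and \(\tilde\Phi_\beta=\Phi_\beta\otimes(\text{marks})\), and set \(f=\mathds 1_A\mathds 1_{\abs{C_0}\geq N}\) and \(g=\mathds 1_{C_0\cap\calG\neq\emptyset}\). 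Both are increasing, \(f\) is independent of the marks, and \(\tilde\Phi_\beta(g\mid\omega)=1-\sfe^{-\lambda\abs{C_0}/N}\). Hence \(\tilde\Phi_\beta(g)=\bar p\), while, since \(\abs{C_0}\geq N\) on \(\{f=1\}\),
\[
	\tilde\Phi_\beta(fg)=\Phi_\beta\bigl(f\,(1-\sfe^{-\lambda\abs{C_0}/N})\bigr)\geq(1-\sfe^{-\lambda})\,\Phi_\beta(A,\abs{C_0}\geq N),
\]
which gives the elementary lower bound
\begin{equation}\label{eq:sketch_cov_lb}
	\tilde\Phi_\beta(f\ ;\ g)\ \geq\ \Phi_\beta(A,\abs{C_0}\geq N)\,\bigl(1-\sfe^{-\lambda}-\bar p\bigr).
\end{equation}

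The crux — and the step I expect to be the real obstacle — is to exhibit a decision tree computing (essentially) \(g\) whose revealment is \emph{uniformly} of order \(\bar p\). Exploring the cluster of \(0\) from the origin is hopeless: each edge at \(0\) is then queried with probability close to \(1\). Instead one should reveal the marks first and explore \emph{inward} from \(\calG\). Precisely, fix \(M\geq1\), put \(\calG_M=\calG\cap\Lambda_M\), and consider the algorithm that first queries all marks in \(\Lambda_M\) (finitely many), then goes through the vertices of \(\calG_M\) one at a time and fully explores the \(\omega\)-cluster of each such vertex — these clusters are a.s.\ finite because \(\beta<\betac\) — returning \(1\) as soon as \(0\) is reached and \(0\) once all these clusters are exhausted. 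It computes \(g_M=\mathds 1_{C_0\cap\calG_M\neq\emptyset}\) and terminates a.s. An \(\omega\)-edge \(e=\{x,y\}\) is queried only if \(x\) or \(y\) belongs to some explored cluster, hence only if \(\{x\leftrightarrow\calG_M\}\) or \(\{y\leftrightarrow\calG_M\}\) occurs, so by translation invariance of \(\Phi_\beta\) together with \(\abs{C_0\cap\Lambda_M}\leq\abs{C_0}\),
\[
	\delta_e(\tilde\Phi_\beta,T_M)\ \leq\ \tilde\Phi_\beta(x\leftrightarrow\calG_M)+\tilde\Phi_\beta(y\leftrightarrow\calG_M)\ \leq\ 2\,\Phi_\beta\bigl(1-\sfe^{-\lambda\abs{C_0}/N}\bigr)\ =\ 2\bar p .
\]
The truncation to \(\Lambda_M\) is precisely what lets this ``inward'' exploration terminate; it also lets one invoke the OSSS inequality in finite volume, after the routine approximation of \(\Phi_\beta\) by \(\Phi^0_{\Lambda_{M'};\beta}\) (using \(\Phi^0_{\Lambda_{M'};\beta}\preccurlyeq\Phi_\beta\)), which I would carry out but not detail.

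Finally I would apply \cite[Theorem~2.2]{Hutchcroft-2020} to \(\tilde\Phi_\beta\), the increasing function \(f\) and the tree \(T_M\) computing \(g_M\); since \(f\) does not depend on the marks, the mark-coordinates contribute nothing, leaving
\[
	\tilde\Phi_\beta(f\ ;\ g_M)\ \leq\ \sum_{e\in E_d}\delta_e(\tilde\Phi_\beta,T_M)\,\Phi_\beta(\omega_e\ ;\ f)\ \leq\ 2\bar p\sum_{e\in E_d}\Phi_\beta(\omega_e\ ;\ f).
\]
Letting \(M\to\infty\), so that \(g_M\uparrow g\) and \(\tilde\Phi_\beta(f;g_M)\to\tilde\Phi_\beta(f;g)\), and inserting~\eqref{eq:sketch_cov_lb}, I obtain
\[
	\sum_{e\in E_d}\Phi_\beta(\omega_e\ ;\ f)\ \geq\ \frac{1}{2\bar p}\,\tilde\Phi_\beta(f\ ;\ g)\ \geq\ \frac12\,\Phi_\beta(A,\abs{C_0}\geq N)\Bigl(\frac{1-\sfe^{-\lambda}}{\bar p}-1\Bigr),
\]
which is exactly the assertion. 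Apart from the revealment estimate, the argument is bookkeeping; the finite-volume approximations and the monotone limit in \(M\) are the only other (routine) points requiring care.
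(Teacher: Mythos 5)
Your proof is correct and follows essentially the same route as the paper: the paper's proof is a reference to Proposition~3.1 of~\cite{Hutchcroft-2020} (the ghost-field OSSS argument) together with the three modifications it lists, and you have simply unfolded that argument explicitly — your covariance lower bound is precisely the inequality the paper highlights, your revealment bound via translation invariance matches the paper's second bullet, and your substitution of \(f=\mathds{1}_A\mathds{1}_{|C_0|\geq N}\) is the paper's first bullet. The only point you defer (the finite-volume approximation needed to make the cluster exploration terminate, since in this infinite-range setting every vertex has infinite degree) is a genuine technical step, but the paper defers it too, and your indication of how to handle it via \(\Phi^0_{\Lambda_{M'};\beta}\preccurlyeq\Phi_\beta\) is the standard resolution.
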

\begin{proof}
	The proof follows exactly the one in~\cite[Proposition 3.1]{Hutchcroft-2020}, with the following changes:
	\begin{itemize}
		\item use \(f=\mathds{1}_{A}\mathds{1}_{|C_0|\geq N}\) instead of \(f=\mathds{1}_{|C_0|\geq N}\),
		\item \(\Phi_\beta\bigl(1-\sfe^{-\lambda|C_0|/N}\bigr) = \sup_{v\in\Zd} \Phi_\beta\bigl(1-\sfe^{-\lambda|C_v|/N}\bigr)\) by translation invariance,
		\item use the inequality
		\begin{multline*}
			\Phi_\beta\Bigl(\bigl(1-\sfe^{-\lambda |C_0|/N}\bigr) \mathds{1}_{A}\mathds{1}_{|C_0|\geq N}\Bigr) -\Phi_\beta\bigl(1-\sfe^{-\lambda |C_0|/N}\bigr)\Phi_\beta\bigl(\mathds{1}_{A}\mathds{1}_{|C_0|\geq N}\bigr)\\
			\geq (1-\sfe^{-\lambda})\Phi_\beta\bigl(\mathds{1}_{A}\mathds{1}_{|C_0|\geq N}\bigr) -\Phi_\beta\bigl(1-\sfe^{-\lambda |C_0|/N}\bigr) \Phi_\beta\bigl(\mathds{1}_{A}\mathds{1}_{|C_0|\geq N}\bigr).
			\qedhere
		\end{multline*}
	\end{itemize}
\end{proof}
From this, we can deduce a bound on the volume of the connected component of \(0\) (which is the first step in comparing connections to nice connections). 
\begin{lemma}
	\label{lem:large_deviation_volume_connect}
	For any \(\beta<\beta'<\betac\), there exist \(c=c_{\beta,\beta'}>0\) and \(C= C_{\beta,\beta'}>0\) such that, for any \(x\in\Zd\),
	\begin{equation}\label{eq:large_deviation_volume_connect}
		\Phi_\beta\bigl(0\leftrightarrow x,|C_0|\geq N \bigr) \leq C \Phi_{\beta'}\bigl(0\leftrightarrow x,|C_0|\geq N \bigr) \sfe^{-cN},
	\end{equation}
	for any \(n\geq 0, N\geq 1\).
	In particular, we have
	\begin{equation}\label{eq:large_deviation_volume}
		\Phi_\beta\bigl(0\leftrightarrow x,|C_0|\geq N \bigr)\leq C\sfe^{-\nu_{\beta'}(x)-cN}.
	\end{equation}
\end{lemma}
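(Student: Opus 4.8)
The plan is to run a differential inequality in \(\beta\) for the quantity \(g_\beta(N,x):=\Phi_\beta\bigl(0\leftrightarrow x,\,|C_0|\geq N\bigr)\), feeding in Lemma~\ref{lem:OSSS_field_connect_volume} together with the finiteness of the susceptibility \(\chi(\beta')=\Phi_{\beta'}(|C_0|)<\infty\) for \(\beta'<\betac\) (from~\cite{Hutchcroft-2020}). Fix \(\beta<\beta'<\betac\); note that \(\{0\leftrightarrow x\}\cap\{|C_0|\geq N\}\) is an increasing event with positive probability, and that \(\nu_{\beta'}\) is a genuine norm since \(\beta'<\betaexp=\betac\).

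First I would record the derivative identity. Working in a large box and letting it grow (the event is increasing, so its probability converges to the infinite-volume value for \(\beta<\betac\)), one has, exactly as in~\cite{Hutchcroft-2020},
\[
\frac{\rmd}{\rmd\beta}\Phi_\beta\bigl(\mathds{1}_{0\leftrightarrow x}\mathds{1}_{|C_0|\geq N}\bigr)
=\sum_{e\in E_d}\frac{J_e}{1-\sfe^{-\beta J_e}}\,\Phi_\beta\bigl(\mathds{1}_{0\leftrightarrow x}\mathds{1}_{|C_0|\geq N}\ ;\ \omega_e\bigr)
\geq\frac1\beta\sum_{e\in E_d}\Phi_\beta\bigl(\mathds{1}_{0\leftrightarrow x}\mathds{1}_{|C_0|\geq N}\ ;\ \omega_e\bigr),
\]
using \(1-\sfe^{-t}\leq t\) and the nonnegativity of the covariances by FKG. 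Applying Lemma~\ref{lem:OSSS_field_connect_volume} with \(A=\{0\leftrightarrow x\}\) and a fixed \(\lambda>0\) (say \(\lambda=1\)), and bounding \(\Phi_{\tilde\beta}\bigl(1-\sfe^{-\lambda|C_0|/N}\bigr)\leq\frac\lambda N\Phi_{\tilde\beta}(|C_0|)\leq\frac\lambda N\chi(\beta')\) for every \(\tilde\beta\leq\beta'\) (monotonicity of \(\chi\) in \(\beta\)), I get a constant \(c_0=c_0(\beta')>0\) and an \(N_0=N_0(\beta')\) so that, for all \(N\geq N_0\) and all \(\tilde\beta\in[\beta,\beta']\),
\[
\frac{\rmd}{\rmd\tilde\beta}\log\Phi_{\tilde\beta}\bigl(0\leftrightarrow x,|C_0|\geq N\bigr)\geq c_0 N .
\]

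Then I would integrate between \(\beta\) and \(\beta'\): for \(N\geq N_0\),
\[
\Phi_\beta\bigl(0\leftrightarrow x,|C_0|\geq N\bigr)\leq\Phi_{\beta'}\bigl(0\leftrightarrow x,|C_0|\geq N\bigr)\,\sfe^{-c_0(\beta'-\beta)N}.
\]
For the finitely many \(N<N_0\) I would instead use \(\Phi_\beta\bigl(0\leftrightarrow x,|C_0|\geq N\bigr)\leq\Phi_{\beta'}\bigl(0\leftrightarrow x,|C_0|\geq N\bigr)\) (stochastic domination applied to the increasing event), and absorb the factor \(\sfe^{c_0(\beta'-\beta)N_0}\) into the prefactor \(C\); this yields~\eqref{eq:large_deviation_volume_connect} with \(c=c_0(\beta'-\beta)\). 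Finally,~\eqref{eq:large_deviation_volume} follows by dropping \(\{|C_0|\geq N\}\) inside the \(\Phi_{\beta'}\) factor and using \(\Phi_{\beta'}(0\leftrightarrow x)=G_{\beta'}(0,x)\leq\sfe^{-\nu_{\beta'}(x)}\) (the stated bound \(G_{\beta'}(0,ns)\leq\sfe^{-\nu_{\beta'}(ns)}\), extended to all lattice points by subadditivity of \(\nu_{\beta'}\)).

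The step I expect to need the most care is the uniform control of the bracket in Lemma~\ref{lem:OSSS_field_connect_volume}: one must verify that \(\frac{1-\sfe^{-\lambda}}{\Phi_{\tilde\beta}(1-\sfe^{-\lambda|C_0|/N})}-1\geq c_0 N\) holds with a constant uniform over \(\tilde\beta\in[\beta,\beta']\) for a single fixed \(\lambda\) — this is precisely where the input \(\chi(\beta')<\infty\) enters, and where one must keep the whole argument strictly below \(\betac\) (so that \(\chi\) is finite and \(\nu_{\beta'}\) is a norm). Everything else is a routine integration of a logarithmic derivative.
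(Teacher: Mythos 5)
Your proposal is correct and follows essentially the same route as the paper: differentiate \(\Phi_\beta(0\leftrightarrow x,|C_0|\geq N)\) in \(\beta\), use the OSSS-type bound of Lemma~\ref{lem:OSSS_field_connect_volume} together with \(\Phi_\beta(1-\sfe^{-\lambda|C_0|/N})\leq \tfrac{\lambda}{N}\Phi_\beta(|C_0|)\) and the finiteness and monotonicity of \(\chi\) below \(\betac\), then integrate the logarithmic derivative from \(\beta\) to \(\beta'\) and conclude with \(\Phi_{\beta'}(0\leftrightarrow x)\leq\sfe^{-\nu_{\beta'}(x)}\). The only cosmetic differences are that the paper lets \(\lambda\downarrow0\) to clean up the bracket and absorbs the \(-1\) term directly into the constant \(C_{\beta,\beta'}\) rather than splitting off the finitely many small \(N\) as you do; both are equivalent.
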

\begin{proof}
First, observe that~\eqref{eq:large_deviation_volume} follows from~\eqref{eq:large_deviation_volume_connect} by noting that 
\(\Phi_{\beta'}\bigl(0\leftrightarrow x \bigr)\leq \sfe^{-\nu_{\beta'}(x)}\) (by sub-additivity). We will therefore focus on proving~\eqref{eq:large_deviation_volume_connect}. For \(\beta<\betac\), we have exponential decay of connectivities in finite volume, uniformly over boundary conditions. In particular, for any \(x\) and any \(N\geq 1\), \(\Phi_\beta\bigl(0\leftrightarrow x,|C_0|\geq N \bigr)\) is differentiable in \(\beta\) on the interval \((0,\betac)\). Moreover, it follows from Lemma~\ref{lem:OSSS_field_connect_volume} that, for any \(N\geq 1,\lambda>0\),
	\begin{align*}
		\frac{\rmd}{\rmd\beta} \Phi_\beta\bigl(0\leftrightarrow x,|C_0|\geq N \bigr)
		&= \sum_{e\in E_d} \frac{J_e}{1-\sfe^{-\beta J_e}} \Phi_\beta\bigl(0\leftrightarrow x,|C_0|\geq N\ ;\ \omega_e \bigr)\\
		&\geq \frac1{2\beta} \Phi_\beta\bigl(0\leftrightarrow x,|C_0|\geq N \bigr) \Bigl(\frac{N(1-\sfe^{-\lambda})}{\lambda \Phi_\beta\bigl(|C_0|\bigr)}-1\Bigr),
	\end{align*}
	where we used
	\[
		\Phi_\beta\bigl(1-\sfe^{-\lambda|C_0|/N}\bigr)
		\leq \frac{\lambda}{N} \Phi_\beta(|C_0|).
	\]
	By taking the limit \(\lambda\downarrow 0\), we deduce that, for any \(0<\beta<\betac\), \(x\in \Zd\), and \(N\geq 1\),
	\[
		\frac{\rmd}{\rmd\beta} \log\Phi_\beta\bigl(0\leftrightarrow x,|C_0|\geq N \bigr) \geq \frac1{2\beta}\Bigl(\frac{N}{\Phi_\beta\bigl(|C_0|\bigr)}-1\Bigr).
	\]
	Integrating this differential inequality yields: for \(\beta<\beta'<\betac\),
	\begin{equation}\label{eq:OSSSexpovolume}
		\Phi_\beta\bigl(0\leftrightarrow x,|C_0|\geq N \bigr) \leq \Phi_{\beta'}\bigl(0\leftrightarrow x,|C_0|\geq N \bigr) C_{\beta, \beta'} \sfe^{-c_{\beta, \beta'} N},
	\end{equation}
	where \(c_{\beta,\beta'} = \frac{\beta'-\beta}{2\beta'\Phi_{\beta'}(|C_0|)}\) and \(C_{\beta,\beta'} = \sfe^{(\beta'-\beta)/2\beta'}\) (note that \(\Phi_{\beta'}(|C_0|)<\infty\)).
\end{proof}
Next, we need to control the pivotality of ``long'' edges. This is the content of the next Lemma.

\begin{lemma}\label{lem:long_edges}
	Assume that \(J\) is exponentially-decaying. Let \(\beta<\betac\).
	Then, there exist \(C<\infty,c>0\) such that, for any \(x\in \Zd\) and \(N\geq 1\),
	\begin{multline}
		\label{eq:longEdgesPivotal_conditioned_clusterSize}
			\Phi_\beta\bigl(\exists \{u,v\}\notin\Piv_{x}, \{u,v\}\subset C_0, \omega_{uv}=1, \rho(v-u) \geq 3\log N \given 0\leftrightarrow x,|C_0|\leq N \bigr)\\\leq CN^{-1/2}.
	\end{multline}
\end{lemma}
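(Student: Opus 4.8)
The plan is to estimate, for each fixed pair $\{u,v\}$ with $\rho(v-u)\geq 3\log N$, the probability of the event that $\{u,v\}$ is open, lies in $C_0$, is \emph{not} pivotal for $\{0\leftrightarrow x\}$, and $|C_0|\leq N$; then sum over all such pairs and divide by $\Phi_\beta(0\leftrightarrow x)$. The key structural observation is: if $\omega_{uv}=1$, $\{u,v\}\subset C_0$ and $\{u,v\}\notin\Piv_x$, then in the configuration $\omega^{uv}$ obtained by \emph{closing} the edge $\{u,v\}$ one still has $0\leftrightarrow x$, and moreover both $u$ and $v$ are still connected to $\{0,x\}$ (indeed non-pivotality means $0\leftrightarrow x$ off the edge, and $u,v\in C_0$ forces, after removing the edge, that at least one of $u,v$ stays attached to $0$; the other is attached to $0$ as well precisely because closing a single edge can split $C_0$ into at most two pieces, and if the piece containing — say — $v$ were detached from $0$ it would be detached from $x$ too, contradicting... ). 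Rather than belabor this, I would use the cleaner route: insert the edge. For a fixed pair $\{u,v\}$, by finite energy
\[
\Phi_\beta\bigl(\{u,v\}\subset C_0,\ \omega_{uv}=1,\ \{u,v\}\notin\Piv_x,\ |C_0|\leq N\bigr)\leq \frac{\beta J_{uv}}{1-\sfe^{-\beta J_{uv}}}\cdot\beta J_{uv}\cdot\Phi_\beta\bigl(E_{u,v,x}\bigr),
\]
where $E_{u,v,x}$ is the event that $0\leftrightarrow x$, that $u$ and $v$ each lie in the cluster of $0$ \emph{without using the edge} $\{u,v\}$, and $|C_0|\leq N$. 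The point of non-pivotality is exactly that on it, $\{0\leftrightarrow x\}$ survives the removal of $\{u,v\}$, while $u,v\in C_0$ (with the edge closed) splits into "$u$ and $v$ both still reach $0$" — the bad case "$v$ only reaches $0$ through the edge" would make $\{u,v\}$ pivotal for $\{0\leftrightarrow v\}$ but not necessarily for $\{0\leftrightarrow x\}$; this is the subtlety I flag below.

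Modulo that subtlety, on $E_{u,v,x}$ there are (at least) three disjoint connections realized inside a cluster of total size $\leq N$: $0\leftrightarrow x$, $0\leftrightarrow u$ and $0\leftrightarrow v$. Using the BK-type / tree-graph bound in the form provided by Lemma~\ref{lem:large_deviation_volume_connect} — or more directly by decomposing the cluster along its connections and applying the finite-range-free large deviation bound~\eqref{eq:large_deviation_volume} — one gets, for any $\beta<\beta'<\betac$,
\[
\Phi_\beta(E_{u,v,x})\ \leq\ C\sum_{\substack{N_1+N_2+N_3\leq N}}\Phi_{\beta'}(0\leftrightarrow x)\,\Phi_{\beta'}(0\leftrightarrow u)\,\Phi_{\beta'}(0\leftrightarrow v)\,\sfe^{-cN}
\]
is too crude; instead I would bound $\Phi_\beta(E_{u,v,x})\leq \Phi_\beta(0\leftrightarrow x)\,\Phi_\beta(0\leftrightarrow u\text{ off }\{u,v\},\ u\leftrightarrow v\text{ off }\{u,v\}\mid 0\leftrightarrow x)$ and note that the conditional factor is at most $\Phi_\beta(0\leftrightarrow u)+\Phi_\beta(0\leftrightarrow v)$ times a constant by FKG and monotonicity. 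Combining, dividing by $\Phi_\beta(0\leftrightarrow x)$, and using $\beta J_{uv}\leq C\sfe^{-\rho(u-v)}$ (since $\psi$ is subexponential), the summand is bounded by $C\sfe^{-\rho(u-v)}\bigl(\Phi_\beta(0\leftrightarrow u)+\Phi_\beta(0\leftrightarrow v)\bigr)$, and summing $\{u,v\}$ with $\rho(u-v)\geq 3\log N$ gives
\[
\sum_{\rho(u-v)\geq 3\log N}\sfe^{-\rho(u-v)}\Phi_\beta(0\leftrightarrow u)\ \leq\ \chi(\beta)\!\!\sum_{\rho(w)\geq 3\log N}\!\!\sfe^{-\rho(w)}\ \leq\ C\,\sfe^{-3\log N/2}\cdot(\text{poly})\ \leq\ CN^{-1/2},
\]
using $\chi(\beta)=\Phi_\beta(|C_0|)<\infty$ for $\beta<\betac$ (Hutchcroft) and the fact that $\sum_{\rho(w)\geq r}\sfe^{-\rho(w)}$ decays like $\sfe^{-r}$ up to polynomial factors, so that $3\log N$ easily beats the required $N^{-1/2}$. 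This handles~\eqref{eq:longEdgesPivotal_conditioned_clusterSize} once we further condition on $|C_0|\leq N$, which only helps.

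The main obstacle is the combinatorial step: turning "$\{u,v\}$ open, in $C_0$, not pivotal for $\{0\leftrightarrow x\}$" into the clean statement that, off the edge $\{u,v\}$, both $u$ and $v$ connect to $0$. Non-pivotality for $\{0\leftrightarrow x\}$ does \emph{not} immediately give this: one could imagine $v$ hanging off the cluster via the single edge $\{u,v\}$. The resolution is that we do not actually need both $u$ and $v$ to reach $0$ off the edge — it suffices that \emph{one} of them does (which is automatic from $\{u,v\}\subset C_0$) and that this one also reaches $x$ off the edge (from non-pivotality, $0\leftrightarrow x$ off $\{u,v\}$, and $0$ is in the same off-edge component as, say, $u$). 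Then we have three disjoint off-edge connections $0\leftrightarrow x$, $0\leftrightarrow u$, together with the edge $\{u,v\}$ itself, all inside a cluster of size $\leq N$ — and crucially the "other" endpoint $v$ still contributes a factor $\sfe^{-\rho(u-v)}$ through $\beta J_{uv}$, with no second sum over $v$'s position needed beyond fixing $w=v-u$. So in fact the sum collapses to $\sum_{u}\Phi_\beta(0\leftrightarrow u)\sum_{\rho(w)\geq 3\log N}\sfe^{-\rho(w)}\leq \chi(\beta)\cdot CN^{-1}\ll N^{-1/2}$, and the conditioning on $|C_0|\leq N$ is not even needed for this particular bound (though it is the natural companion hypothesis in the subsequent reduction to nice connections). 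I would write the argument in this collapsed form to keep it short; the only care needed is the precise finite-energy insertion of the edge $\{u,v\}$ and the BK/FKG-type splitting of the off-edge connections, both of which are standard for the Random-Cluster model with $q\geq 1$.
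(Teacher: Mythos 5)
Your union-bound setup and the subtlety you flag (after closing $\{u,v\}$ only one endpoint need stay connected to $0$, so relabel) match the spirit of the paper's proof. The genuine gap is in what you do after pulling out the finite-energy factor $\approx\beta J_{uv}$: you try to decouple the remaining connection from the conditioning event, invoking ``FKG and monotonicity'' to get something like $\Phi_\beta(0\leftrightarrow u\text{ off }\{u,v\}\given 0\leftrightarrow x)\le C\,\Phi_\beta(0\leftrightarrow u)$, and then sum over $u$ to get $\chi(\beta)$. This goes the wrong way: FKG gives $\Phi_\beta(0\leftrightarrow u\given 0\leftrightarrow x)\ge\Phi_\beta(0\leftrightarrow u)$ (increasing events are positively correlated), and there is no BK-type inequality in the Random-Cluster model with $q>1$ that would give $\Phi_\beta(0\leftrightarrow u,\,0\leftrightarrow x)\le C\,\Phi_\beta(0\leftrightarrow u)\,\Phi_\beta(0\leftrightarrow x)$. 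Moreover you normalize by $\Phi_\beta(0\leftrightarrow x)$ rather than the smaller $\Phi_\beta(0\leftrightarrow x,\,|C_0|\le N)$, which again goes in the wrong direction when forming the conditional probability, and the claim that the constraint $|C_0|\le N$ ``only helps'' is false for exactly this reason.

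The paper's proof sidesteps decoupling entirely. After the union bound and the finite-energy extraction of $(1-\sfe^{-\beta J_{uv}})\le\beta J_{uv}$, it \emph{keeps} the conditional connection probabilities and simply sums them over $u$:
\[
\sum_{u}\Phi_\beta\bigl(u\leftrightarrow 0 \given 0\leftrightarrow x,\,|C_0|\le N\bigr) \;=\; \Phi_\beta\bigl(|C_0| \given 0\leftrightarrow x,\,|C_0|\le N\bigr) \;\le\; N.
\]
This is the one place the volume constraint is used, and it is essential: without it the sum equals $\Phi_\beta(|C_0|\given 0\leftrightarrow x)$, which typically grows with $\norm{x}$. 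Paired with $\sum_{w:\,\rho(w)\ge 3\log N}\beta J_{0w}\le C\sfe^{-(3\log N)/2}=C N^{-3/2}$ (using the subexponential bound $\psi(w)\le\sfe^{\rho(w)/2}$ for large $\norm{w}$), one gets $N\cdot C N^{-3/2}=C N^{-1/2}$. So the missing idea is precisely: do not attempt to split $\{0\leftrightarrow u\}$ off from the conditioning; keep the conditional probability and let the cap $|C_0|\le N$ control the sum over $u$.
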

\begin{proof}
	Let \(\beta<\betac\). Let \(N\geq 1\).
	We want to control the number of ``long'' edges. Let us introduce the random variable
	\[
	L_{K, x} = \#\bsetof{\{i,j\}\subset C_0}{\rho(i-j)\geq K, \{i,j\}\notin \Piv_x, \omega_{ij} = 1}.
	\]
	Then, for any \(K\) large enough,
	\begin{align*}
		&\Phi_\beta\bigl( L_{K,x}>0 \bgiven
		0\leftrightarrow x, |C_0|\leq N\bigr) \\
		&\quad \leq \sum_{u\in\Zd} \sum_{v:\,\rho(u-v)\geq K} \!\!\!\!\Phi_\beta\bigl( u\leftrightarrow 0, \{u,v\}\notin \Piv_x, \omega_{uv} =1 \bgiven 0\leftrightarrow x, |C_0|\leq N \bigr)\\
		&\quad \leq \sum_{u\in\Zd}\sum_{v:\,\rho(u-v)\geq K} (1-\sfe^{-\beta J_{uv}}) \Phi_\beta\bigl( u\leftrightarrow 0 \bgiven
		0\leftrightarrow x, |C_0|\leq N\bigr)\\
		&\quad \leq \Bigl(\sum_{v:\,\rho(v)\geq K} \beta J_{0v}\Bigr) \Phi_\beta\bigl( |C_0| \bgiven
		0\leftrightarrow x, |C_0|\leq N\bigr)
		\leq \beta C \sfe^{-K/2} N,
	\end{align*}
	where we used the definition of \(J\). Taking \(K=K_N=3\log N\), one obtains the claim,
\end{proof}

We then have a BK type property for connections using pivotal edges.
\begin{lemma}\label{lem:BK_piv}
	Let \(e_i=(x_i,y_i), i=1,\dots, k\), be oriented edges. Let \(z\in\Zd\). Then, for any \(\beta \geq 0\) and \(q\geq 1\),
	\begin{equation*}
		\Phi_{\beta}\big(0\leftrightarrow z, D(e_1,\cdots, e_k)\big) \leq \Phi_{\beta}(0\leftrightarrow x_1) \prod_{i=1}^{k} \frac{\sfe^{\beta J_{x_iy_i}}-1}{q}\Phi_{\beta}(y_i\leftrightarrow x_{i+1}),
	\end{equation*}where \(x_{k+1} = z\), and \(D(e_1,\cdots, e_k)\) is the event that any self avoiding path of open edges from \(0\) to \(z\) passes through \(e_1,\cdots, e_k\) in that order (as a sequence of oriented edges, in particular \(\{x_i,y_i\}\in \Piv_z\)).
\end{lemma}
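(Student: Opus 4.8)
The plan is to prove the inequality by induction on $k$, peeling off one pivotal edge at a time from the "source" end, and using the finite-energy/BK-type bound coming from the Random-Cluster structure. The base case $k=0$ is the trivial bound $\Phi_\beta(0\leftrightarrow z) \le \Phi_\beta(0\leftrightarrow z)$. For the inductive step, note that on the event $D(e_1,\dots,e_k)$ every self-avoiding open path from $0$ to $z$ must traverse $e_1=(x_1,y_1)$ first; in particular $\{x_1,y_1\}$ is pivotal for $\{0\leftrightarrow z\}$, the cluster of $0$ contains $x_1$, and — after removing the edge $\{x_1,y_1\}$ — the vertex $0$ is no longer connected to $z$, while $y_1$ is connected to $z$ through $e_2,\dots,e_k$ in order, within the graph with $\{x_1,y_1\}$ deleted.

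The key step is the following decoupling at the edge $e_1$. Write $\omega'=\omega|_{E_d\setminus\{e_1\}}$ for the configuration off $e_1$. Conditionally on $\omega'$, the edge $e_1$ is open with probability at most $\frac{\sfe^{\beta J_{x_1y_1}}-1}{\sfe^{\beta J_{x_1y_1}}-1+q}\le \frac1q(\sfe^{\beta J_{x_1y_1}}-1)$ by the finite-energy bound stated in Section~\ref{ssec:Interactions}. On the event $\{0\leftrightarrow z\}\cap D(e_1,\dots,e_k)$ we have simultaneously: $e_1$ open, $\{0\xleftrightarrow{\ \omega'\ }x_1\}$ (the portion of a path before $e_1$ uses only edges other than $e_1$), and $\{y_1\leftrightarrow z \text{ through } e_2,\dots,e_k \text{ in }\omega'\}$, i.e. $D(e_2,\dots,e_k)$ holds for $\omega'$ with source $y_1$. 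Since the last two events are measurable with respect to $\omega'$, conditioning on $\omega'$ and bounding the conditional probability of $\{e_1 \text{ open}\}$ gives
\[
\Phi_\beta\bigl(0\leftrightarrow z, D(e_1,\dots,e_k)\bigr)
\le \frac{\sfe^{\beta J_{x_1y_1}}-1}{q}\,
\Phi_\beta\Bigl(\{0\leftrightarrow x_1\}\cap\{y_1\leftrightarrow z, D(e_2,\dots,e_k)\}\Bigr).
\]
Now the two events $\{0\leftrightarrow x_1\}$ and $\{y_1\leftrightarrow z, D(e_2,\dots,e_k)\}$ are both increasing; moreover on $D(e_1,\dots,e_k)$ the cluster of $0$ (stopped at $x_1$) and the cluster realizing the $y_1$-to-$z$ connection through $e_2,\dots,e_k$ are edge-disjoint (any shared open path would have to re-enter $e_1$, contradicting that $e_1$ is the unique first crossing). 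One can therefore apply the van den Berg--Kesten / Reimer-type inequality available for the Random-Cluster model with $q\ge1$ (in the form used for disjoint-occurrence of increasing connection events; this is exactly the estimate underlying the standard tree-graph and OZ bounds, and is the reason the factor $q$ appears only once per edge) to factorize:
\[
\Phi_\beta\Bigl(\{0\leftrightarrow x_1\}\cap\{y_1\leftrightarrow z, D(e_2,\dots,e_k)\}\Bigr)
\le \Phi_\beta(0\leftrightarrow x_1)\,\Phi_\beta\bigl(y_1\leftrightarrow z, D(e_2,\dots,e_k)\bigr).
\]
Applying the induction hypothesis to the second factor (with source $y_1$, target $z$, and edges $e_2,\dots,e_k$) yields the claimed product bound.

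The main obstacle is making the disjoint-occurrence/factorization step rigorous for the Random-Cluster model: unlike Bernoulli percolation, $\Phi_\beta$ does not have independent edges, so one cannot invoke the classical BK inequality directly. The standard way around this — which I would adopt — is to work in finite volume with the coupling between the Random-Cluster and Potts (or its high-temperature/random-current-type) representation, or alternatively to exploit the explicit form of the partition function: conditioning on the cluster $C_0$ of $0$ and on the edge $e_1$, the remaining configuration outside $C_0$ is again a Random-Cluster measure (with free boundary on $\partial C_0$), and the number-of-clusters term factorizes across the cut produced by removing $e_1$, giving precisely one power of $q$ in the denominator and the conditional independence needed to split the two connection events. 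This "explore the cluster of $0$ up to $x_1$, pay $(\sfe^{\beta J_{x_1y_1}}-1)/q$ for opening $e_1$, then restart from $y_1$ in the complementary volume" argument is the cleanest route and avoids any appeal to Reimer's inequality; once it is set up for one edge, iterating it $k$ times is exactly the induction above. Finally, the infinite-volume statement follows by taking the limit $F\uparrow E_d$ of the finite-volume bounds, using the convergence $\Phi_{F;\beta}^0\to\Phi_\beta$ and that all events involved are increasing (so the limits of both sides exist and the inequality is preserved).
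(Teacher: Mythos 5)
Your proposal follows the same two core ideas as the paper: (i) close each pivotal edge $e_i$ to extract the factor $\bigl(\sfe^{\beta J_{e_i}}-1\bigr)/q$, and (ii) factorize the remaining connection events by exploring the cluster of one endpoint, using the domain Markov property, and dominating the resulting finite-volume free-boundary measure by $\Phi_\beta$. Where the paper and your write-up part ways is the order of operations, and this is where a genuine gap appears.

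\textbf{The factorization step does not go through as stated.} You peel off $e_1$ only, and the event carried forward is $\{y_1\leftrightarrow z\}\cap D(e_2,\dots,e_k)$. This event is \emph{not} increasing, because $D(e_2,\dots,e_k)$ is a \emph{decreasing} event (closing edges can only help every surviving self-avoiding path go through the prescribed sequence). After exploring $C_0$ and invoking the domain Markov property, you obtain $\Phi^0_{\text{out}}\bigl(y_1\leftrightarrow z,\ D(e_2,\dots,e_k)\bigr)$, and the stochastic domination $\Phi^0_{\text{out}}\preccurlyeq\Phi_\beta$ \emph{cannot} be used to replace this by $\Phi_\beta\bigl(y_1\leftrightarrow z,\ D(e_2,\dots,e_k)\bigr)$, because domination only transfers increasing events. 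To make your induction sound you must either (a) strengthen the inductive statement so that it holds for every finite-volume free-boundary measure $\Phi^0_{V;\beta}$, apply the induction hypothesis \emph{inside} the outer region, and only then dominate the resulting \emph{increasing} factors $\Phi^0_{\text{out}}(y_i\leftrightarrow x_{i+1})$ by $\Phi_\beta(y_i\leftrightarrow x_{i+1})$; or (b) do what the paper does, namely close \emph{all} of $e_1,\dots,e_k$ simultaneously in the first step, landing in $\{0\leftrightarrow x_1\}\cap A\cap\bigcap_i\{y_i\leftrightarrow x_{i+1}\}$ with $A$ the event that no two of $x_1,\dots,x_{k+1}$ are connected. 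After this one-shot closing, all the connection events that remain are genuine increasing events, and $A$ supplies exactly the disjointness needed to iterate the explore-and-dominate argument cleanly.

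\textbf{Two further points of terminology/precision.} First, you should not appeal to a van den Berg--Kesten or Reimer-type inequality ``available for the Random-Cluster model with $q\geq 1$''; no such inequality is known for $q>1$, and the mechanism that replaces it is precisely the cluster-exploration/domination argument you describe afterwards, not any off-the-shelf BK. Second, the bound $\Phi_\beta(\omega_{e_1}=1\mid\omega')\leq\bigl(\sfe^{\beta J_{e_1}}-1\bigr)/\bigl(\sfe^{\beta J_{e_1}}-1+q\bigr)$ is \emph{not} an instance of the finite-energy upper bound quoted in the paper (which is $1-\sfe^{-\beta J_{e_1}}$, and is not $\leq(\sfe^{\beta J_{e_1}}-1)/q$ in general). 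What makes it true here is that on the event one restricts to, $x_1$ and $y_1$ are disconnected off $e_1$, so opening $e_1$ merges two clusters and the conditional probability is exactly $(\sfe^{\beta J_{e_1}}-1)/(\sfe^{\beta J_{e_1}}-1+q)$. This disconnection is a consequence of $D(e_1,\dots,e_k)$ (and must be kept as part of the event you land in), which is also what you need to justify that the clusters being explored and dominated are disjoint.
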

\begin{proof}
	Denote \(x_{k+1}= z\). Let \(A\) be the event that no two elements of \(\{x_1,\dots, x_k, x_{k+1}\}\) are connected together. Then,
	\begin{equation*}
		\Phi_{\beta}\big(0\leftrightarrow z, D(e_1,\cdots, e_k) \big) \leq \Phi_{\beta}(0\leftrightarrow x_1, A, y_i\leftrightarrow x_{i+1}, i=1,\dots,k) \prod_{i=1}^k\frac{\sfe^{\beta J_{x_iy_i}}-1}{q}
	\end{equation*}where we used the definition of the Random-Cluster measure to close the \(e_i\)s and inclusion of events. Monotonicity then implies
	\begin{equation*}
		\Phi_{\beta}(0\leftrightarrow x_1, A, y_i\leftrightarrow x_{i+1}, i=1,\dots,k) \leq \Phi_{\beta}(0\leftrightarrow x_1) \prod_{i=1}^{k} \Phi_{\beta}(y_i\leftrightarrow x_{i+1}).
		\qedhere
	\end{equation*}
\end{proof}

The final preliminary result will give a splitting of the cluster contributing to \(\niceConnection_x\) (which is the reason why we are interested in them in the first place).
\begin{lemma}
\label{lem:nice_co_splitting}
	Let \(\tilde{f}:\R_+\to \R_+\). Define \(f:\Zd\to \R_+\) by \(f(x) = \tilde{f}(\rho(x))\). Suppose that for \(\rho(x)\) large enough one has
	\(\frac{\rho(x)}{f(x)}\geq 3\log f(x)\). Then, for any \(x\in \Zd\) with \(\norm{x}\) large enough
	\begin{equation}
	\label{eq:nice_co_splitting}
		\Phi_{\beta}(\niceConnection_x(f)) \leq \sum_{k= 1}^{f(x)} \sum_{x_0,\dots, x_k}^* \sum_{y_1,\dots,y_k}^* \prod_{i=0}^k \Phi_{\beta}(0\leftrightarrow x_i) \prod_{j=1}^k \frac{e^{\beta J_{y_j}}-1}{q}
	\end{equation}where the \(*\) sums are over \(x_0,\dots,x_k\), and \(y_1,\dots,y_k\) satisfying
	\begin{itemize}
		\item \(\sum_{i=0}^k x_i + \sum_{j=1}^k y_j = x\),
		\item \(\rho(y_j)\geq 3\log f(x)\) for \(j=1,\dots, k\),
		\item \(\sum_{i=0}^k\rho(x_i)\leq f(x)3\log f(x)\).
	\end{itemize}
\end{lemma}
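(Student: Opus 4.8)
The plan is to show that, on $\niceConnection_x(f)$, the cluster $C_0$ splits as a chain of ``blobs'' $B_0\ni 0,B_1,\dots,B_k\ni x$ joined in series by the open edges of $C_0$ of $\rho$-length at least $\ell:=3\log f(x)$, each $B_i$ being connected using only edges of $\rho$-length $<\ell$, and then to feed this decomposition into the BK-type bound of Lemma~\ref{lem:BK_piv}. We may assume $\norm{x}$ is large enough that $\rho(x)/f(x)\ge\ell$ and $f(x)>1$ (so $\ell>0$): if $f(x)\le 1$, then $\abs{C_0}\le f(x)$ forces $C_0=\{0\}$ and $\niceConnection_x(f)=\emptyset$ for $x\ne 0$.

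For the decomposition, fix $\omega\in\niceConnection_x(f)$ and let $e_1,\dots,e_k$ be the open edges of $C_0$ of $\rho$-length $\ge\ell$. By the defining property of $\niceConnection_x(f)$ every such edge is pivotal for $\{0\leftrightarrow x\}$; since an open pivotal edge for this event lies on every open self-avoiding path from $0$ to $x$, the $e_j$ form a chain, and I orient $e_j=(a_j,b_j)$ with $a_j$ on the $0$-side and number them in the order a path crosses them. Deleting $e_1,\dots,e_k$ from $C_0$ then leaves exactly $k+1$ connected pieces $B_0\ni 0,B_1,\dots,B_k\ni x$, with $e_j$ joining $B_{j-1}$ to $B_j$, and every edge inside a $B_i$ has $\rho$-length $<\ell$ (a longer open edge of $C_0$ would be among the $e_j$). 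Setting $x_0=a_1$, $x_i=a_{i+1}-b_i$ for $1\le i\le k-1$, $x_k=x-b_k$ and $y_j=b_j-a_j$, a telescoping sum gives $\sum_{i=0}^k x_i+\sum_{j=1}^k y_j=x$, while $\rho(y_j)\ge\ell$ by construction; bounding $\rho(x_i)$ by the $\rho$-length of a self-avoiding path inside $B_i$ (at most $\abs{B_i}-1$ edges, each of $\rho$-length $<\ell$) yields $\sum_{i=0}^k\rho(x_i)\le(\abs{C_0}-k-1)\ell\le f(x)\ell$. Finally $k\le\abs{C_0}-1\le f(x)$, and $k\ge 1$: were $k=0$ we would have $x=x_0$, hence $\rho(x)\le(\abs{C_0}-1)\ell<f(x)\ell\le\rho(x)$, contradicting the hypothesis.

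The assignment $\omega\mapsto(e_1,\dots,e_k)$ partitions $\niceConnection_x(f)$, and, through the identifications $a_1=x_0$, $b_j=a_j+y_j$, $a_{j+1}=b_j+x_j$, an occurring edge tuple amounts to a vector tuple $(x_0,\dots,x_k,y_1,\dots,y_k)$ with $\sum_i x_i+\sum_j y_j=x$, which by the previous paragraph obeys the three constraints in the statement. On the part of $\niceConnection_x(f)$ corresponding to a fixed such tuple we have $\{0\leftrightarrow x\}$ and, the $e_j$ being pivotal, the event $D(e_1,\dots,e_k)$ of Lemma~\ref{lem:BK_piv}; enlarging the sum to range over all constraint-satisfying vector tuples, a union bound gives
\[
	\Phi_\beta\bigl(\niceConnection_x(f)\bigr)\le\sum_{k=1}^{f(x)}\ \sum_{x_0,\dots,x_k}^{*}\ \sum_{y_1,\dots,y_k}^{*}\Phi_\beta\bigl(0\leftrightarrow x,\,D(e_1,\dots,e_k)\bigr).
\]
Applying Lemma~\ref{lem:BK_piv} with $z=x$ bounds each summand by $\Phi_\beta(0\leftrightarrow a_1)\prod_{j=1}^k\frac{\sfe^{\beta J_{a_jb_j}}-1}{q}\,\Phi_\beta(b_j\leftrightarrow a_{j+1})$ (with $a_{k+1}=x$), and translation invariance, through $\Phi_\beta(0\leftrightarrow a_1)=\Phi_\beta(0\leftrightarrow x_0)$, $\Phi_\beta(b_j\leftrightarrow a_{j+1})=\Phi_\beta(0\leftrightarrow x_j)$ and $J_{a_jb_j}=J_{y_j}$, rewrites this as $\prod_{i=0}^k\Phi_\beta(0\leftrightarrow x_i)\prod_{j=1}^k\frac{\sfe^{\beta J_{y_j}}-1}{q}$, which is exactly the right-hand side of~\eqref{eq:nice_co_splitting}.

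The only genuinely delicate step is the decomposition: one must check that on $\niceConnection_x(f)$ the open edges of $C_0$ of $\rho$-length $\ge\ell$ are a chain of nested pivotal edges, so that removing them leaves $k+1$ ``short-range blobs'' in series, that $k\ge 1$, and that the blob displacements satisfy $\sum_i\rho(x_i)\le f(x)\ell$ --- all consequences of $\abs{C_0}\le f(x)$ and the hypothesis $\rho(x)/f(x)\ge\ell$. With that in hand, the remainder is a finite union bound followed by a single use of Lemma~\ref{lem:BK_piv} and of translation invariance.
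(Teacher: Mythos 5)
Your proof is correct and follows essentially the same route as the paper: decompose the cluster along its open edges of $\rho$-length at least $3\log f(x)$ (which under $\niceConnection_x(f)$ are all pivotal and thus form an ordered chain of bridges), record the long edges $y_j$ and the displacements $x_i$ across the intermediate blobs, then apply Lemma~\ref{lem:BK_piv} and translation invariance. The only cosmetic difference is that the paper fixes an arbitrary self-avoiding open path $\gamma\colon 0\to x$ and splits it at its long steps (since every pivotal edge lies on any such path, this yields the identical split edges and displacements), whereas you argue directly on the cluster $C_0$ by observing that the long open edges are a linearly ordered chain of cut-edges; both formulations produce the same tuples, the same constraints, and the same final bound.
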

\begin{proof}
	The event \(\{ 0\leftrightarrow x \}\) implies the existence of a self-avoiding path \(\gamma:0\to x\) formed of open edges. Note that any edge in \(\Piv_x\) belongs to that path. Fix some arbitrary way of choosing a path from a configuration. We split \(\gamma(\omega)\) as follows:
	\begin{equation*}
		\tau_0=-1,\quad \tau_i = \min\{ r> \tau_{i}:\ \rho(\gamma_{r+1}-\gamma_r)\geq 3\log f(x)\}.
	\end{equation*}Let \(k\) be the largest index for which \(\tau_k\) is defined and set \(\tau_{k+1}= |\gamma|\), \(\tau_{l} = \infty\) for \(l>k+1\). We then set \(x_i=\gamma_{\tau_{i+1}} - \gamma_{\tau_{i}+1} \) for \(i=0,\dots, k\), and \(y_j = \gamma_{\tau_{j}+1}- \gamma_{\tau_{j}}\) for \(j=1,\dots, k\). Under \(\niceConnection_x(f)\), \(\{\gamma_{\tau_i},\gamma_{\tau_{i}+1}\}\in\Piv_x\). Moreover, the condition \(\rho(x)\geq f(x)3\log f(x)\) implies that \(k\geq 1\). So, summing over the possibilities for \(k\), \(x_0,\dots,x_k\) and \(y_1,\dots,y_k\) and using Lemma~\ref{lem:BK_piv},
	\begin{equation*}
		\Phi_{\beta}(\niceConnection_x(f))\leq \sum_{k=1}^{f(x)} \sum_{\substack{x_0,\dots, x_k\\\sum_{i=0}^k \rho(x_i)\leq f(x)3\log f(x)}}\sum_{\substack{y_1,\dots, y_k\\ \rho(y_j)\geq 3\log f(x)}} \mathds{1}_{\bar{x} + \bar{y} = x}\prod_{i=0}^k \Phi_{\beta}(0\leftrightarrow x_i)\prod_{j=1}^k \frac{e^{\beta J_{y_j}}-1}{q}
	\end{equation*}
	where we used translation invariance and \(\bar{x} = \sum_{i=0}^k x_i\), \(\bar{y} = \sum_{j=1}^k y_j\). The constraint on the \(x_i\)s comes from the fact that \(|\gamma|\leq |C_0|\leq f(x)\), and each edge in \(\gamma\) which is not one of the \(y_j\)s is of \(\rho\)-length at most \(3\log f(x)\).
\end{proof}

\subsection{Prefactor: Lower bound}

The first Lemma (which we shall use again in the upper bound) is
\begin{lemma}
	Let \(s\in \bbS^{d-1}\). Let \(\beta <\betasat(s)\) and suppose that there exists \(t\) dual to \(s\) with \(\bbG_{\beta}(t)<\infty\). Then, the sequence \(\constPrefact_n(s,\beta,q)\) converges to \(\constPrefact(s)\), which is given by
	\begin{equation}
		\constPrefact(s,\beta,q) = \frac{\beta}{q} \sum_{u,v\in\Zd} \sfe^{t\cdot u}\Phi_{\beta}(0\leftrightarrow u) \sfe^{-g_{t,s}(u+v)} \sfe^{t\cdot v}\Phi_{\beta}(0\leftrightarrow v),
	\end{equation}where \(g_{t,s}(x)= \lim_{n\to \infty} \surcharge_t(ns-x)\), and \(t\) is any vector dual to \(s\) with \(\bbG_{\beta}(t)<\infty\).
\end{lemma}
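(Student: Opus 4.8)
The plan is to rewrite $\constPrefact_n(s,\beta,q)$ so that all of the $n$-dependence is carried by a single factor lying in $(0,1]$, and then pass to the limit by dominated convergence. Fix a vector $t$ dual to $s$ with $\bbG_\beta(t)<\infty$ (one exists by hypothesis; the hypothesis $\beta<\betasat(s)$ itself will not be used). Since $t$ is dual to $s$, we have $t\cdot(ns)=n\,t\cdot s=n\rho(s)=\rho(ns)$, so that, using $\surcharge_t(y)=\rho(y)-y\cdot t$,
\[
\rho(ns)-\rho(ns-u-v)=t\cdot(ns)-\bigl(\surcharge_t(ns-u-v)+t\cdot(ns-u-v)\bigr)=t\cdot u+t\cdot v-\surcharge_t(ns-u-v).
\]
Substituting this into the definition of $\constPrefact_n$ gives
\[
\constPrefact_n(s,\beta,q)=\frac{\beta}{q}\sum_{u,v\in\Zd}\sfe^{t\cdot u}\Phi_\beta(0\leftrightarrow u)\,\sfe^{-\surcharge_t(ns-u-v)}\,\sfe^{t\cdot v}\Phi_\beta(0\leftrightarrow v),
\]
in which the only $n$-dependent term, $\sfe^{-\surcharge_t(ns-u-v)}$, lies in $(0,1]$ because $\surcharge_t\geq 0$.

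Next I would check that for each fixed $x$ the limit $g_{t,s}(x)=\lim_{n\to\infty}\surcharge_t(ns-x)$ exists. The map $n\mapsto\surcharge_t(ns-x)$ is convex (it is the convex function $\surcharge_t$ precomposed with an affine map), nonnegative (as $\surcharge_t\geq 0$), and sublinear since $\tfrac1n\surcharge_t(ns-x)=\surcharge_t(s-x/n)\to\surcharge_t(s)=0$; a nonnegative, sublinear convex function on $[0,\infty)$ is nonincreasing, hence converges. Consequently, for every fixed pair $(u,v)$ the general term in the display above converges, as $n\to\infty$, to $\sfe^{t\cdot u}\Phi_\beta(0\leftrightarrow u)\,\sfe^{-g_{t,s}(u+v)}\,\sfe^{t\cdot v}\Phi_\beta(0\leftrightarrow v)$, which is precisely the general term of the claimed formula for $\constPrefact(s,\beta,q)$.

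It remains to interchange limit and summation. Using $0<\sfe^{-\surcharge_t(ns-u-v)}\leq 1$, each term is bounded, uniformly in $n$, by $\sfe^{t\cdot u}\Phi_\beta(0\leftrightarrow u)\cdot\sfe^{t\cdot v}\Phi_\beta(0\leftrightarrow v)$, and
\[
\sum_{u,v\in\Zd}\sfe^{t\cdot u}\Phi_\beta(0\leftrightarrow u)\,\sfe^{t\cdot v}\Phi_\beta(0\leftrightarrow v)=\Bigl(\sum_{x\in\Zd}\sfe^{t\cdot x}\Phi_\beta(0\leftrightarrow x)\Bigr)^{2}=\bbG_\beta(t)^{2}<\infty
\]
by the choice of $t$. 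Dominated convergence then gives $\constPrefact_n(s,\beta,q)\to\constPrefact(s,\beta,q)$ with the asserted expression, and in particular $\constPrefact(s,\beta,q)\leq\tfrac{\beta}{q}\bbG_\beta(t)^{2}<\infty$. Since the left-hand side does not involve $t$, the limit is the same for every admissible $t$, so the formula holds for any $t$ dual to $s$ with $\bbG_\beta(t)<\infty$ — as one may also see directly, because two such duals $t,t'$ satisfy $(ns)\cdot t=(ns)\cdot t'$, whence $\surcharge_t(ns-y)-\surcharge_{t'}(ns-y)=y\cdot(t-t')$ for all $n$, and the two extra exponential factors cancel. There is no real obstacle here: the argument is a rewriting followed by dominated convergence, the only points requiring (mild) care being the existence of $g_{t,s}$ and the summability of the dominating sequence, the latter being exactly where the hypothesis $\bbG_\beta(t)<\infty$ enters.
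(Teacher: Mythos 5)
Your proof is correct and follows essentially the same route as the paper's: rewrite $\constPrefact_n$ by pulling out $\sfe^{t\cdot(ns)}=\sfe^{\rho(ns)}$ so that the $n$-dependence sits entirely in $\sfe^{-\surcharge_t(ns-u-v)}$, show that $n\mapsto\surcharge_t(ns-y)$ is nonnegative and nonincreasing so the pointwise limit exists, and pass to the limit term by term. The only minor stylistic differences are that the paper proves monotonicity by a direct triangle-inequality computation and then invokes the Monotone Convergence Theorem, whereas you obtain monotonicity from convexity plus sublinearity and invoke Dominated Convergence with $\bbG_\beta(t)^2$ as the dominating sum — both routes are equally valid.
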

\begin{proof}
	Let \(s,t,\beta\) be as in the statement. The first observation is that \(\surcharge_t(ns+y)\) is non-increasing in \(n\) for any \(y\in\Rd\). Indeed, for any \(m,k\geq 0\) and \(y\in \R^d\),
\begin{multline*}
	\surcharge_t((m+k)s+y) = \rho((m+k)s+y) - t\cdot ((m+k)s+y)\leq \\
	\leq \rho(ks+y) + \rho(ms) - t\cdot (ks+y) - mt\cdot s\\
	= \rho(ks+y) - t\cdot (ks+y) = \surcharge_t(ks+y),
\end{multline*}where we used the triangle inequality in the second line, and \(t\cdot s =\rho(s)\) (as \(t\) is dual to \(s\)) in the third line. In particular, the surcharge being non-negative, \(g_{t,s}(y)= \lim_{n\to \infty} \surcharge_t(ns-y)\) is well defined.
	
	We then prove that the sequence \((\constPrefact_n(s))\) converges, and identify the limit, \(\constPrefact(s)\). Observe that \(\constPrefact_n(s)\) can be rewritten as follows:
\[
	\frac{q}{\beta} \constPrefact_n(s) = \sum_{u,v\in\Zd} \sfe^{t\cdot u} \Phi_\beta(0\leftrightarrow u) \sfe^{-\surcharge_t(ns-u-v)} \sfe^{t\cdot v} \Phi_\beta(0\leftrightarrow v).
\]
Then, one has that the function \(f_n(u,v) = \sfe^{t\cdot u} \Phi_\beta(0\leftrightarrow u) \sfe^{-\surcharge_t(ns-u-v)} \sfe^{t\cdot v} \Phi_\beta(0\leftrightarrow v)\) is non-decreasing in \(n\). We can therefore use the Monotone Convergence Theorem to obtain
\begin{equation}
	\label{eq:lim_G_n_s}
	\lim_{n\to \infty} \frac{q}{\beta} \constPrefact_n(s) = \sum_{u,v\in \Zd} \lim_{n\to\infty} f_n(u,v) = \sum_{u,v\in \Zd} \sfe^{t\cdot u} \Phi_\beta(0\leftrightarrow u) \sfe^{-g_{t,s}(u+v)} \sfe^{t\cdot v} \Phi_\beta(0\leftrightarrow v),
\end{equation}which proves the claim.
\end{proof}
\begin{remark}
\begin{enumerate}
	\item When the norm \(\rho\) is a \(\calC^1\) function in a neighbourhood of $s$ , one has the simpler expression
	\[
		\constPrefact(s) = \frac{\beta}{q} \bbG_{\beta}(t)^2.
	\]
	Indeed, in that case,
	\(
		\surcharge_t(ns-y) = n \bigl( \rho(s-\tfrac1n y) - \rho(s) \bigr) - t\cdot y = \sfo(1)
	\),
	since \(\rho(s-\frac1n y) - \rho(s) = \grad\rho(s) \cdot \tfrac1n y + \sfo(\frac1n) = \frac1n t\cdot y + \sfo(\frac1n)\). This implies that \(g_{t,s}(y)=0\) for all \(y\in\Zd\).
	\item If the norm $\rho$ is not locally \(\calC^1\), then $g_{t,s}(x)\neq 0$ in general: for instance, if $\rho:=\abs{\cdot}_{1}$ and $t=s=e_{1}$, then for $x\in\mathbb{Z}^{d}$ with $x_{1}=0$,
	\begin{equation*}
	\surcharge_t(ns-x)=\abs{ne_{1}-x}_{1}-t\cdot (ns-x)=\abs{x}_{1}.
	\end{equation*}
	\item The previous remarks show that in general, the expression for $\constPrefact(s)$ depends on the local geometry of $\mathscr{U}$.
\end{enumerate}
\end{remark}
We can then turn to the lower bound.
\begin{lemma}\label{lem:prefactor_sharp_LB}
	Let \(s\in \bbS^{d-1}\). Suppose that \(\psi(x) \propto \rho(x)^{-\alpha}\) with \(\alpha >\alphasat(s)\), or \(\psi(x) \propto \sfe^{-\tilde{c}\rho(x)^{\eta} }\) with \(\tilde{c}>0, \eta\in (0,1)\). Let \(\beta <\betasat(s)\) and suppose that there exists \(t\) dual to \(s\) with \(\bbG_{\beta}(t)<\infty\). Then,
	\begin{equation*}
		\Phi_{\beta}(0\leftrightarrow ns) \geq \constPrefact(s) J_{ns} (1+\sfo_n(1)).
	\end{equation*}
\end{lemma}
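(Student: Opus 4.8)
The plan is to produce a configuration realizing $\{0\leftrightarrow ns\}$ by forcing a single ``giant'' open edge $\{u,v'\}$ with $v'$ near $ns$, preceded by a short connection of $0$ to one endpoint and followed by a short connection of the other endpoint to $ns$, and then to sum over all such choices. Concretely, fix a vector $t$ dual to $s$ with $\bbG_\beta(t)<\infty$ (which exists by hypothesis). For $u,v\in\Zd$ consider the event that $0\leftrightarrow u$ inside a box around $0$, that the edge $\{u,\,ns-v\}$ is open, and that $ns-v\leftrightarrow ns$ inside a box around $ns$, with these three features occurring in ``disjoint'' regions. By the finite-energy lower bound on the conditional probability that a given edge is open, namely $\Phi^0_\beta(\omega_e=1\mid\cdot)\geq \frac{\sfe^{\beta J_e}-1}{\sfe^{\beta J_e}-1+q}$, together with FKG to decouple the two short connections from the long edge, one gets
\[
	\Phi_\beta(0\leftrightarrow ns)\;\geq\; \sum_{u,v}\Phi_\beta(0\leftrightarrow u)\,\frac{\sfe^{\beta J_{0,ns-u-v}}-1}{\sfe^{\beta J_{0,ns-u-v}}-1+q}\,\Phi_\beta(0\leftrightarrow v)\,(1+\sfo(1)),
\]
where the $\sfo(1)$ absorbs the small corrections needed to make the events genuinely compatible (avoiding double counting, restricting $u,v$ to bounded-length connections, etc.). Since $J_{0,ns-u-v}\to 0$ as $n\to\infty$ for each fixed $u,v$, we have $\frac{\sfe^{\beta J}-1}{\sfe^{\beta J}-1+q}= \frac{\beta}{q}J_{0,ns-u-v}(1+\sfo(1))$, and after multiplying and dividing by $J_{0,ns}=\psi(ns)\sfe^{-\rho(ns)}$ the sum becomes exactly $\constPrefact_n(s,\beta,q)$ up to the ratio $\psi(ns-u-v)/\psi(ns)\to 1$ (uniformly enough on the relevant range), times $J_{0,ns}$.

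The remaining work is to justify the interchange of limit and sum, i.e. to pass from $\constPrefact_n(s)J_{0,ns}(1+\sfo(1))$ to $\constPrefact(s)J_{0,ns}(1+\sfo_n(1))$. Here the monotone-convergence argument from the previous Lemma does the job for the ``main'' part $\sum_{u,v}f_n(u,v)$, but one must control the prefactor ratio $\psi(ns-u-v)/\psi(ns)$ uniformly over the range of $(u,v)$ that actually contributes. This is where the specific form of $\psi$ enters: for $\psi(x)\propto\rho(x)^{-\alpha}$ one has $\psi(ns-u-v)/\psi(ns)=(1+\sfO(\rho(u+v)/n))^{-\alpha}$, and the weights $\sfe^{t\cdot u}\Phi_\beta(0\leftrightarrow u)$, $\sfe^{t\cdot v}\Phi_\beta(0\leftrightarrow v)$ are summable (because $\bbG_\beta(t)<\infty$), so a truncation of the sum at $\rho(u+v)\leq \epsilon n$ introduces only an $\sfo(1)$ error and on the truncated range the ratio is $1+\sfo_n(1)$; the stretched-exponential case $\psi(x)\propto\sfe^{-\tilde c\rho(x)^\eta}$ is handled identically using $\rho(ns-u-v)^\eta-\rho(ns)^\eta=\sfo(1)$ on the range $\rho(u+v)\ll n^{1-\eta}$, again a negligible restriction.

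The main obstacle I anticipate is the bookkeeping needed to make the three ingredients (the connection $0\leftrightarrow u$, the open long edge, the connection $ns-v\leftrightarrow ns$) genuinely disjoint and to avoid overcounting pairs $(u,v)$ — essentially running the splitting construction of Lemma~\ref{lem:nice_co_splitting} in reverse. One clean way is to restrict to $u$ in a box of radius $o(n)$ about $0$ and $ns-v$ in a box of radius $o(n)$ about $ns$, with $\rho(u),\rho(v)$ bounded by a slowly growing function so that for large $n$ these boxes are disjoint; then condition on the configurations inside the two boxes, use FKG/finite energy to open the edge joining them, and sum. The contribution of pairs with $\rho(u)$ or $\rho(v)$ large is controlled, as above, by summability of $\sfe^{t\cdot u}\Phi_\beta(0\leftrightarrow u)$, i.e. by $\bbG_\beta(t)<\infty$, which is precisely the hypothesis we are given.
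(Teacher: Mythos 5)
Your plan is the same strategy as the paper's: force a single giant edge between small boxes around $0$ and $ns$, factorize the two short connections, use $\bbG_\beta(t)<\infty$ and monotone convergence to pass to $\constPrefact(s)$, and control the ratio $\psi(ns-u-v)/\psi(ns)$ via the specific form of $\psi$. You also correctly anticipate the two technical points (disjointness of regions; controlling the prefactor ratio on the contributing range of $(u,v)$).

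The one place you wave your hands is exactly where the paper is careful: the overcounting of pairs $(u,v)$. You write the lower bound as a sum over all $(u,v)$ with the understanding that the $o(1)$ ``absorbs the small corrections needed to make the events genuinely compatible (avoiding double counting\dots)'', and your proposed fix — ``condition on the configurations inside the two boxes, use FKG/finite energy to open the edge joining them, and sum'' — still does not produce disjoint events indexed by $(u,v)$: a single configuration inside the two boxes with a sizeable cluster of $0$ and a sizeable cluster of $ns$ would be counted once per pair of endpoints. The paper resolves this cleanly by working in the finite-volume free measure on $\Delta=\Delta_1\cup\Delta_2$ and restricting to the event $\{N=1\}$ that \emph{exactly one} edge between $\Delta_1$ and $\Delta_2$ is open; on this event the long edge $\{u,v'\}$ is uniquely determined, so the sum over $(u,v')$ is a genuine disjoint decomposition. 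One then closes that unique edge (gaining the exact factor $(\sfe^{\beta J_{uv'}}-1)/q$, rather than the finite-energy lower bound $(\sfe^{\beta J}-1)/(\sfe^{\beta J}-1+q)$, though the two agree to leading order as $J\to 0$) to land on $\{N=0\}$, where the measure factorizes over the two boxes — no FKG needed. Your argument would go through if you either insert the $\{N=1\}$ restriction, or do an explicit inclusion–exclusion and bound the pair-overlap term by $O(R_n^{4d}J_{ns}^2)=\sfo(J_{ns})$; as stated, the gap is small but real.
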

\begin{proof}
	Take \(R_n\) satisfying
	\begin{itemize}
		\item \(R_n\) is monotone increasing and \(\lim_{n\to\infty }R_n= \infty\).
		\item \(\psi(ns+x) =\psi(ns)(1+\sfo_n(1)) \) for any \(x\in \Lambda_{R_n}\).
		\item \(R_n^{2d} J_{ns} = \sfo_n(1)\).
	\end{itemize}
	Define \(\Delta_1 = \Lambda_{R_n}(0)\) and \(\Delta_2=\Lambda_{R_n}(ns)\). We proceed in this way to make the proof easy to adapt to other coupling constants. We could use the explicit choice \(R_n=n^{\delta}\) with \(\delta\in (0,1)\) chosen in the following way: if \(\psi\) has a polynomial form, then fix any \(\delta\in (0,1)\). If \(\psi\) has a stretched exponential form, fix \(\delta = (1-\eta)/2\).

\begin{figure}[t]
	\centering
	\includegraphics{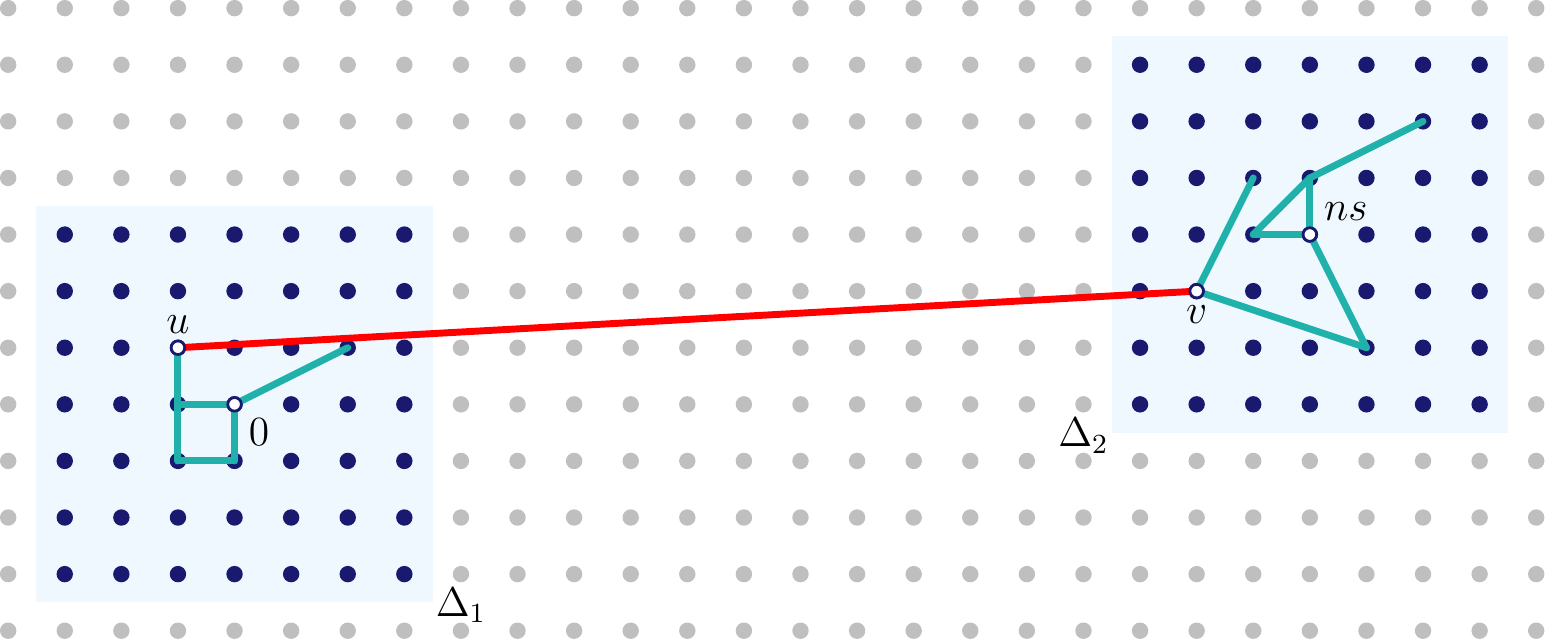}
	\caption{Setting in the proof of Lemma~\ref{lem:prefactor_sharp_LB}.}
	\label{figure:lower_bound_pre-factor}
\end{figure}		
	
	Let \(N\) denote the number of open edges from \(\Delta_1\) to \(\Delta_2\). Monotonicity and inclusion of events imply the following easy bound:
\[
	\Phi_{\Delta,\beta}^0(0\leftrightarrow ns, N=1) \leq \Phi_\beta(0\leftrightarrow ns). 
\]
Now, on the event \(\{N=1\}\), there is a unique open edge \(\{u,v\}\) from \(\Delta_1\) to \(\Delta_2\). By closing it (see Fig.~\ref{figure:lower_bound_pre-factor}, we obtain
\[
	\frac1{q} \sum_{u\in\Delta_1, v\in\Delta_2}
	\Phi_{\Delta,\beta}^0(0\leftrightarrow u, v\leftrightarrow ns, N=0) (\sfe^{\beta J_{u,v}}-1)
	=
	\Phi_{\Delta,\beta}^0(0\leftrightarrow ns, N=1).
\]
Now, conditionally on \(\{N=0\}\), the measure factorizes as follows:
\[
	\Phi_{\Delta,\beta}^0(0\leftrightarrow u, v\leftrightarrow ns \given N=0) = \Phi_{\Delta_1,\beta}^0(0\leftrightarrow u) \Phi_{\Delta_1,\beta}^0(0\leftrightarrow v).
\]
It is an easy consequence of the finite-energy property and of the choice of \(R_n\) that 
\[
	\Phi_{\Delta,\beta}^0(N\geq 1) \leq \sum_{u\in \Delta_1}\sum_{v\in \Delta_2}\beta J_{uv} \stackrel{n\to\infty}{\to} 0.
\]
Combining all these inequalities, we get
\begin{multline}\label{ineq:lower_bound_psi_hyp1}
	\sfe^{t\cdot ns} \Phi_\beta(0\leftrightarrow ns)
	\geq \\
	\frac{\beta}{q} \psi(ns) \sfe^{t\cdot ns} \sum_{u\in\Delta_1, v\in\Delta_1}
	\Phi_{\Delta_1, \beta}^0(0\leftrightarrow u) \sfe^{-\rho(ns-u-v)} \Phi_{\Delta_1, \beta}^0(0\leftrightarrow v) (1+\sfo_n(1)),
\end{multline}
where we used the definition of \(R_n\) as well as the fact \(\sfe^{\beta J_{u,v}}-1 = \beta J_{u,v} (1+\sfo_n(1))\) for any \(u\in \Delta_1\) and \(v\in \Delta_2\).
Now, by the monotone convergence theorem,
\begin{align*}
	\lim_{n\to\infty} \sfe^{t\cdot ns} \sum_{u,v\in\Delta_1}&
	\Phi_{\Delta_1, \beta}^0(0\leftrightarrow u) \sfe^{-\rho(ns-u-v)} \Phi_{\Delta_1, \beta}^0(0\leftrightarrow v)\\
	&= \lim_{n\to\infty} \sum_{u,v\in\Zd}
	\mathds{1}_{u,v\in\Delta_1}
	\sfe^{t\cdot u} \Phi_{\Delta_1, \beta}^0(0\leftrightarrow u)
	\sfe^{-\surcharge_t(ns-u-v)}
	\sfe^{t\cdot v} \Phi_{\Delta_1, \beta}^0(0\leftrightarrow v)\\
	&=\sum_{u,v\in\Zd}
	\sfe^{t\cdot u} \Phi_{\beta}(0\leftrightarrow u)
	\sfe^{-g_{s,t}(u+v)}
	\sfe^{t\cdot v} \Phi_{\beta}(0\leftrightarrow v) =  \constPrefact(s),
\end{align*}
which concludes the proof of the lower bound.
\end{proof}

\subsection{Prefactor: Upper bound}

The procedure here will be a combination of ``good enough bound implies the result'' (Lemma~\ref{lem:UB_good_imply_ok}) and ``bound enhancement'' (Lemma~\ref{lem:UB_enhancement}).

\begin{lemma}
	\label{lem:UB_good_imply_ok}
	Suppose \(\psi(x) \propto \rho(x)^{-\alpha}\) with \(\alpha>0\) or \(\psi(x)\propto \sfe^{-\tilde{c}\rho(x)^{\eta}}\) with \(\tilde{c}>0,\eta\in (0,1)\). Let \(s\in\bbS^{d-1}\). Let \(\beta'< \betasat(s)\). Suppose that there exists \(t\) dual to \(s\) with \(\bbG_{\beta'}(t)<\infty\). Suppose that, for any \(\epsilon>0\), there exist \(C,c\) such that, for all \(n\geq 1\),
	\begin{equation}
		\label{eq:assumed_decay_beta_prime}
		\sfe^{\rho(ns)}\Phi_{\beta'}(0\leftrightarrow ns) \leq C \psi(ns) e^{c \rho(ns)^{\epsilon}}.
	\end{equation}
	Then, for any \(\beta<\beta'\),
	\begin{equation*}
		\sfe^{\rho(ns)}\Phi_{\beta}(0\leftrightarrow ns) \leq \constPrefact(s,\beta,q) \psi(ns) (1+\sfo_n(1)).
	\end{equation*}
\end{lemma}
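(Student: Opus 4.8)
\emph{Strategy.}
I would run the cluster of $0$ through the four preliminary lemmas so as to reduce $\Phi_\beta(0\leftrightarrow ns)$ to the ``condensation'' picture of Figure~\ref{figure:nice_connection} (a short chain of long pivotal edges joined by microscopic excursions), show that the one‑edge chain reproduces $\constPrefact(s)\psi(ns)$, and that longer chains are negligible. Fix a slowly growing scale $f(x)=\tilde f(\rho(x))$ with $\tilde f(r)=r^{\theta}$, where $\theta\in(0,1)$ is small and, in the stretched‑exponential case, also $\theta<1-\eta$; set $K_n=3\log f(ns)\to\infty$, so $f(ns)K_n=\sfo_n(\rho(ns))$ (and $=\sfo_n(\rho(ns)^{1-\eta})$ when $\theta<1-\eta$). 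First split on whether $\abs{C_0}>f(ns)$ or not. By Lemma~\ref{lem:large_deviation_volume_connect} (with $\beta<\beta'$, $N=f(ns)$), $\Phi_\beta(0\leftrightarrow ns,\abs{C_0}>f(ns))\le C\,\Phi_{\beta'}(0\leftrightarrow ns)\,\sfe^{-cf(ns)}$; inserting hypothesis~\eqref{eq:assumed_decay_beta_prime} with any $\epsilon<\theta$ turns this into $\psi(ns)\sfe^{-\rho(ns)}\sfo_n(1)$.

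\emph{Reduction to nice connections and splitting.}
On $\{0\leftrightarrow ns,\abs{C_0}\le f(ns)\}$, Lemma~\ref{lem:long_edges} (with $N=f(ns)$) shows that, off an event of probability at most $Cf(ns)^{-1/2}\Phi_\beta(0\leftrightarrow ns)$, every open edge of $C_0$ of $\rho$‑length $\ge K_n$ is pivotal for $\{0\leftrightarrow ns\}$, i.e.\ one is on $\niceConnection_{ns}(f)$. Combining this with the previous paragraph and rearranging (valid since $f(ns)^{-1/2}\to0$), it suffices to prove $\sfe^{\rho(ns)}\Phi_\beta(\niceConnection_{ns}(f))\le \constPrefact(s,\beta,q)\psi(ns)(1+\sfo_n(1))$. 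Apply Lemma~\ref{lem:nice_co_splitting} (its hypothesis $\rho(x)/f(x)\ge3\log f(x)$ holds for our $f$), bound $\sfe^{\beta J_y}-1\le\beta J_y$, and multiply by $\sfe^{\rho(ns)}$; since $t$ is $\rho$‑dual to $s$ one has $\sum_j\rho(y_j)=t\cdot(ns-\sum_i x_i)+\sum_j\surcharge_t(y_j)$, so $\sfe^{\rho(ns)}\prod_j\sfe^{-\rho(y_j)}=\prod_i\sfe^{t\cdot x_i}\prod_j\sfe^{-\surcharge_t(y_j)}$, and
\[
\sfe^{\rho(ns)}\Phi_\beta\bigl(\niceConnection_{ns}(f)\bigr)\le\sum_{k\ge1}\Bigl(\tfrac{\beta}{q}\Bigr)^k S_k,\qquad
S_k=\sum_{\substack{x_0,\dots,x_k,\,y_1,\dots,y_k\\ \sum_i x_i+\sum_j y_j=ns,\ \rho(y_j)\ge K_n\\ \sum_i\rho(x_i)\le f(ns)K_n}}\Bigl(\prod_{i=0}^k\sfe^{t\cdot x_i}\Phi_\beta(0\leftrightarrow x_i)\Bigr)\Bigl(\prod_{j=1}^k\psi(y_j)\sfe^{-\surcharge_t(y_j)}\Bigr).
\]
Write $\phi(x)=\sfe^{t\cdot x}\Phi_\beta(0\leftrightarrow x)$, so $\sum_x\phi(x)=\bbG_\beta(t)\le\bbG_{\beta'}(t)<\infty$, and $\mathfrak{j}(y)=\psi(y)\sfe^{-\surcharge_t(y)}\mathds{1}_{\rho(y)\ge K_n}$, so $\sum_y\mathfrak{j}(y)=:\delta_n\to0$ (as $\bbJ(t)<\infty$ and $K_n\to\infty$).

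\emph{The $k=1$ and $k\ge2$ terms.}
For $k=1$, $y_1=ns-x_0-x_1$ with $\rho(x_i)\le f(ns)K_n=\sfo_n(\rho(ns))$, so regularity/monotonicity of $\psi$ gives $\psi(y_1)=\psi(ns)(1+\sfo_n(1))$ uniformly; using $\surcharge_t(ns-x_0-x_1)\ge g_{t,s}(x_0+x_1)$, the $k=1$ term is at most $\psi(ns)(1+\sfo_n(1))\tfrac\beta q\sum_{x_0,x_1}\phi(x_0)\phi(x_1)\sfe^{-g_{t,s}(x_0+x_1)}=\psi(ns)\constPrefact(s,\beta,q)(1+\sfo_n(1))$. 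For $k\ge2$, order the $y_j$ so that $y_1$ is $\rho$‑longest; by the constraint $\sum_i\rho(x_i)\le f(ns)K_n$, $\rho(y_1)\ge k^{-1}\sum_j\rho(y_j)\ge k^{-1}(\rho(ns)-f(ns)K_n)$. When $\psi(x)\propto\rho(x)^{-\alpha}$ this gives $\psi(y_1)\le k^\alpha\psi(ns)(1+\sfo_n(1))$; when $\psi(x)\propto\sfe^{-\tilde c\rho(x)^\eta}$, use instead the refined concavity bound $a^\eta+b^\eta\ge(a+b)^\eta+(1-\eta)b^\eta$ for $0<b\le a$, which iterated gives $\sum_j\rho(y_j)^\eta\ge(\sum_j\rho(y_j))^\eta+(1-\eta)\sum_{j\ge2}\rho(y_j)^\eta$, hence $\prod_j\psi(y_j)\le\psi(ns)(1+\sfo_n(1))\prod_{j\ge2}\psi(y_j)^{1-\eta}$. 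In either case, bound $\sfe^{-\surcharge_t(y_1)}\le1$, sum the remaining $k-1$ displacements freely (their factors are $\le\mathfrak{j}(y_j)$, resp.\ $\le\psi(y_j)^{1-\eta}\sfe^{-\surcharge_t(y_j)}$, of total mass $\delta_n$, resp.\ $\delta_n^{\ast}:=\sum_{\rho(y)\ge K_n}\psi(y)^{1-\eta}\sfe^{-\surcharge_t(y)}\to0$), bound $\sum_x\prod_i\phi(x_i)\le\bbG_\beta(t)^{k+1}$, and sum over which $y_j$ is longest; this yields $S_k\le k^{\alpha+1}\psi(ns)(1+\sfo_n(1))\bbG_\beta(t)^{k+1}\delta_n^{k-1}$ (resp.\ with $\delta_n^{\ast}$ and a factor $k$). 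Since $\delta_n,\delta_n^{\ast}\to0$, $\sum_{k\ge2}(\beta/q)^kS_k$ converges and equals $\psi(ns)\,O(\delta_n)=\psi(ns)\sfo_n(1)$. Adding the two contributions and using $\constPrefact(s,\beta,q)\ge\beta/q>0$ gives $\sfe^{\rho(ns)}\Phi_\beta(\niceConnection_{ns}(f))\le\constPrefact(s,\beta,q)\psi(ns)(1+\sfo_n(1))$, whence the claim.

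\emph{Main obstacle.}
The delicate step is the handling of the $k\ge2$ configurations: one must peel off from $\prod_j\psi(y_j)$ a clean factor $\psi(ns)(1+\sfo_n(1))$ while leaving a residual that is both summable over the remaining $k-1$ displacements \emph{and} of vanishing total mass, uniformly over $2\le k\le f(ns)$. For a polynomial prefactor this is just the observation that the longest displacement already carries almost all of $\rho(ns)$; for a stretched‑exponential prefactor it genuinely needs the refined concavity inequality above, which is exactly where the form of $\psi$ (more generally the properties in~\eqref{eq:psi_property}) enters. Verifying simultaneously that $f(ns)^{-1/2}\to0$, that $\sfe^{-cf(ns)}$ beats the $\sfe^{c\rho(ns)^\epsilon}$ error, that $\psi(ns\pm O(f(ns)K_n))=\psi(ns)(1+\sfo_n(1))$ uniformly, and that $\delta_n,\delta_n^{\ast}\to0$, then amounts to taking $\theta$ small (and $\theta<1-\eta$ in the stretched‑exponential case).
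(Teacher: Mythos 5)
Your proof is correct and follows essentially the same route as the paper's: the same split on $\abs{C_0}\lessgtr f(ns)$ via Lemma~\ref{lem:large_deviation_volume_connect} combined with hypothesis~\eqref{eq:assumed_decay_beta_prime}, the same reduction to $\niceConnection_{ns}(f)$ via Lemma~\ref{lem:long_edges}, the same pivotal-edge decomposition via Lemmas~\ref{lem:BK_piv} and~\ref{lem:nice_co_splitting}, and the same $k=1$ vs.\ $k\ge2$ dichotomy with the $k=1$ term producing $\constPrefact(s,\beta,q)\psi(ns)$. The only cosmetic difference is that you estimate $\prod_j\psi(y_j)$ directly from the explicit form of $\psi$ (using $\rho(y_1)\ge k^{-1}(\rho(ns)-f(ns)K_n)$ for the polynomial prefactor and the concavity bound $a^\eta+b^\eta\ge(a+b)^\eta+(1-\eta)b^\eta$ for the stretched-exponential one), whereas the paper packages the needed input into the abstract prefactor-product property~\eqref{eq:psi_property} with exponent $a$ and multiplicative constant $b^k$; both yield the same geometric-in-$k$ smallness of the $k\ge2$ contribution.
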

\begin{proof}
	Choose \(\epsilon\) in the following way: if \(\psi\) decays polynomially, take \(\epsilon <1\). If \(\psi(x)\propto \sfe^{-\tilde{c} \rho(x)^{\eta}}\), take \(\epsilon < 1-\eta\). Let \(\beta<\beta'\). Let \(f(x) = \kappa \rho(x)^{\epsilon}\). Then, using~\eqref{eq:large_deviation_volume_connect},
	one has that, for any \(\kappa>0\) and \(x\in \Zd\),
	\begin{equation*}
		\Phi_\beta\bigl(0\leftrightarrow x,|C_0|\geq f(x) \bigr) \leq C \Phi_{\beta'}\bigl(0\leftrightarrow x \bigr) \sfe^{-c\kappa \rho(x)^{\epsilon}},
	\end{equation*}
	so, using~\eqref{eq:assumed_decay_beta_prime}, \(\sfe^{t\cdot ns} \Phi_{\beta}\bigl(0\leftrightarrow ns,|C_0|\geq f(ns) \bigr)\leq \psi(ns)\sfo_n(1)\) for \(\kappa\) large enough. Fix such a value of \(\kappa\). By~\eqref{eq:longEdgesPivotal_conditioned_clusterSize},
	\begin{equation*}
		\Phi_{\beta}\bigl(0\leftrightarrow x,|C_0|\leq f(x) \bigr)\leq \Phi_{\beta}\bigl(\niceConnection_x(f) \bigr) + \sfo_{\rho(x)}(1)\Phi_{\beta}\bigl(0\leftrightarrow x,|C_0|\leq f(x) \bigr).
	\end{equation*}
	It is therefore sufficient to upper bound \(\Phi_{\beta}\bigl(\niceConnection_{ns}(f) \bigr)\). Using Lemma~\ref{lem:nice_co_splitting}, one obtains
	\begin{equation*}
		\Phi_{\beta}(\niceConnection_{ns}(f))\leq \sum_{k=1}^{f(ns)} \sum_{\substack{x_0,\dots, x_k\\\sum_{i=0}^k \rho(x_i)\leq f(ns)K_n}}\sum_{\substack{y_1,\dots, y_k\\ \rho(y_j)\geq K_n}} \mathds{1}_{\bar{x} + \bar{y} = ns}\prod_{i=0}^k \Phi_{\beta}(0\leftrightarrow x_i)\prod_{j=1}^k \frac{\sfe^{\beta J_{y_j}}-1 }{q},
	\end{equation*}
	where \(\bar{x} = \sum_{i=0}^k x_i \), \(\bar{y} = \sum_{j=1}^k y_j \), and \(K_n = 3\log(\kappa \rho(ns)^{\epsilon})\) diverges with \(n\). We then multiply both sides by \(\sfe^{\rho(ns)} = \sfe^{t\cdot ns} = \prod_{i=0}^k\sfe^{t\cdot x_i} \prod_{j=1}^k\sfe^{t\cdot y_j} \), and transpose the \(y_j\)s so that \(\rho(y_k)\geq \rho(y_j)\) to obtain that \(\sfe^{\rho(ns)}\Phi_{\beta}(\niceConnection_{ns}(f))(1+\sfo_n(1))\) is upper bounded by
	\begin{equation}
		\label{eq:UB_key_inequality}
		\sum_{k=1}^{f(ns)} 2^{k-1} k\sum_{\substack{x_0,\dots, x_k\\\sum_{i=0}^k \rho(x_i)\leq f(ns)K_n}}\sum_{\substack{y_1,\dots, y_k\\ \rho(y_j)\geq K_n\\ \rho(y_k)\geq \rho(y_j)}} \mathds{1}_{\bar{x} + \bar{y} = ns}\prod_{i=0}^k\sfe^{t\cdot x_i} \Phi_{\beta}(0\leftrightarrow x_i)\prod_{j=1}^k \sfe^{t\cdot y_j} \frac{\beta J_{y_j} }{q},
	\end{equation}
	where we used that for \(\rho(y)\geq K_n\), \(\sfe^{\beta J_{y}}-1 \leq (1+\sfo_n(1))\beta J_{y}\), and \((1+\sfo_n(1))^k \leq (1+\sfo_n(1)) 2^{k-1}\) for \(n\) large enough.
	
	We will now use the form of \(\psi\) through the following properties: there exist \(a,b \) such that for any \(u,v_1,\dots,v_k\) with \(\rho(u)\geq \rho(v_i)\),
	\begin{equation}	\label{eq:psi_property}
		\begin{gathered}
			\psi(u)\prod_{i=1}^k \psi(v_i)\leq b^{k} \psi(u+v)\prod_{i=1}^k \psi(v_i)^a,\\
			\bbJ_a(t) = \sum_{y\in \Zd} \sfe^{-\surcharge_t(y)} \psi(y)^a <\infty,\quad \psi(n+K_n\sfO(n^{\epsilon})) \leq \psi(n)(1+\sfo_n(1)),
		\end{gathered}
	\end{equation}
	where \(v= \sum_{i=1}^k v_i\). Let us first see that these hold.
	
	In the stretch exponential case (\(\psi(x) = c_{\tilde{c},\eta} \sfe^{-\tilde{c}\rho(x)^{\eta}}\)), one can take \(a= 2-2^{\eta} \), and \(b= (c_{\tilde{c},\eta})^{2^{\eta}-1} \). The first follows from triangular inequality and repeated use of
	\begin{equation*}
		c \sfe^{-\tilde{c}(n^{\eta}+m^{\eta} -(n+m)^{\eta})} = c \sfe^{-\tilde{c}m^{\eta} (\lambda^{\eta}+1 -(\lambda+1)^{\eta})} = c \sfe^{-\tilde{c}m^{\eta} (2 -2^{\eta})} = c^{2^{\eta}-1}\psi(m)^{2-2^{\eta}}
	\end{equation*}
	for \(n\geq m\), where \(\lambda = n/m \geq 1 \), \(c= c_{\tilde{c},\eta}\), as \((\lambda^{\eta}+1 -(\lambda+1)^{\eta})\) is increasing in \(\lambda\). The finiteness of \(\bbJ_a(t)\) follows from the super-polynomial decay of \(\psi(y)^a\). The last point follows from
	\begin{equation*}
		\sfe^{-\tilde{c} (n+\sfO(K_nn^{\epsilon}))^{\eta}} =
		\sfe^{-\tilde{c} n^{\eta}(1+\sfo(n^{-\eta}))^{\eta}} =
		\sfe^{-\tilde{c} n^{\eta}(1+\sfo(n^{-\eta}))} =
		\sfe^{-\tilde{c} n^{\eta}}e^{\sfo_n(1)} =
		\sfe^{-\tilde{c} n^{\eta}} (1+\sfo_n(1)),
	\end{equation*}
	by choice of \(\epsilon\) and logarithmic nature of \(K_n\).
	
	For the polynomial case (\(\psi(x) = c_{\alpha} \rho(x)^{-\alpha}\)), one can take \(a=1, b=2^{\alpha}\). The first property follows from repeated use of 
	\begin{equation*}
		\frac{(n+m)^{\alpha}}{n^{\alpha} m^{\alpha}}\leq 2^{\alpha}m^{-\alpha},
	\end{equation*}
	for \(n\geq m> 0\), and from triangular inequality. The bound \(\bbJ_a(t)=\bbJ(t)<\infty\) follows from finiteness of \(\bbG_{\beta}(t)\). Finally, \(c_{\alpha} (n+\sfO(n^{\epsilon}))^{-\alpha} = c_{\alpha} n^{-\alpha}(1+\sfo_n(1))^{-\alpha} = c_{\alpha} n^{-\alpha}(1+\sfo_n(1))\).
	We can now turn to the study of~\eqref{eq:UB_key_inequality}. Using~\eqref{eq:psi_property}, and that \(y_k\) is determined by \(\bar{x},y_1,\dots, y_{k-1}\) (due to \(\mathds{1}_{\bar{x} + \bar{y} = ns}\)), one obtains the upper bound
	\begin{multline}\label{eq:hypothesis_psi_property}
		\frac{\beta }{q} \sum_{k=1}^{f(ns)} k\sum_{\substack{x_0,\dots, x_k\\ \rho(\bar{x})\leq f(ns)K_n}}\sum_{\substack{y_1,\dots, y_{k-1}\\ \rho(y_j)\geq K_n}} \psi(ns-\bar{x})\sfe^{-\surcharge_t(ns-\bar{x}-\sum_{j=1}^{k-1}y_j)}\times\\
		\times \prod_{i=0}^k\sfe^{t\cdot x_i} \Phi_{\beta}(0\leftrightarrow x_i)  \prod_{j=1}^{k-1} \sfe^{-\surcharge_t(y_j)} \frac{2\beta b}{q} \psi(y_j)^a.
	\end{multline}
	We can then use \(\rho(\bar{x})\leq f(ns)K_n \leq C n^{\epsilon}\log(n)\) for some \(C\) uniform over \(\bar{x}\) and~\eqref{eq:psi_property} to obtain that, uniformly over \(\bar{x}\), \(\psi(ns-\bar{x})\leq \psi(ns)(1+\sfo_n(1))\). We can then treat separately the \(k=1\) term and the \(k\geq 2\) terms. Start with the latter. We have the upper bound (for \(n\) large enough)
	\begin{equation*}
		\frac{2\beta }{q} \psi(ns) \sum_{k=2}^{f(ns)} k \bbG_{\beta}(t)^{k+1}\Big(\sum_{y:\rho(y)\geq K_n} \sfe^{-\surcharge_t(y)} \frac{2\beta b}{q} \psi(y)^a\Big)^{k-1}.
	\end{equation*}
	As \(\bbJ_a(t)<\infty\), \(\lim_{n\to \infty} \sum_{y:\rho(y)\geq K_n} \sfe^{-\surcharge_t(y)} \psi(y)^a =0\), in particular, the whole sum over \(k\geq 2\) is \(\sfo_n(1)\) which implies that the \(k\geq 2\) terms are negligible. We now turn to the \(k=1\) term. It is upper bounded by
	\begin{multline*}
		\psi(ns)(1+\sfo_n(1))\frac{\beta }{q} \sfe^{t\cdot ns}\sum_{u,v} \Phi_{\beta}(0\leftrightarrow u)\sfe^{-\rho(ns-u-v)} \Phi_{\beta}(0\leftrightarrow v) =\\
		= \psi(ns)(1+\sfo_n(1)) \constPrefact_n(s,\beta,q),
	\end{multline*}
	where we used the definition of \(\surcharge_t\). Convergence of \(\constPrefact_n(s)\) implies the Lemma.
\end{proof}

\begin{lemma}
	\label{lem:UB_enhancement}
	Suppose \(\psi(x) \propto \sfe^{-\tilde{c} \rho(x)^{\eta}}\) with \(\tilde{c}>0,\eta\in (0,1)\). Let \(s\in\bbS^{d-1}\). Let \(\beta'< \betasat(s)\). Suppose that there exists \(t\) dual to \(s\) with \(\bbG_{\beta'}(t)<\infty\). Suppose that there exist \(C_1,c_1>0, \epsilon\in (0,1)\) such that
	\begin{equation}
		\sfe^{\rho(ns)}\Phi_{\beta'}(0\leftrightarrow ns) \leq C_1 \psi(ns) e^{c_1 \rho(ns)^{\epsilon}}.
	\end{equation}Then, for any \(\beta<\beta'\), there exist \(C_2,c_2>0\) such that
	\begin{equation*}
		\sfe^{\rho(ns)}\Phi_{\beta}(0\leftrightarrow ns) \leq C_2 \psi(ns) e^{c_2 \rho(ns)^{\epsilon-1+\eta}\log n }.
	\end{equation*}
\end{lemma}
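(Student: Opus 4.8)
The plan is to rerun the proof of Lemma~\ref{lem:UB_good_imply_ok} essentially verbatim, with the single change that the comparison between $\psi(ns-\bar x)$ and $\psi(ns)$ for a microscopic $\bar x$ is no longer absorbed into a $1+\sfo_n(1)$ correction but kept as an explicit subexponential factor, which will turn out to be exactly the factor in the statement. Concretely, fix $\beta<\beta'$; then $\bbG_\beta(t)\le\bbG_{\beta'}(t)<\infty$ by stochastic monotonicity, so the lemma preceding Lemma~\ref{lem:prefactor_sharp_LB} applies and $\constPrefact_n(s,\beta,q)\to\constPrefact(s)<\infty$. Set $f(x)=\kappa\rho(x)^\epsilon$ with the \emph{same} $\epsilon\in(0,1)$ as in the hypothesis; since $\epsilon<1$, one has $\rho(x)/f(x)=\rho(x)^{1-\epsilon}/\kappa\gg 3\log f(x)$, so $f$ meets the requirement of Lemma~\ref{lem:nice_co_splitting}.

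First I would dispose of large clusters: by \eqref{eq:large_deviation_volume_connect} and the assumed bound at $\beta'$,
\[
\sfe^{\rho(ns)}\Phi_\beta\bigl(0\leftrightarrow ns,\ \abs{C_0}\ge f(ns)\bigr)\le C\,\psi(ns)\,\sfe^{(c_1-c\kappa)\rho(ns)^\epsilon},
\]
which, after fixing $\kappa$ large enough that $c\kappa>c_1$, is at most $C\psi(ns)$ and hence harmless. Next, by \eqref{eq:longEdgesPivotal_conditioned_clusterSize} and $f(ns)\to\infty$, bounding $\sfe^{\rho(ns)}\Phi_\beta(0\leftrightarrow ns,\abs{C_0}\le f(ns))$ reduces, up to a factor $1+\sfo_n(1)$, to bounding $\sfe^{\rho(ns)}\Phi_\beta(\niceConnection_{ns}(f))$. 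Applying Lemma~\ref{lem:nice_co_splitting}, multiplying through by $\sfe^{\rho(ns)}=\prod_i\sfe^{t\cdot x_i}\prod_j\sfe^{t\cdot y_j}$ and relabelling so that $y_k$ is the longest $\rho$-step, I land on the bound \eqref{eq:UB_key_inequality}, exactly as in the proof of Lemma~\ref{lem:UB_good_imply_ok}.

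The one genuinely new estimate is the following. On $\niceConnection_{ns}(f)$ the microscopic part satisfies $\rho(\bar x)\le f(ns)K_n\le C\rho(ns)^\epsilon\log n$ with $K_n=3\log f(ns)$ and $\bar x=\sum_i x_i$; since $\epsilon<1$ we have $\rho(\bar x)\le\tfrac12\rho(ns)$ for large $n$, so concavity of $r\mapsto r^\eta$ gives $\rho(ns-\bar x)^\eta\ge(\rho(ns)-\rho(\bar x))^\eta\ge\rho(ns)^\eta-C'\rho(\bar x)\rho(ns)^{\eta-1}$, whence, uniformly over the admissible $\bar x$,
\[
\psi(ns-\bar x)\le\psi(ns)\,\sfe^{\,c_2\rho(ns)^{\epsilon+\eta-1}\log n}.
\]
Feeding this bound in place of $\psi(ns-\bar x)\le\psi(ns)(1+\sfo_n(1))$ into the manipulation of \eqref{eq:UB_key_inequality} through the properties \eqref{eq:psi_property}, the $k\ge 2$ terms become $\psi(ns)\,\sfe^{c_2\rho(ns)^{\epsilon+\eta-1}\log n}\,\sfo_n(1)$ — one uses $\bbJ_a(t)<\infty$, which forces $\sum_{\rho(y)\ge K_n}\sfe^{-\surcharge_t(y)}\psi(y)^a\to 0$ and makes the geometric-type sum over $k$ negligible — while the $k=1$ term is bounded by $\psi(ns)\,\sfe^{c_2\rho(ns)^{\epsilon+\eta-1}\log n}\,\constPrefact_n(s,\beta,q)(1+\sfo_n(1))$. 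Summing and using $\constPrefact_n\to\constPrefact(s)<\infty$ yields $\sfe^{\rho(ns)}\Phi_\beta(0\leftrightarrow ns)\le C_2\,\psi(ns)\,\sfe^{c_2\rho(ns)^{\epsilon-1+\eta}\log n}$, as claimed.

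I expect no serious obstacle: the entire content of the lemma is the quantitative comparison of $\psi(ns-\bar x)$ with $\psi(ns)$ above, all else being a rerun of Lemma~\ref{lem:UB_good_imply_ok}. The conceptual point worth highlighting — and the reason for stating the lemma in this form — is that one application lowers the error exponent from $\epsilon$ to $\epsilon-1+\eta<\epsilon$, so iterating it finitely many times pushes the exponent below $1-\eta$, at which stage Lemma~\ref{lem:UB_good_imply_ok} applies directly and produces the sharp prefactor.
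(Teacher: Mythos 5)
Your proof matches the paper's essentially step for step: same choice $f(x)=\kappa\rho(x)^\epsilon$, same disposal of large clusters via~\eqref{eq:large_deviation_volume_connect} and the assumed $\beta'$-bound, same reduction to $\niceConnection_{ns}(f)$ via~\eqref{eq:longEdgesPivotal_conditioned_clusterSize} and Lemma~\ref{lem:nice_co_splitting}, and the same key estimate $\psi(ns-\bar x)\leq\psi(ns)\,\sfe^{O(\rho(ns)^{\epsilon-1+\eta}\log n)}$ (which the paper records as~\eqref{ineq:hypothesis_psi_2}) replacing the $(1+\sfo_n(1))$ factor of Lemma~\ref{lem:UB_good_imply_ok}. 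The only cosmetic difference is that you single out the $k=1$ term and bound it via $\constPrefact_n$, whereas the paper bounds the whole sum over $k$ uniformly by a constant using $\bbG_\beta(t)^{k+1}$ and the vanishing of $\sum_{\rho(y)\geq K_n}\sfe^{-\surcharge_t(y)}\psi(y)^a$; the extra precision you invoke is harmless but unnecessary since the lemma only asserts some finite $C_2$.
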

\begin{proof}
	Let \(s,\beta,\beta',t\) be as in the statement of the lemma. Let \( f(x) = \kappa \rho(x)^{\epsilon}\). Using~\eqref{eq:large_deviation_volume_connect},
	one has that for any \(\kappa>0\) and \(x\in \Zd\),
	\begin{equation*}
		\Phi_\beta\bigl(0\leftrightarrow x,|C_0|\geq f(x) \bigr) \leq C \Phi_{\beta'}\bigl(0\leftrightarrow x \bigr) \sfe^{-c\kappa \rho(x)^{\epsilon}},
	\end{equation*}so, using our assumption, \(\sfe^{t\cdot ns} \Phi_{\beta}\bigl(0\leftrightarrow ns,|C_0|\geq f(ns) \bigr)\leq \psi(ns)\sfo_n(1)\) for \(\kappa\) large enough. Fix such a value of \(\kappa\). Proceeding as in the proof of Lemma~\ref{lem:UB_good_imply_ok}, one reduces to upper bound \(\Phi_{\beta}(\niceConnection_{ns})\), which is upper bounded by (via Lemma~\ref{lem:nice_co_splitting} and the same argument as in the proof of Lemma~\ref{lem:UB_good_imply_ok})
	\begin{equation*}
		\sum_{k=1}^{f(ns)} C^k k\sum_{\substack{x_0,\dots, x_k\\\sum_{i=0}^k \rho(x_i)\leq f(ns)K_n}}\sum_{\substack{y_1,\dots, y_k\\ \rho(y_j)\geq K_n\\ \rho(y_k)\geq \rho(y_j)}} \mathds{1}_{\bar{x} + \bar{y} = ns}\prod_{i=0}^k\sfe^{t\cdot x_i} \Phi_{\beta}(0\leftrightarrow x_i)\prod_{j=1}^k \sfe^{-\surcharge_t( y_j)}  \psi(y_j).
	\end{equation*}Still following the same procedure as in Lemma~\ref{lem:UB_good_imply_ok}, we use the property of \(\psi(x)\propto \sfe^{-\tilde{c}\rho(x)^{\eta} }\) that we used in Lemma~\ref{lem:UB_good_imply_ok} (see~\eqref{eq:psi_property}) with \(a=2-2^{\eta}\), and the estimate
	\begin{equation}\label{ineq:hypothesis_psi_2}
		\sfe^{-\tilde{c}\rho(ns-\bar{x})^{\eta} }\leq \sfe^{-\tilde{c}(\rho(ns)-Cn^{\epsilon}\log n)^{\eta} }= \sfe^{-\tilde{c}\rho(ns)^{\eta}(1-\sfO(n^{\epsilon-1}\log n)) }= \sfe^{-\tilde{c}\rho(ns)^{\eta} }\sfe^{\sfO(n^{\epsilon-1+\eta}\log n) },
	\end{equation}
	to obtain the upper bound
	\begin{multline*}
		C'\psi(ns)\sfe^{c n^{\epsilon-1+\eta}\log n } \sum_{k=1}^{f(ns)} C^k k\sum_{x_0,\dots, x_k}\sum_{\substack{y_1,\dots, y_{k-1}\\ \rho(y_j)\geq K_n}} \prod_{i=0}^k\sfe^{t\cdot x_i} \Phi_{\beta}(0\leftrightarrow x_i)\prod_{j=1}^{k-1} \sfe^{-\surcharge_t( y_j)}  \psi(y_j)^a =\\
		= C'\psi(ns)\sfe^{c n^{\epsilon-1+\eta}\log n } \sum_{k=1}^{f(ns)} C^k k \bbG_{\beta}(t)^{k+1} \Big( \sum_{y: \rho(y)\geq K_n} \sfe^{-\surcharge_t( y)}  \psi(y)^a \Big)^{k-1}.
	\end{multline*}
	As \(\lim_{n\to \infty } \sum_{y: \rho(y)\geq K_n} \sfe^{-\surcharge_t( y)}  \psi(y)^a = 0\), the sum over \(k\) is bounded uniformly over \(n\). This concludes the proof.
\end{proof}

We can now prove the upper bound of Theorem~\ref{theorem:sharp_asymptotics}.
\begin{proof}[Proof of Theorem~\ref{theorem:sharp_asymptotics}: upper bound]
	As \(\beta\) is supposed to be strictly smaller than \(\betahat(s)\), all hypotheses of Lemma~\ref{lem:UB_good_imply_ok} except~\eqref{eq:assumed_decay_beta_prime} are obviously fulfilled. When \(\psi(x)\propto \rho(x)^{-\alpha}\), \eqref{eq:assumed_decay_beta_prime} is also obviously satisfied. For \(\psi(x)\propto \sfe^{-\tilde{c}\rho(x)^{\eta}}\), repeated applications of Lemma~\ref{lem:UB_enhancement} yield the validity of~\eqref{eq:assumed_decay_beta_prime}.
\end{proof}

\subsection{Good dual vector}

We turn to the proof of Theorem~\ref{theorem:betasat=betahat}. The goal is to provide some control over the generating function \(\bbG_{\beta}(t)\) for suitable \(t\)s when \(\beta<\betasat(s)\), and \(\psi\) decays fast enough. Namely, we prove
\begin{theorem}
	Let \(s\in\bbS^{d-1}\). Suppose \(\psi(x)\leq C\rho(x)^{-2d-\epsilon}\) with \(\epsilon>0\). Then, for any \(\beta<\betasat(s)\), there exists \(t\) dual to \(s\) with \(\bbG_{\beta}(t)<\infty\).
\end{theorem}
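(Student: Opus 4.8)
The plan is to exhibit a suitable vector $t$ dual to $s$ and to prove a pointwise bound on $e^{t\cdot x}\Phi_\beta(0\leftrightarrow x)$ that decays in $\rho(x)$ fast enough to be summed over $\Zd$; finiteness of $\bbG_\beta(t)$ then follows by direct summation. \emph{Choice of $t$:} fix $\beta<\beta'<\betasat(s)$. Since $\beta'<\betasat(s)\le\betaexp$, the inverse correlation length $\nu_{\beta'}$ is a genuine norm and $\nu_{\beta'}(s)=\rho(s)$. Let $t$ be a vector dual to $s$ \emph{for the norm $\nu_{\beta'}$}; then $t\cdot x\le\nu_{\beta'}(x)\le\rho(x)$ for every $x$, with equality $t\cdot s=\nu_{\beta'}(s)=\rho(s)$, so $t\in\partial\Wulff$ is $\rho$-dual to $s$, and, by monotonicity of $\beta\mapsto\nu_\beta$, $t\cdot x\le\nu_{\beta'}(x)\le\nu_\beta(x)$; in particular $e^{t\cdot x}\Phi_\beta(0\leftrightarrow x)\le e^{t\cdot x-\nu_\beta(x)}\le 1$. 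Moreover $\bbJ(t)=\sum_x\psi(x)e^{-\surcharge_t(x)}\le C\sum_x\rho(x)^{-2d-\epsilon}<\infty$, because $\#\{x:\rho(x)\in[r,r+1)\}\le Cr^{d-1}$ and $\surcharge_t\ge0$.

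\emph{Reduction.} Write $e^{t\cdot x}\Phi_\beta(0\leftrightarrow x)=e^{-\surcharge_t(x)}\,e^{\rho(x)}\Phi_\beta(0\leftrightarrow x)$. Using once more that at most $Cr^{d-1}$ lattice points lie at $\rho$-distance in $[r,r+1)$, one checks that $\sum_x\rho(x)^{-\theta}e^{-\surcharge_t(x)}<\infty$ as soon as $\theta>d$, \emph{even} when $\surcharge_t$ vanishes on a full-dimensional cone (which can happen, e.g.\ for $\rho=\normsup{\cdot}$ and $t=\eone$). Since $2d+\epsilon>2d$, it thus suffices to prove the pointwise bound
\[
	e^{t\cdot x}\,\Phi_\beta(0\leftrightarrow x)\ \le\ C\,\rho(x)^{-(d+\delta)}\qquad\text{for all }x\text{ with }\rho(x)\ge R_0,
\]
for some $\delta\in(0,\epsilon/2]$ and constants $C,R_0<\infty$, the sum over $\{\rho(x)<R_0\}$ being trivially finite.

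\emph{The pointwise bound.} Take $f(x)=\kappa\log\rho(x)$ with $\kappa$ large, to be fixed, and split $\{0\leftrightarrow x\}$ according to whether $|C_0|>f(x)$ or $|C_0|\le f(x)$. For the first piece, \eqref{eq:large_deviation_volume} gives $e^{t\cdot x}\Phi_\beta(0\leftrightarrow x,|C_0|>f(x))\le Ce^{t\cdot x-\nu_{\beta'}(x)}e^{-cf(x)}\le C\rho(x)^{-c\kappa}$, which is at most $\tfrac12C\rho(x)^{-(d+\delta)}$ once $\kappa$ is large. For the second piece, Lemma~\ref{lem:long_edges} bounds $\Phi_\beta(0\leftrightarrow x,|C_0|\le f(x))$ by $2\,\Phi_\beta(\niceConnection_x(f))$, and Lemma~\ref{lem:nice_co_splitting} expands $\Phi_\beta(\niceConnection_x(f))$ into a sum over chains of ``small'' clusters $x_0,\dots,x_k$ (with $\sum_i\rho(x_i)\le L_x:=3f(x)\log f(x)$) joined by $k\le f(x)$ ``long'' edges $y_1,\dots,y_k$ (with $\rho(y_j)\ge K_x:=3\log f(x)$). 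Multiplying by $e^{t\cdot x}=\prod_ie^{t\cdot x_i}\prod_je^{t\cdot y_j}$, each long-edge factor becomes at most $\tfrac{\beta e^{\beta}}{q}\psi(y_j)e^{-\surcharge_t(y_j)}$. As $L_x$ is much smaller than $\rho(x)$, at least one $y_j$ obeys $\rho(y_j)\ge(\rho(x)-L_x)/k$; singling it out, use $\psi(y_j)=\psi(y_j)^{1/2}\psi(y_j)^{1/2}$, bound the first factor by $\psi(y_j)^{1/2}\le C\rho(y_j)^{-(d+\epsilon/2)}\le C(2k)^{d+\epsilon/2}\rho(x)^{-(d+\epsilon/2)}$, and keep $\psi(y_j)^{1/2}e^{-\surcharge_t(y_j)}$ inside the sum, where it is summable: $\sum_y\psi(y)^{1/2}e^{-\surcharge_t(y)}\le C\sum_y\rho(y)^{-(d+\epsilon/2)}<\infty$. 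The residual sums over $x_0,\dots,x_k$ are controlled by the partial sums $\sum_{\rho(z)\le L_x}e^{t\cdot z}\Phi_\beta(0\leftrightarrow z)$ — to which the inductive bound already applies, since $L_x\ll\rho(x)$ — and those over the remaining $y_j$ by $\sum_{\rho(y)\ge K_x}\psi(y)e^{-\surcharge_t(y)}$, which tends to $0$ as $\rho(x)\to\infty$. Choosing $\kappa$, $R_0$ and the constant $C$ in a suitable order so as to absorb the sub-polynomial factors $(2f(x))^{d+\epsilon/2}$ and the geometric sum over $k$, one recovers the displayed pointwise bound, closing the induction on the scale $\rho(x)$.

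\emph{Main obstacle.} The delicate point is the joint control of (a) configurations with many long edges, in which the $k+1$ small clusters contribute a $(k+1)$-fold product of (restricted) generating functions that a priori grows with $k$, and (b) the resulting self-referential appearance of these generating functions in the bound for $x$. Balancing (a) against the smallness of $\sum_{\rho(y)\ge K_x}\psi(y)e^{-\surcharge_t(y)}$ and against the latitude in the choice of $f$ is exactly where the \emph{quadratic} decay rate $2d+\epsilon$ (rather than merely $d+\epsilon$) is used: it is precisely the condition under which the split $\psi=\psi^{1/2}\cdot\psi^{1/2}$ produces simultaneously a prefactor decaying faster than $\rho(x)^{-d}$ and a residual long-edge weight that is still summable against $e^{-\surcharge_t}$.
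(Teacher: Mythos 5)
The construction of the dual vector $t$ is the same as in the paper, and the overall strategy (split on $|C_0|\gtrless f(x)$ with $f(x)=\kappa\log\rho(x)$, reduce to $\niceConnection_x(f)$, expand via Lemma~\ref{lem:nice_co_splitting}) is also the same. However, there is a genuine gap at the heart of the argument, and it concerns the threshold you use for ``long'' edges. You invoke Lemma~\ref{lem:nice_co_splitting} with its original cutoff $K_x=3\log f(x)\sim\log\log\rho(x)$, so the per-slot long-edge weight $\sum_{\rho(y)\ge K_x}\psi(y)e^{-\surcharge_t(y)}\lesssim K_x^{-(d+\epsilon)}$ decays only as $(\log\log\rho(x))^{-(d+\epsilon)}$. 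This is far too slow to control the geometric sum over $k$: with the crude per-slot count $\#\{z:\rho(z)\le f(x)K_x\}\lesssim(\log\rho(x)\log\log\rho(x))^d$, the ratio of consecutive $k$-terms blows up; and with your inductive replacement, you need the constant $A$ in your pointwise bound $g(x)\le A\rho(x)^{-(d+\delta)}$ to absorb the trivial bound $g\le 1$ at the onset scale $R_1$ from which the chain estimate kicks in, forcing $A\ge R_1^{d+\delta}$. But $R_1$ must be large enough that $M_x\cdot K_x^{-(d+\epsilon)}\le 1/2$ with $M_x\lesssim C_1 R_0^d+AC_2R_0^{-\delta}$, which with $K_x\sim\log\log\rho(x)$ means $R_1\ge\exp\exp\bigl(cA^{1/(d+\epsilon)}\bigr)$. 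Combined with $A\ge R_1^{d+\delta}$ this is an impossible fixed-point equation; the induction never closes. The paper sidesteps this exactly by re-running the splitting of Lemma~\ref{lem:nice_co_splitting} with the \emph{larger} cutoff $L_x=(\log\rho(x))^{(d+1)/\epsilon}$ (which is still allowed, since under $\niceConnection_x(f)$ any edge of $\rho$-length $\ge L_x\ge 3\log f(x)$ is pivotal), and by bounding $\sum_x e^{t\cdot x}\Phi_\beta(\niceConnection_x(f))$ directly with the crude $e^{t\cdot x_i}\Phi_\beta(0\leftrightarrow x_i)\le 1$, so that no self-referential constant appears. The point is $\delta>d/\epsilon$ makes $(f(x)L_x)^d\,L_x^{-(d+\epsilon)}=(\log\rho(x))^{d-\delta\epsilon}\kappa^d\to 0$, which is precisely what your doubly-logarithmic $K_x$ fails to achieve.

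Two further remarks. First, the $\psi=\psi^{1/2}\cdot\psi^{1/2}$ split that you present as the key idea is superfluous and in fact weakens the bound: the singled-out $y_k$ is determined by the constraint $\bar x+\bar y=x$ (it is not a free summation variable), so you may use its \emph{full} weight $\psi(y_k)e^{-\surcharge_t(y_k)}\lesssim(2k/\rho(x))^{2d+\epsilon}$, which decays like $\rho(x)^{-(2d+\epsilon)}$, much faster than your $\rho(x)^{-(d+\epsilon/2)}$. The heuristic ``$2d+\epsilon$ is the critical threshold because of the square-root split'' is therefore a red herring; the $2d$ is really $d$ (from the per-slot lattice count $\sim L_x^d$) plus $d$ (from the shell count $\sim r^{d-1}$ in the final sum over $x$). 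Second, even the pointwise bound $e^{t\cdot x}\Phi_\beta(0\leftrightarrow x)\le C\rho(x)^{-(d+\delta)}$ you postulate is not needed and not what the paper establishes: the paper's chain estimate already yields $e^{t\cdot x}\Phi_\beta(\niceConnection_x(f))\lesssim\rho(x)^{-2d-\epsilon}(\log\rho(x))^{O(1)}$ directly, and only the summability of this over $x$ is used.
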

\begin{proof}
	Let \(\beta<\beta'< \betasat(s)\).
	We first construct \(t\) with the desired properties. As \(\nu_{\beta'}\leq \rho\) (as norms), \(\mathscr{U}\subset \mathscr{U}_{\beta'}\) where \(\mathscr{U}_{\beta'}\) is the unit ball for \(\nu_{\beta'}\). Then, as \(\nu_{\beta'}(s)=\rho(s)\), \(\frac{s}{\rho(s)}\in (\partial\mathscr{U} \cap \partial\mathscr{U}_{\beta'})\). Let then \(H\) be a supporting hyperplane of \(\mathscr{U}_{\beta'}\) passing through \(\frac{s}{\rho(s)}\). It is also a supporting hyperplane of \(\mathscr{U}\). Let \(t'\) be orthogonal to \(H\) and such that \(t'\cdot s>0\). Set \(t = \frac{\rho(s)}{t'\cdot s}t'\). This is a dual vector of \(s\) by construction and it satisfies \(t\cdot x \leq \nu_{\beta'}(x)\) for any \(x\in \R^d\).
	
	By~\eqref{eq:large_deviation_volume} and the previous discussion,
	\begin{equation*}
		\Phi_{\beta}\big(0\leftrightarrow x, |C_0|\geq \kappa\log\rho(x)\big) \leq C\sfe^{-t\cdot x}\sfe^{-c\kappa \log\rho(x)}.
	\end{equation*}In particular, for \(\kappa\) large enough,
	\begin{equation*}
		\sum_{x\in \Zd} \sfe^{t\cdot x}\Phi_{\beta}\big(0\leftrightarrow x, |C_0|\geq \kappa\log\rho(x)\big) <\infty.
	\end{equation*}
	Denote \(f(x) = \kappa\log\rho(x)\). Then, by~\eqref{eq:longEdgesPivotal_conditioned_clusterSize}, for \(\norm{x}\) large enough,
	\begin{equation*}
		\Phi_{\beta}\big(0\leftrightarrow x, |C_0|\leq f(x)\big)\leq 2\Phi_{\beta}\big(\niceConnection_x(f)\big).
	\end{equation*}We are thus left with showing the summability of \(e^{t\cdot x}\Phi_{\beta}\big(\niceConnection_x(f)\big)\). Let \(L_x = (\log\rho(x))^{\delta}\), \(\delta = (d+1)/\epsilon\). Note that under \(\niceConnection_x(f)\), any open edge \(\{u,v\}\subset C_0\) with \(\rho(v-u)\geq L_x\) is in \(\Piv_{x}\), and \(C_0\) contains at least one such edge (for \(\rho(x)\) large enough). Proceeding exactly as in the proof of Lemma~\ref{lem:nice_co_splitting} (with \(L_x\) replacing \(3\log(\rho(x))\) in the definition of \(\tau_i\)), one obtains that for \(\rho(x)\) large enough
	\begin{equation*}
		\Phi_{\beta}(\niceConnection_x(f))\leq \sum_{k=1}^{f(x)} \sum_{\substack{x_0,\dots, x_k\\\sum_{i=0}^k \rho(x_i)\leq f(x)L_x}}\sum_{\substack{y_1,\dots, y_k\\ \rho(y_j)\geq L_x}} \mathds{1}_{\bar{x} + \bar{y} = x}\prod_{i=0}^k \Phi_{\beta}(0\leftrightarrow x_i)\prod_{j=1}^k \frac{2\beta J_{y_j}}{q},
	\end{equation*}where \(\bar{x}= \sum_{i=0}^k x_i\), \(\bar{y}= \sum_{j=1}^k y_j\), and we used \(e^{\beta J_{y}}-1 \leq 2\beta J_{y} \) for \(\rho(y)\) large enough. Multiplying both sides by \(\sfe^{t\cdot x}\), using \(\psi(x)\leq C\rho(x)^{-2d-\epsilon}\) and summing over \(x\) with \(\rho(x)\) large enough (symbolized by the \(*\) sum), one obtains
	\begin{multline*}
		\sum_{x}^* \sfe^{t\cdot x} \Phi_{\beta}(\niceConnection_x(f))\leq \\
		\leq C\sum_{x}^*  \sum_{k=1}^{f(x)} \sum_{\substack{x_0,\dots, x_k\\\sum_{i=0}^k \rho(x_i)\leq f(x)L_x}}\sum_{\substack{y_1,\dots, y_k\\ \rho(y_j)\geq L_x}} \mathds{1}_{\bar{x} + \bar{y} = x}\prod_{i=0}^k \sfe^{t\cdot x_i} \Phi_{\beta}(0\leftrightarrow x_i)\prod_{j=1}^k \sfe^{-\surcharge_t(y_j) } c \rho(y_j)^{-2d-\epsilon}.
	\end{multline*}
	Transposing the \(y_j\)s so that \(\rho(y_k)\geq \rho(y_j)\), and using the properties~\eqref{eq:psi_property} of \(\rho(x)^{-2d-\epsilon}\), the last display is upper bounded by
	\begin{equation*}
		C\sum_{x}^*  \rho(x)^{-2d-\epsilon} \sum_{k=1}^{f(x)} k c^k \sum_{\substack{x_0,\dots, x_k\\ \rho(x_i)\leq f(x)L_x}}\sum_{\substack{y_1,\dots, y_{k-1} \\ \rho(y_j)\geq L_x}} \prod_{i=0}^k \sfe^{t\cdot x_i} \Phi_{\beta}(0\leftrightarrow x_i)\prod_{j=1}^{k-1} \rho(y_j)^{-2d-\epsilon}.
	\end{equation*}Using then \(\sfe^{t\cdot x_i} \Phi_{\beta}(0\leftrightarrow x_i)\leq 1\), and the definitions of \(L_x,f(x)\), one has the upper bound
	\begin{equation*}
		C\sum_{x}^*  \rho(x)^{-2d-\epsilon}(\log \rho(x))^{2d(1+\delta)} \sum_{k\geq 1} k \Big( c (\log \rho(x))^{d(1+\delta)} (\log(\rho(x))^{\delta(d-2d-\epsilon)} \Big)^{k-1} .
	\end{equation*}where \(c,C\) now depend on \(\epsilon,\kappa,d,\beta,q\). Using finally \(\delta= (d+1)/\epsilon\), one has that the sum over \(k\) is summable uniformly over \(x\) with \(\rho(x)\) large. Moreover, \(\rho(x)^{-2d-\epsilon}(\log \rho(x))^{2d(1+\delta)}\) is also summable, which implies the claim.

\section{The case of subexponentially decaying coupling constants}
In this section, we explain how the proofs of the last section can be used to prove Theorem~\ref{thm:asymp_sub_exp_coupling}. We only highlight the differences.
Firstly, it was proved in~\cite{Hutchcroft-2020} that for $\beta<\betac$
\begin{equation*}
	\chi(\beta):=\sum_{x\in\mathbb{Z}^{d}}\Phi_{\beta}(0\leftrightarrow x)<\infty.
\end{equation*}
Morally, the modifications go as follows: set \(t=0\) and \(\nu_{\beta} = 0\) in the proofs. Note that in that case, $\bbG_{\beta}(t)= \chi(\beta)$, and \(\bbJ(t) =1\) (by our normalization choice).

To be more detailed, we present how to adapt the proofs claim by claim. We will need
\begin{equation*}
	F(r) = \sum_{x:\normsup{x}\geq r} J_x.
\end{equation*}Note that \(1\geq F(r)>0\) (as \(\sum_x J_x=1\)), \(F\) is strictly decreasing over \(\Z_+\) (as \(J_x>0\) for \(x\neq 0\)) and therefore invertible (as a function \(\Z_+\to \Z_+\)), and, by summability of \(J_x\), \(F(r)\to 0\) as \(r\to\infty\). We denote \(F^{-1}\) the extension to \(\R_+\) of the inverse of \(F\) by linear interpolation.

\begin{remark}
	As it was the case in the previous section, the proof of Theorem~\ref{thm:asymp_sub_exp_coupling} holds under more general assumptions on the coupling constants. We refer the interested reader to the Appendix.
\end{remark}

\subsection{Preparations} Lemma~\ref{lem:BK_piv} is valid in this context without any modification. In Lemma~\ref{lem:OSSS_field_connect_volume} as well as in the proof of Lemma~\ref{lem:large_deviation_volume_connect}, one can bypass differentiability problems by using Dini derivatives (as in~\cite{Hutchcroft-2020}). Lemma~\ref{lem:long_edges} has to be replaced by
\begin{lemma}\label{lem:sub_exp:long_edges}
	Let \(\beta<\betac\) and \(g:\R_+\to\R_+\) be decreasing to \(0\). Then, there exist \(C<\infty,c>0\) such that, for any \(x\in \Zd\) and \(N\geq 1\),
	\begin{multline}
		\label{eq:longEdgesPivotal_conditioned_clusterSize_Poly}
			\!\!\!\!\Phi_\beta\bigl(\exists \{u,v\}\notin\Piv_{x}, \{u,v\}\subset C_0, \omega_{uv}=1, \rho(v-u) \geq C F^{-1}(g(N)/N) \given 0\leftrightarrow x,|C_0|\leq N \bigr)\\\leq g(N).
	\end{multline}
\end{lemma}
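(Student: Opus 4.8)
The plan is to rerun the proof of Lemma~\ref{lem:long_edges} essentially verbatim, the only input that changes being the tail behaviour of $J$. I would introduce the counting variable
\[
	L_{K,x} = \#\bsetof{\{i,j\}\subset C_0}{\rho(i-j)\geq K,\ \{i,j\}\notin\Piv_x,\ \omega_{ij}=1},
\]
and note that the event inside the probability in~\eqref{eq:longEdgesPivotal_conditioned_clusterSize_Poly} is precisely $\{L_{K,x}>0\}$ with $K = C\,F^{-1}(g(N)/N)$. A union bound over the pair $\{u,v\}$, together with the finite-energy bound $\Phi_\beta(\omega_{uv}=1\mid\cdot)\leq 1-\sfe^{-\beta J_{uv}}\leq\beta J_{uv}$, monotonicity of connection events, and the conditioning $|C_0|\leq N$, yields exactly as in the proof of Lemma~\ref{lem:long_edges}
\[
	\Phi_\beta\bigl(L_{K,x}>0 \bgiven 0\leftrightarrow x,\ |C_0|\leq N\bigr)\ \leq\ \beta N\sum_{v:\,\rho(v)\geq K} J_{0v}.
\]

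The only genuinely new point is to bound the tail sum by $F$. Since all norms on $\Rd$ are equivalent, there is $c_\rho>0$ with $\normsup{v}\geq c_\rho\,\rho(v)$ for all $v$, hence $\{v:\rho(v)\geq K\}\subseteq\{v:\normsup{v}\geq c_\rho K\}$ and therefore $\sum_{v:\rho(v)\geq K} J_{0v}\leq F(\lceil c_\rho K\rceil)$. It then suffices to choose $K$ so that $\beta N\,F(\lceil c_\rho K\rceil)\leq g(N)$; since $F$ is strictly decreasing with (linearly interpolated) inverse $F^{-1}$, this holds as soon as $c_\rho K\geq F^{-1}\bigl(g(N)/(\beta N)\bigr)$. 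Plugging $K = C\,F^{-1}(g(N)/N)$ back into the two displays above then produces $\Phi_\beta(L_{K,x}>0\mid 0\leftrightarrow x,|C_0|\leq N)\leq g(N)$, which is the claim.

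The step I expect to require a little care is the last one: one must absorb the factor $\beta$ and the constant $c_\rho$ into the single multiplicative constant $C$ sitting in front of $F^{-1}(g(N)/N)$, i.e.\ deduce $c_\rho K\geq F^{-1}(g(N)/(\beta N))$ from $K\geq C F^{-1}(g(N)/N)$. This rests on a mild (doubling-type) regularity of $F^{-1}$, namely $F^{-1}(\lambda t)\leq C_\lambda\,F^{-1}(t)$ for fixed $\lambda>0$ and all small $t$, which one checks directly for the two profiles of Theorem~\ref{thm:asymp_sub_exp_coupling}: $F(r)\asymp r^{-(\alpha-d)}$ gives $F^{-1}(\lambda t)\asymp\lambda^{-1/(\alpha-d)}F^{-1}(t)$, while $F(r)\asymp\sfe^{-\tilde c r^{\eta}}$ gives $F^{-1}(\lambda t)\asymp\bigl(\tilde c^{-1}\log(1/t)+\tilde c^{-1}\log(1/\lambda)\bigr)^{1/\eta}\leq C_\lambda\, F^{-1}(t)$ for $t$ small (enlarging $C$ to handle the finitely many small values of $N$). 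For genuinely general $J$ this regularity would have to be added as a hypothesis, which is implicitly what the more general treatment in the appendix does; apart from this bookkeeping the proof is identical to that of Lemma~\ref{lem:long_edges}.
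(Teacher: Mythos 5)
Your proof is correct and follows exactly the route the paper intends; the paper itself only says the statement is ``a straightforward adaptation of the one of Lemma~\ref{lem:long_edges}'', and your version is that adaptation, with the only new ingredient being the translation from $\rho$-balls to $\normsup{\cdot}$-balls via equivalence of norms and the ensuing choice of $K$ through $F^{-1}$. Your observation that absorbing the $\beta$ (and the norm-equivalence constant) into the outer multiplicative constant $C$ implicitly uses a doubling-type regularity of $F^{-1}$ is a fair and useful clarification of what ``straightforward'' is silently assuming: you verify it holds for the polynomial and stretched-exponential profiles of Theorem~\ref{thm:asymp_sub_exp_coupling}, which are the only ones the paper treats, and correctly note that in the general setting alluded to in the final remark of Section~6 this would need to be added to the hypotheses.
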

The proof is a straightforward adaptation of the one of Lemma~\ref{lem:long_edges}. One also needs to change the notion of ``nice connections'':
Introduce the \emph{nice connection} event:
\begin{multline}
	\label{eq:nice_connection_def_subExp}
	\niceConnection_x(f) = \{0\leftrightarrow x\}\cap \{|C_0|\leq f(x) \}\cap\\
	\cap \big\{ \{u,v\}\subset C_0 \ \mathrm{ AND }\ \rho(u-v)\geq K_x \ \mathrm{ AND }\  \omega_{uv} = 1 \implies  \{u,v\}\in \Piv_x \big\},
\end{multline}where \(K_x= K_x(f) = CF^{-1}(1/f(x)\log f(x))\), with \(C\) given by Lemma~\ref{lem:sub_exp:long_edges} for \(g(N) = 1/\log N\).

\begin{remark}
	In the case of polynomially decaying interactions, \(F(r) \leq c r^{d-\alpha}\) and \(K_x \asymp (f(x)\log f(x))^{1/(\alpha-d)} \).
\end{remark}

Lemma~\ref{lem:nice_co_splitting} then becomes
\begin{lemma}
	\label{lem:nice_co_splitting_subExp}
	Let \(\tilde{f}:\R_+\to \R_+\). Define \(f:\Zd\to \R_+\) by \(f(x) = \tilde{f}(\rho(x))\). Suppose that for \(\rho(x)\) large enough one has
	\(\frac{\rho(x)}{f(x)}> K_x(f) =K_x\). Then, for any \(x\in \Zd\) with \(\norm{x}\) large enough
	\begin{equation}
		\label{eq:nice_co_splitting_subExp}
		\Phi_{\beta}(\niceConnection_x(f)) \leq \sum_{k= 1}^{f(x)} \sum_{x_0,\dots, x_k}^* \sum_{y_1,\dots,y_k}^* \prod_{i=0}^k \Phi_{\beta}(0\leftrightarrow x_i) \prod_{j=1}^k \frac{e^{\beta J_{y_j}}-1}{q}
	\end{equation}where the \(*\) sums are over \(x_0,\dots,x_k\), and \(y_1,\dots,y_k\) satisfying
	\begin{itemize}
		\item \(\sum_{i=0}^k x_i + \sum_{j=1}^k y_j = x\),
		\item \(\rho(y_j)\geq K_x\) for \(j=1,\dots, k\),
		\item \(\sum_{i=0}^k\rho(x_i)\leq f(x)K_x\).
	\end{itemize}
\end{lemma}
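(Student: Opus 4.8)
The plan is to follow the proof of Lemma~\ref{lem:nice_co_splitting} essentially verbatim, simply replacing the threshold $3\log f(x)$ by $K_x$ everywhere; this is legitimate because the ``nice connection'' event has been redefined accordingly in~\eqref{eq:nice_connection_def_subExp}. So I would first, on the event $\{0\leftrightarrow x\}$, select by some fixed deterministic rule a self-avoiding path $\gamma=(\gamma_0,\dots,\gamma_{|\gamma|})$ of open edges from $\gamma_0=0$ to $\gamma_{|\gamma|}=x$, recalling that every edge of $\Piv_x$ necessarily lies on $\gamma$. I would then cut $\gamma$ at its \emph{long} edges, setting $\tau_0=-1$ and $\tau_i=\min\{r>\tau_{i-1}:\rho(\gamma_{r+1}-\gamma_r)\geq K_x\}$, letting $k$ be the number of long edges, $\tau_{k+1}=|\gamma|$, and defining $x_i=\gamma_{\tau_{i+1}}-\gamma_{\tau_i+1}$ for $i=0,\dots,k$ and $y_j=\gamma_{\tau_j+1}-\gamma_{\tau_j}$ for $j=1,\dots,k$. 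Telescoping gives $\sum_i x_i+\sum_j y_j=x$; by construction $\rho(y_j)\geq K_x$; and since each of the at most $|\gamma|\leq|C_0|\leq f(x)$ short edges has $\rho$-length $<K_x$, one gets $\sum_i\rho(x_i)\leq f(x)K_x$. These are exactly the three constraints appearing in the $*$-sums.

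Next I would check that $k\geq1$: were $\gamma$ to contain no long edge, the triangle inequality (for the norm $\rho$) would give $\rho(x)\leq\sum_r\rho(\gamma_{r+1}-\gamma_r)<|\gamma|\,K_x\leq f(x)K_x<\rho(x)$, where the last inequality is the hypothesis $\rho(x)/f(x)>K_x$ --- a contradiction. Moreover, on $\niceConnection_x(f)$ every open edge $\{u,v\}\subset C_0$ with $\rho(u-v)\geq K_x$ is pivotal for $\{0\leftrightarrow x\}$ (this is precisely the implication encoded in~\eqref{eq:nice_connection_def_subExp}), so each oriented long edge $e_j=(\gamma_{\tau_j},\gamma_{\tau_j+1})$ is crossed, in this order, by every self-avoiding open path from $0$ to $x$; that is, the event $D(e_1,\dots,e_k)$ of Lemma~\ref{lem:BK_piv} occurs.

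Finally I would sum over $k\in\{1,\dots,f(x)\}$ and over the admissible tuples $(x_0,\dots,x_k)$ and $(y_1,\dots,y_k)$, bounding each contribution via Lemma~\ref{lem:BK_piv} applied to $D(e_1,\dots,e_k)$ and using translation invariance to rewrite the resulting factors as $\prod_{i=0}^k\Phi_\beta(0\leftrightarrow x_i)\prod_{j=1}^k(\sfe^{\beta J_{y_j}}-1)/q$; this yields~\eqref{eq:nice_co_splitting_subExp}. I do not expect any genuine obstacle here: the argument is combinatorial bookkeeping identical to that of Lemma~\ref{lem:nice_co_splitting}, and the only point needing a line of care --- the same as there --- is that the deterministic choice of $\gamma$ and the cutting procedure are well defined and measurable on the whole event, which is immediate.
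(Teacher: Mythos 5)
Your proof is correct and is exactly the adaptation the paper intends: Lemma~\ref{lem:nice_co_splitting_subExp} is stated in the paper without a separate proof, with the understanding that the proof of Lemma~\ref{lem:nice_co_splitting} carries over verbatim once the threshold $3\log f(x)$ is replaced by $K_x$ and the hypothesis $\rho(x)/f(x)>K_x$ is used to guarantee $k\geq 1$; you have reproduced precisely that argument (cutting the chosen self-avoiding open path at its long edges, checking the three constraints, verifying that pivotality of the long edges yields the event $D(e_1,\dots,e_k)$ of Lemma~\ref{lem:BK_piv}, and translating).
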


\subsection{Lower bound}
The lower bound follows closely the one of Lemma~\ref{lem:prefactor_sharp_LB} (and is the same as the proof of the lower bound in~\cite{Newman+Spohn-1998} for the Ising model). The only place where one has to be a bit careful is when choosing the size of the boxes \(\Delta_1,\Delta_2\).

\subsection{Upper bound}
We need to replace Lemmas~\ref{lem:UB_good_imply_ok} and~\ref{lem:UB_enhancement}. They are replaced by (the assumptions \(J_x\propto \rho(x)^{-\alpha}\) with \(\alpha>d\) or \(J_{x}\propto \sfe^{-\tilde{c}\rho(x)^{\eta}}\) are implicit)
\begin{lemma}
	\label{lem:UB_good_imply_ok_subPoly}
	Let \(\beta'< \betac\). Suppose that, for any \(\epsilon>0\), there exist \(C,c\) such that for any \(x\in \Zd\)
	\begin{equation}
		\label{eq:assumed_decay_beta_prime_subExp}
		\Phi_{\beta'}(0\leftrightarrow x) \leq C J_x e^{c \rho(x)^{\epsilon}}.
	\end{equation}
	Then, for any \(\beta<\beta'\),
	\begin{equation*}
		\Phi_{\beta}(0\leftrightarrow x) \leq \frac{\beta\chi(\beta)^{2}}{q} J_x (1+\sfo_{\norm{x}}(1)).
	\end{equation*}
\end{lemma}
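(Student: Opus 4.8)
The strategy I would follow is to transcribe, almost verbatim, the proof of Lemma~\ref{lem:UB_good_imply_ok}, under the dictionary $t=0$, $\nu_\beta\equiv 0$, $\bbG_\beta(t)=\chi(\beta)$ and $\bbJ(t)=\sum_xJ_x=1$, with the role of the prefactor $\psi$ now played directly by the coupling constants $J$ (there is no $\sfe^{-\rho}$ factor to peel off). The first thing to fix are the two relevant scales: a cut-off $f(x)=\kappa\rho(x)^\epsilon$ on the volume $|C_0|$, and the threshold $K_x=CF^{-1}\bigl(1/(f(x)\log f(x))\bigr)$ (with $C$ from Lemma~\ref{lem:sub_exp:long_edges} for $g(N)=1/\log N$) separating ``short'' from ``long'' edges in the nice-connection event~\eqref{eq:nice_connection_def_subExp}. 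These must be chosen so that simultaneously $K_x\to\infty$, $f(x)K_x=\sfo(\rho(x))$ and $\rho(x)/f(x)>K_x$: in the polynomial case $F(r)\asymp r^{d-\alpha}$, hence $K_x\asymp(f(x)\log f(x))^{1/(\alpha-d)}$, and all three requirements hold as soon as $\epsilon<\frac{\alpha-d}{\alpha-d+1}$; in the stretched-exponential case $F^{-1}$ grows only polylogarithmically, so any $\epsilon<1-\eta$ works, the binding constraint now coming from the requirement that $J$ be regular along a $\sfo(\rho(x))$-sized perturbation of its argument. This is the only step where $\alpha>d$ (resp. $\eta<1$) is used in an essential way, and the only place where something has to be checked rather than copied.

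With $\epsilon$ fixed in this way, I would carry out three moves exactly as in Lemma~\ref{lem:UB_good_imply_ok}. First, \eqref{eq:large_deviation_volume_connect} together with the hypothesis~\eqref{eq:assumed_decay_beta_prime_subExp} (applied with an exponent $\epsilon'<\epsilon$) gives $\Phi_\beta(0\leftrightarrow x,|C_0|\ge f(x))\le J_x\,\sfo_{\norm{x}}(1)$ for $\kappa$ large. Second, Lemma~\ref{lem:sub_exp:long_edges} (with $g(N)=1/\log N$) lets me replace $\Phi_\beta(0\leftrightarrow x,|C_0|\le f(x))$ by $\Phi_\beta(\niceConnection_x(f))$ up to a factor $(1+\sfo_{\norm{x}}(1))$, and Lemma~\ref{lem:nice_co_splitting_subExp} then bounds $\Phi_\beta(\niceConnection_x(f))$ by the $k$-fold sum over $x_0,\dots,x_k$ (with $\sum_i\rho(x_i)\le f(x)K_x$) and $y_1,\dots,y_k$ (with $\rho(y_j)\ge K_x$) of $\prod_i\Phi_\beta(0\leftrightarrow x_i)\prod_j\frac{\sfe^{\beta J_{y_j}}-1}{q}$. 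Third, transpose the $y_j$'s so that $\rho(y_k)$ is maximal and invoke the $t=0$ analogues of~\eqref{eq:psi_property} for $J$, namely submultiplicativity $J_u\prod_jJ_{v_j}\le b^{\,m}J_{u+v}\prod_jJ_{v_j}^{a}$ when $\rho(u)\ge\rho(v_j)$ (with $a=1,\ b=2^\alpha$ polynomially and $a=2-2^\eta$ in the stretched-exponential case), $\sum_yJ_y^{a}<\infty$, and $J_{x-\bar x}=J_x(1+\sfo_{\norm{x}}(1))$ uniformly over $\rho(\bar x)\le f(x)K_x$. Separating the term $k=1$ from the rest, the tail is controlled by
\[
	\sum_{k\ge 2} k\,c^{\,k}\,\chi(\beta)^{k+1}\Big(\sum_{\rho(y)\ge K_x}J_y^{a}\Big)^{\!k-1}=\sfo_{\norm{x}}(1),
\]
since $\sum_{\rho(y)\ge K_x}J_y^{a}\to0$ and $\chi(\beta)<\infty$ by~\cite{Hutchcroft-2020}, while the $k=1$ term equals $\frac{\beta}{q}J_x(1+\sfo_{\norm{x}}(1))\sum_{x_0,x_1}\Phi_\beta(0\leftrightarrow x_0)\Phi_\beta(0\leftrightarrow x_1)\le\frac{\beta\chi(\beta)^2}{q}J_x(1+\sfo_{\norm{x}}(1))$.

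Combining the second and third moves with the volume bound from the first gives $\Phi_\beta(0\leftrightarrow x)\le\frac{\beta\chi(\beta)^2}{q}J_x(1+\sfo_{\norm{x}}(1))$, which is the claim. I expect the genuine difficulty to be purely bookkeeping: ensuring that the three independent $\sfo_{\norm{x}}(1)$ errors — from $\{|C_0|\ge f(x)\}$, from non-pivotal long edges, and from the $k\ge2$ tail together with the replacement $J_{x-\bar x}\approx J_x$ — are all uniform in $x$ and genuinely vanish, which is exactly what pins down the admissible range of $\epsilon$ described in the first paragraph; everything else is a line-by-line copy of the proof of Lemma~\ref{lem:UB_good_imply_ok}.
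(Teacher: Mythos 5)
Your proposal is correct and follows exactly the route the paper intends: the paper itself only remarks that Lemma~\ref{lem:UB_good_imply_ok} should be transcribed under the dictionary $t=0$, $\nu_\beta\equiv 0$, $\bbG_\beta(t)=\chi(\beta)$, $\bbJ(t)=1$, with Lemmas~\ref{lem:sub_exp:long_edges} and~\ref{lem:nice_co_splitting_subExp} replacing Lemmas~\ref{lem:long_edges} and~\ref{lem:nice_co_splitting}, and that in the polynomial case $\epsilon$ must be small enough for Lemma~\ref{lem:nice_co_splitting_subExp} to apply. You carry this out faithfully and in addition pin down the exact admissible range ($\epsilon<(\alpha-d)/(\alpha-d+1)$ polynomially, $\epsilon<1-\eta$ in the stretched-exponential case), which is what the paper's remark gestures at but does not compute.
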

The main difference in the proof is that, in the polynomial case, \(\epsilon>0\) has to be chosen small enough to be able to use Lemma~\ref{lem:nice_co_splitting_subExp}.

\begin{lemma}
	\label{lem:UB_enhancement_subExp}
	Suppose \(J_x \propto \sfe^{-\tilde{c} \rho(x)^{\eta}}\) with \(\tilde{c}>0,\eta\in (0,1)\). Let \(\beta'< \betac\). Suppose that there exist \(C_1,c_1>0, \epsilon\in (0,1)\) such that for any \(x\in \Zd\)
	\begin{equation}
		\Phi_{\beta'}(0\leftrightarrow x) \leq C_1 J_x e^{c_1 \rho(x)^{\epsilon}}.
	\end{equation}Then, for any \(\beta<\beta'\), there exist \(C_2,c_2>0\) such that
	\begin{equation*}
		\Phi_{\beta}(0\leftrightarrow x) \leq C_2 J_x e^{c_2 \rho(x)^{\epsilon-1+\eta}\log \rho(x) }.
	\end{equation*}
\end{lemma}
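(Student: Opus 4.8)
\textbf{Proof proposal (Lemma~\ref{lem:UB_enhancement_subExp}).}
The plan is to transcribe the proof of Lemma~\ref{lem:UB_enhancement}, now with the interaction $J$ itself playing the role previously played by the sub-exponential correction $\psi$, with the generating function $\bbG_{\beta}(t)$ replaced by the susceptibility $\chi(\beta)$ (finite for $\beta<\betac$ by~\cite{Hutchcroft-2020}), and with Lemmas~\ref{lem:sub_exp:long_edges} and~\ref{lem:nice_co_splitting_subExp} used in place of Lemmas~\ref{lem:long_edges} and~\ref{lem:nice_co_splitting}. Fix $\beta<\beta'<\betac$ and set $f(x)=\kappa\rho(x)^{\epsilon}$ with $\kappa$ to be chosen large. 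First I would discard the large-cluster part: by~\eqref{eq:large_deviation_volume_connect} and the hypothesis,
\[
	\Phi_{\beta}\bigl(0\leftrightarrow x,\,|C_0|\geq f(x)\bigr)\leq C\,\Phi_{\beta'}(0\leftrightarrow x)\,\sfe^{-c\kappa\rho(x)^{\epsilon}}\leq C C_1 J_x\,\sfe^{(c_1-c\kappa)\rho(x)^{\epsilon}},
\]
which is $J_x\,\sfo_{\norm{x}}(1)$ as soon as $\kappa>c_1/c$; fix such a $\kappa$. By Lemma~\ref{lem:sub_exp:long_edges} (with $g(N)=1/\log N$, as in~\eqref{eq:nice_connection_def_subExp}) one gets $\Phi_{\beta}(0\leftrightarrow x,\,|C_0|\leq f(x))\leq 2\,\Phi_{\beta}(\niceConnection_x(f))$ for $\norm{x}$ large, where the pivotality threshold is $K_x=CF^{-1}\!\bigl(1/(f(x)\log f(x))\bigr)$; for $J_x\propto\sfe^{-\tilde c\rho(x)^{\eta}}$ one has $F(r)\asymp r^{d-\eta}\sfe^{-ar^{\eta}}$, hence $K_x\asymp(\log\rho(x))^{1/\eta}=\sfo(\rho(x)/f(x))$, so the hypothesis of Lemma~\ref{lem:nice_co_splitting_subExp} is met for $\norm{x}$ large.

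It then suffices to bound $\Phi_{\beta}(\niceConnection_x(f))$ via the splitting~\eqref{eq:nice_co_splitting_subExp}. Proceeding exactly as in the proof of Lemma~\ref{lem:UB_good_imply_ok}, I would transpose the $y_j$'s so that $\rho(y_k)$ is maximal, bound $\sfe^{\beta J_{y_j}}-1\leq 2\beta J_{y_j}$ (valid since $\rho(y_j)\geq K_x$ is large), and apply~\eqref{eq:psi_property} with $J$ in place of $\psi$ and $a=2-2^{\eta}$ (here $\sum_y J_y^{a}<\infty$ holds trivially because $\eta>0$, and there is no surcharge to keep track of) to factor out $J_{x-\bar x}$, $\bar x=\sum_i x_i$. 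Since $\rho(\bar x)\leq f(x)K_x\asymp\rho(x)^{\epsilon}(\log\rho(x))^{1/\eta}=\sfo(\rho(x))$ uniformly in the $x_i$'s, the computation~\eqref{ineq:hypothesis_psi_2}, with $\rho(x)-\rho(\bar x)$ in place of $\rho(ns)-Cn^{\epsilon}\log n$ and using the elementary $(1-t)^{\eta}\geq 1-t$, gives
\[
	J_{x-\bar x}\leq J_x\,\sfe^{\tilde c\,\rho(x)^{\eta-1}\rho(\bar x)}\leq J_x\,\sfe^{\sfO(\rho(x)^{\epsilon-1+\eta}(\log\rho(x))^{1/\eta})}.
\]
After this factorization the $k+1$ sums over the $x_i$'s (with the constraint $\sum_i\rho(x_i)\leq f(x)K_x$ dropped) produce $\chi(\beta)^{k+1}$, the $k-1$ sums over $y_1,\dots,y_{k-1}$ with $\rho(y_j)\geq K_x$ produce $\bigl(\sum_{\rho(y)\geq K_x}J_y^{a}\bigr)^{k-1}$, and since this last quantity tends to $0$ as $\norm{x}\to\infty$ the series over $k\geq 1$ is uniformly bounded (by $O(\chi(\beta)^{2})$). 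Combining with the first step yields $\Phi_{\beta}(0\leftrightarrow x)\leq C_2 J_x\,\sfe^{c_2\rho(x)^{\epsilon-1+\eta}(\log\rho(x))^{1/\eta}}$; absorbing $(\log\rho(x))^{1/\eta}$ into $\log\rho(x)$ at the price of an arbitrarily small increase of the exponent (harmless, since the only subsequent use of the lemma is the iteration leading to~\eqref{eq:assumed_decay_beta_prime_subExp}, which merely needs the exponent to drop below any prescribed value after finitely many steps, each step decreasing it by at least $1-\eta>0$) gives the stated bound.

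I expect the only genuine difficulty to be the bookkeeping in the passage from $\rho(x-\bar x)$ to $\rho(x)$: one must check that $\rho(\bar x)\leq f(x)K_x$ holds uniformly over all tuples $x_0,\dots,x_k$ admissible in~\eqref{eq:nice_co_splitting_subExp} (this is forced by $\sum_i\rho(x_i)\leq f(x)K_x$ together with $|C_0|\leq f(x)$), and that $\rho(x-\bar x)\geq\rho(x)-\rho(\bar x)$ combined with the concavity bound $(1-t)^{\eta}\geq 1-t$ turns this into the claimed error exponent — precisely the role played by~\eqref{ineq:hypothesis_psi_2} in the exponentially-decaying case, the only new feature being the slightly larger size $K_x\asymp(\log\rho(x))^{1/\eta}$ of the pivotality threshold (versus $3\log f(x)$), which is what the behaviour of $F^{-1}$ for stretched-exponential $J$ forces here.
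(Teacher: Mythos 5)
Your proposal is a correct and faithful adaptation of the proof of Lemma~\ref{lem:UB_enhancement} along exactly the lines the paper's Section on subexponential couplings indicates: $J$ playing the role of $\psi$, $\chi(\beta)$ replacing $\bbG_\beta(t)$, $t=0$ (no surcharge), the subexponential nice-connection event~\eqref{eq:nice_connection_def_subExp} and splitting Lemma~\ref{lem:nice_co_splitting_subExp}, and the estimate~\eqref{ineq:hypothesis_psi_2} with $\rho(\bar x)$ in place of $C n^{\epsilon}\log n$. Your bookkeeping of $K_x$ is also the one genuinely new point, and you handle it correctly: for $J_x\propto \sfe^{-\tilde c\rho(x)^{\eta}}$ one has $F^{-1}(y)\asymp(\log(1/y))^{1/\eta}$, hence $K_x\asymp(\log\rho(x))^{1/\eta}$ and $\rho(\bar x)\lesssim f(x)K_x\asymp\rho(x)^{\epsilon}(\log\rho(x))^{1/\eta}$, so the exponent you actually obtain is $\rho(x)^{\epsilon-1+\eta}(\log\rho(x))^{1/\eta}$ rather than the $\rho(x)^{\epsilon-1+\eta}\log\rho(x)$ written in the lemma; since $1/\eta>1$ this is weaker, and the discrepancy cannot be hidden in $c_2$. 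You rightly observe that it \emph{can} be absorbed by a tiny increase $\delta<1-\eta$ of the polynomial exponent, which is enough because Lemma~\ref{lem:UB_good_imply_ok_subPoly} only requires a finite iteration dropping the exponent below any prescribed level; so your argument does prove Theorem~\ref{thm:asymp_sub_exp_coupling}, and the lemma statement as printed appears to have a minor imprecision in the power of the logarithm. Everything else — the large-cluster truncation via~\eqref{eq:large_deviation_volume_connect}, the comparison to $\niceConnection_x(f)$ via Lemma~\ref{lem:sub_exp:long_edges}, the check that $\rho(x)/f(x)>K_x$, the use of~\eqref{eq:psi_property} with $a=2-2^{\eta}$ and $\sum_y J_y^a<\infty$, and the geometric bound on the sum over $k$ — is carried out correctly.
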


\appendix
\section{Existence of a saturation transition}\label{sec:Criterion}

In this appendix, we prove Theorem~\ref{thm:main_saturation_citerium}. As mentioned after its claim, the ``if'' part was proven in~\cite{Aoun+Ioffe+Ott+Velenik-CMP-2021}. For reasons explained in~\cite{Aoun+Ioffe+Ott+Velenik-CMP-2021}, establishing the claim for the self-avoiding walk automatically implies its validity for a general class of models, including the Random-Cluster model.
We start by briefly recalling the relevant definitions.

\medskip
A \emph{Self-Avoiding Walk} (SAW) is a sequence \(\gamma =(\gamma_0,\dots,\gamma_n)\in (\Zd)^{n+1}\) such that \(i\neq j\implies \gamma_i\neq\gamma_j\).
\(n\) is the \emph{length} of \(\gamma\) and is denoted by \(|\gamma|=n\).
For \(\lambda\in\R_+\), we associate to walks the \emph{weight}
\[
w_{\lambda}(\gamma) = \prod_{i=1}^{|\gamma|} \lambda J_{\gamma_i-\gamma_{i-1}},
\]
where the constants \((J_x)_{x\in\Zd}\) are those introduced in Section~\ref{ssec:Interactions} and are assumed to be exponentially-decaying.

\medskip
The \emph{two-point function} of the SAW is given by the partition function
\[
G_{\lambda}(x,y) = \sum_{\gamma:x\to y} w_{\lambda}(\gamma),
\]
where the notation \(\gamma:x\to y\) means that \(\gamma_0 = x\) and \(\gamma_{|\gamma|} = y\).

\begin{remark}
	As explained in Appendix~A.4.2 of~\cite{Aoun+Ioffe+Ott+Velenik-CMP-2021}, the link with the Random-Cluster model is obtained via the following lower bound on the two-point function of the latter model in terms of the two-point function of the SAW:
	\[
		\Phi_\beta^0(x\leftrightarrow y) \geq c_\beta G_{\lambda(\beta)}(x,y),
	\]
	with \(\lambda(\beta)\) satisfying \(\lim_{\beta\to 0} \lambda(\beta)=0\) (see~\cite{Aoun+Ioffe+Ott+Velenik-CMP-2021} for an explicit expression).
\end{remark}

The \emph{inverse correlation length} is defined as follows: for \(s\in\bbS^{d-1}\),
\[
\nu_\lambda(s) = -\lim_{n\to\infty} \frac1{n}\log G_\lambda(0,ns).
\]
The limit exists and its extension by positive homogeneity of order one defines a norm on \(\Rd\) (see~\cite{Aoun+Ioffe+Ott+Velenik-CMP-2021} for references and details). As for \(\rho\), introduce the associated convex set (Wulff shape)
\begin{equation*}
	\Wulff_{\lambda} = \setof{t\in\bbR^d}{\forall x\in\bbR^d,\, t\cdot x \leq \nu_{\lambda}(x)}
\end{equation*}which satisfies
\begin{equation}
	\label{eq:Wulff_to_norm}
	\nu_{\lambda}(x) = \max_{t\in \Wulff_{\lambda}} t\cdot x.
\end{equation}
As for \(\rho\), \(t\in \partial \Wulff_{\lambda}\) is said to be \(\nu_{\lambda}\)-dual to \(s\in\bbS^{d-1}\) if \(s\cdot t = \nu_{\lambda}(s)\).
The same argument as the one following~\eqref{eq:Gt_finite_implies_t_nu_dual} shows that \(\Wulff_{\lambda}\) is the closure of the convergence domain of the generating function
\begin{equation*}
	\bbG_{\lambda}(h) = \sum_{x\in\Zd} e^{h\cdot x} G_{\lambda}(0,x).
\end{equation*}
As before, we define the \emph{saturation point} by
\[
\lambdasat(s) = \inf\setof{\lambda\geq 0}{\nu_{\lambda}(s) < \rho(s)}.
\]

\medskip
Using the reduction explained in~\cite{Aoun+Ioffe+Ott+Velenik-CMP-2021}, Theorem~\ref{thm:main_saturation_citerium} is a consequence of the following result.
\begin{lemma}
	Suppose \(J\) is exponentially-decaying. Let \(s\in\bbS^{d-1}\). Suppose that, for all \(t\) \(\rho\)-dual to \(s\), \(\bbJ(t) = \infty\). Then \(\lambdasat(s) = 0\).
\end{lemma}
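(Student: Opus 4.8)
The plan is to argue by contradiction: assume $\lambdasat(s)>0$ and fix $\lambda\in(0,\lambdasat(s))$. By definition of $\lambdasat$ and $\nu_\lambda\le\rho$ (as norms), this forces $\nu_\lambda(s)=\rho(s)$; moreover $\lambda<\lambdasat(s)\le\lambdac$, so $G_\lambda(0,x)<\infty$ and $\nu_\lambda$ is a genuine norm, whence $\Wulff_\lambda$ is a convex body. The first (and key) step is to produce a well-placed dual vector. Since $\nu_\lambda\le\rho$ we have $\Wulff_\lambda\subseteq\Wulff$, and by~\eqref{eq:Wulff_to_norm},
\[
\max_{t\in\Wulff_\lambda}t\cdot s=\nu_\lambda(s)=\rho(s)=\max_{t\in\Wulff}t\cdot s .
\]
Any maximizer $t^\ast$ of $t\mapsto t\cdot s$ over $\Wulff_\lambda$ therefore satisfies $t^\ast\cdot s=\rho(s)$ and lies in $\partial\Wulff$, i.e. $t^\ast$ is $\rho$-dual to $s$, so by hypothesis $\bbJ(t^\ast)=\infty$; crucially, $t^\ast\in\Wulff_\lambda$, i.e. $t^\ast\cdot x\le\nu_\lambda(x)$ for every $x\in\Rd$.

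The second step is a directed lower bound on $G_\lambda$. Set $\bar J_x=\sfe^{t^\ast\cdot x}J_x=\psi(x)\sfe^{-\surcharge_{t^\ast}(x)}\ge 0$ and $\bar J^+_x=\bar J_x\,\mathds 1_{\{t^\ast\cdot x>0\}}$. Because $\sfe^{t^\ast\cdot x}\le 1$ on $\{t^\ast\cdot x\le 0\}$ we have $\sum_{x:\,t^\ast\cdot x\le 0}\bar J_x\le\sum_xJ_x=1$, hence $\sum_x\bar J^+_x=\bbJ(t^\ast)-O(1)=\infty$. For any $x\in\Zd$ and $k\ge 1$, every tuple $(x^{(1)},\dots,x^{(k)})$ with $t^\ast\cdot x^{(i)}>0$ for all $i$ and $x^{(1)}+\dots+x^{(k)}=x$ defines a walk $0,x^{(1)},x^{(1)}+x^{(2)},\dots,x$ whose partial sums have strictly increasing inner product with $t^\ast$ and are therefore pairwise distinct: it is a self-avoiding walk. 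Summing the corresponding SAW weights and using $\prod_iJ_{x^{(i)}}=\sfe^{-t^\ast\cdot x}\prod_i\bar J^+_{x^{(i)}}$ gives
\[
\sfe^{t^\ast\cdot x}\,G_\lambda(0,x)\ \ge\ \lambda^k\,(\bar J^+)^{\ast k}(x),\qquad x\in\Zd,\ k\ge1 .
\]
On the other hand $G_\lambda(0,x)\le\sfe^{-\nu_\lambda(x)}$ (the SAW analogue of the inequality recalled in Section~\ref{sec:saturation_transition}; see~\cite{Aoun+Ioffe+Ott+Velenik-CMP-2021}) and $t^\ast\cdot x\le\nu_\lambda(x)$, so $\sfe^{t^\ast\cdot x}G_\lambda(0,x)\le 1$. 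Combining, we obtain the a priori bound
\[
\lambda^k\,(\bar J^+)^{\ast k}(x)\ \le\ 1\qquad\text{for every } x\in\Zd \text{ and } k\ge 1. \tag{$\ast$}
\]

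The last step turns $(\ast)$ into a contradiction using only $\sum_x\bar J^+_x=\infty$. Pick a finite set $B\subset\Zd$ with $\theta:=\lambda\sum_{x\in B}\bar J^+_x>1$ and let $M=\max_{x\in B}\normsup{x}$; write $\bar J^B=\bar J^+\mathds 1_B$. Then $\sum_x(\bar J^B)^{\ast k}(x)=(\theta/\lambda)^k$, while $(\bar J^B)^{\ast k}$ is supported on sums of $k$ elements of $B$, a set of at most $(2kM+1)^d$ lattice points; hence
\[
\max_x\ \lambda^k(\bar J^+)^{\ast k}(x)\ \ge\ \max_x\ \lambda^k(\bar J^B)^{\ast k}(x)\ \ge\ \frac{\theta^k}{(2kM+1)^d}\ \xrightarrow[\ k\to\infty\ ]{}\ \infty,
\]
since $\theta>1$. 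This contradicts $(\ast)$, so $\lambdasat(s)=0$, proving the lemma.

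The heart of the argument — and the one point I would treat most carefully — is the upgrade of the trivial ray estimate $\sfe^{\rho(ns)}G_\lambda(0,ns)\le 1$ to the estimate $\sfe^{t^\ast\cdot x}G_\lambda(0,x)\le 1$ valid at \emph{every} lattice point; this is exactly what $t^\ast\in\Wulff_\lambda$ delivers, and it is available precisely because $\nu_\lambda(s)=\rho(s)$. With $(\ast)$ holding at all $x$ (not just along $\R_{\ge 0}s$), one never has to know \emph{where} the iterated convolutions of $\bar J^+$ concentrate — in particular no local limit theorem, barycenter, or large-deviation estimate is needed, which would otherwise be delicate when the face of $\Wulff$ dual to $s$ is higher-dimensional — and the mere divergence of $\bbJ(t^\ast)$ beats the polynomial growth of the support of $(\bar J^B)^{\ast k}$. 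The only external input is the SAW bound $G_\lambda(0,x)\le\sfe^{-\nu_\lambda(x)}$; should one prefer to avoid citing it, the weaker bound $G_\lambda(0,x)\le C\sfe^{-\nu_\lambda(x)}$ (or even $\sfe^{-\nu_\lambda(x)+\sfo(\norm{x})}$) suffices, since $\norm{x}\le kM$ along the relevant points and $\theta^k$ dominates any sub-exponential factor in $k$.
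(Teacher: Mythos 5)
Your argument is genuinely different from the paper's, and most of it is sound and elegant, but one step is shakier than you suggest.

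\medskip
\emph{Comparison with the paper.} The paper fixes $\lambda>0$ and proves directly that for every $t\in T_s:=\{t\in\partial\Wulff:\,t\text{ is }\rho\text{-dual to }s\}$ one has $\bbG_\lambda((1-\epsilon)t)=\infty$ for some $\epsilon>0$; compactness of $T_s$ then shows $T_s$ is at positive distance from $\partial\Wulff_\lambda$ and hence $\nu_\lambda(s)<\rho(s)$. The SAW lower bound there is obtained by restricting to steps inside a narrow cone $\cone$ where the surcharge is small, which forces self-avoidance. Your route is the contrapositive: assume $\lambdasat(s)>0$, extract a \emph{single} vector $t^\ast$ as the maximiser of $t\cdot s$ over $\Wulff_\lambda$, observe $t^\ast\in T_s\cap\Wulff_\lambda$ and hence $\bbJ(t^\ast)=\infty$, and then derive a contradiction from a \emph{pointwise} bound $\lambda^k(\bar J^+)^{\ast k}(x)\le 1$. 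Two aspects are genuinely nicer: (i) extracting $t^\ast$ via the maximiser over $\Wulff_\lambda$ avoids the paper's compactness argument over all of $T_s$; (ii) your self-avoidance mechanism (steps with $t^\ast\cdot x>0$, hence strictly increasing $t^\ast$-projection) is cleaner than the cone construction and makes $\sum\bar J^+_x=\infty$ immediate. The final ``convolution mass grows geometrically while the support grows polynomially'' step is also a neat replacement for the paper's geometric-series divergence.

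\medskip
\emph{The gap.} Everything hinges on $(\ast)$, which you deduce from the bound $G_\lambda(0,x)\le\sfe^{-\nu_\lambda(x)}$. For the Random-Cluster model this bound is indeed recalled in Section~\ref{sec:saturation_transition} and follows from FKG supermultiplicativity of the two-point function. For the SAW --- which is the model the lemma must be proved for, since the reduction in Appendix~\ref{sec:Criterion} goes \emph{from} SAW \emph{to} RC --- the two-point function is \emph{not} supermultiplicative (concatenating two SAWs need not give a SAW), so this Fekete-type upper bound is not immediate, and I do not see it stated in~\cite{Aoun+Ioffe+Ott+Velenik-CMP-2021}. You correctly note that $G_\lambda(0,x)\le\sfe^{-\nu_\lambda(x)+\sfo(\norm{x})}$ would also do, but your contradiction step takes a maximum over \emph{all} $x$ in the (growing, direction-spreading) support of $(\bar J^B)^{\ast k}$, so you would need the $\sfo(\norm{x})$ to be uniform over directions; that is again nontrivial for SAW and is not supplied.

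\medskip
\emph{A clean fix staying within your framework.} You do not actually need the pointwise bound $(\ast)$. Once you have produced $t^\ast\in\Wulff_\lambda\cap T_s$ with $\bbJ(t^\ast)=\infty$, set $t_\epsilon=(1-\epsilon)t^\ast$ for small $\epsilon>0$. Since $0\in\mathrm{int}(\Wulff_\lambda)$ and $t^\ast\in\Wulff_\lambda$, the point $t_\epsilon$ lies in $\mathrm{int}(\Wulff_\lambda)$, which is contained in the convergence domain of $\bbG_\lambda$ (the paper records that $\Wulff_\lambda$ is the closure of that domain); hence $\bbG_\lambda(t_\epsilon)<\infty$. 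On the other hand, your SAW construction gives, summing over $x$,
\begin{equation*}
\bbG_\lambda(t_\epsilon)\ \ge\ \sum_{k\ge 1}\Bigl(\lambda\sum_{x:\,t^\ast\cdot x>0}\sfe^{t_\epsilon\cdot x}J_x\Bigr)^{k},
\end{equation*}
and by monotone convergence $\sum_{x:\,t^\ast\cdot x>0}\sfe^{t_\epsilon\cdot x}J_x\uparrow\infty$ as $\epsilon\downarrow 0$, so for $\epsilon$ small the inner sum exceeds $\lambda^{-1}$ and $\bbG_\lambda(t_\epsilon)=\infty$ --- a contradiction. This uses only $\bbG_\lambda$-level information (as the paper does) and dispenses with the delicate pointwise decay bound and the convolution-support estimate entirely, while retaining your cleaner choice of $t^\ast$ and of the half-space $\{t^\ast\cdot x>0\}$.
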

\begin{proof}
	Fix \(J\) and \(s\) as in the statement of the lemma. To prove the claim, it is sufficient to show that
	\[
	T_s = \setof{t\in\partial\Wulff}{t \text{ is } \rho\text{-dual to }s}
	\]
	is at positive distance from \(\partial\Wulff_{\lambda}\) for any \(\lambda>0\).
	Indeed, suppose this holds. Then there exists \(\epsilon>0\) and \(h\) \(\nu_{\lambda}\)-dual to \(s\) such that \((1+\epsilon)h \in (\Wulff\setminus \partial\Wulff)\). Hence, using~\eqref{eq:Wulff_to_norm},
	\[
	\rho(s)= \sup_{t\in\Wulff} t\cdot s \geq (1+\epsilon) h \cdot s > \nu_{\lambda}(s).
	\]
	This in turn implies that \(\lambdasat(s) < \lambda\) and the conclusion thus follows, since \(\lambda\) is arbitrary.
	
	Now, as \(T_s\) is compact (since the condition for \(t\) to be \(\rho\)-dual defines a closed subset of the compact set \(\calW\)), it is sufficient to show that, for any \(t\in T_s\), there exists \(\epsilon>0\) such that \(\bbG_{\lambda}\bigl((1-\epsilon)t\bigr) = \infty\) since $\Wulff_{\lambda}$ is the closure of the convergence domain of $\bbG_{\lambda}$.
	Fix \(t\in T_s\). The first observation is that one can find \(\delta>0\) arbitrarily small such that
	\[
	\cone = \setof{x\in\Zd}{\rho(x) - t\cdot x \leq \delta\norm{x}}
	\quad\text{and}\quad
	\cone_R = \setof{x\in\cone}{\norm{x}\leq R}
	\]
	satisfy the following two properties:
	\begin{itemize}
		\item there exists \(u\in\bbS^{d-1}\) such that \(\cone \setminus \{0\}\subset \setof{x}{x\cdot u > 0}\),
		\item \(\lim_{R\to\infty} \bbJ_{R}(t) = \infty\), where \(\bbJ_{R}(t) = \sum_{x\in\cone_R} J_x \sfe^{t\cdot x}\).
	\end{itemize}
	The first property follows from the fact that \(\rho\) is a norm: when \(\delta\downarrow 0\), \(\cone\) tends to the cone generated by the affine part of \(\partial\mathscr{U}\) towards which \(s\) points (which can be reduced to a point).
	The second is a direct consequence of our assumption that \(\bbJ(t)=\infty\) and the bound
	\[
	\sum_{x\in\Zd\setminus\cone} \psi(x) \sfe^{-(\rho(x) -t\cdot x)}
	\leq \sum_{x\in\Zd\setminus\cone} \psi(x) \sfe^{-\delta\norm{x}}
	< \infty.
	\]
	Now, observe that it follows from the strict inclusion of \(\cone\) in a half space that any concatenation of steps in \(\cone\) forms a self-avoiding walk. Moreover, by continuity, one also has
	\[
	\lim_{\epsilon\searrow 0} \bbJ_{R} \bigl((1-\epsilon)t\bigr) = \bbJ_{R}(t).
	\]
	Choosing \(R<\infty\) large enough and then \(\epsilon>0\) small enough, one obtains \(\bbJ_R((1-\epsilon) t)\geq \lambda^{-1}\).
	Therefore, for these choices of \(R\) and \(\epsilon\),
	\[
	\bbG_{\lambda}((1-\epsilon) t)
	\geq \sum_{n\geq 1} \lambda^n \sum_{y_1,\dots,y_n\in \cone_R} \prod_{i=1}^{n} J_{y_i} \sfe^{(1-\epsilon)t\cdot y_i}
	= \sum_{n\geq 1} \Bigl(\lambda\bbJ_{R}\bigl((1-\epsilon) t\bigr)\Bigr)^{\!n}
	= \infty,
	\]
	which concludes the proof.
\end{proof}

\section*{Acknowledgements}

YA and YV are supported by the Swiss NSF grant 200021\_200422. SO is supported by the Swiss NSF grant 200021\_182237. All authors are members of the NCCR SwissMAP.

\section*{Declarations}

The authors have no financial or proprietary interests in any material discussed in this article.

\bibliographystyle{plain}
\bibliography{BIGbib}

\end{document}